\tikzset{->-/.style={decoration={markings,mark=at position #1 with {\arrow{>}}},postaction={decorate}}}
\numberwithin{equation}{section}
\newtheorem{theorem}{Theorem}[section]
\newtheorem{lemma}[theorem]{Lemma}
\newtheorem{corollary}[theorem]{Corollary}
\newtheorem{proposition}[theorem]{Proposition}
\newtheorem{algorithm}[theorem]{Algorithm}
\theoremstyle{definition}
\newtheorem{definition}[theorem]{Definition}
\newtheorem{example}[theorem]{Example}
\newtheorem{remark}[theorem]{Remark}
\renewcommand\labelenumi{(\roman{enumi})}
\renewcommand\theenumi\labelenumi
\renewcommand{\epsilon}{\varepsilon}
\newcommand{\A}{\mathcal{A}}
\newcommand{\B}{\mathcal{B}}
\newcommand{\C}{\mathbb{C}}
\newcommand{\D}{\mathcal{D}}
\newcommand{\E}{\mathbb{E}}
\newcommand{\F}{\mathcal{F}}
\renewcommand{\H}{\mathcal{H}}
\newcommand{\cL}{\mathcal{L}}
\newcommand{\M}{\mathcal{M}}
\newcommand{\N}{\mathbb{N}}
\newcommand{\R}{\mathbb{R}}
\newcommand{\bS}{\mathbb{S}}
\newcommand{\V}{\mathcal{V}}
\newcommand{\W}{\mathcal{W}}
\newcommand{\X}{\mathbb{X}}
\newcommand{\FF}{\mathbb{F}}
\newcommand{\id}{\operatorname{id}}
\newcommand{\ev}{\operatorname{ev}}
\newcommand{\Ev}{\operatorname{Ev}}
\newcommand{\tr}{\operatorname{tr}}
\newcommand{\ran}{\operatorname{ran}}
\newcommand{\vN}{\operatorname{vN}}
\newcommand{\Tr}{\operatorname{Tr}}
\newcommand{\im}{\operatorname{im}}
\newcommand{\dom}{\operatorname{dom}}
\newcommand{\rank}{\operatorname{rank}}
\renewcommand{\1}{\mathbf{1}}
\newcommand{\0}{\mathbf{0}}
\newcommand{\ootimes}{\mathbin{\overline{\otimes}}}
\newcommand{\full}{\mathrm{full}}
\renewcommand{\star}{\bigstar}
\def\moverlay{\mathpalette\mov@rlay}
\def\mov@rlay#1#2{\leavevmode\vtop{%
\baselineskip\z@skip \lineskiplimit-\maxdimen
\ialign{\hfil$#1##$\hfil\cr#2\crcr}}}
\def\plangle{\moverlay{(\cr<}}
\def\prangle{\moverlay{)\cr>}}
\title[The free field]{The free field: zero divisors, Atiyah property and realizations via unbounded operators}
\author[T. Mai]{Tobias Mai}
\address{Saarland University, Faculty of Mathematics, D-66123 Saarbr\"ucken, Germany}
\email{mai@math.uni-sb.de}
\author[R. Speicher]{Roland Speicher}
\address{Saarland University, Faculty of Mathematics, D-66123 Saarbr\"ucken, Germany}
\email{speicher@math.uni-sb.de}
\author[S. Yin]{Sheng Yin}
\address{Saarland University, Faculty of Mathematics, D-66123 Saarbr\"ucken, Germany}
\email{yin@math.uni-sb.de}
\date{\today}
\thanks{We thank Ken Dykema for discussions about the relation between the Atiyah property and the rational closure, as well as providing us with the example in \ref{ex:Dykema_Pascoe}.\\
This work has been supported by the ERC Advanced Grant NCDFP 339760 and by the SFB-TRR 195, Project I.12.}
\keywords{free field, non-commutative rational functions, 
Atiyah property, free probability, free entropy dimension, zero divisor}
\subjclass[2000]{46L54 (12E15)}
\begin{document}

\begin{abstract}

We consider noncommutative rational functions as well as matrices in polynomials in noncommuting variables in two settings: in an algebraic context the variables are formal variables, and their rational functions generate the ''free field''; in an analytic context the variables are given by operators from a finite von Neumann algebra and the question of rational functions is treated within the affiliated unbounded operators.  Our main result shows that for a ''good'' class of operators -- namely those for which the free entropy dimension is maximal -- the analytic and the algebraic theory are isomorphic. This means in particular that any non-trivial rational function can be evaluated as an unbounded operator for any  such good tuple and that those operators don't have zero divisors. On the matrix side, this means that matrices of polynomials which are invertible in the free field are also invertible as matrices over unbounded operators when we plug in our good operator tuples. We also address the question how this is related to the strong Atiyah property. The above yields a quite complete picture for the question of zero divisors (or atoms in the corresponding distributions) for operator tuples with maximal free entropy dimension. We give also some partial results for the question of existence and regularity of a density of the distribution.

\end{abstract}

\maketitle

\tableofcontents

\section{Introduction}

In the last few years there has been quite some progress on understanding qualitative and quantitative properties of 
\begin{itemize}
\item
the asymptotic eigenvalue distribution of polynomials in tuples of random matrices, for big classes of random matrices and
\item
the distribution of polynomials in tuples of operators on infinite-dimensional Hilbert spaces, for big classes of operators
\end{itemize}

Those two arenas for looking on polynomials of several, in general non-commuting, variables, are closely related; namely, free probability theory has taught us that the limit of random matrix ensembles is, in many situations, given by operators in interesting operator algebras. Hence random matrices can tell us something about interesting operators (and their related $C^*$- and von Neumann algebras) and operator theory provides tools for dealing with asymptotic properties of random matrices.

In particular, in the context of free probability theory
one has isolated a precise notion for the big classes of random matrices and operators, alluded to above, in terms of the concept of free entropy dimension.

If we have operators $(X_1,\dots,X_n)$ on an infinite dimensional Hilbert space (living in a von Neumann algebra equipped with a trace, to be precise) then saying that they have maximal free entropy dimension, $\delta(X_1,\dots,X_n)=n$, means (these slogans will be made more precise later)
\begin{itemize}
\item
in the random matrix world:
that there are many matrix tuples which converge in distribution to this tuple $(X_1,\dots,X_n)$
\item
in the operator world: that we have a nice ''free calculus'' theory of non-commutative derivatives for the non-commutative polynomials in those operators.
\end{itemize}

Many random matrices and their limit operators fall into this category. The most basic example is given by independent Gaussian random matrices and their limit, given by free semicircular operators. One should note, however, that freeness between the limit operators is not necessary for having maximal free entropy dimension.

What has been shown before in \cite{MSW17,CS16} for such operator tuples is that non-commutative polynomials in them have no zero divisors in the generated von Neumann algebra, which means that the distribution of such polynomials possesses no atoms.

In \cite{HMS18} we also started to extend the frame of investigations from non-commutative polynomials $\C \langle x_1,\dots,x_n\rangle$ to the much bigger class of non-commutative rational functions. The latter -- which is a skew field, usually called the ''free field'' and denoted by $\C \plangle x_1,\dots,x_n\prangle$ -- is given by all meaningful rational expressions in the non-commuting formal variables $x_1,\dots,x_n$, where two such expressions are being identified if they can be transformed into each other by algebraic manipulations. The existence of such an object is quite non-trivial, and was established by Amitsur \cite{Ami66} and extensively studied by Cohn \cite{Coh95,Coh06}.
We want now to apply such a non-commutative rational function $r(x_1,\dots,x_n)$ to our tuples of operators $(X_1,\dots,X_n)$. Apriori, one has to face the problem that for a given tuple of operators there are always polynomials in those operators which have zero in their spectrum, and hence are not invertible as bounded operators. Hence, in investigations in \cite{HMS18,Yin18} we restricted to tuples which are in the domain of our rational function. However, there we tried to stay within bounded operators. If we allow, however, also unbounded operators, then we gain a lot of freedom and as we will show we can actually evaluate any non-commutative rational function on any tuple of operators with maximal free entropy dimension. This relies crucially on the fact that we consider only operators in finite von Neumann algebras, i.e., those equipped with a faithful trace, and there one has a very nice theory of affiliated unbounded operators. Whereas the set of all unbounded operators on 
an infinite dimensional Hilbert space has many pathological properties, in the finite setting the affiliated unbounded operators form actually a $*$-algebra and it is quite easy to control their invertibility. As in the finite-dimensional setting the only obstruction to invertibility is the existence of a kernel; in an algebraic formulation, such an operator is invertible (as an unbounded operator) if it has no zero divisor. Hence the question whether we can evaluate rational functions in tuples of operators goes hand in hand with the question whether we can exclude zero divisors for such evaluations.

The idea in \cite{MSW17} for proving the absence of zero divisors for polynomials was to reduce the degree of the polynomial by taking derivatives. This approach does not seem to work in the rational case (as the derivative of a rational function does not necessarily decrease the complexity of the considered function). However, rational functions are via the linearization idea related to matrices over polynomials and we will achieve the proof of the absence of zero divisors for elements in $\C \plangle x_1,\dots,x_n\prangle$
by showing the absence of zero divisors for special matrices over $\C \langle x_1,\dots,x_n\rangle$. 
It will turn out that we can precisely characterize this class of matrices as those which are invertible over the free field. 

\begin{theorem}\label{the:main}
Consider operators $X_1,\dots,X_n$ in a finite von Neumann algebra $\M$, with maximal free entropy dimension, i.e., $\delta^*(X_1,\dots,X_n)=n$. Then we have the following. 
\begin{itemize}
\item
For any matrix $P\in M_N(\C \langle x_1,\dots,x_n\rangle)$ over non-commutative polynomials which is full (i.e., cannot be written as a product of strictly rectangular matrices over the polynomials; which is the same as saying that the matrix is invertible in the matrices over the free field) we have that $P(X_1,\dots,X_n)$, the evaluation of this matrix in our operators, is invertible as a matrix over the unbounded operators affiliated to $\M$.
\item
For any non-commutative rational function $0\not= r\in \C \plangle x_1,\dots,x_n\prangle$ in the free field we have that its evaluation $r(X_1,\dots,X_n)$ in the operators $X_1,\dots,X_n$ is well-defined as an unbounded operator affiliated to $\M$, is different from zero and has no zero divisor; hence it can also be inverted as an unbounded operator.
\end{itemize}

\end{theorem}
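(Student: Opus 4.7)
The plan is to establish item one (the matrix statement) first, and then deduce item two by the linearization trick.

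For item one, the strategy adapts the arguments in \cite{MSW17,CS16} for the absence of kernels of non-zero polynomial evaluations $p(X)$. The hypothesis $\delta^*(X_1,\dots,X_n)=n$ is exactly what guarantees the existence of a sufficiently regular conjugate system (or dual system, in Voiculescu's sense) for the tuple $X=(X_1,\dots,X_n)$. I would assume for contradiction that $P \in M_N(\C\langle x_1,\dots,x_n\rangle)$ is full but $P(X)$ admits a non-zero vector $\xi \in L^2(\M)^{\oplus N}$, or more generally a non-zero column $\xi$ of affiliated operators, with $P(X)\xi = 0$. Applying the conjugate system entrywise together with the non-commutative derivatives $\partial_i$, and using the $L^2$-integrability estimates coming from $\delta^*=n$, I want to extract algebraic information on $P$ itself which forces it to factor through a strictly rectangular matrix over $\C\langle x_1,\dots,x_n\rangle$, contradicting fullness. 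Once the kernel is ruled out, the self-adjoint reduction via $P^*P$, combined with the fact that the affiliated operators of a finite von Neumann algebra form a $*$-regular algebra, upgrades trivial kernel to invertibility as a matrix over affiliated operators.

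For item two, any non-zero $r \in \C\plangle x_1,\dots,x_n\prangle$ admits, via the linearization / realization theory of the free field (as used in \cite{HMS18} and rooted in Cohn's work), a presentation of the form $r = u^{T} P^{-1} v$ with $P \in M_N(\C\langle x_1,\dots,x_n\rangle)$ full and $u,v \in \C^N$. Item one then provides the affiliated inverse $P(X)^{-1}$, so $r(X) := u^{T} P(X)^{-1} v$ is a well-defined affiliated operator, and moreover the definition is independent of the chosen realization because any two full realizations of $r$ are linked by invertible transformations that remain invertible after evaluation. To see $r(X) \neq 0$ and that it has no zero divisor, I would apply the same realization procedure to the non-zero rational function $r^{-1} \in \C\plangle x_1,\dots,x_n\prangle$, obtaining a second full matrix whose evaluation produces a two-sided affiliated inverse of $r(X)$.

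The main obstacle will be the matrix kernel step in item one. In the scalar case of \cite{MSW17,CS16} one differentiates to decrease the degree of the polynomial, and the induction terminates at constants. For matrices this simple size-reduction is not available: differentiating the entries of $P$ does not reduce $N$, and a naive induction stalls. The substitute I envisage is an algebraic reduction that converts the analytic kernel information into a rank defect of $P$ in the sense of the free ideal ring $\C\langle x_1,\dots,x_n\rangle$, invoking Cohn's characterization of fullness through inner rank and hollow matrices. Making this conversion rigorous, i.e.\ translating a kernel at the affiliated-operator level back into a genuine algebraic non-fullness over the polynomial ring, is the delicate point on which the whole theorem hinges.
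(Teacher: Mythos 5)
Your overall architecture (matrices first, then rational functions via linearization) matches the paper, and your treatment of the second item is essentially the paper's: realize $r=uP^{-1}v$ with $P$ linear and full, evaluate $P$ at $X$, invert over the affiliated operators, and obtain invertibility of $r(X)$ from a realization of $r^{-1}$; well-definedness is handled in the paper by showing that a linear representation of the zero function evaluates to zero, using stable finiteness of the affiliated operators and an inner-rank/Schur-complement computation. The problem is that the core of the first item is exactly the step you flag as ``delicate'' and leave unresolved, and the route you sketch for it is not the one that works.

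Concretely: you propose to attack a general full polynomial matrix $P$ head-on with the conjugate-system/derivative argument and then ``convert the analytic kernel information into a rank defect of $P$.'' The reduction argument (Theorem \ref{thm:Fisher-growth}, Corollary \ref{cor:reduction}) produces from $P(X)p=0$ a projection $q$ with $q\cdot(\partial_j^{(N)}P)(X)\cdot p=0$; but for a general polynomial matrix $\partial_j^{(N)}P$ is an uncontrolled element of $M_N(\C\langle x_1,\dots,x_n\rangle\otimes\C\langle x_1,\dots,x_n\rangle)$, and there is no evident way to iterate this or to read off non-fullness from it. The paper's resolution is to restrict first to \emph{linear} full matrices $P=b_0+b_1x_1+\dots+b_nx_n$, where $\partial_j^{(N)}P=b_j\odot\1_N$ is constant; applying $\tau\ootimes\tau$ to $(qb_j)\odot p=0$ gives the scalar relations $\tau^{(N)}(q)\,b_j\,\tau^{(N)}(p)=0$, and the shrunk-subspace characterization of fullness of linear matrices (Proposition \ref{prop:shrunk_subspace}, Corollary \ref{cor-of-shrunk}) converts these into the rank inequality $\rank\tau^{(N)}(p)+\rank\tau^{(N)}(q)\le N$. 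That is precisely the analytic-to-algebraic conversion you are missing, and it is only available in the linear case. One then compresses $p$ into an $(N-1)\times(N-1)$ corner (Lemma \ref{lem:projection_reduction}), extracts a full $(N-1)\times(N-1)$ block of $P$ after deleting a column (Proposition \ref{prop:full minor}), and inducts on the matrix size $N$ rather than on degree. The general polynomial-matrix case of the first item is then deduced \emph{after} the second: once the free field embeds into the affiliated operators, any full $P$ has a full square block whose inverse over $\C\plangle x_1,\dots,x_n\prangle$ evaluates, and a Schur-complement factorization gives $\rank(P(X))=\rho(P)$ (Theorem \ref{thm:Atiyah-1}). Unless you reorder your argument in this way --- linear matrices, then the free field, then general matrices --- the step on which, as you say, the whole theorem hinges remains a genuine hole.
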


This gives us a complete understanding for (the absence of) atoms for matrices in polynomials and for rational functions in tuples of operators with maximal free entropy dimension. One expects in this generality also the absence of a singular part, and hence the existence of a density, for the distribution. We are able to show this for linear matrices, but the same question for rational functions has to remain open, as the linearization does not seem to give a direct transfer of results on such questions between rational functions and matrices.

Note that we can rephrase our result about zero divisors for matrices $P\in M_N(\C \langle x_1,\dots,x_n\rangle)$
also in the form that
the point spectrum of the analytic operator $P(X_1,\dots,X_n)$ is the same as the point spectrum of the abstract algebraic element $P(x_1,\dots,x_n)$. 

Our methods are a blend of algebraic and analytic methods; the algebraic part depends quite substantially on the fundamental work of Cohn on the free field (see, e.g., \cite{Coh95,Coh06,CR94,CR99}), and also the more recent work of Garg, Gurvits, Oliveira, and Wigderson \cite{GGOW16} on this subject, which highlights the characterization of full matrices in terms of shrunk subspaces; whereas the analytic part uses heavily ideas and recent progress \cite{Dab10,KV12,SS15,CS16,MSW17} from "free analysis" and its non-commutative derivative calculus. 

Note that our results give in particular that the free field can be realized in a canonical way in the algebra of unbounded operators as the field generated by any tuple of self-adjoint operators with maximal free entropy dimensions. This is in contrast to the usual approaches to non-commutative rational functions in  free analysis where the free field is in some sense generated by tuples of matrices of all sizes. This fits with the idea that operators with maximal free entropy dimension are limits of sufficiently many matrices and should thus represent typical properties of matrices of all sizes. 

The only known realization of the free field in terms of unbounded operators is the result of Linnell \cite{Lin93}, who showed, with very different methods, that the operators in the regular representation of the free group have this property. His result is not a direct corollary of ours, as the generators of the free group are not selfadjoint but unitary operators. However, it it feasible that our approach can also be adapted to dealing with unitary operators. 

Our investigations are of course related to the zero divisor conjecture, the Atiyah conjecture, or $l^2$-Betti numbers; for work in this context see, for example, \cite{GLSZ00,DLMSY03,PTh11}. Whereas those are about properties of elements in the group algebra (where sometimes, in particular in the work of Linnell, this is embedded in the unbouded operators affiliated to the corresponding group von Neumann algebra), we look here at a situation where our tuple of operators is not necessarily coming from a group, but we ask the same type of questions as for the group case. In particular, the ''strong Atiyah property'' was defined in this general setting, and proved for cases where the operators are free, by Shlyakhtenko and Skoufranis in \cite{SS15}. In this general setting, a main object of interest is to identify operators which behave like the generators of a free group; of course, not just on the bare algebraic level, in the sense that the operators have no algebraic relations, but in a more refined way. In particular, one of the main questions in free probability theory is which operators generate as a von Neumann algebra the free group factor $L(\FF_n)$, i.e., the von Neumann algebra generated by the free group on $n$ generators. There is some hope that the free entropy dimension might be some invariant for such questions; in particular, whether $\delta^*(X_1,\dots,X_n)=n$ means that the von Neumann algebra generated by $X_1,\dots,X_n$ is isomorphic to $L(\FF_n)$. At the moment such questions are out of reach. What we provide here is that we relax substantially our goal; instead of taking an analytic closure we push the algebraic closure to its limit by also allowing inverses and replace the von Neumann algebra by the rational (or division) closure generated by our operators in the algebra of affiliated unbounded operators; for this setting we have then a positive answer to the analogue of the above question: for all operators with maximal free entropy dimension their generated rational closure is isomorphic to the free field. Whether the free entropy dimension is also an invariant for the rational closure when the free entropy dimension is not maximal is an interesting question for further investigations.

The paper is organized as follows.
In Section 2, we recall the basic concepts and results around the inner rank and full matrices over polynomials in noncommuting variables. In particular, we provide: in Proposition \ref{prop:shrunk_subspace} the characterization of full linear matrices in terms of shrunk subspaces; and its consequence, Corollary \ref{cor-of-shrunk}, which will play a main role in our later analysis in Section 4.
For the convenience of the reader, we provide in the appendix proofs of all the relevant results of Section 2; this is based on the work of Cohn, but streamlined to our more special setting. In Section 3, we provide the ''free analysis'' tools which are used in the sequel; in particular, we prove matricial extensions of results of Voiculescu and Dabrowski on noncommutative derivatives. The main result is Theorem \ref{thm:Fisher-growth}, which provides the crucial reduction argument in our later analysis. Section 4 is addressing the question when linear matrices of operators with maximal free entropy dimension are invertible, and yields, in the form of Theorem \ref{thm:main-1}, the first part of Theorem \ref{the:main} for linear matrices (the general case of matrices in polynomials will follow later from Theorem \ref{thm:Atiyah-1}). In Section 5 we switch from matrices over polynomials to rational functions. Section 5 provides the definition and basic facts about noncommutative rational functions and the rational closure of an algebra; in particular, the linearization idea is presented, which makes the connection between noncommutative rational functions and matrices over noncommutative polynomials. In Section 6, we translate then our main result about the invertibility of full matrices to a similar statement about the invertibility of noncommutative rational functions, thus giving the second part of Theorem \ref{the:main} in Corollary \ref{cor:full_entropy_dimension_implies_Atiyah}. It is also shown, in Theorems \ref{thm:Atiyah-1} and \ref{thm:Atiyah-2}, how this relates to the strong Atiyah property. In Sections 7 and 8
we give some preliminary results on the absence of a singular part and on regularity properties of the distribution of linear matrices. This is based on ideas from \cite{CS16, AjEK18,AEK18}.

\section{Inner rank of matrices}

In this section, we introduce the inner rank for matrices over noncommutative algebras as an analogous notion of the rank for matrices over numbers or commutative algebras. First, we consider the general case, and let $\A$ be a unital (not necessarily commutative) algebra.

\begin{definition}\label{def:inner-rank_full}
For any non-zero $A\in M_{m.n}(\A)$, the \emph{inner rank} of $A$ is defined as the least positive integer $r$ such that there are matrices $P\in M_{m,r}(\A)$, $Q\in M_{r,n}(\A)$ satisfying $A=PQ$. We denote this number by $\rho(A)$, and any such factorization with $r=\rho(A)$ is called a \emph{rank factorization}. In particular, if $\rho(A)=\min\{m,n\}$, namely, if there is no such factorization with $r<\min\{m,n\}$, then $A$ is called \emph{full}. Additionally, if $A$ is the zero matrix, we define $\rho(A)=0$.
\end{definition}

As indicated by its name, this notion is trying to capture the properties of the usual rank of matrices in linear algebra; it's not difficult to check that it becomes the usual rank of matrices if $\A=\C$. Moreover, similar to the fact that a matrix of rank $r$ in $M_{m,n}(\C)$ always has a non-singular $r\times r$ block, we have the following theorem.

\begin{theorem}
\label{thm:full minor}(See \cite[Theorem 5.4.9]{Coh06}) Suppose that the set of all square full matrices over $\A$ is closed under products and diagonal sums (see Definition \ref{def:diagonal sum}). Then for any $A\in M_{m,n}(\A)$, there exists a square block of $A$ which is a full matrix over $\A$ of dimension $\rho(A)$. Moreover, $\rho(A)$ is the maximal dimension for such blocks.
\end{theorem}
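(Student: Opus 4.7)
The plan is to handle the two assertions separately, starting with the upper bound (the ``moreover'' statement), which is straightforward, and then addressing the existence of a full block of dimension $\rho(A)$. Fix a rank factorization $A = PQ$ with $P \in M_{m,r}(\A)$ and $Q \in M_{r,n}(\A)$, where $r = \rho(A)$. For any index sets $I \subseteq \{1,\dots,m\}$ and $J \subseteq \{1,\dots,n\}$ of common size $s$, the submatrix $A_{I,J}$ inherits the factorization $A_{I,J} = P_I Q_J$ with $P_I \in M_{s,r}(\A)$ and $Q_J \in M_{r,s}(\A)$. If $s > r$, this exhibits $A_{I,J}$ as a product with inner dimension at most $r < s$, so $A_{I,J}$ cannot be full; hence no full square block of $A$ can have dimension exceeding $\rho(A)$.

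For the existence of a full $r \times r$ block, I would exploit the same rank factorization $A = PQ$ but now observe that $\rho(P) = r = \rho(Q)$: any factorization of $P$ or $Q$ with inner dimension strictly smaller than $r$ would, by composition, produce a factorization of $A$ below $\rho(A)$, which is impossible. The strategy is then to locate row indices $I$ of size $r$ for which the row-block $P_I$ is a full $r \times r$ matrix, and column indices $J$ of size $r$ for which the column-block $Q_J$ is a full $r \times r$ matrix. Given such $I$ and $J$, the submatrix $A_{I,J} = P_I Q_J$ is a product of two full square matrices of the same size, and hence full by the hypothesis that full square matrices over $\A$ are closed under products.

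The technical heart of the argument is thus the following lemma: if $P \in M_{m,r}(\A)$ satisfies $\rho(P) = r$, then some $r$-element subset of its rows forms a full $r \times r$ matrix, and symmetrically for the columns of an $r \times n$ matrix $Q$ with $\rho(Q) = r$. I would establish this by contrapositive: assuming that every $r$-row submatrix $P_I$ admits a factorization $P_I = U_I V_I$ with inner dimension strictly below $r$, the goal is to assemble these local factorizations into a global factorization of $P$ with inner dimension below $r$, contradicting $\rho(P) = r$. The closure of full square matrices under diagonal sums is the essential gluing tool, allowing one to paste together the local data $U_I, V_I$ on overlapping row-blocks while keeping the composite inner dimension under control. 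I expect this gluing step, which is the substance of Cohn's original proof of \cite[Theorem~5.4.9]{Coh06}, to be the main obstacle; a naive induction on $m$ (removing one row at a time) fails because dropping a single row need not decrease the inner rank at all, so a more global bookkeeping of the relations produced simultaneously by all non-full $r$-row submatrices is required, and the two closure hypotheses on $\A$ must be used in tandem.
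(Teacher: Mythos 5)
Your treatment of the maximality claim is correct and matches the paper's: any $s\times s$ block of $A$ inherits the factorization $A_{I,J}=P_IQ_J$ through the inner dimension $r=\rho(A)$, so it cannot be full when $s>r$. Your reduction of the existence claim is also the right one: from a rank factorization $A=PQ$ with both factors full, it suffices to find a full $r\times r$ row-block of $P$ and a full $r\times r$ column-block of $Q$, and then closure under products finishes the argument. But the statement you isolate as ``the technical heart'' --- that a full $P\in M_{m,r}(\A)$ has $r$ rows forming a full square matrix --- is exactly the content you do not prove, and the strategy you sketch for it does not work as described. In the contrapositive, the local factorizations $P_I=U_IV_I$ for different index sets $I$ are completely unrelated to one another; there is no compatibility on overlaps to ``glue'', and the closure under diagonal sums gives you no mechanism for pasting factorizations of different submatrices into one factorization of $P$. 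Acknowledging that this is the main obstacle is not the same as overcoming it, so as written the proof has a genuine gap at its central step.

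The paper closes this gap by a different mechanism: it runs a single induction on $m+n$ for the whole statement, and the inductive engine is Lemma \ref{lem:minor}(ii) (Cohn's Lemma 5.4.8). Writing $C=(c\ \, C')$, either $C'$ is still full --- in which case the induction hypothesis applied to the smaller matrix $C'$ already produces the block --- or it is not, in which case one extracts a factorization $C=D\begin{pmatrix}\1&\0\\\0&E\end{pmatrix}$ with $D$ a full $r\times r$ matrix and $E$ full with one fewer row and column. The induction hypothesis gives a full $(r-1)\times(r-1)$ block of $E$; its diagonal sum with $\1$ is full by hypothesis, and the product with $D$ is full by hypothesis, and this product is precisely an $r\times r$ block of $C$. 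This is where the two closure assumptions are actually used, and no assembly of incompatible local data is required. If you want to salvage your outline, you should replace the ``gluing'' step by this column-deletion dichotomy (or prove Lemma \ref{lem:minor}(ii) yourself and induct on the number of columns of $Q$ and rows of $P$).
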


See Appendix \ref{sec:inner rank} for a detailed proof based on Cohn's book \cite{Coh06}. There is another important property of inner rank that we need to highlight here; for that purpose, we need the notion of stably finite algebras.

\begin{definition}
$\A$ is called \emph{stably finite} (or \emph{weakly finite}) if for any $n\in\mathbb{N}$, and all $A,B\in M_{n}(\A)$ the equation $AB=\1_{n}$ implies that also $BA=\1_{n}$ holds.
\end{definition}

\begin{proposition}
\label{prop:invertible minor}(See \cite[Proposition 5.4.6]{Coh06}) Suppose that $\A$ is stably finite. Let $A\in M_{m+n}(\A)$ be of the form
\[A=\begin{pmatrix}B & C\\D & E\end{pmatrix},\]
where $B\in M_{m}(\A)$, $C\in M_{m,n}(\A)$, $D\in M_{n,m}(\A)$ and $E\in M_{n}(\A)$. If $B$ is invertible, then $\rho(A)\geqslant m$, with equality if and only if $E=DB^{-1}C$.
\end{proposition}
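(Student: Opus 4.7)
My plan is to use a block LDU decomposition to reduce the problem to computing the inner rank of a block-diagonal matrix, and then to exploit stable finiteness. First, since $B$ is invertible, one has the identity
\[A = \begin{pmatrix} \1_m & 0 \\ DB^{-1} & \1_n \end{pmatrix} \begin{pmatrix} B & 0 \\ 0 & S \end{pmatrix} \begin{pmatrix} \1_m & B^{-1}C \\ 0 & \1_n \end{pmatrix}, \qquad S := E - DB^{-1}C,\]
where the outer two factors are invertible in $M_{m+n}(\A)$. Since left or right multiplication by an invertible matrix sends rank-$r$ factorizations to rank-$r$ factorizations in both directions, this gives $\rho(A) = \rho(\mathrm{diag}(B,S))$, and the claim reduces to showing $\rho(\mathrm{diag}(B,S)) \geq m$, with equality iff $S = 0$.

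For the lower bound, I would assume $\mathrm{diag}(B,S) = PQ$ with $P \in M_{m+n,r}(\A)$ and $Q \in M_{r,m+n}(\A)$, and split $P = \binom{P_1}{P_2}$, $Q = (Q_1\ Q_2)$ conformably to the block sizes $m, n$. The upper-left block forces $B = P_1 Q_1$, hence $P_1 \cdot (Q_1 B^{-1}) = \1_m$. If $r < m$, I would pad $P_1 \in M_{m,r}(\A)$ with $m-r$ zero columns and $Q_1 B^{-1} \in M_{r,m}(\A)$ with $m-r$ zero rows to obtain matrices $\tilde P, \tilde R \in M_m(\A)$ satisfying $\tilde P \tilde R = \1_m$. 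Stable finiteness then forces $\tilde R \tilde P = \1_m$, but the padded zero rows in $\tilde R$ make the last $m-r$ rows of $\tilde R \tilde P$ vanish, a contradiction. Hence $r \geq m$.

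For the equality case, if $S = 0$ then the explicit factorization $\mathrm{diag}(B,0) = \binom{\1_m}{0}\, B\, (\1_m\ 0)$ shows $\rho(\mathrm{diag}(B,S)) \leq m$, giving equality with the lower bound. Conversely, assume $\rho(\mathrm{diag}(B,S)) = m$ and pick a rank factorization with $r = m$. Then $P_1, Q_1 \in M_m(\A)$ are square with $P_1 Q_1 = B$ invertible, and stable finiteness applied directly to $P_1(Q_1 B^{-1}) = \1_m$ in $M_m(\A)$ yields that $P_1$ is invertible with $P_1^{-1} = Q_1 B^{-1}$. The remaining identities $C = P_1 Q_2$ and $D = P_2 Q_1$ give $Q_2 = P_1^{-1} C$ and $P_2 = DB^{-1} P_1$, whence $E = P_2 Q_2 = DB^{-1}C$.

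The only delicate point is that stable finiteness is hypothesised for square matrices, while the rank factorization involves rectangular ones; the zero-padding trick in the second paragraph and the squareness in the rank-$m$ case in the third paragraph are precisely what let one bring the hypothesis to bear.
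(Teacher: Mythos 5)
Your proof is correct and follows essentially the same route as the paper's: the same block LDU (Schur complement) factorization combined with invariance of the inner rank under invertible factors, with both the lower bound and the converse extracted from stable finiteness via the zero-padding trick that the paper packages as Lemma \ref{lem:full identity}. The only differences are cosmetic --- you keep $B$ in the middle diagonal block where the paper normalizes it to $\1_m$, and in your final paragraph the block identities $C = P_1 Q_2$, $D = P_2 Q_1$ implicitly refer to a rank factorization of $A$ rather than of $\mathrm{diag}(B,S)$, but either choice closes the argument.
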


In the remaining part of this section, we set $\A=\C\left\langle x_{1},\dots,x_{d}\right\rangle $, the algebra of noncommutative polynomials in (formal) non-commuting variables $x_{1},\dots,x_{d}$. Then the requirements in Theorem \ref{thm:full minor} can be verified (see Appendix \ref{sec:full minor}), so as a corollary we have the following proposition which is Lemma 4 of Section 4 in \cite{CR94}; a proof can be found at the end of Appendix \ref{sec:full minor}. This proposition is needed for the induction step in the proof for Theorem \ref{thm:main-1}.

\begin{proposition}
\label{prop:full minor}Let $A\in M_{n}(\C\left\langle x_{1},\dots,x_{d}\right\rangle )$ be given in the form $A=(a,A')$, where $a$ is the first column of $A$ and $A'$ is the remaining block. Assume that $A$ is full, then there is a full $(n-1)\times(n-1)$ block in $A'$.
\end{proposition}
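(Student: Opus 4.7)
The plan is to reduce the statement to Theorem \ref{thm:full minor} by showing that $\rho(A')=n-1$, and then to read off a full $(n-1)\times(n-1)$ block as a full square submatrix of maximal size in $A'$.

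For the reduction, first note the trivial upper bound $\rho(A')\leq n-1$, since $A'$ has only $n-1$ columns. The lower bound $\rho(A')\geq n-1$ is the heart of the argument, and I would prove it by contradiction: suppose $\rho(A')\leq n-2$, and pick a rank factorization $A'=PQ$ with $P\in M_{n,r}(\A)$, $Q\in M_{r,n-1}(\A)$, and $r\leq n-2$. Then the column-wise block identity
$$A = (a \mid A') = (a \mid PQ) = (a \mid P)\begin{pmatrix} 1 & 0 \\ 0 & Q \end{pmatrix}$$
exhibits $A$ as a product of an $n\times(1+r)$ matrix and a $(1+r)\times n$ matrix. Since $1+r\leq n-1<n$, this contradicts the hypothesis that $A$ is full. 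Hence $\rho(A')=n-1$.

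For the second step, I would invoke Theorem \ref{thm:full minor}. Its closure hypothesis is satisfied by $\A=\C\langle x_1,\dots,x_d\rangle$ (this is exactly what is verified in Appendix \ref{sec:full minor} and explicitly mentioned just before the statement), so the theorem applies to $A'$ and produces a square block of $A'$ which is full of dimension $\rho(A')=n-1$. This is precisely the desired full $(n-1)\times(n-1)$ block inside $A'$.

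I do not anticipate a genuine obstacle: the whole argument is short and essentially book-keeping once the block factorization above is written down. The only subtle point is conceptual rather than technical, namely the reliance on Theorem \ref{thm:full minor} (which carries the non-trivial content about full submatrices); once this is available, the proof is just the one-line dimension count showing that a rank-factorization of $A'$ of size $\leq n-2$ would propagate to a forbidden factorization of $A$ itself.
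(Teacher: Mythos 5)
Your proof is correct and follows essentially the same route as the paper: the paper first deduces that $A'$ is full by citing Lemma \ref{lem:minor}(i) (with rows and columns exchanged), whose proof is exactly the block factorization you write out inline, and then applies Theorem \ref{thm:full minor} after noting its hypothesis is verified for $\C\langle x_1,\dots,x_d\rangle$ in the appendix. The only difference is that you re-derive the fullness of $A'$ directly rather than quoting the lemma, which is harmless.
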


Now, consider a matrix of form
\[A=\begin{pmatrix}P & \0\\Q & R\end{pmatrix}\in M_{m,n}(\C\left\langle x_{1},\dots,x_{d}\right\rangle),\]
which has a zero block of size $r\times s$ and blocks $P$, $Q$, $R$ of size $r\times (n-s)$, $(m-r)\times (n-s)$, $(m-r)\times s$, respectively. Then we have the factorization
\[A=\begin{pmatrix}P & \0\\Q & R\end{pmatrix}=\begin{pmatrix}P & \0\\\0 & \1_{m-r}\end{pmatrix}\begin{pmatrix}\1_{n-s} & \0\\Q & R\end{pmatrix}.\]
So $A$ has been expressed as a product of an $m\times(m+n-r-s)$ matrix and an $(m+n-r-s)\times n$ matrix; this allows us to conclude that $\rho(A)\leqslant m+n-r-s$. Therefore, if the size of the zero block of $A$ satisfies $r+s>\max\{m,n\}$, then we have $\rho(A)<\min\{m,n\}$, which means that $A$ is not full. Such matrices are called hollow matrices.

\begin{definition}
\label{def:hollow}A matrix in $M_{m,n}(\C\left\langle x_{1},\dots,x_{d}\right\rangle )$ is called \emph{hollow} if it has an $r\times s$ block of zeros with $r+s>\max\{m,n\}$.
\end{definition}

In general, a non-full $A\in M_{m,n}(\C\left\langle x_{1},\dots,x_{d}\right\rangle)$ may not have any zero blocks. However, we will be mostly interested in special matrices for which we can say more.

\begin{definition}
A matrix $A\in M_{n}(\C\left\langle x_{1},\dots,x_{d}\right\rangle )$ is called \emph{linear} if it can be writen in the form $A=A_{0}+A_{1}x_{1}+\cdots+A_{d}x_{d}$, where $A_{0},A_{1}\dots,A_{d}$ are $n\times n$ matrices over $\C$. Note that we allow also a constant term in a general linear matrix. And we call the non-constant part $A-A_{0}=A_{1}x_{1}+\cdots+A_{d}x_{d}$ the \emph{homogeneous part of $A$}.
\end{definition}

For linear matrices we have the following theorem for the relation between non-full and hollow;  for a proof, see  Appendix \ref{sec:linear matrices}.

\begin{theorem}
\label{thm:hollow and full}(See \cite[Corollary 6.3.6]{Coh95}) Let $A\in M_{n}(\C\left\langle x_{1},\dots,x_{d}\right\rangle )$ be linear. If $A$ is not full, then there exist invertible matrices $U,V\in M_n(\C)$ such that $UAV$ is hollow.
\end{theorem}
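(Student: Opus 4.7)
Since $A$ is not full, Definition~\ref{def:inner-rank_full} provides $P \in M_{n,r}(\C\langle x_1,\dots,x_d\rangle)$ and $Q \in M_{r,n}(\C\langle x_1,\dots,x_d\rangle)$ with $r<n$ and $A = PQ$; one may take $r = \rho(A)$. The plan is to normalize this factorization so that one of the two factors has entries in $\C$, and then extract the hollow block by elementary linear algebra over $\C$. Indeed, granting such a normalization with $P \in M_{n,r}(\C)$, the matrix $P$ has $\C$-rank at most $r<n$, so some $U \in GL_n(\C)$ sends the last $n-r$ rows of $UP$ to $\0$. The same rows of $UA = (UP)Q$ then vanish, so with $V = \1_n$ the matrix $UAV$ carries an $(n-r)\times n$ zero block; since $(n-r)+n > n$, it is hollow. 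The symmetric case, where instead $Q \in M_{r,n}(\C)$, is handled by producing $V \in GL_n(\C)$ that trims the first $n-r$ columns of $QV$.

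The substantial step is therefore producing a factorization in which one factor is constant. I would use three ingredients: (i) the freedom to replace $(P,Q)$ by $(PT,\,T^{-1}Q)$ for any $T \in GL_r(\C\langle x_1,\dots,x_d\rangle)$, which leaves $A = PQ$ intact; (ii) the structural fact that $\C\langle x_1,\dots,x_d\rangle$ is a semifir in the sense of Cohn \cite{Coh95,Coh06}; and (iii) the linearity of $A$. Decomposing $P = \sum_i P^{(i)}$ and $Q = \sum_j Q^{(j)}$ into homogeneous components by total degree, and writing $p,q$ for the top degrees that actually appear, the identity $A = PQ$ combined with $\deg A \le 1$ forces the top-degree component to vanish, $P^{(p)}Q^{(q)} = \0$, whenever $p+q \ge 2$. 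The semifir property converts this vanishing matrix identity into a matching block decomposition of $P^{(p)}$ and $Q^{(q)}$; realising that decomposition via an invertible polynomial $T$ and replacing $(P,Q)$ by $(PT,\,T^{-1}Q)$ strictly decreases $\deg P + \deg Q$. Iterating, one lands at a factorization with $\deg P + \deg Q \le 1$, in which necessarily one of the two factors is constant, feeding into the first paragraph.

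The main obstacle is precisely the degree-reduction step: extracting, from the bare matrix identity $P^{(p)}Q^{(q)} = \0$ over a noncommutative polynomial ring, an invertible $T \in GL_r(\C\langle x_1,\dots,x_d\rangle)$ that lowers the total degree without undoing what has already been achieved at lower orders. This is not a formal consequence of the definitions of fullness and inner rank; it is where the ring-theoretic structure of the free algebra (its being a semifir, and the associated manipulation of matrix factorizations) genuinely enters, and it is the part of the argument that needs the deeper input from \cite{Coh95,Coh06}. Once it is in place, the finishing move is the linear-algebra argument of the first paragraph, applied on whichever side has become constant.
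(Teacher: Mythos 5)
The finishing move in your first paragraph is fine, but the normalization it depends on --- that some rank factorization $A=PQ$ can be adjusted by invertible $T$'s until one of the two factors has all its entries in $\C$ --- is false, so the strategy cannot be saved by filling in the deferred degree-reduction step. Take $d=2$, $n=3$ and
\[
A=\begin{pmatrix} x_1 & 0 & 0\\ x_2 & 0 & 0\\ 0 & x_1 & x_2\end{pmatrix}.
\]
This $A$ is linear with $\rho(A)=2$ (it factors through $r=2$, and the block on rows $\{1,3\}$ and columns $\{1,2\}$ is $x_1\1_2$, which is full), so every factorization witnessing non-fullness has sizes $3\times 2$ and $2\times 3$. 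If $Q\in M_{2,3}(\C)$ were constant, compare homogeneous components of degree one: since the entries $x_1,x_2$ in the third row of $A$ are linearly independent, the two vectors of $\C^2$ formed by the second and third columns of $Q$ must be a basis of $\C^2$; the first row of $A$ vanishes in columns $2$ and $3$, which forces the degree-one part of the first row of $P$ to annihilate that basis and hence to vanish, contradicting the entry $x_1$ in position $(1,1)$. Symmetrically, if $P\in M_{3,2}(\C)$ were constant, the independence of $x_1,x_2$ in the first column of $A$ forces the top $2\times2$ block of $P$ to be invertible and its third row to be zero, contradicting the nonzero third row of $A$. So for this $A$ both factors of any admissible factorization have degree $1$, and your iteration can never reach $\deg P+\deg Q\le 1$; indeed, with $P=\bigl(\begin{smallmatrix} x_1&0\\x_2&0\\0&1\end{smallmatrix}\bigr)$ and $Q=\bigl(\begin{smallmatrix}1&0&0\\0&x_1&x_2\end{smallmatrix}\bigr)$ the top-degree relation $P^{(1)}Q^{(1)}=\0$ is already in trivialized form, yet neither factor is constant.

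What the semifir structure actually yields is weaker, and it is what the paper's proof (Theorem \ref{thm:hollow and full-appendix} in the appendix) uses: Lemma \ref{degree control} brings a rank factorization $A=BC$ to one where \emph{both} factors are linear, and Lemma \ref{lem:monic} then normalizes $B$, by some $U\in GL_n(\C)$ on the left and an invertible \emph{linear} polynomial matrix on the right, to $\bigl(\begin{smallmatrix}B'&\0\\\0&\1_s\end{smallmatrix}\bigr)$ with $B'$ left monic. Only at that point does the degree constraint $d(B'C')\le 1$ force the corresponding block $C'$ of the second factor to be constant; reducing that constant full block to $(\1_{r-s}\ \ \0)$ by a suitable $V\in GL_n(\C)$ yields $UAV=\bigl(\begin{smallmatrix}B'&\0\\ C_1''&C_2''\end{smallmatrix}\bigr)$ with a zero block of size $(n-s)\times(n-r+s)$, which is hollow because $(n-s)+(n-r+s)=2n-r>n$. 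The hollow block thus arises from the interaction of the two partially normalized factors, not from zero rows of $UP$ alone; your second and third paragraphs need to be replaced by this (or an equivalent) two-stage normalization.
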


\begin{definition}
Let a linear $A=A_{0}+A_{1}x_{1}+\cdots+A_{d}x_{d}\in M_{n}(\C\left\langle x_{1},\dots,x_{d}\right\rangle )$ be given. If there are subspaces $V,W$ of $\C^{n}$ with $\dim W<\dim V$ such that $A_{i}V\subseteq W$ for all $i=0,1,\dots,d$, then $V$ is called a \emph{shrunk subspace}. In this case we also say that $A$ has a shrunk subspace $V$.
\end{definition}

These shrunk subspaces can be used to describe the fullness of linear matrices. It seems that this notion and the following proposition appeared for the first time in \cite{GGOW16}.

\begin{proposition}\label{prop:shrunk_subspace}
A linear matrix $A\in M_{n}(\C\left\langle x_{1},\dots,x_{d}\right\rangle )$ is full if and only if it has no shrunk subspace.
\end{proposition}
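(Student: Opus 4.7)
The plan is to prove the two implications separately; the substantive content in the "not full implies shrunk subspace" direction is packaged inside Theorem~\ref{thm:hollow and full}, which converts non-fullness of a linear matrix into an honest zero block via invertible scalar matrices.

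For the direction "shrunk subspace implies not full", I would start with a shrunk subspace $V$ of dimension $k$ whose image under every $A_i$ lies in a common subspace $W$ of dimension $l<k$. The natural move is to work in adapted bases: choose $v_1,\dots,v_k$ a basis of $V$ extended to a basis of $\C^n$, and $w_1,\dots,w_l$ a basis of $W$ extended to a basis of $\C^n$. Assembling these into invertible matrices $T,S\in M_n(\C)$ (with $T$'s columns the $v_j$, and $S^{-1}$'s columns the $w_j$), the shrunk-subspace condition translates directly into the statement that every $SA_iT$ has zero entries in rows $l+1,\dots,n$ and columns $1,\dots,k$. Therefore $SAT$ itself carries an $(n-l)\times k$ block of zeros, and since $(n-l)+k>n$, it is hollow in the sense of Definition~\ref{def:hollow} and thus not full by the factorization argument given right before that definition. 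Fullness is invariant under multiplication by invertible constant matrices (a rank factorization $SAT=PQ$ through $M_{n,r}\times M_{r,n}$ gives $A=(S^{-1}P)(QT^{-1})$, also of inner rank at most $r$), so $A$ is not full either.

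For the converse, I would invoke Theorem~\ref{thm:hollow and full} to obtain invertible $S,T\in M_n(\C)$ such that $SAT$ is hollow, containing an $r\times s$ zero block with $r+s>n$. After absorbing suitable row and column permutations into $S$ and $T$, I can arrange this zero block to occupy the lower-left corner, so that each transformed coefficient matrix decomposes as $SA_iT=\begin{pmatrix}B_i & C_i\\ 0 & D_i\end{pmatrix}$ with a vanishing $r\times s$ lower-left block. Set $V':=T(\C^s\oplus 0)$ and $W':=S^{-1}(\C^{n-r}\oplus 0)$, two subspaces of $\C^n$ of dimensions $s$ and $n-r$ respectively. A direct computation, writing any $v\in V'$ as $v=Tx$ with $x$ supported in the first $s$ coordinates, shows $A_iv=S^{-1}(SA_iT)x=S^{-1}\bigl(\begin{smallmatrix}B_ix\\ 0\end{smallmatrix}\bigr)\in W'$ for every $i$. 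Since $r+s>n$ gives $\dim W'=n-r<s=\dim V'$, the subspace $V'$ is a shrunk subspace for $A$.

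I do not expect a serious obstacle: the only nontrivial input is Theorem~\ref{thm:hollow and full}, which is cited; everything else is change-of-basis bookkeeping together with the elementary factorization observation that turns a sufficiently large zero block into a non-full rank factorization.
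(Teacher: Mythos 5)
Your proposal is correct and follows essentially the same route as the paper: a change of basis turns a shrunk subspace into a hollow matrix (hence not full), and conversely Theorem~\ref{thm:hollow and full} produces a hollow matrix from a non-full one, whose zero block is then read back as a shrunk subspace transported by the basis change. You simply spell out the base-change bookkeeping that the paper leaves as ``it is not difficult to see.''
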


\begin{proof}
It is clear that if $A$ has such a shrunk subspace $V$, then there are unitary matrices $P$ and $Q$ such that each $PA_{i}Q$ has a $\dim V\times(n-\dim W)$ block of zeros. So it follows from $\dim W<\dim V$ that $PAQ$ is a hollow matrix. Thus $PAQ$ is not full, which also implies $A$ is not full.

For the converse, if $A$ is not full, by Theorem \ref{thm:hollow and full}, there exist some invertible matrices $P$ and $Q$ over $\C$ such that $PAQ$ has a $r\times s$ block of zeros with $r+s>n$. So it is not difficult to see that $PAQ$ has a shrunk subspace $V$ and thus $A$ has a shrunk subspace $QV$, as asserted.
\end{proof}

From this we get the following corollary, which will be a main ingredient in the proof for Theorem \ref{thm:main-1}.

\begin{corollary}\label{cor-of-shrunk}
\label{cor:fullness_contraction}Let $A\in M_{n}(\C\left\langle x_{1},\dots,x_{d}\right\rangle )$ be a linear full matrix and $P,Q\in M_{n}(\C)$ be given such that $PAQ=0$. Then we have
\[\rank(P)+\rank(Q)\leqslant n.\]
\end{corollary}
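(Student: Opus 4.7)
The plan is to derive the inequality as an almost immediate consequence of Proposition \ref{prop:shrunk_subspace}, by producing from $P$ and $Q$ a candidate shrunk subspace and then using fullness of $A$ to forbid it.

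First, I would unpack what $PAQ=0$ says on the level of coefficient matrices. Writing $A=A_0+A_1x_1+\cdots+A_dx_d$, the product $PAQ$ is again a linear matrix, namely $PAQ = PA_0 Q + (PA_1 Q)x_1+\cdots+(PA_d Q)x_d$. Since $1,x_1,\ldots,x_d$ are linearly independent in $\C\langle x_1,\ldots,x_d\rangle$, the vanishing of $PAQ$ forces $PA_i Q=0$ for every $i=0,1,\dots,d$.

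Next I would set $V:=\im(Q)\subseteq \C^n$ and $W:=\ker(P)\subseteq \C^n$, so that $\dim V=\rank(Q)$ and $\dim W=n-\rank(P)$. For any $v\in V$, choose $u\in\C^n$ with $v=Qu$; then $PA_iv = PA_iQu=0$, which says $A_iv\in W$. Hence $A_iV\subseteq W$ for all $i=0,1,\ldots,d$.

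Finally, I would argue by contradiction: if $\rank(P)+\rank(Q)>n$, then $\dim V=\rank(Q)>n-\rank(P)=\dim W$, and together with the inclusions $A_iV\subseteq W$ the subspace $V$ qualifies as a shrunk subspace of $A$. Proposition \ref{prop:shrunk_subspace} then contradicts the fullness of $A$. There is essentially no obstacle beyond bookkeeping; the only subtlety is the passage from $PAQ=0$ to $PA_iQ=0$ for each $i$, which is a direct consequence of the freeness of the variables in $\C\langle x_1,\ldots,x_d\rangle$.
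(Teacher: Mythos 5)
Your proof is correct and follows essentially the same route as the paper: set $V=\im(Q)$, $W=\ker(P)$, deduce $A_iV\subseteq W$ for all $i$ from $PAQ=0$, and invoke Proposition \ref{prop:shrunk_subspace} to rule out $\dim V>\dim W$. The only difference is cosmetic — you phrase the last step as a contradiction and spell out the coefficient-wise vanishing $PA_iQ=0$, which the paper leaves implicit.
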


\begin{proof}
Let $A=A_{0}+A_{1}x_{1}+\cdots+A_{d}x_{d}$. We define $V=\im Q$ and $W=\ker P$, then we have
\[A_{i}V\subseteq W,\ i=0,1,\dots,d\]
by $PAQ=0$. As $A$ is full, there is no shrunk subspace, and hence $\dim W\geqslant\dim V$, that is, $n-\rank(P)\geqslant\rank(Q)$.
\end{proof}

We finish by mentioning another interesting criterion for the fullness of linear matrices that was given in \cite{GGOW16}.

\begin{proposition}\label{prop:rank-decreasing}
Consider a linear matrix $A = A_{1}x_{1}+\cdots+A_{d}x_{d}$ in $M_{n}(\C\left\langle x_{1},\dots,x_{d}\right\rangle )$ with zero constant part. Then $A$ is full if and only if the associated \emph{quantum operator}
$$\cL:\ M_n(\C) \to M_n(\C), \qquad b \mapsto \sum^d_{i=1} A_i b A_i^\ast$$
is \emph{nowhere rank-decreasing}, i.e., there is no positive semidefinite matrix $b \in M_n(\C)$ for which $\rank(\cL(b)) < \rank(b)$ holds.
\end{proposition}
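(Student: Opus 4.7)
The plan is to reduce the claim to Proposition~\ref{prop:shrunk_subspace}, which characterizes fullness of a linear matrix by the absence of a shrunk subspace. Since the constant part of $A$ vanishes (i.e.\ $A_0 = 0$), the shrunk subspace condition becomes: there exist subspaces $V, W \subseteq \C^n$ with $\dim W < \dim V$ and $A_i V \subseteq W$ for all $i = 1, \dots, d$. Equivalently, and more economically, there exists a nonzero $V \subseteq \C^n$ with $\dim\bigl(\sum_{i=1}^d A_i V\bigr) < \dim V$ (one may always take $W := \sum_i A_i V$). Thus it suffices to show that the existence of such a $V$ is equivalent to $\cL$ being rank-decreasing at some positive semidefinite $b \in M_n(\C)$.

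The bridge between the two conditions is the identity
\[\ran(\cL(b)) = \sum_{i=1}^d A_i\, \ran(b) \qquad \text{for every } b \succeq 0.\]
To establish this, factor $b = cc^*$ (possible precisely because $b$ is PSD); then $\cL(b) = \sum_i (A_i c)(A_i c)^*$ is a sum of PSD matrices. The standard fact that for PSD matrices $P_1, P_2$ one has $\ker(P_1 + P_2) = \ker P_1 \cap \ker P_2$ (since $\langle (P_1 + P_2)v, v\rangle = 0$ forces $\langle P_j v, v\rangle = 0$, hence $P_j v = 0$) yields, by iteration, $\ran(\cL(b)) = \sum_i \ran(A_i c) = \sum_i A_i \ran(c) = \sum_i A_i \ran(b)$.

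With this identity in hand, both implications are routine. If $A$ has a shrunk subspace, pick $V$ with $\dim\bigl(\sum_i A_i V\bigr) < \dim V$, and let $b$ be the orthogonal projection onto $V$; then $\rank(b) = \dim V$ while $\rank(\cL(b)) = \dim\bigl(\sum_i A_i V\bigr) < \dim V$, so $\cL$ is rank-decreasing at $b$. Conversely, if $\cL$ is rank-decreasing at some PSD $b$, set $V := \ran(b)$; then $\dim\bigl(\sum_i A_i V\bigr) = \rank(\cL(b)) < \rank(b) = \dim V$, so $V$ together with $W := \sum_i A_i V$ provides a shrunk subspace for $A$ (the condition $A_0 V \subseteq W$ being vacuous since $A_0 = 0$).

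The main subtlety is the range identity for $\cL(b)$, which relies crucially on both the positivity of $b$ (to write $b = cc^*$ and to apply the sum-of-ranges fact) and the absence of a constant term in $A$ (otherwise the $A_0$-piece of the shrunk subspace condition could not be detected by $\cL$). Everything else is linear algebra, and the substantive content is already packaged in Proposition~\ref{prop:shrunk_subspace}.
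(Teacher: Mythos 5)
Your proof is correct. The paper itself states Proposition \ref{prop:rank-decreasing} without proof, simply attributing it to \cite{GGOW16}, so there is no in-paper argument to compare against; your derivation from Proposition \ref{prop:shrunk_subspace} is the natural one and is complete. The one step that deserves the care you gave it is the range identity $\ran(\cL(b)) = \sum_{i} A_i\,\ran(b)$ for positive semidefinite $b$: writing $b=cc^\ast$ gives $\ran(c)=\ran(b)$ and $\ran\bigl((A_ic)(A_ic)^\ast\bigr)=A_i\,\ran(b)$, and the kernel-intersection fact for sums of positive semidefinite matrices (dualized to ranges via self-adjointness) then yields the claim; both directions of the equivalence follow immediately, with the vacuous $A_0V\subseteq W$ condition correctly accounted for by the hypothesis $A_0=0$.
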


This connects fullness very nicely with concepts that are used, for instance, in \cite{AjEK18,AEK18}; we will say more about this in Section \ref{sec:regularity_linear_matrices} and Section \ref{sec:Hoelder_continuity}.

\section{Matricial differential calculus}

This section is devoted to ``free analysis'' that provides the analytic tools used in the sequel. Our main goal is Theorem \ref{thm:Fisher-growth}, by which we generalize the crucial ``reduction argument'' of \cite{MSW17} that was formulated in Proposition 3.9 therein to the case of square matrices of noncommutative polynomials.
The proof of Theorem \ref{thm:Fisher-growth} that will be given below, however, does not rely on the corresponding result in \cite{MSW17}; we rather have to repeat the arguments of \cite{MSW17}, which were built in particular on the work of Voiculescu \cite{Voi98} and Dabrowski \cite{Dab10} about the $L^2$-theory of operators induced by noncommutative derivatives, in our matricial setup.
Conceptually, the proof given below will follow the lines of \cite{MSW17}. The main difference compared to this preceding work is that here some matricial extension of the aforementioned results due to Voiculescu and Dabrowski are needed. We point out that especially the matricial extension of the amazing norm estimates that were obtained in \cite{Dab10} requires some care; while they are proven in our context almost in the same way as the corresponding scalar-valued results, they are not a direct consequence thereof.
We highlight that the matricial extension of the $L^2$-theory for free differential operators that is presented here fits into the much more general frame developed by Shlyakhtenko in \cite{S00}; thus, some of our results could alternatively be derived from \cite{S00}, but for the sake of a self-contained exposition, we prefer to give direct proofs that are adapted to our situation. Furthermore, we remark that in \cite{CDS14}, the $L^2$-theory for free differential operators was extended even to the setting of planar algebras.

\subsection{Matricial extension of derivations}

We begin by an introductory discussion around derivations and their matricial extensions in some general algebraic framework.

Let $\A$ be a unital complex algebra and let $\M$ be any $\A$-bimodule. Denote by $\cdot$ the left respectively right action of $\A$ on $\M$. Clearly, for any $N\in\N$, we have that $M_N(\M)$ is a $M_N(\A)$-bimodule with respect to the left respectively right action defined by
$$a^1 \cdot m \cdot a^2 = \bigg(\sum^N_{p,q=1} a^1_{k,p} \cdot m_{p,q} \cdot a^2_{q,l}\bigg)_{k,l=1}^N$$
for any given $a^1=(a^1_{kl})_{k,l=1}^N$, $a^2=(a^2_{kl})_{k,l=1}^N$ in $M_N(\A)$ and $m=(m_{kl})_{k,l=1}^N$ in $M_N(\M)$. Note that $M_N(\A)$ forms canonically a complex unital algebra by the usual matrix multiplication but performed with respect to the multiplication given on $\A$.

Now, let us consider an $\M$-valued derivation $\partial$ on $\A$, i.e., a linear mapping $\partial: \A \to \M$ that satisfies the \emph{Leibniz rule}
$$\partial(a_1 a_2) = \partial(a_1) \cdot a_2 + a_1 \cdot \partial(a_2) \qquad\text{for all $a_1,a_2\in \A$}.$$
We may introduce then its amplification $\partial^{(N)}$ to $M_N(\A)$, which is given by
$$\partial^{(N)}:\ M_N(\A) \to M_N(\M),\quad (a_{kl})_{k,l=1}^N \mapsto \big(\partial(a_{kl})\big)_{k,l=1}^N.$$
For later use, we agree here on the following notation: whenever $\V$ and $\W$ are vector spaces and $\phi: \V\to\W$ is any linear map between them, then we may introduce for $N\in\N$ the \emph{(matricial) amplification $\phi^{(N)}$} of $\phi$ by
$$\phi^{(N)}:\ M_N(\V) \to M_N(\W),\quad (v_{kl})_{k,l=1}^N \mapsto \big(\phi(v_{kl})\big)_{k,l=1}^N.$$

\begin{lemma}\label{lem:derivation_amplification}
The matricial amplification $\partial^{(N)}: M_N(\A) \to M_N(\M)$ of any $\M$-valued derivation $\partial: \A \to \M$ on $\A$ is an $M_N(\M)$-valued derivation on $M_N(\A)$.
\end{lemma}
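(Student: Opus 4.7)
The plan is to verify the two defining properties of a derivation directly from the entrywise definition of $\partial^{(N)}$. Linearity is immediate: since $\partial$ is linear and $\partial^{(N)}$ is defined by applying $\partial$ in each matrix entry, $\partial^{(N)}$ inherits linearity at once. The real content of the lemma is the Leibniz rule with respect to the algebra structure on $M_N(\A)$ and the bimodule structure on $M_N(\M)$, and this should fall out of a short index computation.

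More precisely, take any $a = (a_{kl})_{k,l=1}^N$ and $b = (b_{kl})_{k,l=1}^N$ in $M_N(\A)$. By the definition of matrix multiplication in $M_N(\A)$ we have $(ab)_{kl} = \sum_{p=1}^N a_{kp} b_{pl}$, so applying $\partial$ entrywise and using linearity together with the scalar Leibniz rule for $\partial$ yields
$$\bigl(\partial^{(N)}(ab)\bigr)_{kl} \;=\; \partial\Bigl(\sum_{p=1}^N a_{kp} b_{pl}\Bigr) \;=\; \sum_{p=1}^N \bigl(\partial(a_{kp})\cdot b_{pl} + a_{kp}\cdot\partial(b_{pl})\bigr).$$
On the other hand, unwinding the definition of the left/right action of $M_N(\A)$ on $M_N(\M)$ gives
$$\bigl(\partial^{(N)}(a)\cdot b\bigr)_{kl} = \sum_{p=1}^N \partial(a_{kp})\cdot b_{pl}, \qquad \bigl(a\cdot\partial^{(N)}(b)\bigr)_{kl} = \sum_{p=1}^N a_{kp}\cdot\partial(b_{pl}),$$
so matching entries shows $\partial^{(N)}(ab) = \partial^{(N)}(a)\cdot b + a\cdot\partial^{(N)}(b)$, which is the Leibniz rule on $M_N(\A)$.

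There is no real obstacle here; the lemma is essentially a compatibility statement between the two matrix amplifications involved (of the algebra, the bimodule, and the linear map), and the only thing one has to be careful about is bookkeeping of indices to make sure that the left/right actions defined before the statement are used consistently. I would also briefly note at the start that $M_N(\A)$ is a unital complex algebra and $M_N(\M)$ is an $M_N(\A)$-bimodule, both of which have already been set up in the paragraph preceding the lemma, so that the statement of the Leibniz rule even makes sense.
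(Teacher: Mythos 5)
Your proof is correct and follows essentially the same route as the paper's: both verify the Leibniz rule by expanding $(ab)_{kl}=\sum_{p}a_{kp}b_{pl}$, applying the scalar Leibniz rule entrywise, and regrouping the sums according to the definition of the $M_N(\A)$-bimodule actions on $M_N(\M)$. Nothing is missing.
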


\begin{proof}
Let  $a^1=(a^1_{kl})_{k,l=1}^N$ and $a^2=(a^2_{kl})_{k,l=1}^N$ in $M_N(\A)$ be given. Then
\begin{align*}
\partial^{(N)}\big(a^1 a^2\big) &= \partial^{(N)} \bigg(\sum^N_{p=1} a^1_{kp} a^2_{pl}\bigg)_{k,l=1}^N\\
                                &= \bigg(\sum^N_{p=1} \partial\big(a^1_{kp} a^2_{pl}\big)\bigg)_{k,l=1}^N\\
                                &= \bigg(\sum^N_{p=1} \Big[ \partial(a^1_{kp}) \cdot a^2_{pl} + a^1_{kp} \cdot \partial(a^2_{pl}) \Big] \bigg)_{k,l=1}^N\\
                                &= \bigg(\sum^N_{p=1} \partial(a^1_{kp})\bigg)_{k,p=1}^N \cdot a^2 + a^1 \cdot \bigg(\sum^N_{p=1} \partial (a^2_{pl})\bigg)_{p,l=1}^N\\
                                &= \partial^{(N)}(a^1) \cdot a^2 + a^1 \cdot \partial^{(N)} (a^2),
\end{align*}
which confirms that $\partial^{(N)}$ is an $M_N(\M)$-valued derivation on $M_N(\A)$, as asserted.
\end{proof}

We focus now on the particular case where the $\A$-bimodule $\M$ is given as $\A \otimes \A$, i.e., as the algebraic tensor product of $\A$ over $\C$ with itself. Note that $\A \otimes \A$ forms both
\begin{itemize}
 \item an algebra with respect to the multiplication that is defined by bilinear extension of $$(a_1 \otimes a_2) (b_1 \otimes b_2) = (a_1 b_1) \otimes (a_2 b_2);$$
 \item an $\A$-bimodule with respect to the left and right action of $\A$ that are defined by $$a_1 \cdot (b_1 \otimes b_2) \cdot a_2 = (a_1 b_1) \otimes (b_2 a_2).$$
\end{itemize}
Accordingly, $M_N(\A \otimes \A)$ can be seen both as a complex unital algebra and an $M_N(\A)$-bimodule.

On each $M_N(\A)$, there is a binary operation
$$\odot:\ M_N(\A) \times M_N(\A) \to M_N(\A \otimes \A)$$
that is given by
$$a^1 \odot a^2 = \bigg(\sum_{p=1}^N a^1_{kp} \otimes a^2_{pl}\bigg)_{k,l=1}^N$$
for any $a^1=(a^1_{kl})_{k,k=1}^N$ and $a^2 = (a^2_{kl})_{k,l=1}^N$ in $M_N(\A)$. We have then the relation
$$M_N(\A) \odot M_N(\A) = M_N(\A \otimes \A),$$
since $(e^{k,p} a_1) \odot (e^{p,l} a_2) = e^{k,l} (a_1 \otimes a_2)$ for all $a_1,a_2\in \A$ and all $1\leq k,p,l \leq N$; here, we denote by $e^{k,l}$ the matrix unit in $M_N(\C)$, i.e., the matrix whose entries are all $0$ except the $(k,l)$-entry which is $1$.

The binary operation $\odot$ is compatible with the algebra structure and the $\A$-bimodule structure of $M_N(\A\otimes\A)$ in the sense that
$$a^1 \cdot m \cdot a^2 = (a^1 \odot \1_N) m (\1_N \odot a^2)$$
for all $a^1,a^2 \in M_N(\A)$ and all $m\in M_N(\A \otimes \A)$, where $\1_N$ denotes the identity element in $M_N(\C) \subset M_N(\A)$.

Finally, we point out that for any other $\A$-bimodule $\M$, there is an operation $\sharp: (\A \otimes \A) \times \M \to \M$ which is defined by bilinear extension of $(a_1 \otimes a_2) \sharp m = a_1 \cdot m \cdot a_2$ for $a_1,a_2\in\A$ and $m\in\M$; this extends naturally to the matricial setup as an operation
$$\sharp:\ M_N(\A \otimes \A) \times \M \to M_N(\M)$$
that is defined by $u \sharp m := (u_{kl} \sharp m)_{k,l=1}^N$ for any $u=(u_{kl})_{k,l=1}^N$ in $M_N(\A \otimes \A)$ and $m\in\M$.

\subsection{Noncommutative derivatives}

We focus now on the case of noncommutative derivatives, which underly free analysis as the suitable noncommutative counterpart of classical derivatives.

\subsubsection{Matrices of noncommutative polynomials}

Let $\C\langle x_1,\dots,x_n\rangle$ be the complex unital algebra of noncommutative polynomials in $n$ (formal) non-commuting variables $x_1,\dots,x_n$.
Note that $\C\langle x_1,\dots,x_n\rangle$ becomes a $\ast$-algebra with respect to the involution $\ast$ that is determined by the condition that $1^\ast = 1$ and $x_i^\ast = x_i$ for $i=1,\dots,n$.

If $X=(X_1,\dots,X_n)$ is any $n$-tuple consisting of elements $X_1,\dots,X_n$ in any complex unital algebra $\A$, then we may define the \emph{evaluation map $\ev_X$} as the unique algebra homomorphism
$$\ev_X:\ \C\langle x_1,\dots,x_n\rangle \to \A$$
that is unital and satisfies $\ev_X(x_i)=X_i$ for $i=1,\dots,n$; its image, which is the unital subalgebra of $\A$ that is generated by $X_1,\dots,X_n$, will be denoted by $\C\langle X_1,\dots,X_n\rangle$.

Fix $N\in\N$. For any given $P\in M_N(\C\langle x_1,\dots,x_n\rangle)$, we will mostly write $P(X_1,\dots,X_n)$ or $P(X)$ instead of $\ev_X^{(N)}(P)$.
Correspondingly, for each $Q\in  M_N(\C\langle x_1,\dots,x_n\rangle \otimes \C\langle x_1,\dots,x_n\rangle)$, we abbreviate $(\ev_X \otimes \ev_X)^{(N)}(Q)$ by $Q(X_1,\dots,X_n)$ or just $Q(X)$.

Note that since $\C\langle x_1,\dots,x_n\rangle$ is a $\ast$-algebra, also $M_N(\C\langle x_1,\dots,x_n\rangle)$ forms naturally a $\ast$-algebra; thus, if $\A$ is a unital complex $\ast$-algebra, then $P(X)^\ast = P^\ast(X)$ for each $P\in M_n(\C\langle x_1,\dots,x_n\rangle)$ and all $n$-tuples $X=(X_1,\dots,X_n)$ that consist of selfadjoint elements $X_1,\dots,X_n$ in $\A$.

\subsubsection{Noncommutative derivatives and their matricial extension}

On the algebra $\C\langle x_1,\dots,x_n\rangle$, we may introduce the so-called \emph{non-commutative derivatives} $\partial_1,\dots,\partial_n$ as the unique derivations
$$\partial_j:\ \C\langle x_1,\dots,x_n\rangle \to \C\langle x_1,\dots,x_n\rangle \otimes \C\langle x_1,\dots,x_n\rangle,\qquad j=1,\dots,n,$$
with values in the $\C\langle x_1,\dots,x_n\rangle$-bimodule $\C\langle x_1,\dots,x_n\rangle \otimes \C\langle x_1,\dots,x_n\rangle$ that satisfy the condition $\partial_j x_i = \delta_{i,j} 1 \otimes 1$ for $i,j=1,\dots,n$.

For any $N\in\N$, the noncommutative derivatives extend according to Lemma \ref{lem:derivation_amplification} to $M_N(\C\langle x_1,\dots,x_n\rangle \otimes \C\langle x_1,\dots,x_n\rangle)$-valued derivations
$$\partial_j^{(N)}:\ M_N(\C\langle x_1,\dots,x_n\rangle) \to M_N(\C\langle x_1,\dots,x_n\rangle \otimes \C\langle x_1,\dots,x_n\rangle),\qquad j=1,\dots,n.$$

In the next subsection, we study unbounded linear operators that are induced by those amplifications of the noncommutative derivatives.

\subsection{A matricial extension of the $L^2$-theory for free differential operators}

Let $(\M,\tau)$ be a tracial $W^\ast$-probability space (i.e., a von Neumann algebra $\M$ that is endowed with a faithful normal tracial state $\tau: M \to \C$) and consider $n$ selfadjoint noncommutative random variables $X_1,\dots,X_n\in \M$.
Throughout the following, we will denote in such cases by $\M_0\subseteq \M$ the von Neumann subalgebra that is generated by $X_1,\dots,X_n$; in order to simplify the notation, the restriction of $\tau$ to $\M_0$ will be denoted again by $\tau$.

\subsubsection{Conjugate systems and non-microstates free Fisher information}

In \cite{Voi98}, Voiculescu associated to the tuple $(X_1,\dots,X_n)$ the so-called \emph{non-microstates free Fisher information $\Phi^\ast(X_1,\dots,X_n)$}; note that, while he assumed for technical reasons in addition that $X_1,\dots,X_n$ do not satisfy any non-trivial algebraic relation over $\C$, it was shown in \cite{MSW17} that this constraint is not needed as an a priori assumption on $(X_1,\dots,X_n)$ but is nonetheless enforced a posteriori by some general arguments. We call $(\xi_1,\dots,\xi_n) \in L^2(\M_0,\tau)^n$ a \emph{conjugate system for $(X_1,\dots,X_n)$}, if the \emph{conjugate relation}
$$\tau\big(\xi_j P(X_1,\dots,X_n)\big) = (\tau \ootimes \tau)\big((\partial_j P)(X_1,\dots,X_n)\big)$$
holds for each $j=1,\dots,n$ and for all noncommutative polynomials $P\in\C\langle x_1,\dots,x_n\rangle$, where $\tau \ootimes\tau$ denotes the faithful normal tracial state that is induced by $\tau$ on the von Neumann algebra tensor product $\M \ootimes \M$. The conjugate relation implies that such a conjugate system, in case of its existence, is automatically unique; thus, one can define
$$\Phi^\ast(X_1,\dots,X_n) := \sum^n_{j=1} \|\xi_j\|_2^2$$
if a conjugate system $(\xi_1,\dots,\xi_n)$ for $(X_1,\dots,X_n)$ exists and $\Phi^\ast(X_1,\dots,X_n) := \infty$ if there is no conjugate system for $(X_1,\dots,X_n)$.

\subsubsection{Free differential operators}

Suppose now that $\Phi^\ast(X_1,\dots,X_n) < \infty$ holds and let $(\xi_1,\dots,\xi_n)$ be the conjugate system for $(X_1,\dots,X_n)$. It was shown in \cite{MSW17} that $\ev_X: \C\langle x_1,\dots,x_n\rangle \to \C\langle X_1,\dots,X_n\rangle$ constitutes under this hypothesis an isomorphism, so that the noncommutative derivatives induce unbounded linear operators
$$\partial_j:\ L^2(\M_0,\tau) \supseteq D(\partial_j) \to L^2(\M_0 \ootimes \M_0, \tau \ootimes \tau)$$
with domain $D(\partial_j) := \C\langle X_1,\dots,X_n\rangle$. Since $\partial_j$ is densely defined, we may consider the adjoint operators
$$\partial_j^\ast:\ L^2(\M_0 \ootimes \M_0, \tau \ootimes \tau) \supseteq D(\partial_j^\ast) \to L^2(\M_0,\tau)$$
and we conclude from the conjugate relations that $1\otimes 1 \in D(\partial_j^\ast)$ with $\partial_j^\ast(1\otimes 1) = \xi_j$.

In a similar way, we may treat their matricial amplifications. Let us fix $N\in\N$. We consider then the $W^\ast$-probability spaces
$$\big(M_N(\M), \tr_N \circ \tau^{(N)}\big) \qquad \text{and} \qquad \big(M_N(\M \ootimes \M), \tr_N \circ (\tau \ootimes \tau)^{(N)}\big).$$
Here, $\tr_N: M_N(\C) \to \C$ stands for the usual normalized trace on $M_N(\C)$, $\tau^{(N)}: M_N(\M) \to M_N(\C)$ for the amplification of $\tau$ to $M_N(\M)$, and $(\tau\ootimes\tau)^{(N)}: M_N(\M \ootimes \M) \to M_N(\C)$ for the amplification of $\tau \ootimes\tau$.
The norms in the induced $L^2$-spaces will both be denoted by $\|\cdot\|_2$ as their meaning will always be clear from the context.

The matricial amplifications of the noncommutative derivatives induce unbounded linear operators
$$\partial_j^{(N)}:\ L^2(M_N(\M_0),\tr_N\circ\tau^{(N)}) \supseteq D(\partial_j^{(N)}) \to L^2(M_N(\M_0 \ootimes \M_0), \tr_N\circ(\tau\ootimes\tau)^{(N)})$$
with domain $D(\partial_j^{(N)}) = M_N(\C\langle X_1,\dots,X_n\rangle)$; the domain is dense in $L^2(M_N(\M_0),\tr_N\circ\tau^{(N)})$ and forms furthermore a $\ast$-subalgebra of $M_N(\M_0)$.

If restricted to its domain, each of the unbounded linear operator $\partial_j$ gives a $\C\langle X_1,\dots,X_n\rangle \otimes \C\langle X_1,\dots,X_n\rangle$-valued derivation on $\C\langle X_1,\dots,X_n\rangle$. Thus, Lemma \ref{lem:derivation_amplification} says that the matricial amplification $\partial_j^{(N)}$ restricts to an $M_N(\C\langle X_1,\dots,X_n\rangle \otimes \C\langle X_1,\dots,X_n\rangle)$-valued derivation on $M_N(\C\langle X_1,\dots,X_n\rangle)$.

These matricial amplifications enjoy properties very similar to the scalar-valued versions. In particular, they are \emph{real} in the sense that
\begin{equation}\label{eq:real_amplification}
\partial_j^{(N)}(P^\ast) = \big(\partial_j^{(N)} P \big)^\dagger
\end{equation}
holds for all $P\in D(\partial_j^{(N)})$, where the involution $\dagger$ on $M_N(\M_0 \ootimes \M_0)$ is defined by 
$$Q^\dagger = (Q^\dagger_{lk})_{k,l=1}^N \qquad \text{for any $Q = (Q_{kl})_{k,l=1}^N \in M_N(\M_0 \ootimes \M_0)$}$$
as the natural extension of the involution $\dagger$ on $\M_0 \ootimes \M_0$ that is determined by $(P_1 \otimes P_2)^\dagger = P_2^\ast \otimes P_1^\ast$ for arbitrary $P_1,P_2\in \M_0$.
The validity of \eqref{eq:real_amplification} follows from the corresponding statement in the scalar-valued setting; indeed, if $P=(P_{kl})_{k,l=1}^N$ in $D(\partial_j^{(N)})$ is given, we may easily check that
$$\partial_j^{(N)}(P^\ast) = \big(\partial_j (P_{lk}^\ast) \big)_{k,l=1}^N = \big( (\partial_j P_{lk})^\dagger \big)_{k,l=1}^N =  \big(\partial_j^{(N)} P \big)^\dagger.$$

\subsubsection{Voiculescu's formulas}

The assumption $\Phi^\ast(X_1,\dots,X_n) < \infty$ guarantees for each $j=1,\dots,n$ that $1 \otimes 1 \in D(\partial_j^\ast)$ with $\partial^\ast_j(1\otimes 1) = \xi_j$ and moreover that
$$\C\langle X_1,\dots,X_n\rangle \otimes \C\langle X_1,\dots,X_n\rangle \subseteq D(\partial^\ast_j).$$
Here, we prove that an analogous statement holds for their matricial amplifications; note that those unbounded linear operators are densely defined, so that their adjoints exist.

\begin{lemma}\label{lem:adjoints_amplifications}
For each $j=1,\dots,n$, we have that
$$M_N(\C\langle X_1,\dots,X_n\rangle \otimes \C\langle X_1,\dots,X_n\rangle) \subseteq D\big((\partial^{(N)}_j)^\ast\big).$$
If any $Q =(Q_{kl})_{k,l=1}^N \in M_N(\C\langle X_1,\dots,X_n\rangle \otimes \C\langle X_1,\dots,X_n\rangle)$ is given, then we have more precisely
$$(\partial^{(N)}_j)^\ast Q = \big(\partial_j^\ast Q_{kl}\big)_{k,l=1}^N$$
and in particular
$$(\partial_j^{(N)})^\ast (\1_N \odot \1_N) = \1_N \xi_j.$$
\end{lemma}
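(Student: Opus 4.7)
The plan is to reduce the matricial statement to the scalar-valued one entrywise, using that both inner products on $L^2(M_N(\M_0), \tr_N \circ \tau^{(N)})$ and $L^2(M_N(\M_0 \ootimes \M_0), \tr_N\circ(\tau\ootimes\tau)^{(N)})$ decompose as normalized sums of the scalar inner products over matrix entries. Specifically, for $A=(A_{kl})$, $B=(B_{kl})$ in $M_N(\M_0)$ (or in $M_N(\M_0 \ootimes \M_0)$ respectively) one has
\[
\langle A, B\rangle = \frac{1}{N} \sum_{k,l=1}^N \langle A_{kl}, B_{kl}\rangle,
\]
simply because $\tr_N\circ\tau^{(N)}(A B^\ast) = \frac{1}{N}\sum_{k,l} \tau(A_{kl} B_{kl}^\ast)$, and analogously on the bimodule side.

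With this in hand, I would fix $Q = (Q_{kl})_{k,l=1}^N \in M_N(\C\langle X_1,\dots,X_n\rangle\otimes\C\langle X_1,\dots,X_n\rangle)$ and take an arbitrary $P = (P_{kl})_{k,l=1}^N$ in $D(\partial_j^{(N)}) = M_N(\C\langle X_1,\dots,X_n\rangle)$. Since $\partial_j^{(N)}$ acts entrywise, the entrywise decomposition of the inner product yields
\[
\langle \partial_j^{(N)} P, Q \rangle = \frac{1}{N} \sum_{k,l=1}^N \langle \partial_j P_{kl}, Q_{kl}\rangle.
\]
Now each $Q_{kl}$ lies in $\C\langle X_1,\dots,X_n\rangle \otimes \C\langle X_1,\dots,X_n\rangle \subseteq D(\partial_j^\ast)$ by the scalar result recalled just before the lemma, so $\langle \partial_j P_{kl}, Q_{kl}\rangle = \langle P_{kl}, \partial_j^\ast Q_{kl}\rangle$. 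Summing back up gives
\[
\langle \partial_j^{(N)} P, Q\rangle = \frac{1}{N}\sum_{k,l=1}^N \langle P_{kl}, \partial_j^\ast Q_{kl}\rangle = \langle P, R\rangle,
\]
where $R := (\partial_j^\ast Q_{kl})_{k,l=1}^N \in L^2(M_N(\M_0),\tr_N\circ\tau^{(N)})$. The right-hand side is clearly a bounded linear functional of $P$ with respect to $\|\cdot\|_2$, which by definition of the adjoint shows $Q \in D((\partial_j^{(N)})^\ast)$ and $(\partial_j^{(N)})^\ast Q = R$, as claimed.

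For the last assertion, recall from the definition of $\odot$ that the $(k,l)$-entry of $\1_N \odot \1_N$ equals $\sum_{p=1}^N \delta_{kp}\delta_{pl}(1\otimes 1) = \delta_{kl}(1\otimes 1)$. Applying the formula just established and using $\partial_j^\ast(1\otimes 1) = \xi_j$ from the conjugate relation, the $(k,l)$-entry of $(\partial_j^{(N)})^\ast(\1_N \odot \1_N)$ equals $\delta_{kl}\xi_j$, i.e., the matrix is $\1_N \xi_j$. There is no real obstacle here; the only thing that requires care is the correct normalization convention for the traces and keeping track of the bimodule identification $M_N(\A)\odot M_N(\A) = M_N(\A\otimes\A)$, so that the entrywise reduction is unambiguous.
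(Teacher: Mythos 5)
Your argument is correct and is essentially identical to the paper's proof: both reduce the matricial adjoint to the scalar one entrywise via the decomposition of the $L^2$-inner product over matrix entries, then evaluate at $\1_N\odot\1_N$ whose $(k,l)$-entry is $\delta_{kl}\,1\otimes 1$. Your explicit tracking of the $\frac{1}{N}$ normalization is a harmless (and slightly more careful) presentational difference only.
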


\begin{proof}
Let any $Q =(Q_{kl})_{k,l=1}^N$ in $M_N(\C\langle X_1,\dots,X_n\rangle \otimes \C\langle X_1,\dots,X_n\rangle)$ be given. Then, since each $Q_{kl}$ belongs to $D(\partial^\ast_j)$, we may deduce that
$$\langle \partial^{(N)}_j P, Q \rangle = \sum^N_{k,l=1} \langle \partial_j P_{kl}, Q_{kl} \rangle = \sum^N_{k,l=1} \langle P_{kl}, \partial_j^\ast Q_{kl}\rangle = \langle P, \big(\partial_j^\ast Q_{kl}\big)_{k,l=1}^N\rangle$$
for all $P=(P_{kl})_{k,l=1}^N$ in $M_N(\C\langle X_1,\dots,X_N\rangle)$, which shows $Q \in D((\partial^{(N)}_j)^\ast)$ and the asserted formula for $(\partial^{(N)}_j)^\ast Q$.
If we apply the latter observations to $\1_N \odot \1_N$, we conclude that it belongs to $D((\partial^{(N)}_j)^\ast)$ and $(\partial_j^{(N)})^\ast (\1_N \odot \1_N) = (\delta_{k,l} \partial_j^\ast(1\otimes 1) \big)_{k,l=1}^N = \1_N \xi_j$.
\end{proof}

Voiculescu \cite{Voi98} also derived a formula for $\partial_j^\ast$ on $\C\langle X_1,\dots,X_n\rangle \otimes \C\langle X_1,\dots,X_n\rangle$. In fact, he showed that
$$(\partial_j)^\ast Q = m_{\xi_j}(Q) - m_1 (\id \otimes \tau \otimes \id) (\partial_j \otimes \id + \id \otimes \partial_j)(Q)$$
for all $Q \in \C\langle X_1,\dots,X_n\rangle \otimes \C\langle X_1,\dots,X_n\rangle$, where $m_\eta$ for any element $\eta \in L^2(\M_0,\tau)$ stands for the linear mapping $m_\eta: \M_0 \otimes \M_0 \to L^2(\M_0,\tau)$ that is given by $m_\eta(a_1 \otimes a_2) = a_1 \eta a_2$. Due to the previous Lemma \ref{lem:adjoints_amplifications}, the above formula readily passes to the matricial setting. Indeed, we have
\begin{equation}\label{eq:Voiculescu-formula_amplification}
(\partial^{(N)}_j)^\ast Q = m_{\xi_j}^{(N)}(Q) - m_1^{(N)} (\id \otimes \tau \otimes \id)^{(N)}(\partial_j \otimes \id + \id \otimes \partial_j)^{(N)}(Q)
\end{equation}
for each $Q \in M_N(\C\langle X_1,\dots,X_n\rangle \otimes \C\langle X_1,\dots,X_n\rangle)$. In the sequel, some special instance of that formula will become important, which we thus present in the next proposition.

\begin{proposition}\label{prop:Voiculescu-formula_amplification}
Let $P^1,P^2 \in M_N(\C\langle X_1,\dots,X_n\rangle)$ be given. Then
$$(\partial_j^{(N)})^\ast \big(P^1 \odot P^2\big) = P^1(\1_N \xi_j) P^2 - (\id\otimes\tau)^{(N)}\big(\partial^{(N)}_j P^1\big) P^2 - P^1 (\tau\otimes\id)^{(N)}\big(\partial^{(N)}_j P^2\big).$$
\end{proposition}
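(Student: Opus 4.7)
The plan is to specialize the general formula \eqref{eq:Voiculescu-formula_amplification} to $Q := P^1 \odot P^2$ and unfold the three resulting pieces entrywise; apart from some notational bookkeeping everything reduces to the scalar Voiculescu formula applied to elementary tensors, aided by Lemma \ref{lem:adjoints_amplifications}.

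First I will note that, by the very definition of $\odot$, the $(k,l)$-entry of $P^1 \odot P^2$ is $\sum_{p=1}^N P^1_{kp} \otimes P^2_{pl}$, which lies in $\C\langle X_1,\dots,X_n\rangle \otimes \C\langle X_1,\dots,X_n\rangle$. Hence Lemma \ref{lem:adjoints_amplifications} gives $\bigl((\partial_j^{(N)})^\ast(P^1 \odot P^2)\bigr)_{kl} = \sum_p \partial_j^\ast\bigl(P^1_{kp} \otimes P^2_{pl}\bigr)$, so it suffices to apply the scalar Voiculescu formula to each elementary tensor $P^1_{kp} \otimes P^2_{pl}$ and reassemble the pieces into matrix products.

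Next I will compute each of the three terms arising from the scalar formula. The $m_{\xi_j}$-term contributes $P^1_{kp} \xi_j P^2_{pl}$ in the $p$-th slot of the inner sum; summing over $p$ produces the $(k,l)$-entry of $P^1 (\1_N \xi_j) P^2$, because $\1_N \xi_j$ is the diagonal matrix whose diagonal entries are all equal to $\xi_j$. For the remaining piece I will apply the Leibniz-type identity $(\partial_j \otimes \id + \id \otimes \partial_j)(P^1_{kp} \otimes P^2_{pl}) = \partial_j(P^1_{kp}) \otimes P^2_{pl} + P^1_{kp} \otimes \partial_j(P^2_{pl})$, then contract the middle tensor factor via $\id \otimes \tau \otimes \id$, and finally multiply via $m_1$; this yields $(\id \otimes \tau)(\partial_j P^1_{kp}) \cdot P^2_{pl} + P^1_{kp} \cdot (\tau \otimes \id)(\partial_j P^2_{pl})$ in the $p$-th slot.

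Summing over $p$ and observing that the amplifications $(\id\otimes\tau)^{(N)}$ and $(\tau\otimes\id)^{(N)}$ act entry-by-entry on the matricial derivatives $\partial_j^{(N)} P^1$ and $\partial_j^{(N)} P^2$, the two summands combine into the $(k,l)$-entries of the matrix products $(\id\otimes\tau)^{(N)}(\partial_j^{(N)} P^1) \cdot P^2$ and $P^1 \cdot (\tau\otimes\id)^{(N)}(\partial_j^{(N)} P^2)$ respectively. Reading off all entries $(k,l)$ at once then delivers the claimed identity. I do not expect a genuine obstacle here; the only thing that requires care is keeping track of which tensor slot is being contracted and checking that the inner summations over $p$ line up correctly with ordinary matrix multiplication, which is transparent once one remembers the identity $(e^{k,p} a_1) \odot (e^{p,l} a_2) = e^{k,l}(a_1 \otimes a_2)$ recorded just before Lemma \ref{lem:adjoints_amplifications}.
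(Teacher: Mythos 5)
Your proposal is correct and follows essentially the same route as the paper: both specialize the amplified Voiculescu formula \eqref{eq:Voiculescu-formula_amplification} (equivalently, apply the scalar formula entrywise via Lemma \ref{lem:adjoints_amplifications}) to $Q=P^1\odot P^2$ and reassemble the three resulting pieces into the stated matrix products. The bookkeeping of the inner sum over $p$ is exactly the computation carried out in the paper.
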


\begin{proof}
Consider arbitrary $P^1=(P^1_{kl})_{k,l=1}^N$ and $P^2=(P^2_{kl})_{k,l=1}^N$ in $M_N(\C\langle X_1,\dots,X_n\rangle)$. Then $m_{\xi_j}^{(N)}(P^1 \odot P^2) = P^1 (\1_N \xi_j) P^2$ and
\begin{align*}
\lefteqn{m_1^{(N)} (\id \otimes \tau \otimes \id)^{(N)}(\partial_j \otimes \id + \id \otimes \partial_j)^{(N)}(P^1 \odot P^2)}\\
&= m_1^{(N)} (\id \otimes \tau \otimes \id)^{(N)} (\partial_j \otimes \id + \id \otimes \partial_j)^{(N)} \bigg( \sum^N_{p=1} P^1_{kp} \otimes P^2_{pl} \bigg)_{k,l=1}^N\\
&= m_1^{(N)} (\id \otimes \tau \otimes \id)^{(N)} \bigg( \sum^N_{p=1} \big[ \partial_j P^1_{kp} \otimes P^2_{pl} + P^1_{kp} \otimes \partial_j P^2_{pl} \big] \bigg)_{k,l=1}^N\\
&= m_1^{(N)} \bigg( \sum^N_{p=1} \big[ (\id\otimes\tau)(\partial_j P^1_{kp}) \otimes P^2_{pl} + P^1_{kp} \otimes (\tau\otimes\id)(\partial_j P^2_{pl}) \big] \bigg)_{k,l=1}^N\\
&= \bigg(\sum^N_{p=1} \big[ (\id\otimes\tau)(\partial_j P^1_{kp}) P^2_{pl} + P^1_{kp} (\tau\otimes\id)(\partial_j P^2_{pl}) \big]\bigg)_{k,l=1}^N\\
&= \big((\id\otimes\tau) \circ \partial_j\big)^{(N)} (P^1) P^2 + P^1 \big((\tau\otimes\id)\circ \partial_j \big)^{(N)} (P^2)\\
&= (\id\otimes\tau)^{(N)}\big(\partial^{(N)}_j P^1\big) P^2 + P^1 (\tau\otimes\id)^{(N)}\big(\partial^{(N)}_j P^2\big),
\end{align*}
so that \eqref{eq:Voiculescu-formula_amplification} yields the assertion.
\end{proof}

\subsubsection{Dabrowski's inequalities}

While the simple observation recorded in Lemma \ref{lem:adjoints_amplifications} allowed us to translate directly Voiculescu's results \cite{Voi98} about the adjoints of those unbounded linear operators that are induced by the noncommutative derivatives, establishing Dabrowski's inequalities \cite{Dab10} in the amplified setting is also possible but requires more work; in fact, this is the main strengthening compared to the tools already used in \cite{MSW17}. The precise statement reads as follows.

\begin{proposition}\label{prop:Dab-estimates}
Let $P\in M_N(\C\langle X_1,\dots,X_n\rangle)$ be given. Then
\begin{equation}\label{eq:Dab-estimates_1}
\begin{aligned}
\big\|(\partial_j^{(N)})^\ast (P \odot \1_N) \big\|_2 &\leq \|\xi_j\|_2 \|P\|,\\
\big\|(\partial_j^{(N)})^\ast (\1_N \odot P) \big\|_2 &\leq \|\xi_j\|_2 \|P\|,\\
\end{aligned}
\end{equation}
and
\begin{equation}\label{eq:Dab-estimates_2}
\begin{aligned}
\big\|(\tau\otimes\id)^{(N)} \big(\partial_j^{(N)} P\big)\big\|_2 &\leq 2\|\xi_j\|_2 \|P\|,\\
\big\|(\id\otimes\tau)^{(N)} \big(\partial_j^{(N)} P\big)\big\|_2 &\leq 2\|\xi_j\|_2 \|P\|.\\
\end{aligned}
\end{equation}
\end{proposition}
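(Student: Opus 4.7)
The four inequalities split naturally into two groups. Inequalities (1) and (2) are the deep Dabrowski-type bounds on the adjoint of the amplified derivation, while (3) and (4) are bounds on the slice maps. My plan is to prove (1) and (2) first and then deduce (3) and (4) as easy consequences via Proposition \ref{prop:Voiculescu-formula_amplification}.

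For the reduction: apply Proposition \ref{prop:Voiculescu-formula_amplification} with $P^1 = P$ and $P^2 = \1_N$, and use $\partial_j^{(N)}(\1_N) = 0$ to obtain
$$(\partial_j^{(N)})^\ast(P \odot \1_N) = P (\1_N \xi_j) - (\id \otimes \tau)^{(N)}\bigl(\partial_j^{(N)} P\bigr).$$
By the triangle inequality together with the $L^\infty$--$L^2$ bound $\|P (\1_N \xi_j)\|_2 \leq \|P\|\, \|\1_N \xi_j\|_2 = \|P\|\, \|\xi_j\|_2$ in the tracial von Neumann algebra $(M_N(\M_0), \tr_N \circ \tau^{(N)})$, this gives
$$\bigl\|(\id \otimes \tau)^{(N)}\bigl(\partial_j^{(N)} P\bigr)\bigr\|_2 \leq \|P\|\, \|\xi_j\|_2 + \bigl\|(\partial_j^{(N)})^\ast(P \odot \1_N)\bigr\|_2 \leq 2\|P\|\, \|\xi_j\|_2$$
once (1) is known, so (4) follows. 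Taking instead $P^1 = \1_N$ and $P^2 = P$ gives (3) from (2) in the same way.

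For (1) and (2) I would adapt Dabrowski's scalar argument from \cite{Dab10}, but carry it out intrinsically in the tracial $W^\ast$-probability space $(M_N(\M_0), \tr_N \circ \tau^{(N)})$, viewing $\partial_j^{(N)}$ as a densely defined $M_N(\M_0 \ootimes \M_0)$-valued derivation and $\1_N \xi_j$ as the conjugate variable for the amplification (as already suggested by Lemma \ref{lem:adjoints_amplifications}). The starting point is the duality
$$\bigl\|(\partial_j^{(N)})^\ast(P \odot \1_N)\bigr\|_2 = \sup_{\substack{Y \in M_N(\C\langle X_1,\dots,X_n\rangle) \\ \|Y\|_2 \leq 1}} \bigl|\langle P \odot \1_N, \partial_j^{(N)} Y\rangle\bigr|,$$
from which one has to establish the estimate $|\langle P \odot \1_N, \partial_j^{(N)} Y\rangle| \leq \|\xi_j\|_2 \|P\|\, \|Y\|_2$. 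Following Dabrowski's scalar-valued proof, this is done by iteratively applying the matricial Leibniz rule for $\partial_j^{(N)}$ (Lemma \ref{lem:derivation_amplification}), invoking the matricial conjugate relation (obtained from the scalar one by applying $\tr_N \circ \tau^{(N)}$ entrywise), and exploiting closability to produce a telescoping bound that avoids the losses a direct triangle inequality would introduce. Inequality (2) is then obtained either by a symmetric argument or from (1) by applying the involution, using the reality property \eqref{eq:real_amplification} and the identity $(\1_N \odot P)^\dagger = P^\ast \odot \1_N$ together with $\|P^\ast\| = \|P\|$.

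The main obstacle is precisely this second step. Dabrowski's scalar argument is delicate, and a naive attempt to bound each matrix entry of $P$ separately would produce constants depending on $N$, destroying the sharp bound. The point is instead to work throughout with $M_N(\M_0)$ as a tracial von Neumann algebra in its own right, and to exploit that left and right multiplication by $\1_N \xi_j$ on $M_N(\M_0)$ is controlled by $\|\xi_j\|_2$ (not by the $N$-fold sum of the $L^2$-norms of its entries). This is exactly the issue flagged in the paragraph preceding the proposition: while the matricial estimates are proved almost in the same way as Dabrowski's scalar-valued results, they are not formal corollaries of them and require redoing the scalar argument carefully in the amplified setting.
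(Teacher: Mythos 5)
Your reduction of \eqref{eq:Dab-estimates_2} to \eqref{eq:Dab-estimates_1} via Proposition \ref{prop:Voiculescu-formula_amplification} (with $P^2=\1_N$, using $\partial_j^{(N)}\1_N=0$) is exactly the paper's argument, and your derivation of the second line of \eqref{eq:Dab-estimates_1} from the first via the reality property \eqref{eq:real_amplification} and $(\1_N\odot P)^\dagger=P^\ast\odot\1_N$ is also correct. The problem is that for the first line of \eqref{eq:Dab-estimates_1} --- which is the actual content of the proposition --- you have written a plan rather than a proof, and the plan omits the one identity on which everything hinges. The paper's proof rests on the matricial version of Dabrowski's identity,
$$\big\| (\partial_j^{(N)})^\ast \big(P \odot \1_N\big) \big\|_2^2 \;=\; \big\langle (\partial_j^{(N)})^\ast \big((P^\ast P) \odot \1_N\big),\, \1_N \xi_j\big\rangle,$$
which after Cauchy--Schwarz gives the quadratic inequality $\|T(P)\|_2^2\leq \|T(P^\ast P)\|_2\,\|\xi_j\|_2$ for $T(P):=(\partial_j^{(N)})^\ast(P\odot\1_N)$; one then feeds this into the iteration Lemma \ref{lem:iterative-norm-bound}, whose growth hypothesis $\limsup_m\|T(P^m)\|_2^{1/m}\leq\|P\|$ is verified separately using Proposition \ref{prop:Voiculescu-formula_amplification} and the Leibniz rule. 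Your ``duality plus telescoping'' description gestures at the iteration but never produces the quadratic inequality that drives it, so the argument does not get off the ground as written.

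There is also a point where your heuristic is slightly off, and seeing it would have shown you the way through. You warn that bounding matrix entries separately ``would produce constants depending on $N$.'' That is true for the \emph{inequality}, but the paper transfers the scalar \emph{identity} \eqref{eq:Dabrowski-identity} entrywise: writing $P=\sum_{k,l}e^{k,l}P_{kl}$ and using $\langle e^{k,l}\eta,e^{k',l'}\eta'\rangle=\frac{1}{N}\delta_{k,k'}\delta_{l,l'}\langle\eta,\eta'\rangle$, both sides of the matricial identity reduce to $\frac{1}{N}\sum_{k,l}\langle \partial_j^\ast((P_{kl}^\ast P_{kl})\otimes 1),\xi_j\rangle$, with no loss of constants because identities sum exactly. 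Only after this does one pass to norm estimates at the level of the tracial $W^\ast$-probability space $(M_N(\M_0),\tr_N\circ\tau^{(N)})$, which is the intrinsic viewpoint you correctly advocate. In short: the architecture of your proposal is right, but the proof of \eqref{eq:Dab-estimates_1} is missing its engine.
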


Our proof relies rather on Dabrowski's proof \cite{Dab10} for the scalar-valued case than on those statements themselves. We need to recall that Dabrowski's arguments yield -- though not stated explicitly in his paper -- that
\begin{equation}\label{eq:Dabrowski-identity}
\| \partial_j^\ast (P \otimes 1) \|_2^2 = \langle \partial_j^\ast \big( (P^\ast P) \otimes 1\big), \xi_j\rangle
\end{equation}
holds under our assumptions for each $P\in\C\langle X_1,\dots,X_n\rangle$. In addition, we will use the following easy result; a proof thereof can be found in \cite{Mai15}.

\begin{lemma}\label{lem:iterative-norm-bound}
Let $(\M,\tau)$ be a $W^\ast$-probability space and let $T: L^2(\M,\tau) \supseteq D(T) \rightarrow L^2(\M,\tau)$ be an unbounded linear operator whose domain $D(T)$ is a unital $\ast$-subalgebra of $\M$. Assume that the following conditions are satisfied:
\begin{enumerate}
 \item\label{it:iterative-norm-bound-1} There exists a constant $C>0$ such that
$$\|T(X)\|^2_2 \leq C \|T(X^\ast X)\|_2 \qquad\text{for all $X\in D(T)$}.$$
 \item\label{it:iterative-norm-bound-2} For each $X\in D(T)$, we have that
$$\limsup_{m\to\infty} \|T(X^m)\|^\frac{1}{m}_2 \leq \|X\|.$$
\end{enumerate}
Then $T$ satisfies $\|T(X)\|_2 \leq C \|X\|$ for all $X\in D(T)$.
\end{lemma}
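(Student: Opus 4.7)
The plan is to bootstrap condition (i) by iterating it on successive self-adjoint ``squares'' of $X$, and then to pass to the limit using condition (ii). Since $D(T)$ is a unital $\ast$-subalgebra of $\M$, the elements $Y_0 := X$ and $Y_k := (X^\ast X)^{2^{k-1}}$ for $k \geq 1$ all lie in $D(T)$, and they satisfy the self-reproducing identity $Y_k^\ast Y_k = Y_{k+1}$ for every $k \geq 0$. Hence condition (i) applied to $Y_k$ reads
$$\|T(Y_k)\|_2^2 \leq C \|T(Y_{k+1})\|_2 \qquad \text{for every } k \geq 0.$$

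I would then prove by induction on $k \geq 1$ the sharpened bound
$$\|T(X)\|_2^2 \leq C^{2 - 2^{1-k}} \|T(Y_k)\|_2^{2^{1-k}}.$$
The base case $k=1$ is exactly condition (i) applied to $Y_0 = X$. For the step, raise the inequality $\|T(Y_k)\|_2^2 \leq C \|T(Y_{k+1})\|_2$ to the power $2^{-k}$ and substitute into the hypothesis; the $C$-exponent accumulates as the partial sum $\sum_{j=0}^{k-1} 2^{-j}$, which is bounded by and converges to $2$, while the exponent on the norm of $T(Y_{k+1})$ halves to $2^{-k} = 2^{1-(k+1)}$.

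Finally, the substitution $m := 2^{k-1}$ rewrites the remaining factor as
$$\|T(Y_k)\|_2^{2^{1-k}} = \|T((X^\ast X)^m)\|_2^{1/m},$$
so letting $k\to\infty$ and invoking condition (ii) for the self-adjoint element $X^\ast X \in D(T)$ yields
$$\limsup_{k \to \infty} \|T(Y_k)\|_2^{2^{1-k}} \leq \|X^\ast X\| = \|X\|^2.$$
Combined with $C^{2 - 2^{1-k}} \to C^2$, this gives $\|T(X)\|_2^2 \leq C^2 \|X\|^2$, which is the desired inequality $\|T(X)\|_2 \leq C \|X\|$.

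There is no serious obstacle in this argument: the two hypotheses are tailored precisely to be combined in this way, and the proof is essentially bookkeeping for a telescoping geometric series of exponents. The only minor point worth double-checking is that the convergence in condition (ii) is a limsup statement that automatically transfers to the subsequence $m = 2^{k-1}$ as $k \to \infty$, which it does.
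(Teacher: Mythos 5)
Your proof is correct: the identity $Y_k^\ast Y_k = Y_{k+1}$ holds, the induction on the exponents is sound (the $C$-exponent telescopes to $2-2^{1-k}$ and the norm-exponent halves as claimed), and condition (ii) applied to the self-adjoint element $X^\ast X$ does pass to the subsequence $m=2^{k-1}$ since a limsup along a subsequence is dominated by the full limsup. The paper itself does not prove this lemma but defers to the reference [Mai15]; the argument given there is exactly this standard bootstrapping of the ``square'' inequality, so your proposal coincides with the intended proof.
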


Note that we will apply Lemma \ref{lem:iterative-norm-bound} for the tracial $W^\ast$-probability space $(M_N(\M_0), \tr_N \circ \tau^{(N)})$.

\begin{proof}[Proof of Proposition \ref{prop:Dab-estimates}]
We will establish first that \eqref{eq:Dabrowski-identity} extends to the matricial setting; more precisely, we claim that
\begin{equation}\label{eq:Dabrowski-identity-amplification}
\big\| (\partial_j^{(N)})^\ast \big(P \odot \1_N\big) \big\|_2^2 = \langle (\partial_j^{(N)})^\ast \big((P^\ast P) \odot \1_N\big), \1_N \xi_j\rangle
\end{equation}
holds for each $P\in M_N(\C\langle X_1,\dots,X_n\rangle)$. For seeing that, take any $P=(P_{kl})_{k,l=1}^N$ in $M_N(\C\langle X_1,\dots,X_n\rangle)$ and write, with respect the matrix units $e^{k,l}$, $1\leq k,l\leq N$, in $M_N(\C)$,
$$P = \sum^N_{k,l=1} e^{k,l} P_{kl} \qquad\text{and}\qquad P^\ast P = \sum^N_{k,l=1} e^{k,l} \bigg(\sum^N_{p=1} P_{pk}^\ast P_{pl}\bigg).$$
With the help of Lemma \ref{lem:adjoints_amplifications}, we see that accordingly
\begin{align*}
(\partial_j^{(N)})^\ast(P \odot \1_N) &= \sum^N_{k,l=1} e^{k,l} \partial_j^\ast(P_{kl} \otimes 1) \qquad\text{and}\\
(\partial_j^{(N)})^\ast( (P^\ast P) \odot \1_N) &= \sum^N_{k,l=1} e^{k,l} \bigg(\sum^N_{p=1} \partial_j^\ast\big( (P_{pk}^\ast P_{pl}) \otimes 1\big)\bigg).
\end{align*} 
Note that for all $1\leq k,l,k',l'\leq N$ and $\eta,\eta'\in L^2(\M_0,\tau)$
$$\langle e^{k,l} \eta, e^{k',l'} \eta' \rangle = \tr_N(e^{k,l} e^{l'k'}) \langle \eta, \eta' \rangle = \frac{1}{N} \delta_{l,l'} \delta_{k,k'} \langle \eta, \eta' \rangle,$$
so that
\begin{align*}
\big\| (\partial_j^{(N)})^\ast \big(P \odot \1_N\big) \big\|_2^2
&= \langle \sum^N_{k,l=1} e^{k,l} \partial_j^\ast(P_{kl} \otimes 1), \sum^N_{k',l'=1} e^{k',l'} \partial_j^\ast(P_{k'l'} \otimes 1) \rangle\\
&= \frac{1}{N} \sum^N_{k,l=1} \| \partial_j^\ast (P_{kl} \otimes 1) \|_2^2\\
&\stackrel{\eqref{eq:Dabrowski-identity}}{=} \frac{1}{N} \sum^N_{k,l=1} \langle \partial_j^\ast \big( (P^\ast_{kl} P_{kl}) \otimes 1\big), \xi_j\rangle
\end{align*}
and on the other hand
\begin{align*}
\langle (\partial_j^{(N)})^\ast \big((P^\ast P) \odot \1_N\big), \1_N \xi_j\rangle
&= \langle \sum^N_{k,l=1} e^{k,l} \bigg(\sum^N_{p=1} \partial_j^\ast\big( (P_{pk}^\ast P_{pl}) \otimes 1\big)\bigg), \1_N \xi_j\rangle\\
&= \frac{1}{N} \sum^N_{p,l=1} \langle \partial_j^\ast\big( (P_{pl}^\ast P_{pl}) \otimes 1\big), \xi_j \rangle.
\end{align*}
Thus, comparing both results yields the asserted identity \eqref{eq:Dabrowski-identity-amplification}.

Next, we apply the Cauchy-Schwarz inequality to the right hand side of \eqref{eq:Dabrowski-identity-amplification}; in this way, noting that $\|\1_N \xi_2\|_2 = \|\xi_j\|_2$, we obtain that
$$\big\| (\partial_j^{(N)})^\ast \big(P \odot \1_N\big) \big\|_2^2 \leq \big\| (\partial_j^{(N)})^\ast \big((P^\ast P) \odot \1_N\big) \big\|_2  \|\xi_j\|_2.$$
In terms of the unbounded linear operator $T$ that is given by
$$T:\ L^2(\M_0,\tau) \supseteq D(T) \to L^2(\M_0,\tau),\quad P \mapsto (\partial_j^{(N)})^\ast \big(P \odot \1_N\big)$$
with domain $D(T) = M_N(\C\langle X_1,\dots,X_n\rangle)$, the latter can be rewritten as
$$\| T(P) \|_2^2 \leq \| T(P^\ast P) \|_2 \|\xi_j\|_2.$$

We intend to apply Lemma \ref{lem:iterative-norm-bound} to $T$; while the previously obtained estimate verifies the condition required in Item \ref{it:iterative-norm-bound-1}, it remains to check Item \ref{it:iterative-norm-bound-2}. For that purpose, let us fix $P\in M_N(\C\langle X_1,\dots,X_n\rangle)$ and $m\in N$. With the help of Proposition \ref{prop:Voiculescu-formula_amplification}, using also the fact that $\partial^{(N)}_j$ is a derivation, we derive that
\begin{align*}
T(P^m) &= (\partial^{(N)})^\ast\big(P^m \odot \1_N\big)\\
       &= P^m (\1_N \xi_j) - (\id\otimes\tau)^{(N)}\big(\partial^{(N)}_j P^m\big)\\
       &= P^m (\1_N \xi_j) - \sum_{k=1}^m (\id\otimes\tau)^{(N)}\big(P^{k-1} \cdot (\partial_j^{(N)} P) \cdot P^{m-k}\big).
\end{align*}
Applying $\|\cdot\|_2$ and using the triangle inequality as well as the fact that $(\id\otimes\tau)^{(N)}$ is a contraction, this yields that
$$\| T(P^m) \|_2 \leq \|P\|^m \|\xi_j\|_2 + m \|P\|^{m-1} \|\partial_j^{(N)} P\|_2,$$
from which we immediately get that
$$\limsup_{m\rightarrow\infty} \|T(P^m)\|_2^{\frac{1}{m}} \leq \|P\|.$$

Therefore, we may now apply Lemma \ref{lem:iterative-norm-bound} to $T$ in the setting of $(M_N(\M_0), \tr_N \circ \tau^{(N)})$, which gives
$$\| T(P) \|_2 \leq \|\xi_j\|_2 \|P\|$$
and thus the first estimate in \eqref{eq:Dab-estimates_1}; the second one can be obtained similarly or by using the fact that $\partial_j^{(N)}$ is real in the sense of \eqref{eq:real_amplification}.

The estimates in \eqref{eq:Dab-estimates_2} can be deduced from \eqref{eq:Dab-estimates_1} with the help of Proposition \ref{prop:Voiculescu-formula_amplification}. Indeed, we may infer from Proposition \ref{prop:Voiculescu-formula_amplification} that
\begin{align*}
(\id\otimes\tau)^{(N)}\big(\partial^{(N)}_j P\big) &= P (\1_N \xi_j) - (\partial_j^{(N)})^\ast \big(P \odot \1_N\big) \qquad\text{and}\\
(\tau\otimes\id)^{(N)}\big(\partial^{(N)}_j P\big) &= (\1_N \xi_j) P - (\partial_j^{(N)})^\ast \big(\1_N \odot P\big),
\end{align*}
from which \eqref{eq:Dab-estimates_2} follows after applying the triangle inequality and using \eqref{eq:Dab-estimates_1}.
\end{proof}

In fact, we will need in the sequel some slight extension of Proposition \ref{prop:Dab-estimates}. This is the content of the following corollary.

\begin{corollary}\label{cor:key-estimates}
Let $P^1,P^2\in M_N(\C\langle X_1,\dots,X_n\rangle)$ be given. For $j=1,\dots,n$, we have that
\begin{equation}\label{eq:key-estimates_1}
\big\|(\partial_j^{(N)})^\ast (P^1 \odot P^2) \big\|_2 \leq 3 \|\xi_j\|_2 \|P^1\| \|P^2\|,
\end{equation}
and
\begin{equation}\label{eq:key-estimates_2}
\begin{aligned}
\big\| (\id\otimes\tau)^{(N)} \big((\partial_j^{(N)} P^1) \cdot P^2\big) \big\|_2 & \leq 4 \|\xi_j\|_2 \|P^1\| \|P^2\|,\\
\big\| (\tau\otimes\id)^{(N)} \big(P^1 \cdot (\partial_j^{(N)} P^2)\big) \big\|_2 & \leq 4 \|\xi_j\|_2 \|P^1\| \|P^2\|.
\end{aligned}
\end{equation}
\end{corollary}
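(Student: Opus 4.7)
The plan is to reduce both estimates to Proposition \ref{prop:Dab-estimates} via the Voiculescu formula of Proposition \ref{prop:Voiculescu-formula_amplification} and the Leibniz rule for $\partial_j^{(N)}$; the corollary is essentially a bookkeeping exercise on top of the work already done.

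For \eqref{eq:key-estimates_1}, I would rewrite $(\partial_j^{(N)})^\ast(P^1 \odot P^2)$ in terms of the two special cases $(\partial_j^{(N)})^\ast(P^1 \odot \1_N)$ and $(\partial_j^{(N)})^\ast(\1_N \odot P^2)$, for which sharp estimates are already available. Specifically, specializing Proposition \ref{prop:Voiculescu-formula_amplification} to $P^2 = \1_N$ and $P^1 = \1_N$ (and using $\partial_j^{(N)} \1_N = 0$) yields
$$(\id\otimes\tau)^{(N)}(\partial_j^{(N)} P^1) = P^1(\1_N \xi_j) - (\partial_j^{(N)})^\ast(P^1 \odot \1_N)$$
and the symmetric identity for $(\tau\otimes\id)^{(N)}(\partial_j^{(N)} P^2)$. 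Substituting these back into the general Voiculescu formula collapses the three original terms to
$$(\partial_j^{(N)})^\ast(P^1 \odot P^2) = (\partial_j^{(N)})^\ast(P^1 \odot \1_N)\, P^2 + P^1\, (\partial_j^{(N)})^\ast(\1_N \odot P^2) - P^1(\1_N \xi_j) P^2,$$
and the triangle inequality combined with the $L^2$–module bound $\|AB\|_2 \leq \|A\|\|B\|_2$, Proposition \ref{prop:Dab-estimates} \eqref{eq:Dab-estimates_1}, and $\|\1_N \xi_j\|_2 = \|\xi_j\|_2$ produces the constant $1 + 1 + 1 = 3$.

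For \eqref{eq:key-estimates_2}, I would exploit the fact that by Lemma \ref{lem:derivation_amplification}, $\partial_j^{(N)}$ satisfies the Leibniz rule $\partial_j^{(N)}(P^1 P^2) = (\partial_j^{(N)} P^1)\cdot P^2 + P^1 \cdot (\partial_j^{(N)} P^2)$. Applying $(\id\otimes\tau)^{(N)}$ and rearranging gives
$$(\id\otimes\tau)^{(N)}\bigl((\partial_j^{(N)} P^1)\cdot P^2\bigr) = (\id\otimes\tau)^{(N)}\bigl(\partial_j^{(N)}(P^1 P^2)\bigr) - P^1 \cdot (\id\otimes\tau)^{(N)}\bigl(\partial_j^{(N)} P^2\bigr),$$
where the second term simplifies because the trace $\tau$ acts on the right leg of the tensor product and hence commutes through a left multiplication by $P^1$. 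Bounding both terms via Proposition \ref{prop:Dab-estimates} \eqref{eq:Dab-estimates_2}, together with the submultiplicativity $\|P^1 P^2\| \leq \|P^1\|\|P^2\|$, yields $2\|\xi_j\|_2\|P^1\|\|P^2\| + 2\|P^1\|\|\xi_j\|_2\|P^2\| = 4\|\xi_j\|_2\|P^1\|\|P^2\|$. The second inequality follows by the symmetric manipulation using $(\tau\otimes\id)^{(N)}$ and a right multiplication by $P^2$.

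I do not anticipate a serious obstacle; the only point that requires a moment of care is the verification that $(\id\otimes\tau)^{(N)}$ (respectively $(\tau\otimes\id)^{(N)}$) is a left (respectively right) $M_N(\C\langle X_1,\dots,X_n\rangle)$–module map, which is immediate from the definitions of the bimodule structure and the $\sharp$-operation recalled in Section 3.1.
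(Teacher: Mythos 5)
Your proof is correct and follows essentially the same route as the paper: both estimates are reduced to Proposition \ref{prop:Voiculescu-formula_amplification} and Proposition \ref{prop:Dab-estimates}, with the second inequality handled by exactly the same integration-by-parts argument. The only (cosmetic) difference is in \eqref{eq:key-estimates_1}, where you expand $(\partial_j^{(N)})^\ast(P^1\odot P^2)$ symmetrically into the two one-sided adjoint terms plus $P^1(\1_N\xi_j)P^2$ (giving $1+1+1=3$), whereas the paper keeps the two-term form $(\partial_j^{(N)})^\ast(P^1\odot\1_N)P^2 - P^1(\tau\otimes\id)^{(N)}(\partial_j^{(N)}P^2)$ and bounds the second summand by $2\|\xi_j\|_2$ via \eqref{eq:Dab-estimates_2} (giving $1+2=3$); both yield the same constant.
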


\begin{proof}
Take any $P^1,P^2 \in M_N(\C\langle X_1,\dots,X_n\rangle)$. By using Proposition \ref{prop:Voiculescu-formula_amplification}, we see that
\begin{align*}
(\partial_j^{(N)})^\ast (P^1 \odot P^2)
 &= P^1 (\1_N \xi_j) P^2 - (\id\otimes\tau)^{(N)}\big(\partial^{(N)}_j P^1\big) P^2 - P^1 (\tau\otimes\id)^{(N)}\big(\partial^{(N)}_j P^2\big)\\
 &= (\partial_j^{(N)})^\ast(P^1 \odot \1_N) P^2 - P^1 (\tau\otimes\id)^{(N)}\big(\partial^{(N)}_j P^2\big).
\end{align*}
Finally, applying the estimates that were established in Proposition \ref{prop:Dab-estimates} yields that
\begin{align*}
\big\|(\partial_j^{(N)})^\ast (P^1 \odot P^2) \big\|_2
 &\leq \big\|(\partial_j^{(N)})^\ast(P^1 \odot \1_N)\big\|_2 \|P^2\| + \|P^1\| \big\|(\tau\otimes\id)^{(N)}\big(\partial^{(N)}_j P^2\big)\big\|_2\\
 &\leq 3 \|\xi_j\|_2 \|P^1\| \|P^2\|,
\end{align*}
which verifies \eqref{eq:key-estimates_1}. For proving \eqref{eq:key-estimates_2}, we proceed as follows: since $\partial_j^{(N)}$ forms a derivation on its domain $D(\partial_j^{(N)})$, we may use the strategy of ``integration by parts'' in order to derive that
\begin{align*}
(\id\otimes\tau)^{(N)}\big((\partial_j^{(N)} P^1) P^2\big)
 &= (\id\otimes\tau)^{(N)}\big(\partial_j^{(N)} (P^1 P^2)\big) - (\id\otimes\tau)^{(N)}\big( P^1 (\partial_j^{(N)} P^2) \big)\\
 &= (\id\otimes\tau)^{(N)}\big(\partial_j^{(N)} (P^1 P^2)\big) - P^1 (\id\otimes\tau)^{(N)}\big((\partial_j^{(N)} P^2) \big)
\end{align*}
for arbitrary $P^1,P^2\in M_N(\C\langle X_1,\dots,X_n\rangle)$, from which we may easily deduce with the help of \eqref{eq:Dab-estimates_2} that
\begin{align*}
\big\| (\id\otimes\tau)^{(N)}\big((\partial_j^{(N)} P^1) P^2\big)\big\|_2
 &\leq \big\|(\id\otimes\tau)^{(N)}\big(\partial_j^{(N)} (P^1 P^2)\big) \big\|_2 + \|P^1\| \big\|(\id\otimes\tau)^{(N)}\big((\partial_j^{(N)} P^2) \big)\|_2\\
 &\leq 4 \|\xi_j\|_2 \|P^1\| \|P^2\|.
\end{align*}
This is the first of the inequalities that are stated in \eqref{eq:key-estimates_2}, the second one can be proven analogously.
\end{proof}

\subsubsection{Non-microstates free entropy and free entropy dimension}

It was shown in \cite{Voi98} that arbitrarily small perturbations of any tuple $(X_1,\dots,X_n)$ of selfadjoint operators in $\M$ by freely independent semicircular elements lead to finite non-microstates free Fisher information. Indeed, if $S_1,\dots,S_n$ are semicircular elements in $\M$ which are freely independent among themselves and also free from $\{X_1,\dots,X_n\}$, then $(X_1+\sqrt{t}S_n,\dots,X_n+\sqrt{t}S_n)$ admits a conjugate system for each $t>0$ and we have the estimates (cf. \cite[Corollary 6.14]{Voi98})
\begin{equation}\label{eq:Fisher_perturbation}
\frac{n^2}{C^2 + nt} \leq \Phi^\ast(X_1+\sqrt{t}S_1,\dots,X_n+\sqrt{t}S_n) \leq \frac{n}{t} \quad\text{for all $t>0$},
\end{equation}
where $C^2 := \tau(X_1^2 + \dots + X_n^2)$.
Based on this observation, Voiculescu introduced in \cite{Voi98} the \emph{non-microstates free entropy} $\chi^\ast(X_1,\dots,X_n)$ of $X_1,\dots,X_n$ by
$$\chi^\ast(X_1,\dots,X_n) := \frac{1}{2} \int^\infty_0\Big(\frac{n}{1+t}-\Phi^\ast(X_1+\sqrt{t}S_1,\dots,X_n+\sqrt{t}S_n)\Big)\, dt + \frac{n}{2}\log(2\pi e).$$
Note that the left inequality in \eqref{eq:Fisher_perturbation} implies in particular that (cf. \cite[Proposition 7.2]{Voi98})
$$\chi^\ast(X_1,\dots,X_n) \leq \frac{n}{2}\log(2\pi e n^{-1} C^2).$$

The \emph{non-microstates free entropy dimension} $\delta^\ast(X_1,\dots,X_n)$ is now defined in terms of the non-microstates free entropy $\chi^\ast$ by
$$\delta^\ast(X_1,\dots,X_n) := n - \liminf_{\epsilon \searrow 0} \frac{\chi^\ast(X_1+\sqrt{\epsilon}S_1,\dots,X_n+\sqrt{\epsilon}S_n)}{\log(\sqrt{\epsilon})}.$$
In the case $n=1$ of a single operator $X=X^\ast\in\M$, we infer from \cite[Proposition 6.3]{Voi94} and the fact that the microstates entropy as introduced in \cite{Voi94} coincides in this case with the non-microstates entropy (cf. \cite[Proposition 7.6]{Voi98} that
\begin{equation}\label{eq:entropy_dimension_single_operator}
\delta^\ast(X) = 1 - \sum_{t\in\R} \mu_X(\{t\})^2.
\end{equation}
Here, $\mu_X$ denotes the \emph{analytic distribution of $X$}, i.e., the unique Borel probability measure $\mu_X$ on $\R$ that satisfies $\int_\R t^k\, d\mu_X(t) = \tau(X^k)$ for all $k\in\N_0$.

We note that in \cite{CS05} some variant of $\delta^\ast(X_1,\dots,X_n)$ was introduced, namely
$$\delta^\star(X_1,\dots,X_n) := n - \liminf_{t\searrow0} t \Phi^\ast(X_1+\sqrt{t}S_1,\dots,X_n+\sqrt{t}S_n),$$
whose defining expression is formally obtained by applying L'Hospital's rule to the $\liminf$ appearing in the definition of $\delta^\ast(X_1,\dots,X_n)$. We point out that $0 \leq \delta^\star(X_1,\dots,X_n) \leq n$ due to \eqref{eq:Fisher_perturbation}.

Furthermore, it was shown in \cite[Lemma 4.1]{CS05} that $\delta^\ast(X_1,\dots,X_n) \leq \delta^\star(X_1,\dots,X_n)$ holds, so that the condition $\delta^\star(X_1,\dots,X_n)=n$ is weaker than $\delta^\ast(X_1,\dots,X_n)=n$. Conceptually, we are mostly interested in situations where $\delta^\ast(X_1,\dots,X_n)=n$ holds, but our statements will be proven under the weaker assumption $\delta^\star(X_1,\dots,X_n)=n$, which is easier to work with since the associated quantity
$$\alpha(X_1,\dots,X_n) := n - \delta^\star(X_1,\dots,X_n) = \liminf_{t\searrow0} t \Phi^\ast(X_1+\sqrt{t}S_1,\dots,X_n+\sqrt{t}S_n)$$
emerges very naturally in our considerations.

\subsection{A matricial extension of the reduction argument}

The approach of \cite{MSW17}, where the authors aimed at proving absence of atoms for analytic distributions arising from evaluations of noncommutative polynomials in variables having maximal non-microstates free entropy dimension by excluding zero divisors, relied eminently on some ``reduction argument'' established in \cite[Proposition 3.9]{MSW17}.
The goal of this subsection is the following Theorem \ref{thm:Fisher-growth}, which constitutes a matricial analogue the aforementioned result.

\begin{theorem}\label{thm:Fisher-growth}
Let $(\M,\tau)$ be a tracial $W^\ast$-probability space and let $X_1,\dots,X_n\in \M$ be $n$ selfadjoint noncommutative random variables. Consider any matrix $P\in M_N(\C\langle x_1,\dots,x_n\rangle)$ of noncommutative polynomials and suppose that there are elements $u,v \in M_N(\M_0)$ such that both
$$P(X_1,\dots,X_n) u = 0 \qquad\text{and}\qquad P(X_1,\dots,X_n)^\ast v = 0.$$
Then, with the abbreviation $X=(X_1,\dots,X_n)$, we have that
$$\bigg(\sum^n_{j=1} \big|\langle v^\ast \cdot (\partial_j^{(N)} P)(X) \cdot u, Y^1 \odot Y^2\rangle\big|^2\bigg)^{\frac{1}{2}} \leq 4 \kappa_X(P;u,v) \alpha(X)^{\frac{1}{2}} \|Y^1\| \|Y^2\|$$
holds for all $Y^1,Y^2\in M_N(\C\langle X_1,\dots,X_n\rangle)$, where
$$\kappa_X(P;u,v) := \bigg(\sum^n_{j=1} \big\| (\partial^{(N)}_j P)(X) \cdot u\big\|_2^2\bigg)^{\frac{1}{2}} \|v\| + \|u\| \bigg(\sum^n_{j=1} \big\| (\partial^{(N)}_j P^\ast)(X) \cdot v\big\|^2_2\bigg)^{\frac{1}{2}}.$$
As a consequence, if $\delta^\star(X_1,\dots,X_n) = n$, then
$$v^\ast \cdot (\partial_j^{(N)} P)(X_1,\dots,X_n) \cdot u = 0 \qquad\text{for $j=1,\dots,n$}.$$
\end{theorem}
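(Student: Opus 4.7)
My plan is to adapt the reduction argument of \cite[Proposition 3.9]{MSW17} to the matricial setting, combining the matricial Voiculescu formula (Proposition \ref{prop:Voiculescu-formula_amplification}) and Dabrowski inequalities (Proposition \ref{prop:Dab-estimates} and Corollary \ref{cor:key-estimates}) with a perturbation by free semicirculars. First I would embed $(\M,\tau)$ into a larger tracial $W^\ast$-probability space containing free standard semicirculars $S_1,\dots,S_n$ that are free from $\{X_1,\dots,X_n\}$, and set $X^t_i := X_i+\sqrt{t}\, S_i$ for $t>0$. By \eqref{eq:Fisher_perturbation}, the perturbed tuple $X^t$ admits a conjugate system $(\xi^t_1,\dots,\xi^t_n)$ with $\sum_j \|\xi^t_j\|_2^2 = \Phi^\ast(X^t)\leq n/t$, so the full formalism of Section 3 is available on $X^t$.

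The next step is an algebraic rewriting. Using matricial bimodule duality and the adjoint of $\partial^{(N)}_j$ relative to $X^t$ (extended from polynomial test vectors by continuity, based on Corollary \ref{cor:key-estimates}), I would obtain
\[
\langle v^\ast (\partial^{(N)}_j P)(X^t)\, u,\, Y^1\odot Y^2\rangle = \bigl\langle P(X^t),\, (\partial^{(N)}_j)^{\ast,t}(v Y^1 \odot Y^2 u^\ast)\bigr\rangle,
\]
where the superscript $t$ indicates that the adjoint is taken in the $L^2$-structure induced by $X^t$. The crucial observation is that replacing $P(X^t)$ by $P(X)$ gives zero: by Proposition \ref{prop:Voiculescu-formula_amplification}, $(\partial^{(N)}_j)^{\ast,t}(vY^1\odot Y^2 u^\ast)$ decomposes into three terms, and when each is paired with $P(X)$, trace cyclicity brings either $v^\ast P(X) u$, $P(X) u$, or $v^\ast P(X)$ into view as a factor, all of which vanish by hypothesis. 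Subtracting and noting that $(\partial^{(N)}_j P)(X^t)\to(\partial^{(N)}_j P)(X)$ in $L^2$ as $t\to 0$ by polynomial continuity, the quantity to be estimated is the limit of
\[
\bigl\langle P(X^t)-P(X),\, (\partial^{(N)}_j)^{\ast,t}(vY^1\odot Y^2 u^\ast)\bigr\rangle.
\]

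To bound this I would use the first-order Taylor expansion $P(X^t)-P(X)=\sqrt{t}\sum_k (\partial^{(N)}_k P)(X)\sharp S_k + O(t)$ in operator norm. Freeness of the $S_k$ from $\{X,u,v\}$ together with the free Wick identity $\tau(A S_k B S_k C)=\tau(AC)\tau(B)$ yields the matricial identity $\|(\pi\sharp S_k)u\|_2 = \|\pi\cdot u\|_{L^2}$ for $\pi\in M_N(\M\ootimes\M)$, plus vanishing of the cross terms with $k\neq k'$; combined with the reality relation $(\partial_k P)^\dagger=\partial_k P^\ast$ recorded in \eqref{eq:real_amplification} and the elementary estimate $\|v^\ast\pi u\|_2\leq\min(\|v\|\|\pi\cdot u\|_2,\|u\|\|\pi^\dagger\cdot v\|_2)$, this produces sharp asymptotics in $\sqrt t$ for $\|(P(X^t)-P(X))u\|_2$, $\|v^\ast(P(X^t)-P(X))\|_2$ and $\|v^\ast(P(X^t)-P(X))u\|_2$ in terms of the two natural halves of $\kappa_X(P;u,v)$. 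Applying Cauchy--Schwarz and the matricial Dabrowski bounds (Proposition \ref{prop:Dab-estimates} and Corollary \ref{cor:key-estimates}) to each of the three Voiculescu terms, each picks up a factor $\|\xi^t_j\|_2$ times one of these asymptotics; summing squared moduli over $j$, using $\sum_j\|\xi^t_j\|_2^2=\Phi^\ast(X^t)$, and taking the liminf along a subsequence $t\searrow 0$ with $t\Phi^\ast(X^t)\to\alpha(X)$ yields the asserted inequality with overall constant bounded by $\tfrac{1}{2}+2+2=\tfrac{5}{2}\leq 4$. The final consequence follows because $\delta^\star(X)=n$ means $\alpha(X)=0$.

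The main obstacle is the last step: extracting the refined quantity $\kappa_X(P;u,v)$, rather than a cruder $\|u\|\|v\|$-type bound, requires precise first-order expansions involving $\|(\partial^{(N)}_k P)(X)u\|_2$ and $\|(\partial^{(N)}_k P^\ast)(X)v\|_2$, which in turn rest on the matricial free Wick identity and on careful bookkeeping of how the left and right kernel data $u$ and $v$ distribute across the three Voiculescu terms. One also has to verify that the matrix-entry indices created by the amplifications do not obstruct the freeness computation, so that the scalar identity $\tau(ASBSC)=\tau(AC)\tau(B)$ indeed lifts cleanly to the amplified inner products.
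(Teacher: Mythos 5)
Your overall architecture is the right one and matches the paper's: perturb by free semicirculars, use the matricial Voiculescu formula and Dabrowski estimates at time $t$, exploit the freeness-induced isometry (the ``free Wick identity'') for the first-order term in $\sqrt{t}$, and pass to the liminf using $t\Phi^\ast(X^t)\to\alpha(X)$. However, there are two genuine gaps in the execution. First, a domain problem: you apply $(\partial_j^{(N)})^{\ast,t}$ to $v\cdot(Y^1\odot Y^2)\cdot u^\ast$ with $u,v\in M_N(\M_0)$ and $Y^1,Y^2$ polynomials in $X$, but the adjoint of $\partial_j^{(N)}$ relative to $X^t$ lives on $L^2(M_N(\M_t\ootimes\M_t))$, and $\M_0\not\subseteq\M_t$ for $t>0$; ``extension by continuity'' does not repair this, since the obstruction is membership in the wrong von Neumann algebra, not boundedness. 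The paper's fix is to replace $u,v$ by the trace-preserving conditional expectations $u_t=\E_t^{(N)}[u]$, $v_t=\E_t^{(N)}[v]$ onto $M_N(\M_t)$ (and to evaluate $Y^i$ at $X^t$), then Kaplansky-approximate by polynomials in $X^t$; the same conditional expectations are also what make the $t\searrow 0$ limit of the left-hand side a polynomial identity in $\sqrt{t}$.

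Second, and more seriously, your central estimate --- Cauchy--Schwarz on $\langle P(X^t)-P(X),(\partial_j^{(N)})^{\ast,t}(\cdots)\rangle$ --- cannot produce $\kappa_X(P;u,v)$: it separates $P(X^t)-P(X)$ from $u$ and $v$ inside the $L^2$-norms and therefore yields at best a bound of the form $\|u\|\,\|v\|\cdot(\text{const})$, which is weaker than the asserted inequality (note $\kappa_X(P;u,v)\lesssim\|u\|\,\|v\|$, not the reverse). You correctly flag this as ``the main obstacle'' but do not resolve it. The resolution in the paper is to organize the integration by parts so that the kernel vectors stay attached to $P$: one starts from $\langle P(X^t)u_t,(\partial_j^{(N)})^{\ast,t}\bigl((v_tQ^1)\odot Q^2\bigr)\rangle$, expands by the Leibniz rule so that the main term $\langle v_t^\ast(\partial_j^{(N)}P)(X^t)u_t,Q^1\odot Q^2\rangle$ appears together with a correction controlled by Corollary \ref{cor:key-estimates}, and thereby obtains the fixed-$t$ bound $4\|\xi_j^t\|_2(\|P(X^t)u_t\|_2\|v_t\|+\|u_t\|\|P(X^t)^\ast v_t\|_2)\|Q^1\|\|Q^2\|$ (Theorem \ref{thm:Fisher-bound}); only then does the freeness isometry convert $\frac{1}{\sqrt t}\|P(X^t)u_t\|_2$ into $(\sum_k\|(\partial_k^{(N)}P)(X)u\|_2^2)^{1/2}$ in the limit. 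Without this reorganization the quantitative statement of the theorem is not proved, although your cruder bound would still suffice for the qualitative consequence when $\delta^\star(X_1,\dots,X_n)=n$.
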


The proof of Theorem \ref{thm:Fisher-growth} will be given in Paragraph \ref{subsubsec:proof_Fisher-growth}. Like in \cite{MSW17}, it relies on a similar result, namely Theorem \ref{thm:Fisher-bound}, that holds under the stronger assumption of finite free Fisher information; this is the content of the next paragraph.

\subsubsection{A preliminary version of Theorem \ref{thm:Fisher-growth}}

We want to prove a preliminary version of Theorem \ref{thm:Fisher-growth} that holds under the stronger assumption of finite Fisher information. The precise statement reads as follows.

\begin{theorem}\label{thm:Fisher-bound}
Let $(\M,\tau)$ be a tracial $W^\ast$-probability space and let $X_1,\dots,X_n\in \M$ be $n$ selfadjoint noncommutative random variables. For all $P\in M_N(\C\langle X_1,\dots,X_n\rangle)$ and all $u,v\in M_N(\M_0)$, we have
\begin{equation}\label{eq:Fisher-bound}
|\langle v^\ast \cdot (\partial_j^{(N)} P) \cdot u, Q^1 \odot Q^2\rangle| \leq 4 \|\xi_j\|_2 \bigl(\|Pu\|_2 \|v\| + \|u\| \|P^\ast v\|_2\bigr) \|Q^1\| \|Q^2\|
\end{equation}
for all $Q^1,Q^2\in M_N(\C\langle X_1,\dots,X_n\rangle)$ and $j=1,\dots,n$. In particular, we have
\begin{equation}\label{eq:Fisher-bound_sum}
\begin{aligned}
\lefteqn{\sum^n_{j=1} |\langle v^\ast \cdot (\partial_j^{(N)} P) \cdot u, Q^1 \odot Q^2\rangle|^2}\\
 & \qquad \leq 16 \bigl(\|P u\|_2 \|v\| + \|u\| \|P^\ast v\|_2\bigr)^2 \Phi^\ast(X_1,\dots,X_n) \|Q^1\|^2 \|Q^2\|^2
\end{aligned}
\end{equation}
for all $Q^1,Q^2\in M_N(\C\langle X_1,\dots,X_n\rangle)$.
\end{theorem}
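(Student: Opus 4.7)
The plan is to establish \eqref{eq:Fisher-bound} first under the assumption $u, v \in M_N(\C\langle X_1,\dots,X_n\rangle)$ and then extend by density; in the polynomial case, I would move the bimodule action to the second entry of the inner product, apply the adjoint identity from Lemma~\ref{lem:adjoints_amplifications}, expand $(\partial_j^{(N)})^\ast$ via Voiculescu's formula (Proposition~\ref{prop:Voiculescu-formula_amplification}), and bound each of the three resulting summands by using cyclicity of the tracial state $\tau_N$ to regroup and invoking the matricial Dabrowski inequalities of Proposition~\ref{prop:Dab-estimates}.

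Concretely, for polynomial $u, v$, the compatibility $v \cdot (Q^1 \odot Q^2) \cdot u^\ast = (vQ^1) \odot (Q^2 u^\ast)$ together with the adjointness of the bimodule actions on $L^2(M_N(\M_0 \ootimes \M_0))$ and Lemma~\ref{lem:adjoints_amplifications} (applicable since $vQ^1, Q^2 u^\ast$ lie in $M_N(\C\langle X_1,\dots,X_n\rangle)$) gives
\[
\langle v^\ast \cdot (\partial_j^{(N)} P) \cdot u,\, Q^1 \odot Q^2 \rangle = \langle P,\, (\partial_j^{(N)})^\ast\big((vQ^1) \odot (Q^2 u^\ast)\big) \rangle.
\]
Proposition~\ref{prop:Voiculescu-formula_amplification} then splits the right-hand side into three summands, and I would pair each against $P$ via $\tau_N(P\, (\cdot)^\ast)$ and regroup by cyclicity to obtain bounds
\[
\|\xi_j\|_2 \|v\| \|Pu\|_2 \|Q^1\|\|Q^2\|,\quad 2\|\xi_j\|_2 \|v\|\|Pu\|_2 \|Q^1\|\|Q^2\|,\quad 2\|\xi_j\|_2 \|u\|\|P^\ast v\|_2 \|Q^1\|\|Q^2\|,
\]
respectively. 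For the $\xi_j$-summand, cyclicity clusters $v^\ast P u$ and one applies $|\tau_N(AB)| \leq \|A\|_2\|B\|_2$ with $\|v^\ast P u\|_2 \leq \|v\|\|Pu\|_2$ and $\|\1_N \xi_j^\ast\|_2 = \|\xi_j\|_2$; for the $(\id\otimes\tau)^{(N)}$-summand, cyclicity exposes $Pu$ and the Dabrowski bound $\|(\id\otimes\tau)^{(N)}(\partial_j^{(N)}(vQ^1))\|_2 \leq 2\|\xi_j\|_2 \|v\|\|Q^1\|$ from Proposition~\ref{prop:Dab-estimates} closes the estimate; and for the $(\tau\otimes\id)^{(N)}$-summand, regrouping isolates $v^\ast P$ (so that $\|v^\ast P\|_2 = \|P^\ast v\|_2$) and the symmetric Dabrowski estimate applies. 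The sum of the three contributions is dominated by $4\|\xi_j\|_2(\|Pu\|_2\|v\| + \|u\|\|P^\ast v\|_2)\|Q^1\|\|Q^2\|$, which is \eqref{eq:Fisher-bound}.

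To pass from polynomial $u, v$ to general $u, v \in M_N(\M_0)$, I would approximate by Kaplansky density: there exist polynomial nets $u^{(k)}, v^{(k)}$ converging to $u, v$ in strong operator topology with $\|u^{(k)}\|, \|v^{(k)}\|$ uniformly bounded, so that the left-hand side is continuous ($L^2$-multiplication by uniformly bounded operators being strong-operator continuous) and the right-hand side passes to the limit since $\|Pu^{(k)}\|_2 \to \|Pu\|_2$ and $\|P^\ast v^{(k)}\|_2 \to \|P^\ast v\|_2$. The summed inequality \eqref{eq:Fisher-bound_sum} then follows by squaring \eqref{eq:Fisher-bound}, summing over $j=1,\dots,n$, and invoking $\sum_{j=1}^n \|\xi_j\|_2^2 = \Phi^\ast(X_1,\dots,X_n)$.

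The main obstacle is the \emph{asymmetric allocation} of the bounded factors $u$ and $v$ to either side of $P$ in each of the three Voiculescu summands: cyclicity of $\tau_N$ must be exploited so that the relevant $L^2$-factor appears as $\|Pu\|_2$ or $\|P^\ast v\|_2$ rather than the bigger $\|P\|_2$. This closely parallels the scalar-valued reduction argument of \cite[Proposition~3.9]{MSW17}, now executed with the matrix-valued Dabrowski inequalities from Proposition~\ref{prop:Dab-estimates}.
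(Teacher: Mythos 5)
Your proof is correct, and it reaches \eqref{eq:Fisher-bound} by a route that differs from the paper's in one structural respect. The paper starts from $\langle Pu, (\partial_j^{(N)})^\ast((vQ^1)\odot Q^2)\rangle$ and applies the Leibniz rule to $\partial_j^{(N)}(Pu)$; this produces the target inner product plus a correction term involving $\partial_j^{(N)}u$, and the two resulting terms are then controlled by the two estimates of Corollary \ref{cor:key-estimates}. You instead transfer both $v^\ast\cdot$ and $\cdot\,u$ across the inner product in a single step, using $v\cdot(Q^1\odot Q^2)\cdot u^\ast=(vQ^1)\odot(Q^2u^\ast)$ and Lemma \ref{lem:adjoints_amplifications}, arriving at $\langle P, (\partial_j^{(N)})^\ast((vQ^1)\odot(Q^2u^\ast))\rangle$, and then expand the adjoint by Voiculescu's formula (Proposition \ref{prop:Voiculescu-formula_amplification}) into three summands, each controlled by traciality and Proposition \ref{prop:Dab-estimates}. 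This buys a small simplification: no Leibniz rule on the product $Pu$, hence no term containing $\partial_j^{(N)}u$ and no need for Corollary \ref{cor:key-estimates} at all; your three contributions carry coefficients $1$, $2$ and $2$ (versus the paper's $3$ and $4$), so the stated constant $4$ is comfortably met. The essential mechanism — allocating $u$ and $v$ so that the $L^2$-factors appear as $\|Pu\|_2$ and $\|P^\ast v\|_2$, then closing with the matricial Dabrowski bounds — is the same, and your Kaplansky-density passage to general $u,v\in M_N(\M_0)$ matches the paper's use of Lemma \ref{lem:Kaplansky}, as does the derivation of \eqref{eq:Fisher-bound_sum} by squaring and summing.
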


In addition to the results obtained in the previous paragraphs, we will use in proof of Theorem \ref{thm:Fisher-bound} the following fact, which is a direct consequence of Kaplansky's density theorem.

\begin{lemma}\label{lem:Kaplansky}
For any $w\in M_N(\M_0)$, there exists a sequence $(w_k)_{k\in\N}$ in $M_N(\C\langle X_1,\dots,X_n\rangle)$ such that
$$\sup_{k\in\N} \|w_k\| \leq \|w\| \qquad\text{and}\qquad \lim_{k\to \infty} \|w_k-w\|_2 = 0.$$
\end{lemma}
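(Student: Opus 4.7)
The statement is essentially a packaging of Kaplansky's density theorem for the matricial setting, combined with the standard fact that, for a finite von Neumann algebra equipped with a faithful normal tracial state, strong-operator convergence of a bounded net coincides with $\|\cdot\|_2$-convergence in the standard representation. My plan is to carry this out in three short steps, and I do not expect any substantial obstacle: once the right tools are aligned, the proof is essentially a single invocation of Kaplansky.

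First, I would observe that $\C\langle X_1,\dots,X_n\rangle$ is a unital $\ast$-subalgebra of $\M_0$, using that each $X_j$ is selfadjoint, and that by definition $\M_0$ coincides with the SOT-closure of this $\ast$-algebra. Passing to $N\times N$ matrices entry-by-entry then shows that $M_N(\C\langle X_1,\dots,X_n\rangle)$ is a unital $\ast$-subalgebra of $M_N(\M_0)$ whose SOT-closure is all of $M_N(\M_0)$.

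Second, assuming $w\neq 0$ (the case $w=0$ is handled trivially by taking $w_k=0$), I would invoke Kaplansky's density theorem for the inclusion $M_N(\C\langle X_1,\dots,X_n\rangle) \subset M_N(\M_0)$. This gives that the closed operator-norm ball of radius $\|w\|$ of the subalgebra is SOT-dense in the closed operator-norm ball of radius $\|w\|$ of $M_N(\M_0)$. Hence there exists a net $(w_\alpha)$ in $M_N(\C\langle X_1,\dots,X_n\rangle)$ satisfying $\|w_\alpha\|\leq \|w\|$ for every $\alpha$ together with $w_\alpha \to w$ strongly.

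Finally, in the standard representation of $M_N(\M_0)$ on $L^2(M_N(\M_0),\tr_N\circ \tau^{(N)})$, the cyclic trace vector $\hat{\1}_N$ satisfies $\|a\hat{\1}_N\| = \|a\|_2$ for every $a\in M_N(\M_0)$; therefore, the SOT-convergence $w_\alpha \to w$ forces $\|w_\alpha - w\|_2 \to 0$. In particular, $w$ lies in the $\|\cdot\|_2$-closure of the set $\{a\in M_N(\C\langle X_1,\dots,X_n\rangle):\|a\|\leq \|w\|\}$; since $\|\cdot\|_2$ is a metric, I can extract from that set an honest sequence $(w_k)_{k\in\N}$ with $\|w_k-w\|_2\to 0$ and automatically $\sup_k \|w_k\|\leq \|w\|$, which is exactly what the lemma asserts. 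The only point to be slightly careful about is not to conflate the operator norm $\|\cdot\|$ with the $\|\cdot\|_2$-norm when invoking Kaplansky, but this is precisely what the statement of Kaplansky's theorem handles.
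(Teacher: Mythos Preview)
Your proof is correct and follows essentially the same line as the paper's: apply Kaplansky's density theorem to obtain a norm-bounded strongly convergent net, observe that strong convergence implies $\|\cdot\|_2$-convergence via the trace vector, and then pass to a sequence. If anything, your extraction of a sequence via the metric structure of $\|\cdot\|_2$ is slightly more careful than the paper's phrasing ``choose a subsequence'' of the net.
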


\begin{proof}
By Kaplansky's density theorem, each element $w\in M_N(\M_0)$ can be approximated with respect to the strong operator topology by some net $(w_\lambda)_{\lambda\in\Lambda}$ in $M_N(\C\langle X_1,\dots,X_n\rangle)$ satisfying $\|w_\lambda\| \leq \|w\|$. Now, since the net $(w_\lambda)_{\lambda\in\Lambda}$ converges to $w$ in the strong operator topology, it also converges in the $L^2$-topology to $w$. Thus, we may choose a subsequence $(w_{\lambda(k)})_{k\in\N}$ that converges to $w$ in the $L^2$-sense, which does the job.
\end{proof}

\begin{proof}[Proof of Theorem \ref{thm:Fisher-bound}]
First, we consider the case $u,v\in M_N(\C\langle X_1,\dots,X_n\rangle)$. If $Q^1,Q^2\in M_N(\C\langle X_1,\dots,X_n\rangle)$ are given, we may check then that
\begin{align*}
\lefteqn{\langle Pu, (\partial_j^{(N)})^\ast\big( (v Q^1) \odot Q^2\big)\rangle}\\
&= \langle \partial_j^{(N)}(Pu), v \cdot (Q^1 \odot Q^2)\rangle\\
&= \langle (\partial_j^{(N)} P) \cdot u,  v \cdot (Q^1 \odot Q^2)\rangle + \langle P \cdot (\partial_j^{(N)} u), v \cdot (Q^1 \odot Q^2)\rangle\\
&= \langle v^\ast \cdot (\partial_j^{(N)} P) \cdot u, Q^1 \odot Q^2\rangle + \langle (\partial_j^{(N)} u) \cdot (Q^2)^\ast, (P^\ast v Q^1) \odot \1_N\rangle\\
&= \langle v^\ast \cdot (\partial_j^{(N)} P) \cdot u, Q^1 \odot Q^2\rangle + \langle (\id\otimes\tau)^{(N)}\big((\partial_j^{(N)} u) \cdot (Q^2)^\ast\big), (P^\ast v) Q^1\rangle
\end{align*}
holds. Rearranging the above equation, applying the triangle inequality, and using Corollary \ref{cor:key-estimates} yields that
\begin{align*}
\lefteqn{\big|\langle v^\ast \cdot (\partial_j^{(N)} P) \cdot u, Q^1 \odot Q^2\rangle\big|}\\
&\leq \big|\langle Pu, (\partial_j^{(N)})^\ast\big( (v Q^1) \odot Q^2)\rangle\big| + \big|\langle (\id\otimes\tau)^{(N)}\big((\partial_j^{(N)} u) \cdot (Q^2)^\ast\big), (P^\ast v) Q^1\rangle\big|\\
&\leq \|Pu\|_2 \big\|(\partial_j^{(N)})^\ast\big( (v Q^1) \odot Q^2)\big\|_2 + \big\|(\id\otimes\tau)^{(N)}\big((\partial_j^{(N)} u) \cdot (Q^2)^\ast\big)\big\|_2 \|(P^\ast v) Q^1\|_2\\
&\leq 3 \|\xi_j\|_2 \|Pu\|_2 \|v Q^1\| \|Q^2\| + 4 \|\xi_j\|_2 \|u\| \|(Q^2)^\ast\| \|P^\ast v Q^1\|_2\\
&\leq 4 \|\xi_j\|_2 \bigl(\|Pu\|_2 \|v\| + \|u\| \|P^\ast v\|_2\bigr) \|Q^1\| \|Q^2\|,
\end{align*}
which is \eqref{eq:Fisher-bound}. The validity of \eqref{eq:Fisher-bound} in the general case of arbitrary $u,v\in M_N(\M_0)$ is due to Lemma \ref{lem:Kaplansky}. This shows the first part of the statement.

The second inequality \eqref{eq:Fisher-bound_sum} follows by taking squares on both sides of \eqref{eq:Fisher-bound}, summing over all $j=1,\dots,n$, and using that $\Phi^\ast(X_1,\dots,X_n) = \sum^n_{j=1}\|\xi_j\|^2_2$.
\end{proof}

\subsubsection{Proof of Theorem \ref{thm:Fisher-growth}}
\label{subsubsec:proof_Fisher-growth}

Let $(\M,\tau)$ be a tracial $W^\ast$-probability space and consider $n$ selfadjoint noncommutative random variables $X_1,\dots,X_n\in \M$.

With no loss of generality, we may assume that $\M$ contains $n$ normalized semicircular elements $S_1,\dots,S_n$ such that $\{X_1,\dots,X_n\},\{S_1\},\dots,\{S_n\}$ are freely independent; the following lemma records an important consequence of that assumption.

\begin{lemma}\label{lem:isometry}
The linear mapping
$$\Theta:\ M_N(\M_0 \otimes \M_0)^n \to M_N(\M),\quad (Q^1,\dots,Q^n) \mapsto \sum^n_{j=1} Q^j \sharp S_j$$
extends to an isometry
$$\hat{\Theta}:\ L^2(M_N(\M_0 \ootimes \M_0), \tr_N \circ (\tau \ootimes \tau)^{(N)})^n \to L^2(M_N(\M),\tr_N \circ \tau^{(N)})$$
\end{lemma}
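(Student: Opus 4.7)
The plan is to verify the isometry property by a direct entry-wise computation on the dense subspace $M_N(\M_0 \otimes \M_0)^n$, and then extend by continuity. All of the work ultimately boils down to a single covariance identity for the free normalized semicirculars $S_1,\dots,S_n$.

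First I would establish the scalar moment formula: for arbitrary $a_1, a_2, b_1, b_2 \in \M_0$ and $j,k\in\{1,\dots,n\}$,
\begin{equation*}
\tau\bigl((a_1 S_j b_1)^\ast (a_2 S_k b_2)\bigr) \;=\; \delta_{j,k}\, \tau(a_1^\ast a_2)\, \tau(b_1^\ast b_2).
\end{equation*}
After cycling the trace to rewrite the left-hand side as $\tau(S_j a_1^\ast a_2 S_k b_2 b_1^\ast)$, this follows from the standard covariance relation $\tau(S_j Y S_k Z) = \delta_{j,k}\, \tau(Y)\, \tau(Z)$ for $Y,Z\in\M_0$, which is in turn a consequence of the freeness of the $S_j$ among themselves and from $\M_0$, together with $\tau(S_j)=0$ and $\tau(S_j^2)=1$; the asymmetry $\tau(b_2 b_1^\ast)$ versus $\tau(b_1^\ast b_2)$ is absorbed by traciality of $\tau$.

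Next I would lift this to the matricial setting. Observing that $(a\otimes b)^\dagger \sharp S_j = b^\ast S_j a^\ast = \bigl((a\otimes b)\sharp S_j\bigr)^\ast$, one checks entry-by-entry that the adjoint in $M_N(\M)$ of $Q \sharp S_j$ equals $Q^\flat \sharp S_j$, where $Q^\flat$ denotes the matrix with $(p,q)$-entry $(Q_{q,p})^\dagger$. Expanding the diagonal entries of $\Theta(Q^1,\dots,Q^n)^\ast\,\Theta(\tilde Q^1,\dots,\tilde Q^n)$ and applying $\tr_N \circ \tau^{(N)}$, the cross-index terms $j\neq k$ vanish thanks to the scalar identity above, while the diagonal terms reassemble precisely into
\begin{equation*}
\sum_{j=1}^n \tr_N \circ (\tau \ootimes \tau)^{(N)}\bigl((Q^j)^\ast \tilde Q^j\bigr) \;=\; \sum_{j=1}^n \langle Q^j, \tilde Q^j \rangle.
\end{equation*}
By bilinearity this yields the polarized isometry identity on the whole of $M_N(\M_0 \otimes \M_0)^n$.

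Finally, since $\M_0 \otimes \M_0$ is $\sigma$-strongly dense in $\M_0 \ootimes \M_0$ and the trace $\tau \ootimes \tau$ is normal and faithful, the algebraic tensor product is $\|\cdot\|_2$-dense in $L^2(\M_0 \ootimes \M_0,\tau \ootimes \tau)$; passing to $M_N$ preserves density in the induced $L^2$-norm, so $M_N(\M_0 \otimes \M_0)^n$ is dense in the source Hilbert space of $\hat\Theta$. Hence $\Theta$ admits a unique continuous linear extension $\hat\Theta$, which is automatically an isometry. The only real care required is the bookkeeping between the ordinary involution $\ast$ on $\M_0 \ootimes \M_0$ (which controls the $L^2$-inner product) and the $\dagger$-involution (which naturally appears when taking the adjoint of $Q\sharp S_j$ in $M_N(\M)$); once that interplay is made explicit, every other step is either elementary linear algebra or the standard free-probability covariance formula.
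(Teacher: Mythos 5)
Your proof is correct and follows essentially the same route as the paper's: expand the inner product entrywise, reduce everything to the covariance identity $\tau(S_jYS_kZ)=\delta_{j,k}\tau(Y)\tau(Z)$ coming from freeness and normalization of the semicirculars, identify the result with the $L^2$-norm on $M_N(\M_0\ootimes\M_0)$, and extend by density. You merely spell out a few points the paper leaves implicit (the scalar moment formula, the $\dagger$-bookkeeping for adjoints of $Q\sharp S_j$, and the density of the algebraic tensor product), which is fine.
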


\begin{proof}
If $Q^1,\dots,Q^n \in M_N(\M_0 \otimes \M_0)$ are given, say $Q^j = (Q^j_{kl})_{k,l=1}^N$ for $j=1,\dots,n$, then
$$\bigg\|\sum^n_{j=1} Q^j \sharp S_j\bigg\|_2^2
= \sum_{i,j=1}^n \langle Q^i \sharp S_i, Q^j \sharp S_j\rangle
= \frac{1}{N} \sum_{i,j=1}^n \sum_{k,l=1}^N \langle Q^i_{kl} \sharp S_i, Q^j_{lk} \sharp S_j\rangle.$$
Now, the assumed freeness gives us that $\langle Q^i_{kl} \sharp S_i, Q^j_{lk} \sharp S_j\rangle = \delta_{i,j} \langle Q^j_{kl}, Q^j_{lk}\rangle$. Hence, we see that
$$\bigg\|\sum^n_{j=1} Q^j \sharp S_j\bigg\|_2^2
= \frac{1}{N} \sum_{i,j=1}^n \sum_{k,l=1}^N \langle Q^i_{kl} \sharp S_i, Q^j_{lk} \sharp S_j\rangle
= \frac{1}{N} \sum_{j=1}^n \sum_{k,l=1}^N \langle Q^j_{kl}, Q^j_{lk} \rangle
= \sum^n_{j=1} \|Q^j\|_2^2,$$
i.e., we have that $\|\Theta(Q^1,\dots,Q^n)\|_2^2 = \sum^n_{j=1} \|Q^j\|_2^2$. This confirms that $\Theta$ admits an isometric extension of the desired form.
\end{proof}

Now, let us define for each $t\geq 0$ the variables
$$X_j^t := X_j + \sqrt{t} S_j \qquad\text{for $j=1,\dots,n$}$$
and denote by $\M_t := \vN(X_1^t,\dots,X_n^t)$ the von Neumann subalgebra of $\M$ that they generate; in the case $t=0$, this is in accordance with our previous definition of $\M_0$. Furthermore, we abbreviate $X^t=(X_1^t,\dots,X_n^t)$ for each $t\geq 0$, so that in particular $X^0=X=(X_1,\dots,X_n)$.

Since $\M_t$ is a von Neumann subalgebra of $\M$, there is a unique trace-preserving conditional expectation $\E_t$ from $\M$ onto $\M_t$. Note that $\E_t^{(N)}$ gives then the unique trace-preserving conditional expectation from $M_N(\M)$ to $M_N(\M_t)$.

\begin{lemma}\label{lem:t-limits}
Take any $P\in M_N(\C\langle x_1,\dots,x_n\rangle)$ and suppose that there is an element $w\in M_N(\M_0)$ such that $P(X) w = 0$. Then
$$\lim_{t\searrow 0} \frac{1}{\sqrt{t}} P(X^t)w = \sum^n_{j=1} \big((\partial_j^{(N)} P)(X) \cdot w \big) \sharp S_j$$
and 
$$\lim_{t\searrow 0} \frac{1}{\sqrt{t}} \|P(X^t)w\|_2 = \bigg(\sum^n_{j=1} \big\|(\partial_j^{(N)} P)(X) \cdot w \big\|_2^2\bigg)^{1/2}.$$
Moreover, if we consider $w_t := \E_t^{(N)}[w] \in \M_t$, then
$$\limsup_{t\searrow0}  \frac{1}{\sqrt{t}} \|P(X^t) w_t\|_2 \leq \bigg(\sum^n_{j=1} \big\|(\partial_j^{(N)} P)(X) \cdot w \big\|_2^2\bigg)^{1/2}.$$
\end{lemma}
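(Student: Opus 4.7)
The plan is to expand $P(X^t)$ in powers of $\sqrt{t}$ around $X$ and then multiply by $w$ on the right to kill the zeroth-order term. Specifically, I would first establish the operator-norm Taylor expansion
$$P(X^t) = P(X) + \sqrt{t} \sum^n_{j=1} (\partial_j^{(N)} P)(X) \sharp S_j + R_t, \qquad \|R_t\| \leq C_P\, t,$$
valid for $t$ in a bounded range, with $C_P$ depending on $P$ and on $\|X_j\|$, $\|S_j\|$. Once this is in hand, multiplying by $w$ on the right, using the hypothesis $P(X) w = 0$, and dividing by $\sqrt{t}$ gives
$$\frac{1}{\sqrt{t}} P(X^t) w = \sum^n_{j=1} \big((\partial_j^{(N)} P)(X) \cdot w\big) \sharp S_j + \frac{1}{\sqrt{t}} R_t w,$$
and the last term tends to $0$ in $L^2$ because $\|R_t w\|_2 \leq \|R_t\| \|w\|_2 = O(t)$. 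This yields the first identity.

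For the Taylor expansion itself, I would use $\C$-linearity and the fact that the matrix entries decouple under $\partial_j^{(N)}$ and $\sharp$ to reduce to the case of a scalar monomial $P(x) = x_{i_1}\cdots x_{i_r}$. Expanding $P(X^t) = \prod_{\ell=1}^r (X_{i_\ell} + \sqrt{t} S_{i_\ell})$ produces $2^r$ terms indexed by subsets $A \subseteq \{1,\dots,r\}$, with the term for $A$ carrying a factor $t^{|A|/2}$. The $|A|=0$ term recovers $P(X)$; the $|A|=1$ terms sum, by direct comparison with the definition of the noncommutative derivatives, to the first-order contribution above; and the remainder $R_t$ collects the terms with $|A|\geq 2$, which a crude triangle-inequality bound on operator norms controls by $\|R_t\| \leq C_P\, t$.

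The second claim follows from the first because $L^2$-convergence entails convergence of $L^2$-norms; since each $(\partial_j^{(N)} P)(X) \cdot w$ belongs to $M_N(\M_0 \ootimes \M_0)$, Lemma \ref{lem:isometry} gives
$$\bigg\| \sum^n_{j=1} \big((\partial_j^{(N)} P)(X) \cdot w\big) \sharp S_j \bigg\|_2^2 = \sum^n_{j=1} \big\|(\partial_j^{(N)} P)(X) \cdot w\big\|_2^2.$$
For the third claim, I would exploit that $P(X^t) \in M_N(\M_t)$ and that $\E_t^{(N)}$ is an $M_N(\M_t)$-bimodule map, so that
$$P(X^t) w_t = P(X^t) \E_t^{(N)}[w] = \E_t^{(N)}\big[P(X^t) w\big].$$
Contractivity of $\E_t^{(N)}$ on $L^2$ then gives $\|P(X^t) w_t\|_2 \leq \|P(X^t) w\|_2$, and dividing by $\sqrt{t}$ and taking $\limsup$ reduces the third claim to the second.

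None of these steps is genuinely difficult; the proof is essentially a bookkeeping exercise. The one point that demands care is verifying that the first-order coefficient in the expansion of $P(X^t)$ matches $\sum_j (\partial_j^{(N)} P)(X) \sharp S_j$ once the matricial $\sharp$-operation and the right bimodule action of $w$ on $M_N(\M_0 \ootimes \M_0)$ are correctly tracked. The isometric identity provided by Lemma \ref{lem:isometry}, which crucially exploits the freeness of the semicirculars $S_1,\dots,S_n$ from $\M_0$ and pairwise from each other, is what makes the $L^2$ identity on the right-hand side of the second claim come out cleanly.
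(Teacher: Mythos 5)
Your proposal is correct and follows essentially the same route as the paper: expand $P(X^t)$ as a polynomial in $\sqrt{t}$ whose zeroth-order term is killed by $P(X)w=0$ and whose remainder is $o(\sqrt{t})$ in operator norm, identify the first-order coefficient with $\sum_j(\partial_j^{(N)}P)(X)\sharp S_j$, apply Lemma \ref{lem:isometry} for the $L^2$-norm identity, and use $P(X^t)w_t=\E_t^{(N)}[P(X^t)w]$ together with contractivity of the conditional expectation for the final estimate. The only difference is that you spell out the monomial-by-monomial bookkeeping that the paper leaves implicit.
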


\begin{proof}
We write $P=(P_{kl})_{k,l=1}^N$. Then each $t\mapsto P_{k,l}(X^t)$ is a polynomial in $\sqrt{t}$ with coefficients coming from $\M$ and we have that
$$P_{kl}(X^t) = P_{kl}(X) + \sqrt{t} \sum^n_{j=1} (\partial_j P_{kl})(X) \sharp S_j + \omega_{kl}(t) \qquad\text{for all $t\geq 0$},$$
where $\omega_{kl}: [0,\infty) \to \M$ is a polynomial in $\sqrt{t}$ that satisfies $\lim_{t \searrow 0} \frac{1}{\sqrt{t}} \|\omega_{kl}(t)\| = 0$. Put $\omega = (\omega_{kl})_{k,l=1}^N: [0,\infty) \to M_N(\M)$. Then
$$P(X^t) = P(X) + \sqrt{t} \sum^n_{j=1} (\partial_j^{(N)} P)(X) \sharp S_j + \omega(t) \qquad\text{for all $t\geq 0$},$$
and since $P(X) w = 0$, the latter identity gives
$$\frac{1}{\sqrt{t}} P(X^t) w = \sum^n_{j=1} \big((\partial_j^{(N)} P)(X) \cdot w\big) \sharp S_j + \frac{1}{\sqrt{t}} \omega(t) w \qquad\text{for all $t\geq 0$}.$$
Because also $\lim_{t \searrow 0} \frac{1}{\sqrt{t}} \|\omega(t) w\| = 0$, we infer that
$$\lim_{t\searrow 0} \frac{1}{\sqrt{t}} P(X^t)w = \sum^n_{j=1} \big((\partial_j^{(N)} P)(X) \cdot w \big) \sharp S_j,$$
as claimed. With the help of Lemma \ref{lem:isometry}, we may deduce now that
\begin{align*}
\lim_{t\searrow 0} \frac{1}{\sqrt{t}} \|P(X^t)w\|_2
&= \bigg\|\sum^n_{j=1} \big((\partial_j^{(N)} P)(X) \cdot w \big) \sharp S_j\bigg\|_2\\
&= \hat{\Theta}\big((\partial_1^{(N)} P)(X) \cdot w, \dots, (\partial_n^{(N)} P)(X) \cdot w\big)\\
&= \bigg(\sum^n_{j=1} \big\|(\partial_j^{(N)} P)(X) \cdot w \big\|_2^2\bigg)^{1/2},
\end{align*}
which is the second formula. The last formula follows from the second one, since
$$P(X^t) w_t = P(X^t) \E_t^{(N)}[w] = \E_t^{(N)}[P(X^t)w]$$
and hence
$$\|P(X^t) w_t\|_2 = \|\E_t^{(N)}[P(X^t)w]\|_2 \leq \|P(X^t)w\|_2$$
holds. This concludes the proof.
\end{proof}

We are now prepared to prove Theorem \ref{thm:Fisher-growth}.

\begin{proof}[Proof of Theorem \ref{thm:Fisher-growth}]
Let us take any $P\in M_N(\C\langle x_1,\dots,x_n\rangle)$ and let us suppose that there are elements $u,v \in M_N(\M_0)$ such that both $P(X) u = 0$ and $P(X)^\ast v = 0$ holds. For each $t\geq0$, we introduce $u_t := \E_t^{(N)}[u]$ and $v_t := \E_t^{(N)}[v]$, which are by construction elements of $\M_t$ that satisfy $\|u_t\| \leq \|u\|$ and $\|v_t\| \leq \|v\|$.

We infer from Lemma \ref{lem:t-limits} that
$$\limsup_{t\searrow0} \frac{1}{\sqrt{t}} \|P(X^t) u_t\|_2 \leq \bigg(\sum^n_{j=1} \big\|(\partial_j^{(N)} P)(X) \cdot u \big\|_2^2\bigg)^{1/2},$$
and since $P^\ast(X) v = P(X)^\ast v = 0$, also that
$$\limsup_{t\searrow0} \frac{1}{\sqrt{t}} \|P^\ast(X^t) v_t\|_2 \leq \bigg(\sum^n_{j=1} \big\|(\partial_j^{(N)} P^\ast)(X) \cdot v \big\|_2^2\bigg)^{1/2},$$
which can be reformulated as
$$\limsup_{t\searrow0} \frac{1}{\sqrt{t}} \|P(X^t)^\ast v_t\|_2 \leq \bigg(\sum^n_{j=1} \big\|(\partial_j^{(N)} P)(X)^\dagger \cdot v \big\|_2^2\bigg)^{1/2}.$$

Now, take any $Q^1,Q^2\in M_N(\C\langle x_1,\dots,x_n\rangle)$. Since $\Phi^\ast(X^t)<\infty$, we obtain by the inequality \eqref{eq:Fisher-bound_sum} in Theorem \ref{thm:Fisher-bound} that
\begin{align*}
\lefteqn{\sum^n_{j=1} |\langle v_t^\ast \cdot (\partial_j^{(N)} P)(X^t) \cdot u_t, Q^1(X^t) \odot Q^2(X^t)\rangle|^2}\\
 & \quad \leq 16 \bigl(\|P(X^t) u_t\|_2 \|v_t\| + \|u_t\| \|P(X^t)^\ast v_t\|_2\bigr)^2 \Phi^\ast(X^t) \|Q^1(X^t)\|^2 \|Q^2(X^t)\|^2\\
 & \quad \leq 16 \bigg(\frac{1}{\sqrt{t}}\|P(X^t) u_t\|_2 \|v\| + \|u\| \frac{1}{\sqrt{t}} \|P(X^t)^\ast v_t\|_2\bigg)^2 \big(t\Phi^\ast(X^t)\big) \|Q^1(X^t)\|^2 \|Q^2(X^t)\|^2.
\end{align*}
Hence, we may conclude that
\begin{align*}
\liminf_{t\searrow 0} \sum^n_{j=1} |\langle v_t^\ast \cdot (\partial_j^{(N)} P)(X^t) \cdot u_t&, Q^1(X^t) \odot Q^2(X^t)\rangle|^2\\
                           & \leq 16 \kappa_X(P;u,v)^2 \alpha(X) \|Q^1(X)\|^2 \|Q^2(X)\|^2.
\end{align*}

In order to establish Theorem \ref{thm:Fisher-growth}, it thus only remains to show that
\begin{equation}\label{eq:Fisher-growth_liminf}
\begin{aligned}
\liminf_{t\searrow 0}\ \sum^n_{j=1} |\langle v_t^\ast \cdot (\partial_j^{(N)} P)(X^t) & \cdot u_t, Q^1(X^t) \odot Q^2(X^t)\rangle|^2\\
&= \sum^n_{j=1} |\langle v^\ast \cdot (\partial_j^{(N)} P)(X) \cdot u, Q^1(X) \odot Q^2(X)\rangle|^2.
\end{aligned}
\end{equation}
This is indeed sufficient, since each $Y\in M_N(\C\langle X_1,\dots,X_n\rangle)$ is of the form $Y=Q(X)$ for some $Q\in M_N(\C\langle x_1,\dots,x_n\rangle)$.

We first note that $\E_t \ootimes \E_t$ gives the unique trace-preserving conditional expectation from $\M \ootimes \M$ to $\M_t \ootimes \M_t$, so that $(\E_t \ootimes \E_t)^{(N)}$ is the unique trace-preserving conditional expectation from $M_N(\M \ootimes \M)$ to $M_N(\M_t \ootimes \M_t)$. Using this fact, we may check that
\begin{align*}
\lefteqn{\langle v_t^\ast \cdot (\partial_j^{(N)} P)(X^t) \cdot u_t, Q^1(X^t) \odot Q^2(X^t)\rangle}\\
&\qquad = \langle \E_t^{(N)}[v^\ast] \cdot (\partial_j^{(N)} P)(X^t) \cdot \E_t^{(N)}[u], Q^1(X^t) \odot Q^2(X^t)\rangle\\
&\qquad = \langle (\E_t \ootimes \E_t)^{(N)}[v^\ast \cdot (\partial_j^{(N)} P)(X^t) \cdot u], Q^1(X^t) \odot Q^2(X^t)\rangle\\
&\qquad = \langle v^\ast \cdot (\partial_j^{(N)} P)(X^t) \cdot u, Q^1(X^t) \odot Q^2(X^t)\rangle
\end{align*}
and the latter expression is actually a complex polynomial in $\sqrt{t}$. Altogether, this shows that
$$\lim_{t\searrow0}\ \langle v_t^\ast \cdot (\partial_j^{(N)} P)(X^t) \cdot u_t, Q^1(X^t) \odot Q^2(X^t)\rangle = \langle v^\ast \cdot (\partial_j^{(N)} P)(X) \cdot u, Q^1(X) \odot Q^2(X)\rangle,$$
so that
\begin{align*}
\lim_{t\searrow 0}\ \sum^n_{j=1} |\langle v_t^\ast \cdot (\partial_j^{(N)} P)(X^t) \cdot u_t&, Q^1(X^t) \odot Q^2(X^t)\rangle|^2\\
&= \sum^n_{j=1} |\langle v^\ast \cdot (\partial_j^{(N)} P)(X) \cdot u, Q^1(X) \odot Q^2(X)\rangle|^2
\end{align*}
holds, from which \eqref{eq:Fisher-growth_liminf} follows. This completes the proof of Theorem \ref{thm:Fisher-growth}.
\end{proof}

\subsubsection{A reformulation of Theorem \ref{thm:Fisher-growth}}

In the spirit of \cite{MSW17}, the reduction argument provided by Theorem \ref{thm:Fisher-growth} will be used later on actually in the following reformulation.

\begin{corollary}\label{cor:reduction}
Let $(\M,\tau)$ be a tracial $W^\ast$-probability space and let $X_1,\dots,X_n\in \M$ be $n$ selfadjoint noncommutative random variables that satisfy the condition
$$\delta^\star(X_1,\dots,X_n)=n.$$
Consider any matrix $P\in M_N(\C\langle x_1,\dots,x_n\rangle)$ of noncommutative polynomials and suppose that there is a projection $p \in M_N(\M_0)$ such that
$$P(X_1,\dots,X_n) p = 0.$$
Then there exists a projection $q\in M_N(\M_0)$ with the property $(\tr_N \circ \tau^{(N)})(q) \geq (\tr_N \circ \tau^{(N)})(p)$ and such that $P(X_1,\dots,X_n)^\ast q = 0$ and
$$q \cdot (\partial_j^{(N)} P)(X_1,\dots,X_n) \cdot p = 0 \qquad\text{for $j=1,\dots,n$}.$$
\end{corollary}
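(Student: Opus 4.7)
My plan is to reduce the statement directly to Theorem \ref{thm:Fisher-growth} by choosing $u := p$ and $v := q$ for a suitably constructed projection $q$. Since $\delta^\star(X_1,\dots,X_n)=n$ means $\alpha(X_1,\dots,X_n)=0$, the bound in that theorem collapses to zero, and its conclusion reads $v^\ast \cdot (\partial^{(N)}_j P)(X) \cdot u = 0$ for all $j$. If $v = q$ is a projection, this is precisely the last conclusion of the corollary. What remains, then, is to produce a projection $q\in M_N(\M_0)$ that satisfies $P(X)^\ast q = 0$ together with the trace inequality $(\tr_N\circ\tau^{(N)})(q) \geq (\tr_N\circ\tau^{(N)})(p)$.

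The natural candidate is to let $q$ be the kernel projection of $P(X)^\ast$. Since $M_N(\M_0)$ is a finite von Neumann algebra and $P(X)\in M_N(\M_0)$, this kernel projection lies in $M_N(\M_0)$, and by construction $P(X)^\ast q = 0$.

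For the trace bound I would invoke two standard facts in finite von Neumann algebras. First, $P(X) p = 0$ forces $p \leq \ker(P(X))$ as projections, so $(\tr_N\circ\tau^{(N)})(p) \leq (\tr_N\circ\tau^{(N)})(\ker(P(X)))$. Second, the kernel projections of $P(X)$ and of $P(X)^\ast$ carry the same trace: taking the polar decomposition $P(X) = w\, |P(X)|$ with $w$ a partial isometry in $M_N(\M_0)$, the source and range projections $w^\ast w = \1 - \ker(P(X))$ and $w w^\ast = \1 - \ker(P(X)^\ast)$ are Murray--von Neumann equivalent and therefore have the same trace. Combining these observations yields
\[
(\tr_N\circ\tau^{(N)})(q) = (\tr_N\circ\tau^{(N)})(\ker(P(X)^\ast)) = (\tr_N\circ\tau^{(N)})(\ker(P(X))) \geq (\tr_N\circ\tau^{(N)})(p).
\]

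Finally, applying Theorem \ref{thm:Fisher-growth} with $u = p$ and $v = q$, and using $q^\ast = q$ together with $\alpha(X_1,\dots,X_n) = 0$, gives $q \cdot (\partial_j^{(N)} P)(X_1,\dots,X_n) \cdot p = 0$ for every $j = 1,\dots,n$, completing the argument. I do not anticipate a genuine obstacle here: all the analytic work has already been absorbed into Theorem \ref{thm:Fisher-growth}, and the choice of $q$ as the kernel projection of $P(X)^\ast$ is essentially forced once one asks for the largest possible projection annihilating $P(X)^\ast$ from the left.
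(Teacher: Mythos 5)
Your argument is correct and coincides with the paper's own proof: the paper likewise takes $q := p_{\ker(P(X)^\ast)}$, obtains the trace inequality from $p \leq p_{\ker(P(X))}$ together with the equality of traces of the kernel projections of $P(X)$ and $P(X)^\ast$ (Lemma \ref{lem:kernels}, proved exactly via the polar decomposition argument you sketch), and then applies Theorem \ref{thm:Fisher-growth} with $u=p$, $v=q$ under $\delta^\star(X_1,\dots,X_n)=n$. No gaps.
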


Before giving the proof of Corollary \ref{cor:reduction}, we need to recall the following well-known result.

\begin{lemma}\label{lem:kernels}
Let $X$ be an element of any tracial $W^\ast$-probability space $(\M,\tau)$ over some complex Hilbert space $H$. Let $p_{\ker(X)}$ and $p_{\ker(X^\ast)}$ denote the orthogonal projections onto $\ker(X)$ and $\ker(X^\ast)$, respectively.

The projections $p_{\ker(X)}$ and $p_{\ker(X^\ast)}$ belong both to $\M$ and satisfy
$$\tau(p_{\ker(X)}) = \tau(p_{\ker(X^\ast)}).$$
Thus, in particular, if $\ker(X)$ is non-zero, then also $\ker(X^\ast)$ is a non-zero subspace of $H$.
\end{lemma}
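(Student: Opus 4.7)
The plan is to use the polar decomposition of $X$ inside the von Neumann algebra $\M$ together with the trace property of $\tau$.

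First I would argue that both projections lie in $\M$. Since $\M$ is a von Neumann algebra and $X \in \M$, also $X^\ast X \in \M$ and $XX^\ast \in \M$, so all of their spectral projections belong to $\M$. The projection $p_{\ker(X)}$ is precisely the spectral projection of $X^\ast X$ corresponding to the singleton $\{0\}$ (because $\ker(X) = \ker(X^\ast X)$ in any Hilbert space), and similarly $p_{\ker(X^\ast)}$ is the spectral projection of $XX^\ast$ at $\{0\}$. Hence both are in $\M$.

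Next I would invoke the polar decomposition $X = V |X|$, where $|X| = (X^\ast X)^{1/2} \in \M$ and $V$ is a partial isometry with initial space $\overline{\ran(|X|)} = \ker(X)^\perp$ and final space $\overline{\ran(X)} = \ker(X^\ast)^\perp$. In a von Neumann algebra the partial isometry $V$ lies in $\M$, and its source and range projections are
$$V^\ast V = \1 - p_{\ker(X)}, \qquad V V^\ast = \1 - p_{\ker(X^\ast)}.$$
Applying the trace $\tau$ and using its tracial property $\tau(V^\ast V) = \tau(V V^\ast)$, I obtain $\tau(p_{\ker(X)}) = \tau(p_{\ker(X^\ast)})$, which is the claimed identity. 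The last statement about non-triviality of $\ker(X^\ast)$ is then immediate from faithfulness of $\tau$: if $\ker(X)$ is non-zero then $p_{\ker(X)}\neq 0$ gives $\tau(p_{\ker(X)}) > 0$, hence also $\tau(p_{\ker(X^\ast)})>0$ and $p_{\ker(X^\ast)} \neq 0$.

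There is no real obstacle here; the argument is entirely standard and the only thing one needs to make sure of is that the polar decomposition indeed stays inside $\M$, which is a classical fact for von Neumann algebras. The tracial property is used in exactly the way one expects.
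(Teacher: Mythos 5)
Your proof is correct and is exactly the standard argument; the paper itself does not spell out a proof but defers to \cite{MSW17}, where the same polar-decomposition reasoning is used: $V^\ast V = \1 - p_{\ker(X)}$, $VV^\ast = \1 - p_{\ker(X^\ast)}$, and the trace property plus faithfulness finish the job. Nothing is missing.
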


The interested reader can find a detailed proof of that statement in \cite{MSW17}.

\begin{proof}[Proof of Corollary \ref{cor:reduction}]
Lemma \ref{lem:kernels} tells us that the projections $p_{\ker(P(X))}$ and $p_{\ker(P(X)^\ast)}$ both belong to $\M_0$ and satisfy $(\tr_N \circ \tau^{(N)})(p_{\ker(P(X))}) = (\tr_N \circ \tau^{(N)})(p_{\ker(P(X)^\ast)})$. We put $q := p_{\ker(P(X)^\ast)}$; note that in particular $P(X)^\ast q = 0$.
From $P(X) p = 0$, it follows that $\ran(p) \subseteq \ker(P(X))$. The projection $p_{\ker(P(X))}$ thus satisfies $p \leq p_{\ker(P(X))}$, so that
$$(\tr_N \circ \tau^{(N)})(p) \leq (\tr_N \circ \tau^{(N)})(p_{\ker(P(X))}) = (\tr_N \circ \tau^{(N)})(q)$$
by the positivity of $\tr_N\circ\tau^{(N)}$. Now, we may apply Theorem \ref{thm:Fisher-growth}, which then yields also the assertion
$$q \cdot (\partial_j^{(N)} P)(X_1,\dots,X_n) \cdot p = 0 \qquad\text{for $j=1,\dots,n$}$$
due to our assumption that $\delta^\star(X_1,\dots,X_n)=n$.
\end{proof}

\section{Regularity of matrices with linear entries}
\label{sec:regularity_linear_matrices}

Since recently, operator-valued semicircular elements are attracting much attention motivated by far reaching applications in random matrix theory. Those are noncommutative random variables of the form
$$\bS = b_0 + b_1 S_1 + \dots + b_n S_n$$
with selfadjoint coefficient matrices $b_0,b_1,\dots,b_n \in M_N(\C)$ and a tuple $(S_1,\dots,S_n)$ of freely independent semicircular elements. In some impressive series of publications (see, for instance, \cite{EKYY13,AjEK18,AEK18} and the references collected therein), a deep understanding of the regularity properties of their distributions was gained. These achievements rely on a very detailed analysis of the so-called \emph{Dyson equation}, which is some kind of quadratic equation on $M_N(\C)$ (or, more generally, on von Neumann algebras) for their operator-valued Cauchy transforms that determines in particular their scalar-valued Cauchy transforms and hence their analytic distributions $\mu_{\bS}$.

It turns out that analytic properties of $\mu_{\bS}$ strongly depend on the algebraic properties of the coefficient matrices $b_0,b_1,\dots,b_n$. In \cite{AjEK18}, the associated \emph{self-energy operator} (or \emph{quantum operator} in the terminology of \cite{GGOW16} used in Proposition \ref{prop:rank-decreasing})
$$\cL:\ M_N(\C) \to M_N(\C),\qquad b \mapsto \sum^n_{j=1} b_j b b_j$$
was supposed to be \emph{flat} in the sense that there are constants $c_1,c_2>0$ such that
\begin{equation}\label{eq:flatness}
c_1 \tr_N(b) \1_N \leq \cL(b) \leq c_2 \tr_N(b) \1_N \qquad\text{for all positive semidefinite $b\in M_N(\C)$}. 
\end{equation}
Note that $b_0$, the so-called \emph{bare matrix}, plays a special role and accordingly does not show up in $\cL$. We will come back to the flatness condition in Section \ref{sec:Hoelder_continuity}.

In this section, we consider more generally operator-valued elements of the form
$$\X = b_0 + b_1 X_1 + \dots + b_n X_n,$$
where, on the analytic side, we allow $(X_1,\dots,X_n)$ to be any tuple of selfadjoint noncommutative random variables that satisfies $\delta^\star(X_1,\dots,X_n)=n$. On the algebraic side, we significantly relax the flatness condition by requiring only that the associated linear polynomial
$$P = b_0 + b_1 x_1 + \dots + b_n x_n \in M_N(\C\langle x_1,\dots,x_n\rangle)$$
is full, where $\C\langle x_1,\dots,x_n\rangle$ denotes as before the ring of noncommutative polynomials in the (formal) non-commuting variables $x_1,\dots,x_n$.

Let us point out that, due to Proposition \ref{prop:rank-decreasing}, the homogeneous part $b_1 x_1 + \dots + b_n x_n$ of $P$ is full if and only if $\cL$ is nowhere rank-decreasing. Now, if $\cL$ is flat, then the lower estimate in \eqref{eq:flatness} enforces $\cL$ to be nowhere rank-decreasing, so that $b_1 x_1 + \dots + b_n x_n$ and hence (as one sees, for instance with the help of Proposition \ref{prop:shrunk_subspace}) also $P$ must be full. Therefore, flatness of $\cL$ is indeed a much stronger requirement than fullness of $P$.

Our motivation to study regularity properties for such operator-valued elements $\X$ has in fact two sources. On the one hand, it is natural to ask which of the results that were obtained for operator-valued semicircular elements survive in this generality -- especially because the description in terms of the Dyson equation is no longer available. On the other hand, these operators are at the core of our present investigations, since they are intimately related via the linearization machinery to questions about evaluations of noncommutative rational functions.

Our first main theorem reads as follows.

\begin{theorem}\label{thm:main-1}
Suppose that
\begin{enumerate}
 \item $b_0,b_1,\dots,b_n$ are (not necessarily selfadjoint) matrices in $M_N(\C)$ for which
 $$P=b_0 + b_1 x_1 + \dots + b_n x_n \in M_N(\C\langle x_1,\dots,x_n\rangle)$$
 is full over $\C\langle x_1,\dots,x_n\rangle$;
 \item $X_1,\dots,X_n$ are selfadjoint noncommutative random variables in some tracial $W^\ast$-probability space $(\M,\tau)$ that satisfy $$\delta^\star(X_1,\dots,X_n)=n.$$
\end{enumerate}
Put $\M_0 := \vN(X_1,\dots,X_n) \subseteq \M$ and consider the operator
$$P(X_1,\dots,X_n) = b_0 + b_1 X_1 + \dots + b_n X_n \in M_N(\M_0)$$
If now $p\in M_N(\M_0)$ is any projection satisfying $P(X_1,\dots,X_n) p = 0$, then necessarily $p=0$.
\end{theorem}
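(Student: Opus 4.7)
The plan is to argue by induction on $N$ and derive a contradiction from the assumption $p\neq 0$, combining the reduction argument of Corollary \ref{cor:reduction} with the shrunk-subspace criterion of Corollary \ref{cor-of-shrunk}. Assume $p\neq 0$ and set $X=(X_1,\dots,X_n)$. By Corollary \ref{cor:reduction} there is a projection $q\in M_N(\M_0)$ with $(\tr_N\circ\tau^{(N)})(q)\geq(\tr_N\circ\tau^{(N)})(p)>0$, $P(X)^\ast q=0$, and $q\cdot(\partial_j^{(N)} P)(X)\cdot p=0$ for $j=1,\dots,n$. Since $P=b_0+\sum_i b_i x_i$ is linear, each $(\partial_j^{(N)} P)(X)$ is the scalar matrix $b_j$ embedded into $M_N(\M_0\ootimes\M_0)$ via $b_j\mapsto b_j\cdot(1\otimes 1)$, and the bimodule action rewrites these identities as $(qb_j)\odot p=0$ in $M_N(\M_0\ootimes\M_0)$ for $j=1,\dots,n$.

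Next I pass to $M_N(\C)$ by slicing. Applying $\phi\otimes\psi$ entrywise for arbitrary $\phi,\psi\in\M_0^\ast$ yields $\phi^{(N)}(q)\,b_j\,\psi^{(N)}(p)=0$ in $M_N(\C)$ for all such $\phi,\psi$ and all $j=1,\dots,n$. To capture $j=0$, I use the auxiliary relation $qP(X)=0$: applying $\phi^{(N)}$ gives $\phi^{(N)}(q)b_0=-\sum_j\phi_{X_j}^{(N)}(q)b_j$, where $\phi_{X_j}(\,\cdot\,):=\phi(\,\cdot\, X_j)\in\M_0^\ast$; right-multiplying by $\psi^{(N)}(p)$ and invoking the already-known case $j\geq 1$ with $\phi$ replaced by $\phi_{X_j}$ forces $\phi^{(N)}(q)b_0\psi^{(N)}(p)=0$ as well. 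Hence $\phi^{(N)}(q)\,b_j\,\psi^{(N)}(p)=0$ for all $\phi,\psi\in\M_0^\ast$ and all $j=0,1,\dots,n$.

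Define the subspaces $U:=\sum_{\psi\in\M_0^\ast}\ran\bigl(\psi^{(N)}(p)\bigr)$ and $V:=\sum_{\phi\in\M_0^\ast}\ran\bigl(\phi^{(N)}(q)\bigr)$ of $\C^N$, and let $\Pi_U,\Pi_V\in M_N(\C)$ denote the orthogonal projections onto $U,V$. The preceding identities read $b_j U\subseteq V^\perp$ for every $j=0,1,\dots,n$, equivalently $\Pi_V\cdot P\cdot\Pi_U=0$ in $M_N(\C\langle x_1,\dots,x_n\rangle)$. Fullness of the linear matrix $P$ together with Corollary \ref{cor-of-shrunk} then forces $\dim V+\dim U\leq N$.

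I conclude by induction on $N$. If $\dim U=N$, then $\dim V=0$, so $\phi^{(N)}(q)=0$ for every $\phi\in\M_0^\ast$; since $\M_0^\ast$ separates points of $\M_0$, this forces $q=0$, contradicting $(\tr_N\circ\tau^{(N)})(q)>0$. Otherwise $\dim U=N'<N$; a unitary change of basis of $\C^N$ brings $U$ to $\operatorname{span}(e_1,\dots,e_{N'})$, and the inclusion $\ran(\psi^{(N)}(p))\subseteq U$ for every $\psi$ gives $\psi^{(N)}(\Pi_{U^\perp}p)=\Pi_{U^\perp}\psi^{(N)}(p)=0$, which by functional separation yields $\Pi_{U^\perp}p=0$; self-adjointness of $p$ then forces a block decomposition $p=p'\oplus 0$ with $0\neq p'\in M_{N'}(\M_0)$ a projection. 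Iterating Proposition \ref{prop:full minor} on the last $N-N'$ columns of $P$ produces row indices $I\subseteq\{1,\dots,N\}$ of size $N'$ such that the submatrix $P'':=(P_{ij})_{i\in I,\,1\leq j\leq N'}$ is a full linear matrix in $M_{N'}(\C\langle x_1,\dots,x_n\rangle)$; the relation $P(X)p=0$ restricts to $P''(X)p'=0$, and the inductive hypothesis yields $p'=0$, hence $p=0$, a contradiction. The base case $N=1$ falls under the first branch, since $p\neq 0$ forces $\dim U=1$. The delicate step throughout is the promotion of the single bimodule identity $(qb_j)\odot p=0$ to the full scalar family $\phi^{(N)}(q)b_j\psi^{(N)}(p)=0$, and in particular extending it to the constant coefficient $b_0$ by exploiting $qP(X)=0$; this strengthening is precisely what permits Corollary \ref{cor-of-shrunk} to be invoked for the full matrix $P$ rather than only its homogeneous part.
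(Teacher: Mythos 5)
Your proof is correct and follows essentially the same route as the paper's: Corollary \ref{cor:reduction} to produce $q$, extension of the relations $(qb_j)\odot p=0$ to the constant coefficient via $qP(X)=0$, slicing down to scalar matrices, the shrunk-subspace criterion of Corollary \ref{cor-of-shrunk}, block reduction of the projection $p$, and Proposition \ref{prop:full minor} to drive the induction. The only cosmetic differences are that you slice with arbitrary functionals $\phi\otimes\psi$ rather than just $\tau\ootimes\tau$ (for a projection $p$ your subspace $U$ coincides with $\ran(\tau^{(N)}(p))$ anyway, by positivity and faithfulness of $\tau$, which is how the paper's Lemma \ref{lem:projection_reduction} achieves the same block decomposition), and that your reduction may drop several dimensions at once, so the induction should formally be phrased as strong induction on $N$.
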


The proof of Theorem \ref{thm:main-1} relies crucially on the following easy fact, which will allow us to reduce inductively the dimension of the considered matrices.

\begin{lemma}\label{lem:projection_reduction}
Let $(\M,\tau)$ be tracial $W^\ast$-probability space. Suppose that $p\in M_N(\M)$ is a projection that satisfies
$$\tilde{p} := \tau^{(N)}(p) = \begin{pmatrix} 0 & \tilde{p}_{12}\\ \tilde{p}_{21} & \tilde{p}_{22}\end{pmatrix}$$
with the block $\tilde{p}_{22}$ belonging to $M_{N-1}(\C)$ and all other blocks being of appropriate size. Then necessarily $\tilde{p}_{12} = 0$ and $\tilde{p}_{21} = 0$ and we have that
$$p = \begin{pmatrix} 0 & 0\\ 0 & p_{22}\end{pmatrix}$$
with a projection $p_{22} \in M_{N-1}(\M)$.
\end{lemma}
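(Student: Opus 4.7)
The plan is to exploit the fact that a projection is in particular positive and self-adjoint, and combine this with the faithfulness of the trace $\tau$ to upgrade a statement about scalars (the vanishing of one entry of $\tilde p$) to a statement about operators (the vanishing of an entire row and column of $p$).

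First, I write $p$ in block form compatible with the decomposition of $\tilde p$, namely
\[
p = \begin{pmatrix} p_{11} & p_{12} \\ p_{21} & p_{22} \end{pmatrix},
\]
where $p_{11}\in\M$, $p_{12} \in M_{1,N-1}(\M)$, $p_{21} \in M_{N-1,1}(\M)$, $p_{22}\in M_{N-1}(\M)$. Since $p$ is self-adjoint, $p_{11}=p_{11}^\ast$, $p_{22}=p_{22}^\ast$, and $p_{21}=p_{12}^\ast$. The matricial amplification $\tau^{(N)}$ acts entrywise, so the hypothesis that the $(1,1)$-block of $\tilde p$ vanishes translates into $\tau(p_{11})=0$.

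Next, I combine $p=p^\ast$ with $p^2=p$ to compute
\[
p_{11} = (p^2)_{11} = \sum_{k=1}^N p_{1k} p_{k1} = \sum_{k=1}^N p_{1k} p_{1k}^\ast.
\]
Each summand $p_{1k}p_{1k}^\ast$ is positive, so applying $\tau$ and using its positivity gives $\sum_k \tau(p_{1k}p_{1k}^\ast)=\tau(p_{11})=0$, which forces $\tau(p_{1k}p_{1k}^\ast)=0$ for every $k$. By faithfulness of $\tau$, each $p_{1k}p_{1k}^\ast$ vanishes, and since we are in a $C^\ast$-algebra, $\|p_{1k}\|^2 = \|p_{1k}p_{1k}^\ast\|=0$; hence $p_{1k}=0$ for all $k$. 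This yields the vanishing of the whole first block-row $(p_{11},p_{12})$ of $p$, and adjoining gives also $p_{21}=p_{12}^\ast=0$. Consequently $\tilde p_{12}=\tau^{(1,N-1)}(p_{12})=0$ and $\tilde p_{21}=0$ as claimed.

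Finally, with $p$ now of the asserted block form, comparing $(p^2)_{22}=p_{21}p_{12}+p_{22}^2=p_{22}^2$ with $p_{22}$ shows $p_{22}^2=p_{22}$, and self-adjointness of $p_{22}$ has already been observed, so $p_{22}$ is indeed a projection in $M_{N-1}(\M)$. There is no real obstacle in the argument; the only point that requires a moment of care is passing from the scalar condition $\tau(p_{11})=0$ to the operator condition $p_{1k}=0$, which is precisely where faithfulness of $\tau$ and the identity $p_{11}=\sum_k p_{1k}p_{1k}^\ast$ coming from $p=p^2=p^\ast p$ are both indispensable.
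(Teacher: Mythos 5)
Your proof is correct and follows essentially the same route as the paper's: both express the $(1,1)$ corner of $p=p^*p=p^2$ as a sum of positive elements and use $\tau(p_{11})=0$ together with positivity and faithfulness of $\tau$ to kill the first block row and column. The only cosmetic difference is that you eliminate all entries $p_{1k}$ in one step, whereas the paper first deduces $p_{11}=0$ from $p_{11}\geq p_{11}^*p_{11}$ and then feeds that back in to get $p_{21}=0$.
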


\begin{proof}
Let us consider the block decomposition of $p$ of the form
$$p = \begin{pmatrix} p_{11} & p_{12}\\ p_{21} & p_{22} \end{pmatrix},$$
where the block $p_{22}$ belongs to $M_{N-1}(\M)$ and all other blocks are of appropriate size. Since $p$ is supposed to be a projection, it satisfies $p^\ast=p$ and $p^2 = p$. Then $p=p^\ast$ yields in particular that
\begin{equation}\label{eq:projection_reduction-1}
p_{11} = p_{11}^\ast \qquad\text{and}\qquad p_{12} = p_{21}^\ast,
\end{equation}
and from $p = p^2$ it follows that
\begin{equation}\label{eq:projection_reduction-2}
p_{11} = p_{11}^2 + p_{12} p_{21}.
\end{equation}
Combining these facts gives us that
\begin{equation}\label{eq:projection_reduction-3}
p_{11} = p_{11}^2 + p_{12} p_{21} = p_{11}^\ast p_{11} + p_{21}^\ast p_{21} \geq p_{11}^\ast p_{11}.
\end{equation}
Now, we invoke our assumption on $\tilde{p}=\tau^{(N)}(p)$, which says that $\tau(p_{11})=0$; by \eqref{eq:projection_reduction-3} and the positivity of $\tau$, this enforces that $\tau(p_{11}^\ast p_{11}) = 0$, and since $\tau$ is moreover faithful, we may infer that $p_{11} = 0$. Hence, with the help of \eqref{eq:projection_reduction-2}, we see that $p_{21}^\ast p_{21} = 0$, which gives $p_{21} = 0$ and thus by \eqref{eq:projection_reduction-1} also $p_{12} = p_{21}^\ast = 0$.
In summary, this shows that $p$ has the desired form; furthermore, it follows that $\tilde{p}_{12} = 0$ and $\tilde{p}_{21} = 0$, as asserted.
\end{proof}

\begin{proof}[Proof of Theorem \ref{thm:main-1}]
Our proof proceeds by mathematical induction on the matrix-size $N$. We suppose that $X_1,\dots,X_n$ are selfadjoint elements in a tracial $W^\ast$-probability space $(\M,\tau)$ that satisfy $\delta^\star(X_1,\dots,X_n)=n$; like above, we put $\M_0:=\vN(X_1,\dots,X_n) \subseteq \M$ and we abbreviate $X:=(X_1,\dots,X_n)$. We want to show the validity of the following assertion for each $N\geq 1$:
$$A(N) \quad \begin{cases} \quad \begin{minipage}[c]{0.79\textwidth} If $b_0,b_1,\dots,b_n$ are matrices in $M_N(\C)$ for which the associated polynomial $$P = b_0 + b_1 x_1 + \dots + b_n x_n$$ in $M_N(\C\langle x_1,\dots,x_n\rangle)$ is full over $\C\langle x_1,\dots,x_n\rangle$, then the only projection $p\in M_N(\M_0)$ that satisfies $P(X) p = 0$ is $p=0$.\end{minipage} \end{cases}$$ 
First of all, we note the following: whenever matrices $b_0,b_1,\dots,b_n\in M_N(\C)$ and a projection $p\in M_N(\M_0)$ are given such that $P = b_0 + b_1 x_1 + \dots + b_n x_n$ satisfies $P(X) p = 0$, then Corollary \ref{cor:reduction} shows the existence of another projection $q\in M_N(\M_0)$ that satisfies $(\tr_N \circ \tau^{(N)})(q) \geq (\tr_N \circ \tau^{(N)})(p)$, $P(X)^\ast q = 0$, and  
$$q \cdot (\partial^{(N)}_j P)(X) \cdot p = 0 \qquad\text{for $j=1,\dots,n$}.$$
By definition of $P$, clearly $\partial_j^{(N)} P = b_j \odot \1_N$, so that
$$(q b_j) \odot p = 0 \qquad\text{for $j=1,\dots,n$}.$$
In particular, since $P(X)^\ast q = 0$ is equivalent to $q P(X) = 0$,
$$0 = \sum^n_{j=1} \big( (q b_j) \odot p \big) \sharp X_j = \sum^n_{j=1} (q b_j X_j) \odot p =  (q P(X)) \odot p - (q b_0) \odot p = - (q b_0) \odot p,$$
so that in summary
$$(q b_j) \odot p = 0 \qquad\text{for $j=0,1,\dots,n$}.$$
Now, applying $(\tau\ootimes\tau)^{(N)}$ on both sides of the last equation gives
\begin{equation}\label{eq:coefficients}
\tilde{q} b_j \tilde{p} = 0 \qquad\text{for $j=1,\dots,n$},
\end{equation}
where we put $\tilde{p} := \tau^{(N)}(p)$ and $\tilde{q} := \tau^{(N)}(q)$. 

Let us treat $A(1)$ first. The assumption that $P$ is full means that at least one of the scalar coefficients $b_0,b_1\dots,b_n$ is non-zero; but \eqref{eq:coefficients} then tells us that $\tilde{q}=0$ or $\tilde{p}=0$; since $\tau$ is faithful, this means that $q=0$ or $p=0$, which by construction of $q$ implies in either case that $p=0$. This proves the validity of $A(1)$.

Now, suppose that $A(N-1)$ for some $N\geq 2$ is already proven; we want to establish that also $A(N)$ holds true. Again, we start our considerations at \eqref{eq:coefficients}, which yields after consulting Corollary \ref{cor:fullness_contraction} that
\begin{equation}\label{eq:fullness_condition}
\rank(\tilde{p}) + \rank(\tilde{q}) \leq N.
\end{equation}
If we would have that $\rank(\tilde{q}) = 0$, i.e., $\tilde{q}=0$, then the faithfulness of $\tau^{(N)}$ would imply that $q=0$; then, we would be done, since $q=0$ enforces by construction of $q$ that $p_{\ker(P(X))}=0$ and thus $p=0$, which is what we wished to show. Accordingly, it suffices to treat the case $\rank(\tilde{q}) \neq 0$. Since $\tilde{q}$ is a scalar matrix, we thus have that $\rank(\tilde{q}) \geq 1$ and the previous observation \eqref{eq:fullness_condition} yields that $\rank(\tilde{p}) \leq N-1$.
Since the scalar matrix $\tilde{p}$ is selfadjoint, we find a unitary matrix $U$ in $M_N(\C)$ such that
$$\tilde{p}' := U^\ast \tilde{p} U = \begin{pmatrix} 0 & 0\\ 0 & \tilde{p}_{22}' \end{pmatrix} \qquad\text{with $\tilde{p}'_{22} \in M_{N-1}(\C)$}.$$
Thus, according to Lemma \ref{lem:projection_reduction}, the projection $p' := U^\ast p U \in M_N(\M_0)$ enjoys itself a block decomposition of the form
$$p' = \begin{pmatrix} 0 & 0\\ 0 & p'_{22} \end{pmatrix}$$
with a projection $p'_{22} \in M_{N-1}(\M_0)$. Now, due to Proposition \ref{prop:full minor}, there exists a unitary matrix (in fact, a permutation matrix) $V\in M_N(\C)$, such that the matrix $P' := V P U \in M_N(\C\langle x_1,\dots,x_n\rangle)$ enjoys a block decomposition of the form
$$P' = \begin{pmatrix} P_{11}' & P_{12}'\\ P_{21}' & P_{22}' \end{pmatrix}$$
with a full block $P_{22}' \in M_{N-1}(\C\langle x_1,\dots,x_n\rangle)$. Then
$$0 = V (P(X) p) = (V P(X) U) (U^\ast p U) = \begin{pmatrix} P_{11}'(X) & P_{12}'(X)\\ P_{21}'(X) & P_{22}'(X) \end{pmatrix} \begin{pmatrix} 0 & 0\\ 0 & p_{22}' \end{pmatrix} = \begin{pmatrix} 0 & P_{12}'(X) p_{22}'\\ 0 & P_{22}'(X) p_{22}' \end{pmatrix}$$
implies that $P'_{22}(X) p_{22}' = 0$. Due to $A(N-1)$, if follows that $p_{22}'=0$ and hence $p'=0$, so that we obtain finally $p = U p' U^\ast = 0$. Thus, the validity of $A(N)$ is shown.
\end{proof}

In the particular case where the matrices $b_0,b_1,\dots,b_n$ and thus the operator $P(X)$ are selfadjoint, we may conclude from Theorem \ref{thm:main-1} that the analytic distribution $\mu_{P(X)}$ of $P(X)$ cannot have an atom at $0$, i.e., that $\mu_{P(X)}(\{0\})=0$. Under the fullness assumption only, atoms at all other points, however, cannot be excluded; if $P$ is for instance a constant selfadjoint polynomial, i.e., $P = P^\ast \in M_N(\C) \subset M_N(\C\langle x_1,\dots,x_n\rangle)$, then fullness implies that the scalar matrix $P$ is invertible and thus has only non-zero eigenvalues, but its distribution is nonetheless purely atomic with atoms sitting at each eigenvalue of $P$. We thus ask the following questions:
\begin{enumerate}
 \item\label{it:question_1} Under which additional conditions on $P$ can we exclude atoms in $\mu_{P(X)}$?
 \item\label{it:question_2} What happens if we drop the fullness condition?
\end{enumerate}
We will see that under the assumption $\delta^\star(X_1,\dots,X_n)=n$ the positions where atoms appear can be characterized in purely algebraic terms; for that purpose, we give the following definition in the generality of Definition \ref{def:inner-rank_full}.

\begin{definition}
Let $\A$ be a unital complex algebra. For each square matrix $A$ over $\A$, say $A\in M_N(\A)$ for some $N\in\N$, we define
$$\rho^\full_\A(A) := \big\{\lambda \in \C \bigm| \text{$A - \lambda \1_N$ is full over $\A$}\big\},$$
where $\1_N$ stands for the unital element in $M_N(\A)$, and $\sigma^\full_\A(A) := \C \setminus \rho^\full_\A(A)$.
\end{definition}

This definition is clearly modeled according to the familiar notion of resolvent sets and spectra for elements in unital Banach algebras. They show, however, very different properties since the underlying notion of invertibility is here of purely algebraic nature. We specialize our considerations now to the for us relevant case $\A = \C\langle x_1,\dots,x_n\rangle$.

\begin{lemma}\label{lem:full_spectrum_properties}
Let any $P \in M_N(\C\langle x_1,\dots,x_n \rangle)$ of the form $P = b_0 + b_1 x_1 + \cdots + b_n x_n$ with $b_0,b_1,\dots,b_n\in M_N(\C)$ be given. Then the following statements hold:
\begin{enumerate}
 \item\label{it:full_spectrum_properties_1} We have that $\sigma^\full_{\C\langle x_1,\dots,x_n\rangle}(P) \subseteq \sigma(b_0)$, where $\sigma(b_0)$ is the usual spectrum of $b_0$, which consists of all eigenvalues ob $b_0$.
 \item\label{it:full_spectrum_properties_2} If the homogeneous part $P - b_0 = b_1 x_1 + \cdots + b_n x_n$ of $P$ is full, then $\sigma^\full_{\C\langle x_1,\dots,x_n\rangle}(P) = \emptyset$.
\end{enumerate}
\end{lemma}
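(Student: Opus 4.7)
The plan is to handle the two items separately, using for \eqref{it:full_spectrum_properties_1} a direct specialization argument and for \eqref{it:full_spectrum_properties_2} the shrunk-subspace characterization of fullness established in Proposition \ref{prop:shrunk_subspace}. Nothing more than these structural facts should be required; the main obstacle is really only keeping the bookkeeping of shrunk subspaces honest, in particular remembering that the homogeneous part has zero constant term, so only the coefficients $b_1,\dots,b_n$ need to be checked against the pair $(V,W)$.

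For \eqref{it:full_spectrum_properties_1}, assume $\lambda \notin \sigma(b_0)$, i.e.\ that $b_0 - \lambda \1_N \in M_N(\C)$ is invertible. The matrix $P - \lambda \1_N = (b_0 - \lambda\1_N) + b_1 x_1 + \cdots + b_n x_n$ is linear with invertible constant term. If one had a non-trivial rank factorization $P - \lambda\1_N = AB$ with $A\in M_{N,r}(\C\langle x_1,\dots,x_n\rangle)$ and $B\in M_{r,N}(\C\langle x_1,\dots,x_n\rangle)$ for some $r<N$, then substituting $x_1=\cdots=x_n=0$ (which is a unital algebra homomorphism from $\C\langle x_1,\dots,x_n\rangle$ onto $\C$) would yield $b_0 - \lambda\1_N = A(0) B(0)$ with $A(0) \in M_{N,r}(\C)$, $B(0)\in M_{r,N}(\C)$; but then $\rank(b_0-\lambda\1_N) \le r < N$, contradicting invertibility. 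Hence $P-\lambda\1_N$ is full and $\lambda \in \rho^\full_{\C\langle x_1,\dots,x_n\rangle}(P)$, which is what we need.

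For \eqref{it:full_spectrum_properties_2}, I will argue by contradiction. Suppose there is some $\lambda \in \C$ such that the linear matrix $Q_\lambda := P - \lambda \1_N = (b_0 - \lambda \1_N) + b_1 x_1 + \cdots + b_n x_n$ is not full. By Proposition \ref{prop:shrunk_subspace} there exist subspaces $V, W \subseteq \C^N$ with $\dim W < \dim V$ such that
\[
(b_0 - \lambda\1_N) V \subseteq W \quad\text{and}\quad b_j V \subseteq W \quad\text{for } j = 1,\dots,n.
\]
The last $n$ inclusions are precisely the shrunk-subspace condition for the homogeneous part $P - b_0 = b_1 x_1 + \cdots + b_n x_n$ (whose zero constant term trivially satisfies $0\cdot V \subseteq W$), so by the same Proposition \ref{prop:shrunk_subspace} applied in the other direction, the homogeneous part is not full. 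This contradicts the hypothesis and proves that $Q_\lambda$ is full for every $\lambda \in \C$, i.e.\ $\sigma^\full_{\C\langle x_1,\dots,x_n\rangle}(P)=\emptyset$.

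Note that part \eqref{it:full_spectrum_properties_2} in particular sharpens \eqref{it:full_spectrum_properties_1}: the spectrum $\sigma^\full$ of a linear pencil is always contained in the classical spectrum of its constant term, and it collapses to the empty set as soon as the pencil is non-degenerate in the sense that its homogeneous part is full. I do not anticipate any subtle difficulty beyond what is already encoded in Proposition \ref{prop:shrunk_subspace}; the only thing to remain aware of is that the ambient algebra is noncommutative, which is why the substitution-at-zero argument in \eqref{it:full_spectrum_properties_1} must be phrased as an algebra homomorphism rather than a "numerical" evaluation.
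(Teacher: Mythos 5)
Your proof is correct, but it takes a genuinely different route from the paper's, which handles both items in one stroke via Theorem \ref{thm:hollow and full}: if $P-\lambda\1_N$ is not full, there are invertible $U,V\in M_N(\C)$ making $U(P-\lambda\1_N)V$ hollow, and since the zero block must persist in every degree separately, both the constant part $U(b_0-\lambda\1_N)V$ and the homogeneous part $\sum_{j}(Ub_jV)x_j$ are forced to be hollow --- the first yields \ref{it:full_spectrum_properties_1}, the second \ref{it:full_spectrum_properties_2}. For \ref{it:full_spectrum_properties_1} your specialization argument is more elementary: pushing a rank factorization through the evaluation homomorphism at $x=0$ needs nothing about linear matrices beyond the definition of the inner rank, and in fact proves the stronger and more general statement that $\rho(Q)\geq\rank(Q(0))$ for an arbitrary matrix $Q$ of polynomials, from which \ref{it:full_spectrum_properties_1} is immediate. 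For \ref{it:full_spectrum_properties_2} you route through Proposition \ref{prop:shrunk_subspace}: a shrunk subspace $(V,W)$ for $P-\lambda\1_N$ is automatically one for the homogeneous part, since the latter's constant coefficient is $0$ and $0\cdot V\subseteq W$ holds trivially. This is the paper's hollowness argument in disguise (the proposition is itself deduced from Theorem \ref{thm:hollow and full}), but the subspace bookkeeping is arguably cleaner than splitting a hollow block across degrees. Both parts of your argument are complete; no gaps.
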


\begin{proof}
Let $\lambda\in\sigma_{\C\langle x_1,\dots,x_n\rangle}^{\full}(P)$ be given. By definition, this means that $P-\lambda \1_N$ is not full, so that Theorem \ref{thm:hollow and full} guarantees the existence of invertible matrices $U,V\in M_N(\C)$ for which
$$U (P - \lambda \1_N) V = U (b_0 - \lambda \1_N) V + \sum_{j=1}^n (U b_j V) x_j$$
is hollow. Due to linearity, this enforces both $U (b_0 - \lambda \1_N) V$ and $\sum_{j=1}^n (U b_j V) x_j$ to be hollow.
Now, on the one hand, it follows that neither $U (b_0 - \lambda \1_N) V$ nor $b_0 - \lambda \1_N$, thanks to the invertibility of $U$ and $V$, can be invertible; thus, we infer that $\lambda \in \sigma(b_0)$, which shows the validity of \ref{it:full_spectrum_properties_1}.
On the other hand, we see that neither $\sum_{j=1}^n (U b_j V) x_j$ nor $\sum_{j=1}^n b_j x_j$, by the invertibility of $U$ and $V$, can be full; thus, if the homogeneous part of $P$ is assumed to be full, that contradiction rules out the existence of $\lambda\in\sigma_{\C\langle x_1,\dots,x_n\rangle}^{\full}(P)$, which proves \ref{it:full_spectrum_properties_2}. 
\end{proof}

The following statement generalizes Theorem \ref{thm:main-1} and in turn answers question \ref{it:question_2}.

\begin{theorem}\label{thm:main-2}
Suppose that
\begin{enumerate}
 \item $b_0,b_1,\dots,b_n$ are selfadjoint matrices in $M_N(\C)$;
 \item $X_1,\dots,X_n$ are selfadjoint elements in a tracial $W^\ast$-probability space $(\M,\tau)$ that satisfy $$\delta^\star(X_1,\dots,X_n)=n.$$
\end{enumerate}
Then the analytic distribution $\mu_\X$ of
$$\X :=  b_0 + b_1 X_1 + \dots + b_n X_n,$$
seen as an element in the tracial $W^\ast$-probability space $(M_N(\M),\tr_N \circ \tau^{(N)})$, has atoms precisely at the points of
$$\sigma^\full_{\C\langle x_1,\dots,x_n\rangle}(b_0 + b_1 x_1 + \dots + b_n x_n).$$
\end{theorem}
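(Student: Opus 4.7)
The plan is to reformulate the statement in terms of kernel projections and then handle each direction separately: one implication is a direct application of Theorem \ref{thm:main-1}, while the other combines Theorem \ref{thm:hollow and full} with Murray--von Neumann dimension theory for wide matrices over $\M_0$. For a fixed $\lambda \in \C$, set $Y := \X - \lambda \1_N \in M_N(\M_0)$. Then $\mu_\X(\{\lambda\}) = (\tr_N \circ \tau^{(N)})(p_{\ker(Y)})$, and since $Y \in M_N(\M_0)$ the kernel projection $p_{\ker(Y)}$ --- being the spectral projection of $Y^\ast Y$ at $\{0\}$ --- also lies in $M_N(\M_0)$. With $P = b_0 + b_1 x_1 + \dots + b_n x_n$, the theorem thus reduces to the equivalence
\[
p_{\ker(Y)} \neq 0 \qquad\Longleftrightarrow\qquad P - \lambda \1_N \text{ is not full over } \C\langle x_1,\dots,x_n\rangle.
\]

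For the ``if full, then no atom'' direction, I would apply Theorem \ref{thm:main-1} directly to the linear matrix $P - \lambda \1_N$, whose coefficient matrices $b_0 - \lambda \1_N, b_1, \dots, b_n \in M_N(\C)$ need not be selfadjoint (Theorem \ref{thm:main-1} allows this). The conclusion is that any projection $p \in M_N(\M_0)$ with $Y p = 0$ must vanish; in particular $p_{\ker(Y)} = 0$, so $\lambda$ is not an atom.

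For the converse, assume $P - \lambda \1_N$ is not full. Since it is linear, Theorem \ref{thm:hollow and full} furnishes invertible $U, V \in M_N(\C)$ such that $U(P - \lambda \1_N) V$ is hollow; after composing with appropriate permutation matrices, I may arrange that its zero block of size $r \times s$ with $r + s > N$ sits in the top-right corner, so that
\[
U Y V = \begin{pmatrix} A & 0 \\ B & C \end{pmatrix}
\qquad\text{with } C \in M_{N-r,\, s}(\M_0).
\]
The inequality $r + s > N$ gives $s > N - r$, so $C$ is a wide matrix over $\M_0$. Viewing $C$ as a bounded morphism of right Hilbert $\M_0$-modules $L^2(\M_0)^s \to L^2(\M_0)^{N-r}$ and invoking the Murray--von Neumann dimension theory of the finite von Neumann algebra $\M_0$, one obtains $\dim_{\M_0}(\ker C) \geq s - (N-r) > 0$; hence $p_{\ker C} \in M_s(\M_0)$ is a non-zero projection. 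Setting
\[
p' := \begin{pmatrix} 0 & 0 \\ 0 & p_{\ker C} \end{pmatrix} \in M_N(\M_0)
\]
yields a non-zero projection satisfying $U Y V p' = 0$, whence $Y(V p') = 0$; the invertibility of $V$ ensures $Vp' \neq 0$ in $M_N(\M_0)$, so $p_{\ker(Y)} \neq 0$ and $\mu_\X(\{\lambda\}) > 0$.

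The main obstacle I expect is the careful handling of the Murray--von Neumann dimension argument in the converse direction: one must verify that $p_{\ker C}$ lives in $M_s(\M_0)$ rather than in some ambient larger algebra, and that the reduction through the scalar invertible matrices $U, V$ preserves non-triviality of kernels. Both points are mild: the former follows because $C^\ast C \in M_s(\M_0)$ and $\M_0$, being a von Neumann algebra, is closed under spectral projections; the latter holds because $U, V \in M_N(\C) \subset M_N(\M_0)$ are invertible. Beyond this bookkeeping, the substantive content is already encoded in Theorem \ref{thm:main-1}, with the converse merely supplying a structural/algebraic complement via hollowness.
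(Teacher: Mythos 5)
Your argument is correct, and the forward direction (full $\Rightarrow$ no atom at $\lambda$) coincides with the paper's: both reduce to Theorem \ref{thm:main-1} applied to $P-\lambda\1_N$, whose constant coefficient need not be selfadjoint. The converse, however, takes a genuinely different route. The paper starts from a rank factorization $P-\lambda\1_N = R_1 R_2$ with $R_1$ of size $N\times r$, $r=\rho(P-\lambda\1_N)<N$, pads $R_1,R_2$ to square matrices $\hat R_1,\hat R_2$ by zero columns/rows, evaluates at $X$, and uses only Lemma \ref{lem:kernels} (equality of kernel traces of an operator and its adjoint) to see that $p_{\ker(\hat R_2(X))}$, and hence $p_{\ker(\X-\lambda\1_N)}$, has trace at least $\frac{N-r}{N}$. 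You instead invoke Theorem \ref{thm:hollow and full} to make $U(P-\lambda\1_N)V$ hollow and then run a Murray--von Neumann dimension count on the wide block $C\in M_{N-r,s}(\M_0)$ to produce a kernel projection; your bookkeeping (that $p_{\ker C}$ lies in $M_s(\M_0)$ as a spectral projection of $C^\ast C$, and that conjugation by the scalar invertibles $U,V$ preserves nontriviality) is sound. The trade-offs: your route leans on the linearity of $P-\lambda\1_N$ through Theorem \ref{thm:hollow and full} and on the rank--nullity inequality $\dim_{\M_0}(\ker C)\geq s-(N-r)$, which is a (standard) strengthening of the single fact Lemma \ref{lem:kernels} that the paper needs; the paper's factorization route works verbatim for non-full matrices of arbitrary degree and delivers the sharper quantitative estimate $\mu_\X(\{\lambda\})\geq \frac{1}{N}\bigl(N-\rho(P-\lambda\1_N)\bigr)$, which is exactly what is reused in Remark \ref{rem:value_entropy_dimension}; your hollow-block argument yields the bound $\frac{1}{N}(r+s-N)$, which is positive but in general weaker since $N-\rho(P-\lambda\1_N)\geq r+s-N$.
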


\begin{proof}
We first prove that $\mu_\X$ can have atoms only at the points in $\sigma^\full_{\C\langle x_1,\dots,x_n\rangle}(b_0 + b_1 x_1 + \dots + b_n x_n)$; this is an immediate consequence of Theorem \ref{thm:main-1}: if $\mu_\X$ has an atom at $\lambda$, then we can find a projection $0 \neq p \in M_N(\M_0)$ such that $(\X - \lambda \1_N)p = 0$; hence, the matrix $(b_0 - \lambda \1_N) + b_1 x_1 + \dots + b_n x_n$ cannot be full over $\C\langle x_1,\dots,x_n\rangle$, since otherwise Theorem \ref{thm:main-1} would enforce $p$ to be $0$, which is excluded by our choice of $p$; accordingly, we must have that $\lambda \in \sigma^\full_{\C\langle x_1,\dots,x_n\rangle}(b_0 + b_1 x_1 + \dots + b_n x_n)$, which proves the first part of the assertion.

In order to prove the converse direction, let us take any $\lambda \in \sigma^\full_{\C\langle x_1,\dots,x_n\rangle}(b_0 + b_1 x_1 + \dots + b_n x_n)$. By definition, this means that $(b_0 - \lambda \1_N) + b_1 x_1 + \dots + b_n x_n$ is not full over $\C\langle x_1,\dots,x_n\rangle$ and hence can be written as a product $R_1 R_2$ of an $N \times r$ matrix $R_1$ and an $r \times N$ matrix $R_2$ with entries in $\C\langle x_1,\dots,x_n\rangle$ for some integer $1 \leq r < N$. We may enlarge $R_1$ and $R_2$ to square matrices $\hat{R}_1$ and $\hat{R}_2$, respectively, in $M_N(\C\langle x_1,\dots,x_n\rangle)$ by filling up with zeros as
$$\hat{R}_1 = \begin{pmatrix} R_1 & \0_{N \times (N-r)} \end{pmatrix} \qquad\text{and}\qquad \hat{R}_2 = \begin{pmatrix} R_2 \\ \0_{(N-r) \times N} \end{pmatrix}.$$
Obviously, this does not affect the factorization, i.e., we still have that
$$(b_0 - \lambda \1_N) + b_1 x_1 + \dots + b_n x_n = \hat{R}_1 \hat{R}_2.$$
The latter identity over $\C\langle x_1,\dots,x_n\rangle$ remains valid after evaluation in $X=(X_1,\dots,X_n)$; we thus have that
\begin{equation}\label{eq:main2-factorization}
\X - \lambda \1_N = (b_0 - \lambda \1_N) + b_1 X_1 + \dots + b_n X_n = \hat{R}_1(X) \hat{R}_2(X).
\end{equation}
Now, let us take a look at the operator $\hat{R}_2(X)$. The projection $p_{\ker(\hat{R}_2(X)^\ast)}$ onto the kernel of its adjoint $\hat{R}_2(X)^\ast$ is obviously non-zero as we have that $\hat{R}_2(X)^\ast = \begin{pmatrix} R_2(X)^\ast & \0_{N \times (N-r)} \end{pmatrix}$; more precisely, we have that $(\tr_N \circ \tau^{(N)})(p_{\ker(\hat{R}_2(X)^\ast)}) \geq \frac{N-r}{N}$. From Lemma \ref{lem:kernels}, it thus follows that also  $p_{\ker(\hat{R}_2(X))}$ is non-zero with $(\tr_N \circ \tau^{(N)})(p_{\ker(\hat{R}_2(X))}) \geq \frac{N-r}{N}$. In particular, since the factorization \eqref{eq:main2-factorization} gives that $\ker(\hat{R}_2(X)) \subseteq \ker(\X - \lambda \1_N)$, we see that $p_{\ker(\X - \lambda \1_N)}$ is non-zero with $(\tr_N \circ \tau^{(N)})(p_{\ker(\X - \lambda \1_N)}) \geq \frac{N-r}{N}$. Consequently, $\mu_\X$ has an atom at the given point $\lambda$ with $\mu_\X(\{\lambda\}) \geq \frac{N-r}{N}$.
\end{proof}

\begin{remark}\label{rem:value_entropy_dimension}
The proof of Theorem \ref{thm:main-2} shows in addition that the size of the atom of $\mu_\X$ at any given point $\lambda \in \sigma^\full_{\C\langle x_1,\dots,x_n\rangle}(P)$, where we abbreviate $P := b_0 + b_1 x_1 + \dots + b_n x_n$, can be controlled by
\begin{equation}\label{eq:entropy_dimension_bound}
\mu_\X(\{\lambda\}) = (\tr_N \circ \tau^{(N)})(p_{\ker(\X - \lambda \1_N)}) \geq \frac{1}{N}\big(N - \rho(P-\lambda \1_N)\big),
\end{equation}
where $\rho(P-\lambda \1_N)$ denotes the inner rank of the non-full matrix $P-\lambda \1_N$ as introduced in Definition \ref{def:inner-rank_full}. According to \eqref{eq:entropy_dimension_single_operator}, the non-microstates free entropy dimension $\delta^\ast(\X)$ of the selfadjoint noncommutative random variable $\X = P(X_1,\dots,X_n)$ in $(M_N(\M),\tr_N \circ \tau^{(N)})$ can be estimated as
$$\delta^\ast(\X) = 1 - \sum_{\lambda \in \sigma^\full_{\C\langle x_1,\dots,x_n\rangle}(P)} \mu_\X(\{\lambda\})^2 \leq 1 - \frac{1}{N^2} \sum_{\lambda \in \sigma^\full_{\C\langle x_1,\dots,x_n\rangle}(P)} \big(N - \rho(P-\lambda\1_N)\big)^2.$$
Later, in Corollary \ref{cor:full_entropy_dimension_implies_Atiyah}, we will see that $\rho(P-\lambda\1_N) = \rank(\X - \lambda \1_N)$. From this, we infer $$\rho(P-\lambda\1_N) = N (\tr_N \circ \tau^{(N)})(p_{\overline{\im(\X-\lambda\1_N)}}) = N \big(1 - (\tr_N \circ \tau^{(N)})(p_{\ker(\X - \lambda \1_N)})\big),$$
so that in fact equality holds in \eqref{eq:entropy_dimension_bound}; consequently, we can also improve our previous estimate for $\delta^\ast(\X)$ to
$$\delta^\ast(\X) = 1 - \frac{1}{N^2} \sum_{\lambda \in \sigma^\full_{\C\langle x_1,\dots,x_n\rangle}(P)} \big(N - \rho(P-\lambda\1_N)\big)^2,$$
which expresses the non-microstates free entropy dimension $\delta^\ast(\X)$ in terms of purely algebraic quantities associated to $P$.
\end{remark}

Finally, we observe that the situation becomes particularly nice when fullness is imposed on the purely linear part $b_1 x_1 + \dots + b_n x_n$ of $b_0 + b_1 x_1 + \dots + b_n x_n$; this provides the answer to our question \ref{it:question_1}.

\begin{theorem}\label{thm:main-3}
Suppose that
\begin{enumerate}
 \item $b_1,\dots,b_n$ are selfadjoint matrices in $M_N(\C)$ for which
 $$b_1 x_1 + \dots + b_n x_n \in M_N(\C\langle x_1,\dots,x_n\rangle)$$
 is full over $\C\langle x_1,\dots,x_n\rangle$; let $b_0\in M_N(\C)$ be any other selfadjoint matrix;
 \item $X_1,\dots,X_n$ are selfadjoint elements in a tracial $W^\ast$-probability space $(\M,\tau)$ that satisfy $$\delta^\star(X_1,\dots,X_n)=n.$$
\end{enumerate}
Then the analytic distribution $\mu_\X$ of
$$\X :=  b_0 + b_1 X_1 + \dots + b_n X_n,$$
seen as an element in the tracial $W^\ast$-probability space $(M_N(\M),\tr_N \circ \tau^{(N)})$, has no atoms.
\end{theorem}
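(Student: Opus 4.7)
The plan is to derive this statement directly as a corollary of the preceding two results, namely Theorem \ref{thm:main-2} and part \ref{it:full_spectrum_properties_2} of Lemma \ref{lem:full_spectrum_properties}. Set $P := b_0 + b_1 x_1 + \cdots + b_n x_n \in M_N(\C\langle x_1,\dots,x_n\rangle)$, so that $\X = P(X_1,\dots,X_n)$. Because all the $b_i$ are selfadjoint, $P$ is a selfadjoint element of $M_N(\C\langle x_1,\dots,x_n\rangle)$ and $\X$ is a selfadjoint operator in $(M_N(\M), \tr_N \circ \tau^{(N)})$, so the analytic distribution $\mu_\X$ is a well-defined Borel probability measure on $\R$. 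Under the running hypothesis $\delta^\star(X_1,\dots,X_n)=n$, Theorem \ref{thm:main-2} then pinpoints the location of the atoms of $\mu_\X$: they can occur only at points $\lambda \in \sigma^\full_{\C\langle x_1,\dots,x_n\rangle}(P)$.

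It therefore suffices to show that this algebraic ``full-spectrum'' of $P$ is empty. This is precisely the payoff of the stronger fullness assumption on the homogeneous part $b_1 x_1 + \cdots + b_n x_n$. Indeed, for any $\lambda\in\C$ the matrix $P - \lambda \1_N$ shares its homogeneous part with $P$, so part \ref{it:full_spectrum_properties_2} of Lemma \ref{lem:full_spectrum_properties}, applied to $P - \lambda \1_N$, forces $P - \lambda \1_N$ to be full whenever $b_1 x_1 + \cdots + b_n x_n$ is full. Hence $\sigma^\full_{\C\langle x_1,\dots,x_n\rangle}(P) = \emptyset$, and combining this with Theorem \ref{thm:main-2} gives the absence of atoms in $\mu_\X$.

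Because the heavy lifting has already been carried out in Theorem \ref{thm:main-2} (which in turn was proved by combining the reduction argument of Corollary \ref{cor:reduction} with the inductive argument of Theorem \ref{thm:main-1}), no substantive new obstacle arises at this stage; the argument is a short diagram-chase through the algebraic structure. The only conceptual point worth highlighting in the write-up is that the selfadjointness of the bare matrix $b_0$ plays no role in the algebraic reduction and enters only to ensure that $\X$ is selfadjoint so that $\mu_\X$ makes sense; it is the fullness of the homogeneous part, not any spectral property of $b_0$, that is responsible for killing every candidate atomic location.
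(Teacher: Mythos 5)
Your proposal is correct and follows exactly the paper's own argument: invoke Lemma \ref{lem:full_spectrum_properties}\ref{it:full_spectrum_properties_2} to conclude that $\sigma^\full_{\C\langle x_1,\dots,x_n\rangle}(b_0+b_1x_1+\cdots+b_nx_n)=\emptyset$ when the homogeneous part is full, and then apply Theorem \ref{thm:main-2} to rule out atoms. Nothing is missing.
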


\begin{proof}
This relies crucially on Lemma \ref{lem:full_spectrum_properties} \ref{it:full_spectrum_properties_2}: if the homogeneous part $b_1 x_1 + \dots + b_n x_n$ is full, then $\sigma^\full_{\C\langle x_1,\dots,x_n\rangle}(b_0 + b_1 x_1 + \dots + b_n x_n) = \emptyset$. Therefore, according to Theorem \ref{thm:main-2}, the measure $\mu_\X$ cannot have atoms.
\end{proof}

\section{Noncommutative Rational functions and rational closure}

In this section, we will give an introduction to noncommutative rational functions. One crucial fact in this section is that, for each rational function, we can associate it with some representation using full matrices over linear polynomials: this is known as ``linearization trick''. Hence Theorem \ref{thm:main-1} can be applied to rational functions, which will allow us to exclude zero divisors for rational functions under the same assumption as in Section 4. Furthermore, a construction called rational closure is also introduced; this will play an important role when we consider the Atiyah property in the next section.

\subsection{Noncommutative rational expressions and rational functions}

A noncommutative rational expression, intuitively speaking, is obtained by taking repeatedly sums, products and inverses, starting from scalars and some formal non-commuting variables, without taking care about possible cancellations or resulting mathematical inconsistencies. For example, we allow $0^{-1}$ and $(x-x)^{-1}$ as valid and different expressions, though they don't make any sense when we try to treat them as functions. (We will take care of this problem, when we talk about the domain of such expressions.) A formal definition for rational expressions can be achieved in terms of graphs as follows.

\begin{definition}
A \emph{rational expression} in variables $\{x_{1},\dots,x_{n}\}$ is a finite directed acyclic graph, i.e., a finite directed graph with no directed cycles, with labels on in-degree zero vertices and some edges, satisfying the following rules:
\begin{enumerate}
\item the in-degree zero vertices, i.e., vertices having no edge directed to them, are chosen from the set $\C$ and $\{x_{1},\dots,x_{n}\}$, that is, each vertex of in-degree zero can be identified with a complex number or some variable in $\{x_{1},\dots,x_{n}\}$; all other vertices are of in-degree $1$ or $2$;
\item there is only one vertex that has out-degree zero, i.e., there is no edge directed from it; all other vertices are of out-degree $1$;
\item for any pair of edges directed to some vertex of in-degree $2$, they can be additionally labelled by \emph{left} and \emph{right}.
\end{enumerate}
\end{definition}

The definition is more or less self-explanatory, that is, for any given ``rational expression'', we can read it by the above language: the variables and coefficients in the expression are given by some vertices of in-degree zero; for each $^{-1}$ applied to a vertex (which represents some variable or number), we add an directed edge from it to a new vertex without any label; for each $+$ applied to two vertices, we add two directed edges from them to a new vertex without any labels; for each $\times$ applied to two vertices, we add two directed edges from them to a new vertex with left and right labels to determine the order of multiplication; we proceed in such a way until we arrive at a vertex which corresponds to the desired ``rational expression''. For example, the rational expression $y\left(xy\right)^{-1}x$ is given by the following graph:
\begin{center}
\begin{tikzpicture}
  \draw [fill] (0,0) circle [radius=0.03];
  \node [left] at (0,0) {$x$};
  \draw [fill] (0,-3) circle [radius=0.03];
  \node [left] at (0,-3) {$y$};
  \draw [fill] (2.25,-1.5) circle [radius=0.03];
  \node [above] at (2.25,-1.5) {$xy$};
  \draw [fill] (4.5,-1.5) circle [radius=0.03];
  \node [above] at (4.5,-1.5) {$(xy)^{-1}$};
  \draw [fill] (6.75,-1.5) circle [radius=0.03];
  \node [below] at (6.75,-1.5) {$(xy)^{-1}x$};
  \draw [fill] (9,-1.5) circle [radius=0.03];
  \node [right] at (9,-1.5) {$y(xy)^{-1}x$};
  \draw [->-=.5] (0,0) to node [below] {left} (2.25,-1.5) ;
  \draw [->-=.5] (0,-3) to node [above] {right} (2.25,-1.5);
  \draw [->-=.5] (2.25,-1.5) to (4.5,-1.5);
  \draw [->-=.5] (4.5,-1.5) to node [below] {left} (6.75,-1.5);
  \draw [->-=.5] (6.75,-1.5) to node [above] {right} (9,-1.5);
  \draw [->-=.5] (0,0) .. controls (4.5,0) .. node [above] {right} (6.75,-1.5);
  \draw [->-=.5] (0,-3) .. controls (6.75,-3) .. node [below] {left} (9,-1.5);
\end{tikzpicture}
\end{center}

With this definition, taking sums, products and inverses of rational expression are clear: we adjoin two rational expression by adding two edges from their unique vertices of out-degree zero to a new vertex, then the resulting new graph is the sum, or product if these two edges are labelled by left and right; if we add a new edge from the vertex of out-degree zero to a new vertex, then the resulting graph is the inverse. This definition of rational expressions is known as \emph{circuits}, or \emph{noncommutative arithmetic circuits with division}. We refer to \cite{HW15} and the references collected therein for this notion and related topics.

Our goal in this subsection is to introduce the noncommutative rational functions, which should be some smallest division ring (or skew field) containing noncommutative polynomials; similar to the case of commutative polynomials, or any integral domain in general. So it may be tempting to imitate the construction as the commutative case; that is, every commutative rational function can be written in the form $pq^{-1}$, where $p$ and $q$ are polynomials. So we may hope that this also holds for the noncommutative case. But this doesn't work any more; for example, considering a function such as $xy^{-1}x$, where $x$ and $y$ are viewed as noncommutative polynomials, there is no way to put this in the form $pq^{-1}$, simply due to the noncommutativity of the variables. Therefore, in the noncommutative case there is no hope to represent a rational function just by two polynomials.

Then it is natural to go back to the notion of rational expressions given at the beginning of this section. A basic idea is to view rational functions as equivalence classes of rational expressions. But then we also need to identify the rational expressions which are trivial
or mathematically inconsistent. For example, as a rational function, we should have $y\left(xy\right)^{-1}x=yy^{-1}x^{-1}x=1$, since each non-zero polynomial is invertible as a rational function; so the rational expression $1-y(xy)^{-1}x$ actually represents the zero function and thus the rational expression $(1-y(xy)^{-1}x)^{-1}$ represents nothing
meaningful. Actually, there can be quite complicated rational expressions  to represent a simple rational function; for example,
\[(x-y^{-1})^{-1}-x^{-1}-(xyx-x)^{-1}\]
also can be reduced to zero by arithmetic operations though it may not be obvious. A way to overcome this difficulty is to define the equivalence classes by evaluations.

\begin{definition}
\label{def:evaluation of expr}Let $\A$ be any unital algebra. For any noncommutative rational expression $r$ in variables $\{x_{1},\dots,x_{n}\}$, we define its $\A$\emph{-domain} $\dom_{\A}(r)$ together with its \emph{evaluation} $\ev_{X}(r)$ for any $X=(X_{1},\dots,X_{n})\in\dom_{\A}(r)$ by the following rules:
\begin{enumerate}
\item For any $\lambda\in\C$, we put $\dom_{\A}(r)=\A^{n}$ and $\ev_{X}(\lambda)=\lambda1$, where $1$ is the unit of algebra $\A$;
\item For $i=1,\dots,n$, we put $\dom_{\A}(x_{i})=\A^{n}$ and $\ev_{X}(x_{i})=X_{i}$;
\item For two rational expressions $r_{1}$, $r_{2}$, we have
\[\dom_{\A}(r_{1}\cdot r_{2})=\dom_{\A}(r_{1}+r_{2})=\dom_{\A}(r_{1})\cap\dom_{\A}(r_{2})\]
and
\[\begin{array}{l}\ev_{X}(r_{1}\cdot r_{2})=\ev_{X}(r_{1})\cdot\ev_{X}(r_{2}),\\\ev_{X}(r_{1}+r_{2})=\ev_{X}(r_{1})+\ev_{X}(r_{2});\end{array}\]
\item For a rational expression $r$, we have
\[\dom_{\A}(r^{-1})=\{X\in\dom_{\A}(r)\bigm| \ev_{X}(r)\text{ is invertible in }\A\}\]
and
\[\ev_{X}(r^{-1})=\ev_{X}(r)^{-1}.\]
\end{enumerate}
We also abbreviate $r(X):=\ev_{X}(r)$ for any given rational expression $r$ and $X\in\dom_{\A}(r)$.
\end{definition}

Therefore, when a unital algebra $\A$ is given, equivalence classes of rational expressions can be defined by an equivalence relation: two rational expressions $r_{1}$ and $r_{2}$ are called \emph{$\A$-evaluation equivalent} if $\dom_{\A}(r_{1})\cap\dom_{\A}(r_{2})\neq\emptyset$ and $r_{1}(X)=r_{2}(X)$ for all $X\in\dom_{\A}(r_{1})\cap\dom_{\A}(r_{2})$. Then it remains to choose appropriate $\A$ such that rational functions can be well-defined as $\A$-evaluation equivalent classes of rational expressions. This approach was first achieved by Amitsur in his paper \cite{Ami66} by evaluating rational expressions on some large auxiliary skew field; it turns out that the evaluation on matrices of all sizes is also sufficient, which was proved in \cite{KV12}.

Besides the difficulty to construct such a skew field of rational functions containing polynomials as a subring, there is another significant difference between the noncommutative and commutative cases. That is, noncommutative polynomials actually can be embedded into more than one skew fields that are non-isomorphic; see \cite{KV12} for some examples. However, it turns out that there exists an unique skew field which has some ``universality''. In order to make this notion precise, we take some definitions from \cite[Section 7.2]{Coh06}.

\begin{definition}
Let $\mathcal{R}$ be a ring.
\begin{enumerate}
\item An $\mathcal{R}$-\emph{ring} (respectively, \emph{$\mathcal{R}$-field}) is a ring (respectively, (skew) field) $\mathcal{K}$ together with a homomorphism $\phi:\mathcal{R}\rightarrow\mathcal{K}$.
\item An $\mathcal{R}$-field $\mathcal{K}$ is called \emph{epic} if there is no proper subfield of $\mathcal{K}$ containing the image $\phi(\mathcal{R})$.
\item An epic $\mathcal{R}$-field $\mathcal{K}$, for which $\phi$ is injective, is called\emph{ field of fractions of} $\mathcal{R}$.
\end{enumerate}
\end{definition}

Since we want to compare different $\mathcal{R}$-fields, it is natural to consider the homomorphisms between $\mathcal{R}$-rings which respect the $\mathcal{R}$-ring structure. That is, for a homomorphism $f:\mathcal{K}\rightarrow\mathcal{L}$ between two $\mathcal{R}$-rings $\mathcal{K}$ and $\mathcal{L}$ with homomorphisms $\phi_{\mathcal{K}}:\mathcal{R}\rightarrow\mathcal{K}$ and $\phi_{\mathcal{L}}:\mathcal{R}\rightarrow\mathcal{L}$, if $f\circ\phi_{\mathcal{K}}=\phi_{\mathcal{L}}$, then we say $f$ is an $\mathcal{R}$-\emph{ring homomorphism}. However, this requirement enforces $f$ to be an isomorphism whenever $\mathcal{K}$ and $\mathcal{L}$ are two epic $\mathcal{R}$-fields. Hence we need to consider more general maps.

\begin{definition}
Let $\mathcal{K}$ and $\mathcal{L}$ be $\mathcal{R}$-fields. A \emph{specialization} from $\mathcal{K}$ to $\mathcal{L}$ is an $\mathcal{R}$-ring homomorphism $f:\mathcal{K}_{f}\rightarrow\mathcal{L}$, where $\mathcal{K}_{f}$ is a minimal subring of $\mathcal{K}$ satisfying
\begin{itemize}
\item $\mathcal{K}_{f}$ contains the image of $\mathcal{R}$,
\item all elements of $\{x\in\mathcal{K}_{f}\bigm|f(x)\neq0\}$ are invertible in $\mathcal{K}_{f}$.
\end{itemize}
\end{definition}

This definition is slightly modified from the one in \cite[Section 7.2]{Coh06} for simplicity. With the help of specializations we can now clarify a universal property for epic $\mathcal{R}$-fields.

\begin{definition}
An epic $\mathcal{R}$-field $\mathcal{U}$ is called a \emph{universal $\mathcal{R}$-field} if for any epic $\mathcal{R}$-field $\mathcal{K}$ there is a unique specialization $\mathcal{U}\rightarrow\mathcal{K}$. If $\mathcal{U}$ is in addition a field of fractions of $\mathcal{R}$, then we call $\mathcal{U}$ the \emph{universal field of fractions of} $\mathcal{R}$.
\end{definition}

In other words, an epic $\mathcal{R}$-field $\mathcal{U}$ is universal if for any other epic $\mathcal{R}$-field $\mathcal{K}$, the corresponding $\phi_{\mathcal{K}}$ factorizes through a specialization $f$ from $\mathcal{U}$ to $\mathcal{K}$, i.e.,
\begin{center}
\begin{tikzpicture}
  \node (R) at (0,0) {$\mathcal{R}$};
  \node (U) at (2,0) {$\mathcal{U}$};
  \node (K) at (2,-2) {$\mathcal{K}$.};
  \draw[->] (R) to node [above] {$\phi_{\mathcal{U}}$} (U);
  \draw[->] (R) to node [left] {$\phi_{\mathcal{K}}$} (K);
  \draw[->] [dotted] (U) to node [right] {$f$} (K);
\end{tikzpicture}
\end{center}
Actually, in this case, $\ker f$ is a maximal ideal of $\mathcal{U}_{f}$ and hence by the definition of $\mathcal{U}_{f}$, $\mathcal{U}_{f}/\ker f$ is a field, isomorphic to a subfield of $\mathcal{K}$ containing $\phi_{\mathcal{K}}(\mathcal{R})$; so if $\mathcal{K}$ is epic, then this field is actually isomorphic to $\mathcal{K}$. Therefore, from a universal $\mathcal{R}$-field one can obtain any other epic $\mathcal{R}$-field by a specialization; and by this universal property a universal $\mathcal{R}$-field, if it exists, is unique up to isomorphism.

In our particular case, though it is highly non-trivial, the universal field of fractions of $\C\left\langle x_{1},\dots,x_{n}\right\rangle $ indeed exists; it is denoted by $\C\plangle x_{1},\dots,x_{n}\prangle$, and sometimes it is also simply called the \emph{free (skew) field}. We have already mentioned two ways of constructing the free field, by evaluating rational expressions on some auxiliary algebras; yet there is also another approach to construct the free field by generalizing the idea of \emph{localization} to the non-commutative case (see \cite[Chapter 7]{Coh06} for details). Recall that, for a commutative unital ring $\mathcal{R}$ and a given set $S\subseteq\mathcal{R}$ which is closed under multiplication and contains $1$, localization allows us to construct another ring $\mathcal{R}_{S}$ together with a homomorphism $\phi:\mathcal{R}\rightarrow\mathcal{R}_{S}$ such that all elements in the image $\phi(\mathcal{R})$ are invertible in $\mathcal{R}_{S}$. Cohn discovered that one can replace the set $S$ by a set of matrices $\Sigma$ over $\mathcal{R}$ and construct a \emph{universal localization} $\mathcal{R}_{\Sigma}$, that is, a ring with a homomorphism $\phi:\mathcal{R}\rightarrow\mathcal{R}_{\Sigma}$ such that all elements in the image $\phi(\Sigma)$ are invertible as matrices over $\mathcal{R}_{\Sigma}$, and any other ring with such a homomorphism can be factorized through $\mathcal{R}_{\Sigma}$. Moreover, if we take the set $\Sigma$ to be the set of all full matrices over $\mathcal{R}$ and if $\Sigma$ satisfies some ``multiplicative closure'' property, then this universal localization $\mathcal{R}_{\Sigma}$ turns out to be the universal field of fractions of $\mathcal{R}$. Actually, in \cite[Theorem 7.5.13]{Coh06}, Cohn gives a list of characterizations for rings that can be embedded into universal fields of fractions. With the help of our lemmas from the appendix we can check one of those characterizations for our ring of non-commutative polynomials.

\begin{lemma}
Let $\Sigma$ be the set of full matrices over $\C\left\langle x_{1},\dots,x_{n}\right\rangle $. Then $\Sigma$ it is lower multiplicative, i.e., $\1\in\Sigma$ and
\[\begin{pmatrix}A & \0\\C & B\end{pmatrix}\in\Sigma\]
for all $A,B\in\Sigma$ and each matrix $C$ over $\C\left\langle x_{1},\dots,x_{d}\right\rangle $ of appropriate size.
\end{lemma}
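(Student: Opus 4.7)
My plan is to exploit directly the two closure properties of full matrices over $\C\langle x_1,\dots,x_n\rangle$ that are verified in Appendix \ref{sec:full minor}, as stated in the hypotheses of Theorem \ref{thm:full minor}: namely, $(\star)$ the product of two square full matrices is full, and $(\star\star)$ the diagonal sum of full matrices is full. Once these are granted, the lemma is essentially a bookkeeping argument.

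The first step is to show $\1\in\Sigma$. The $1\times 1$ matrix $(1)$ is full because $1\neq 0$, so any identity $\1_n$ is the diagonal sum of $n$ copies of $(1)$ and hence is full by $(\star\star)$. As a consequence, every invertible square matrix $X\in M_n(\C\langle x_1,\dots,x_n\rangle)$ is full: from a factorization $X=PQ$ with $P\in M_{n,r}$ and $Q\in M_{r,n}$ one obtains $\1_n=(X^{-1}P)Q$, forcing $r\geq n$ by fullness of $\1_n$.

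For the second requirement, given $A\in\Sigma\cap M_m$, $B\in\Sigma\cap M_n$, and an arbitrary $C\in M_{n,m}(\C\langle x_1,\dots,x_n\rangle)$, I would use the direct computation
$$\begin{pmatrix} A & \0 \\ C & B \end{pmatrix} = \begin{pmatrix} A & \0 \\ \0 & \1_n \end{pmatrix} \begin{pmatrix} \1_m & \0 \\ C & \1_n \end{pmatrix} \begin{pmatrix} \1_m & \0 \\ \0 & B \end{pmatrix}$$
to write the target as a product of three matrices which are each full: the outer factors are diagonal sums of two full matrices and therefore full by $(\star\star)$, while the middle factor is invertible with explicit inverse $\begin{pmatrix} \1_m & \0 \\ -C & \1_n \end{pmatrix}$, and hence full by the preceding observation. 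A twofold application of $(\star)$ then shows that the product is full, which yields $\begin{pmatrix} A & \0 \\ C & B \end{pmatrix}\in\Sigma$.

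At the level of this lemma there is no real obstacle: the substantive content has been offloaded to Appendix \ref{sec:full minor}, where $(\star)$ and $(\star\star)$ are established for $\C\langle x_1,\dots,x_n\rangle$ on the basis of Cohn's theory. The only delicate point within the present argument is the observation that the identity matrix itself is full, which is why I would derive it carefully from $(\star\star)$ before invoking it to conclude that invertible matrices are full.
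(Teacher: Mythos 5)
Your proof is correct, but it takes a genuinely different route from the paper's. The paper does not factor the lower-triangular matrix at all: it goes back inside the proof of Lemma \ref{lem:product and sum} (the diagonal-sum case), observes that a putative rank factorization
\[
\begin{pmatrix}A & \0\\C & B\end{pmatrix}=\begin{pmatrix}C'\\C''\end{pmatrix}\begin{pmatrix}D' & D''\end{pmatrix}
\]
still yields the three relations $C'D''=\0$, $A=C'D'$, $B=C''D''$ regardless of what sits in the lower-left corner, and concludes exactly as in that lemma. You instead treat the closure properties $(\star)$ and $(\star\star)$ as black boxes and reduce the triangular case to them via the explicit factorization into two diagonal sums and an invertible unipotent middle factor; this requires the additional (correct) observations that $\1_n$ is full -- which you derive from $(\star\star)$ applied to copies of the full $1\times1$ matrix $(1)$, more directly than the paper's Lemma \ref{lem:full identity}, which goes through stable finiteness -- and that a square matrix with an inverse over the polynomial ring is full, which you get from $\1_n=(X^{-1}P)Q$. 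The trade-off is that the paper's argument requires reopening an earlier proof, while yours is modular but needs the two auxiliary facts about identity and invertible matrices; both are complete and rest on the same substantive input from Appendix \ref{sec:full minor}.
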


\begin{proof}
From Lemma \ref{lem:product and sum}, we know that the diagonal sum of two full matrices is full again, so it's clear that $\1\in\Sigma$ and
\[\begin{pmatrix}A & \0\\\0 & B\end{pmatrix}\in\Sigma\]
for any two full matrices $A\in M_{k}(\C\left\langle x_{1},\dots,x_{n}\right\rangle )$ and $B\in M_{l}(\C\left\langle x_{1},\dots,x_{n}\right\rangle )$. Actually, the argument in the proof of Lemma \ref{lem:product and sum} can be applied directly to
\[\begin{pmatrix}A & \0\\C & B\end{pmatrix},\]
since the left lower block $C$ doesn't play any role therein; hence this matrix is also full.
\end{proof}

So Theorem 7.5.13 of \cite{Coh06} yields the following.

\begin{theorem}
Let $\Sigma$ be the set of full matrices over $\C\left\langle x_{1},\dots,x_{n}\right\rangle $, then the universal localization for $\Sigma$ is the universal field of fractions $\C\plangle x_{1},\dots,x_{n}\prangle$
of $\C\left\langle x_{1},\dots,x_{n}\right\rangle $. Moreover, the inner rank of a matrix over $\C\left\langle x_{1},\dots,x_{n}\right\rangle $ stays the same if the matrix is considered as a matrix over $\C\plangle x_{1},\dots,x_{n}\prangle$.
\end{theorem}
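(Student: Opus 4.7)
The plan is to apply Cohn's Theorem 7.5.13 of \cite{Coh06} essentially as a black box, since the preceding lemma has already carried out the crucial verification that the set $\Sigma$ of full matrices over $\C\langle x_1,\dots,x_n\rangle$ is lower multiplicative. Cohn's theorem gives a list of equivalent characterizations for a ring $\mathcal{R}$ to admit a universal field of fractions realized concretely as the universal localization $\mathcal{R}_\Sigma$; one of these equivalent sufficient conditions is precisely the combination of (i) lower multiplicativity of $\Sigma$, which we have just established, and (ii) the structural fact that $\C\langle x_1,\dots,x_n\rangle$ is a fir (free ideal ring), which is a well-known classical result of Cohn. Together these allow us to identify $\mathcal{R}_\Sigma$ with $\C\plangle x_1,\dots,x_n\prangle$.

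For the invariance of the inner rank under the passage to the free field, the plan is to exploit the explicit construction of $\mathcal{R}_\Sigma$. Given $A \in M_{m,n}(\C\langle x_1,\dots,x_n\rangle)$ with $r := \rho_{\C\plangle x_1,\dots,x_n\prangle}(A)$, there is a factorization $A = PQ$ with $P$ of size $m \times r$ and $Q$ of size $r \times n$ over the free field. By the universal property of the localization, each entry of $P$ and $Q$ is obtained from elements of $\mathcal{R}$ by ring operations together with inversions of full square matrices over $\mathcal{R}$. Collecting these inverses into a single large block-diagonal full matrix and then applying Proposition \ref{prop:invertible minor} and Theorem \ref{thm:full minor} to clear them out, the factorization can be rewritten as a width-$r$ factorization over $\mathcal{R}$ itself; this yields $\rho_{\C\langle x_1,\dots,x_n\rangle}(A) \leq r$, and the reverse inequality holds trivially because any rank factorization over the smaller ring persists over the larger one.

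The main obstacle is not really mathematical but organizational: one has to match the hypothesis isolated in the preceding lemma with the correct clause in Cohn's theorem and then trace through the proof of that theorem to extract the rank-preservation statement, which is essentially built into the construction of the universal localization at the set of full matrices. This is precisely why we isolated the lower multiplicativity of $\Sigma$ as a separate lemma immediately before the statement.
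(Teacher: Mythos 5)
Your proposal matches the paper's treatment: the paper likewise proves lower multiplicativity of $\Sigma$ as a standalone lemma and then simply invokes Cohn's Theorem 7.5.13 to obtain both the identification of the universal localization with $\C\plangle x_{1},\dots,x_{n}\prangle$ and the preservation of the inner rank, offering no further argument. Your sketch of how the rank statement is extracted from the construction is extra detail beyond what the paper records, but the underlying approach — verify the multiplicative hypothesis, then cite Cohn as a black box — is the same.
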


Therefore, in the following, for a matrix $A$ over polynomials, we do not need to distinguish between its inner rank over polynomials and its inner rank over rational functions; this common inner rank is denoted by $\rho\left(A\right)$.

\subsection{\label{sec:linearization}Linearization for rational functions}

This localization in the last subsection tells us that a full matrix $A$ over $\C\left\langle x_{1},\dots,x_{n}\right\rangle $ is invertible as a matrix over $\C\plangle x_{1},\dots,x_{n}\prangle$. So each entry in $A^{-1}$ is a \emph{rational function}, i.e., an element in the free field $\C\plangle x_{1},\dots,x_{n}\prangle$. Therefore, for any row vector $u$ and any column vector $v$ over $\C$, $uA^{-1}v$ is a rational function as it is a linear combination of some rational functions. Actually, in the construction of the universal localization, we add new elements more or less in this way to extend $\C\left\langle x_{1},\dots,x_{n}\right\rangle$ to its universal localization, which turns out to be the free field $\C\plangle x_{1},\dots,x_{n}\prangle$; so we can expect the converse should also be true, that is, any rational function $r$ can be written in the form $r=uA^{-1}v$, by some full matrix $A$ over polynomials with two scalar-valued vectors $u$ and $v$.
Moreover, this matrix $A$ can be chosen to be linear, though the dimension of $A$ may increase for exchange. This culminates in the following definition borrowed from \cite{CR99}.

\begin{definition}\label{def:linear representation}
Let $r$ be a rational function. A \emph{linear representation of} $r$ is a tuple $\rho=(u,A,v)$ consisting of a linear full matrix $A\in M_{k}(\C\left\langle x_{1},\dots,x_{n}\right\rangle )$, a row vector $u\in M_{1,k}(\C)$ and a column vector $M_{k,1}(\C)$ such that $r=uA^{-1}v$.
\end{definition}

In \cite{CR99}, such linear representations were used to give an alternative construction of the free field. That indeed each element in the free field admits a linear representation is a fundamental result, which is a direct consequence of the approach of \cite{CR99}, but follows also from the general theory presented in \cite{Coh06}; see also \cite{Vol18}.

\begin{theorem}\label{thm:linear representation}
Each rational function $r \in \C\plangle x_1,\dots,x_n\prangle$ admits a linear representation in the sense of Definition \ref{def:linear representation}.
\end{theorem}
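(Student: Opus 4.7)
My plan is to prove the theorem by structural induction on rational expressions representing $r$, using the fact that the free field $\C\plangle x_1,\dots,x_n\prangle$ is generated from $\C$ and the variables $x_1,\dots,x_n$ under the operations of addition, multiplication, and inversion of nonzero elements. Let $\mathcal{R}_0\subseteq \C\plangle x_1,\dots,x_n\prangle$ denote the class of rational functions admitting a linear representation; it suffices to show that $\mathcal{R}_0$ contains all scalars and variables and is stable under these three operations.

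For the base cases, a scalar $\lambda\in\C$ has representation $(1,\1_1,\lambda)$, and each variable $x_i$ has representation
$$\Big((1,0),\ \begin{pmatrix} 0 & 1 \\ 1 & -x_i \end{pmatrix},\ \begin{pmatrix} 1 \\ 0\end{pmatrix}\Big),$$
verified by a direct block-inversion computation. Given representations $r_k=u_k A_k^{-1} v_k$ with $A_k\in M_{N_k}(\C\langle x_1,\dots,x_n\rangle)$ for $k=1,2$, the standard Schur-complement constructions
$$r_1+r_2:\ \Big((u_1,u_2),\ \begin{pmatrix} A_1 & 0\\ 0 & A_2\end{pmatrix},\ \begin{pmatrix} v_1\\ v_2\end{pmatrix}\Big),$$
$$r_1 r_2:\ \Big((u_1,0),\ \begin{pmatrix} A_1 & -v_1 u_2\\ 0 & A_2\end{pmatrix},\ \begin{pmatrix} 0\\ v_2\end{pmatrix}\Big),$$
and, for $r=uA^{-1}v\ne 0$,
$$r^{-1}:\ \Big((0,1),\ \begin{pmatrix} A & -v\\ u & 0\end{pmatrix},\ \begin{pmatrix} 0\\ 1\end{pmatrix}\Big)$$
yield candidate linear representations; a routine block-matrix calculation confirms that in each case the expression $u'(A')^{-1}v'$ recovers $r_1+r_2$, $r_1 r_2$, and $r^{-1}$ respectively.

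The only real content of the argument, and the main obstacle, is verifying fullness of the newly constructed matrices. Linearity is immediate by construction. For the sum, the diagonal sum of full matrices is full (this is the content of the lemma cited from the appendix), so the block-diagonal matrix is full. For the product, the block upper-triangular matrix with full diagonal blocks is full by the same argument as the ``lower multiplicative'' lemma stated just before Theorem \ref{thm:linear representation}, applied to the transposed pattern, since the off-diagonal entry plays no role in the proof. The inverse step is more delicate: $M=\begin{pmatrix} A & -v\\ u & 0\end{pmatrix}$ is not block-triangular, but the assumption $r\ne 0$ guarantees that $r$ is invertible in the free field, so we have the factorization
$$M = \begin{pmatrix} \1 & 0\\ uA^{-1} & 1\end{pmatrix}\begin{pmatrix} A & -v\\ 0 & r\end{pmatrix}$$
over $\C\plangle x_1,\dots,x_n\prangle$. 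The right factor is block-triangular with $A$ and the nonzero scalar $r$ on the diagonal, hence invertible over the free field, and the left factor is unitriangular; therefore $M$ is invertible over the free field. By the theorem immediately preceding, which equates the inner rank of a polynomial matrix over $\C\langle x_1,\dots,x_n\rangle$ with its inner rank over $\C\plangle x_1,\dots,x_n\prangle$, the matrix $M$ is full. Structural induction then completes the proof.
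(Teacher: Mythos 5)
Your proof is correct, and it is considerably more self-contained than what the paper actually provides: the paper establishes Theorem \ref{thm:linear representation} essentially by citation to Cohn--Reutenauer and Cohn, and then sketches an alternative recovery via the formal linear representations of Algorithm \ref{linearization}, where fullness of $A$ is certified \emph{semantically} --- a regular rational expression has nonempty matrix domain, $A(X)$ is invertible at every point of that domain, and an invertible scalar-matrix evaluation forces fullness. Your block constructions are, up to signs and permutations of blocks, exactly those of Algorithm \ref{linearization}, but you certify fullness \emph{algebraically}: diagonal sums and block-triangular matrices with full square diagonal blocks are full (the appendix lemmas), and for the inversion step you pass through invertibility over $\C\plangle x_1,\dots,x_n\prangle$ and pull fullness back via the preservation of inner rank under the embedding of $\C\langle x_1,\dots,x_n\rangle$ into the free field. (Equivalently, you could invoke Proposition \ref{prop:invertible minor} directly: $uA^{-1}v=r\neq 0$ forces the bordered $(N+1)\times(N+1)$ matrix to have inner rank $N+1$; this is precisely the argument the paper runs in the proof of Theorem \ref{thm:evaluation}.) The one dependence you should make explicit is that your structural induction exhausts the free field only because $\C\plangle x_1,\dots,x_n\prangle$ coincides with the division closure of $\C\langle x_1,\dots,x_n\rangle$ inside itself, i.e.\ it is an epic $\C\langle x_1,\dots,x_n\rangle$-field with the recursive structure the paper quotes in Section \ref{sec:rational closure}; granting that standard fact, your argument is complete. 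What your route buys is independence from matrix evaluations and from the notion of regular rational expressions; what the paper's route buys is that the same construction simultaneously yields formal linear representations with the stronger domain-of-evaluation property used later in the paper.
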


The idea of realizing rational noncommutative functions by inverses of linear matrices has been known for more than fifty years; and was rediscovered several times in many distant realms, such as automaton theory and non-commutative rational series, and many other branches of mathematics as well as computer science and engineering. Under the name ``linearization trick'', it was introduced to the community of free probability by the work of Haagerup and Thorbj\o rnsen \cite{HT05} and Haagerup, Schultz, and Thorbj\o rnsen \cite{HST06}, building on earlier operator space versions; for the latter see in particular the work of Pisier \cite{Pis18}.

For the special case of noncommutative polynomials, similar concepts were developed by Anderson \cite{And12,And13,And15} and were used in \cite{BMS17} in order to study evaluations of noncommutative polynomials in noncommutative random variables by means of operator-valued free probability theory. Later, in \cite{HMS18}, these methods were generalized to noncommutative rational expressions, based on the following variant of Definition \ref{def:linear representation}; it is taken from \cite[Section 5]{HMS18}, but with the sign changed for convenience.

\begin{definition}
Let $r$ be a rational expression in variables $x_{1},\dots,x_{n}$. A \emph{formal linear representation $\rho=(u,A,v)$ of $r$} consists of a linear matrix $A$ over $\C\left\langle x_{1},\dots,x_{n}\right\rangle $, a row vector $u$ and a column vector $v$ over $\C$ such that for any unital algebra $\A$,
\[\dom_{\A}(r)\subseteq\{X\in\A^{n}\bigm|A\left(X\right)\text{ is invertible in }\A\}\]
and
\[r(X)=uA(X)^{-1}v\]
for any tuple $X\in\dom_{\A}(r)$.
\end{definition}

The following explicit algorithm that was stated in \cite[Section 5]{HMS18} establishes the existence of a formal linear representation for any rational expression, thus yielding a perfect analogue of Theorem \ref{thm:linear representation}. 

\begin{algorithm}\label{linearization}
A formal linear representation $\rho=(u,A,v)$ of a rational expression $r$ can be constructed by using successively the following rules:
\begin{enumerate}
\item For scalars $\lambda\in\C$ and the variables $x_{j}$, $j=1,\dots,n$, formal linear representations are given by
\[\rho_{\lambda}:=\left(\begin{pmatrix}0 & 1\end{pmatrix},\begin{pmatrix}-\lambda & 1\\1 & 0\end{pmatrix},\begin{pmatrix}0\\1\end{pmatrix}\right),\]
and
\[\rho_{x_{j}}:=\left(\begin{pmatrix}0 & 1\end{pmatrix},\begin{pmatrix}-x_{j} & 1\\1 & 0\end{pmatrix},\begin{pmatrix}0\\1\end{pmatrix}\right).\]
\item If $\rho_{1}=(u_{1},A_{1},v_{1})$ and $\rho_{2}=(u_{2},A_{2},v_{2})$ are two formal linear representations for rational expressions $r_{1}$ and $r_{2}$, respectively, then
\[\rho_{1}\oplus\rho_{2}:=\left(\begin{pmatrix}u_{1} & u_{2}\end{pmatrix},\begin{pmatrix}A_{1} & \0\\\0 & A_{2}\end{pmatrix},\begin{pmatrix}v_{1}\\v_{2}\end{pmatrix}\right)\]
gives a formal linear representation of $r_{1}+r_{2}$ and
\[\rho_{1}\odot\rho_{2}:=\left(\begin{pmatrix}\0 & u_{1}\end{pmatrix},\begin{pmatrix}-v_{1}u_{2} & A_{1}\\A_{2} & \0\end{pmatrix},\begin{pmatrix}\0\\v_{2}\end{pmatrix}\right)\]
gives a formal linear representation of $r_{1}\cdot r_{2}$.
\item If $\rho=(u,A,v)$ is a formal linear representation for rational expression $r$, then
\[\rho^{-1}:=\left(\begin{pmatrix}1 & \0\end{pmatrix},\begin{pmatrix}0 & u\\v & A\end{pmatrix},\begin{pmatrix}1\\\0\end{pmatrix}\right)\]
gives a formal linear representation of $r^{-1}$.
\end{enumerate}
\end{algorithm}

Our main interest in this section is in rational functions rather than rational expressions and actually we don't really need to deal with rational expressions through the paper. So we will not say more on this algorithm here, but a detailed proof can be found in \cite[Section 5]{HMS18} or \cite[Chapter III]{Mai17}.
We only want to highlight that, due to their excellent evaluation properties, formal linear representations are closely related to linear representations as introduced in Definition \ref{def:linear representation}.
Of course, as we just consider rational expressions in the above algorithm, it may happen that the linear matrix $A$ is not full, since rational expressions like $0^{-1}$ are allowed. However, for rational expressions that ``represent'' rational functions, their formal linear representations automatically produce linear matrices $A$ that are full; this is explained in \cite[Chapter III]{Mai17}.
Indeed, if a rational function is seen like in \cite{KV12} as an equivalence class of regular rational expressions with respect to matrix evaluation equivalence (where a rational expression $r$ is said to be \emph{regular} if $\dom_{M_n(\C)}(r) \neq \emptyset$ holds for at least one $n\in\N$), then any formal linear representation $\rho=(u,A,v)$ of any of its representatives $r$ carries a full matrix $A$, because $A(X)$ is due to the defining property of $\rho$ an invertible matrix for each $X\in \dom_{M_n(\C)}(r)$. In this way, one recovers the the fundamental result Theorem \ref{thm:linear representation} on the existence of linear representations for rational functions.

\subsection{Evaluation of rational functions}\label{sec:evaluation}

Let $X=(X_{1},\dots,X_{n})$ be a tuple of elements in a unital algebra $\A$, then its evaluation map $\ev_{X}$ from $\C\left\langle x_{1},\dots,x_{n}\right\rangle $ to $\A$ is well-defined as a homomorphism. We have also seen that the evaluation of rational expressions can be defined naturally with $\A$-domains considered in Definition \ref{def:evaluation of expr}. Then the question is how can we define the evaluation for rational functions. However, unfortunately, the evaluation can not be well-defined for all algebras without additional assumptions. Here is an example which illustrates the problem: considering $\A=B\left(H\right)$ for some infinite dimensional separable Hilbert space, let $l$ denote the one-sided left-shift operator, then $l^{\ast}$ is the right-shift operator and we have $l\cdot l^{\ast}=1$ but $l^{\ast}\cdot l\neq1$; so it is clear that the evaluation of the rational expression $r(x,y)=y\left(xy\right)^{-1}x$ is $r(l,l^{\ast})=l^{\ast}l\neq1$; however, since this rational expression also represents the rational function $1$ there is no consistent way to define its value for the arguments $l$ and $l^*$. So it's natural to consider algebras in which a left inverse is also a right inverse to avoid such a problem; actually, we require algebras to be stably finite in order to make sure that we have a well-defined evaluation.

\begin{theorem}
\label{thm:evaluation}Let $\A$ be a stably finite algebra, then for any rational function $r$ in the free field $\C\plangle x_{1},\dots,x_{n}\prangle$, we have a well-defined $\A$-domain $\dom_{\A}(r)\subseteq\A^{n}$ and an evaluation $r(X)$ for any $X\in\dom_{\A}(r)$.
\end{theorem}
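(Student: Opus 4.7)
The plan is to use the linear representations of Theorem \ref{thm:linear representation} to define evaluation. For $r\in\C\plangle x_1,\dots,x_n\prangle$ and $X\in\A^n$, I declare $X\in\dom_\A(r)$ precisely when $r$ admits \emph{some} linear representation $(u,A,v)$ with $A(X)$ invertible in $M_k(\A)$, and set $r(X):=uA(X)^{-1}v$ for such $X$. Consistency with the already-defined evaluations of noncommutative polynomials and of their inverses is immediate from the trivial representations $\rho_\lambda$, $\rho_{x_j}$, and $\rho^{-1}$ in Algorithm \ref{linearization}; the substance of the theorem is the well-definedness of $r(X)$.

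Concretely, given two linear representations $(u_1,A_1,v_1)$ and $(u_2,A_2,v_2)$ of the same $r$, both with invertible evaluation at $X$, I need to show
\[
u_1A_1(X)^{-1}v_1 \;=\; u_2A_2(X)^{-1}v_2.
\]
My approach is via the minimal linear representations developed in \cite{CR99}: every rational function admits minimal linear representations, and any two minimal ones for the same $r$ are related by a scalar change of basis $(u,A,v)\mapsto(uP,P^{-1}AQ^{-1},Qv)$ with $P,Q\in GL_k(\C)$, under which the value $uA(X)^{-1}v$ is visibly invariant. An arbitrary linear representation can be reduced to a minimal one by a finite sequence of elementary reductions -- essentially collapsing block-triangular summands via the universal ring identity
\[
\begin{pmatrix} u' & 0 \end{pmatrix} \begin{pmatrix} A' & B \\ 0 & C \end{pmatrix}^{-1} \begin{pmatrix} v' \\ 0 \end{pmatrix} \;=\; u'(A')^{-1}v',
\]
and each such reduction preserves the evaluation at $X$ provided the matrices involved remain invertible in the relevant matrix algebra over $\A$.

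Stable finiteness of $\A$ is what makes this reduction procedure consistent at the level of $\A$: it guarantees that in each $M_k(\A)$ a one-sided inverse is automatically two-sided, and in particular that a block-triangular matrix $\bigl(\begin{smallmatrix} A' & B \\ 0 & C \end{smallmatrix}\bigr)(X)$ is invertible if and only if its diagonal blocks are. This precludes the shift-operator pathology recalled just before the theorem statement, where the rational function $y(xy)^{-1}x$ equals $1$ in $\C\plangle x,y\prangle$ but evaluates inconsistently on $B(H)$ through its evident rational expression. The principal obstacle in executing the plan is the combinatorial verification that the sequence of elementary reductions from an arbitrary linear representation to a minimal one can always be arranged to pass through linear representations whose evaluated matrix at $X$ remains invertible; the required machinery is essentially contained in \cite{CR99} and its adaptation to formal linear representations in \cite[Chapter III]{Mai17}.
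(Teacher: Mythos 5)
Your framework for the definition (declaring $X\in\dom_{\A}(r)$ when \emph{some} linear representation has invertible evaluation, and setting $r(X)=uA(X)^{-1}v$) coincides with the paper's Definition \ref{def:evaluation}, but your proof of well-definedness takes a genuinely different route and leaves the decisive step open. You propose to pass from an arbitrary linear representation to a minimal one by elementary block-triangular collapses and then invoke uniqueness of minimal representations up to constant similarity. The obstacle you name at the end -- that the chain of reductions must stay within representations whose evaluated matrices remain invertible in the relevant $M_k(\A)$, and that each collapse is compatible with evaluation at $X$ -- is not a routine verification to be deferred: it \emph{is} the theorem. The minimization theory of \cite{CR99} is a statement internal to the free field; neither that paper nor \cite[Chapter III]{Mai17} proves that the reduction to minimal form can be realized through constant similarities and collapses in a way that descends to evaluations in an arbitrary stably finite $\A$, so citing them does not close the gap. (A smaller slip: the similarity $(u,A,v)\mapsto(uP,P^{-1}AQ^{-1},Qv)$ does not preserve $uA^{-1}v$; the invariant transformation is $(u,A,v)\mapsto(uQ,PAQ,Pv)$.)

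The paper's argument avoids minimality altogether. Given two linear representations $(u_1,A_1,v_1)$ and $(u_2,A_2,v_2)$ of the same $r$, it forms the single linear representation
\[\left(\begin{pmatrix}u_{1} & u_{2}\end{pmatrix},\begin{pmatrix}A_{1} & \0\\\0 & -A_{2}\end{pmatrix},\begin{pmatrix}v_{1}\\v_{2}\end{pmatrix}\right)\]
of the zero element, reducing everything to the claim that any linear representation $(u,A,v)$ of $0$ satisfies $uA(X)^{-1}v=0$ whenever $A(X)$ is invertible. Stable finiteness then enters through Proposition \ref{prop:invertible minor}: if $uA(X)^{-1}v\neq 0$, that proposition applied in $\A$ forces the bordered matrix $\bigl(\begin{smallmatrix}0 & u\\ v & A(X)\end{smallmatrix}\bigr)$ to be full of inner rank $k+1$ over $\A$; but the same proposition applied over the stably finite ring $\C\left\langle x_{1},\dots,x_{n}\right\rangle$ shows that $\bigl(\begin{smallmatrix}0 & u\\ v & A\end{smallmatrix}\bigr)$ has inner rank $k$, and evaluating a rank factorization at $X$ contradicts fullness over $\A$. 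If you wish to keep your route you must actually prove the compatibility of the reduction procedure with evaluation; otherwise the Schur-complement argument above is shorter and uses only tools already established in Section 2 and the appendix.
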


Actually, the converse also holds in some sense, see Theorem 7.8.3 in the book \cite{Coh06}, which is stated in some other terminologies. When rational functions are treated as equivalence classes of rational expressions evaluated on matrices of all sizes, see also \cite[Theorem 6.1]{HMS18} for a proof of the same theorem. For reader's convenience, here we give a proof under our setting.

\begin{definition}
\label{def:evaluation}For a linear representation $\rho=(u,A,v)$, we define its $\A$\emph{-domain}
\[\dom_{\A}(\rho)=\{X\in\A^{n}\bigm|A(X)\text{ is invertible as a matrix over }\A\};\]
and for a given rational function $r$, we define its $\A$\emph{-domain}
\[\dom_{\A}(r)=\bigcup_{\rho}\dom_{\A}(\rho),\]
where the union is taken over all possible linear representations of $r$. Then we define the \emph{evaluation} of $r$ at a tuple $X\in\dom_{\A}(r)$ by
\[\Ev_{X}(r)=r(X)=uA(X)^{-1}v\]
for any linear representation $\rho=(u,A,v)$ satisfying $X\in\dom_{\A}(\rho)$.
\end{definition}

Of course, as the choice of the linear representations for a rational function is not unique, we have to prove that different choices always give the same evaluation.

\begin{proof}[Proof of Theorem \ref{thm:evaluation}]
Let $\rho_{1}=(u_{1},A_{1},v_{1})$ and $\rho_{2}=(u_{2},A_{2},v_{2})$ be two linear representations of a rational function $r$ such that
\[r=u_{1}A_{1}^{-1}v_{1}=u_{2}A_{2}^{-1}v_{2}.\]
We need to prove that for any $X\in\dom_{\A}(\rho_{1})\cap\dom_{\A}(\rho_{2})$, we have $u_{1}A_{1}(X)^{-1}v_{1}=u_{2}A_{2}(X)^{-1}v_{2}$. It is not difficult to verify that the tuple
\[\left(\begin{pmatrix}u_{1} & u_{2}\end{pmatrix},\begin{pmatrix}A_{1} & \0\\\0 & -A_{2}\end{pmatrix},\begin{pmatrix}v_{1}\\v_{2}\end{pmatrix}\right)\]
is a linear representation of zero in free field; hence it suffices to prove that, for any linear representation $\rho=(u,A,v)$ of zero, we have $uA(X)^{-1}v=0$ for any $X\in\dom_{\A}(A)$. Now suppose that $uA(X)^{-1}v\neq0$ for some linear representation $\rho=(u,A,v)$ of the zero function; then
\[\begin{pmatrix}0 & u\\v & A(X)\end{pmatrix}\in M_{k+1}(\A)\]
has inner rank $k+1$ over $\A$ by Proposition \ref{prop:invertible minor} as $\A$ is stably finite. However, this is impossible: by the embedding of polynomials into rational functions, we see that $\C\left\langle x_{1},\dots,x_{n}\right\rangle $ is stably finite; so we can apply the same proposition to show that
\[\begin{pmatrix}0 & u\\v & A\end{pmatrix}\in M_{k+1}(\C\left\langle x_{1},\dots,x_{n}\right\rangle )\]
has inner rank $k$ as $uA^{-1}v=0$; and thus it has a rank factorization over $\C\left\langle x_{1},\dots,x_{n}\right\rangle $, which leads to the same factorization over $\A$ as the evaluation of polynomials is always well-defined as a homomorphism.
\end{proof}

We close this subsection by remarking that this definition of evaluation is consistent with the usual notion of evaluation. That is, given any polynomial $p$, in order to see that the above definition coincides with the usual one, we should find a linear representation $\rho=(u,A,v)$ such that $uA(X)^{-1}v$ equals $p(X)$, the usual evaluation of $p$ at $X$, for any $X\in\A^{n}$; and actually such a linear representation can be constructed by following the first two rules in Algorithm \ref{linearization} for formal linear representations. Furthermore, from the last rules of this algorithm, we can also see that the arithmetic operations between rational functions give the corresponding arithmetic operations between their evaluations.

\subsection{\label{sec:rational closure}Rational closure}

In this subsection, we introduce another construction besides rational functions, which is based on similar idea as localizations; but it will allow us to consider some general situations when we study Atiyah properties in the next section.

\begin{definition}\label{def:rational closure}
Let $\phi:\C\left\langle x_{1},\dots,x_{n}\right\rangle \rightarrow\A$ be a homomorphism into a unital algebra $\A$ and let us denote by $\Sigma_{\phi}$ the set of all matrices whose images are invertible under the matricial amplifications of $\phi$, i.e.,
\[\Sigma_{\phi}=\bigcup_{k=1}^{\infty}\{A\in M_{k}(\C\left\langle x_{1},\dots,x_{n}\right\rangle )\bigm|\phi^{(k)}(A)\text{ is invertible in }M_{k}(\A)\}.\]
The \emph{rational closure of} $\C\left\langle x_{1},\dots,x_{n}\right\rangle $ with respect to $\phi$, denoted by $\mathcal{R}_{\phi}$, is the set of all entries of inverses of matrices in the image of $\Sigma_{\phi}$ under $\phi$.
\end{definition}

We actually only consider the case when $\phi$ is given by the evaluation of some tuple of elements in $\A$, in the next section. For discussion of more general cases, see \cite[Section 7.1]{Coh06}.

\begin{lemma}
(See \cite[Proposition 7.1.1 and Theorem 7.1.2]{Coh06}) The rational closure $\mathcal{R}_{\phi}$ for any given homomorphism $\phi$ is a subalgebra of $\A$ containing the image of $\C\left\langle x_{1},\dots,x_{n}\right\rangle $.
\end{lemma}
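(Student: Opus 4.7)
The plan is to handle the three ingredients separately: containment of $\phi(\C\langle x_1,\dots,x_n\rangle)$, closure under $\C$-linear combinations, and closure under multiplication. For each I would exhibit an explicit block matrix over $\C\langle x_1,\dots,x_n\rangle$ that lies in $\Sigma_{\phi}$ and whose inverse (after applying $\phi^{(\cdot)}$) exhibits the desired element of $\A$ as an entry. A useful preliminary observation is that $\mathcal{R}_{\phi}$ actually coincides with the set of all \emph{scalar bilinear forms} of the type $u^{T} \phi^{(k)}(A)^{-1} v$, where $k\in\N$, $A\in M_k(\C\langle x_1,\dots,x_n\rangle)\cap\Sigma_{\phi}$, and $u,v\in\C^{k}$. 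The inclusion ``$\subseteq$'' is trivial ($e_i^{T} \phi^{(k)}(A)^{-1} e_j$ is an entry); for ``$\supseteq$'' I would use the $(k+2)\times(k+2)$ block matrix
$$B \;=\; \begin{pmatrix} 1 & 0 & 0 \\ v & A & 0 \\ 0 & u^{T} & 1 \end{pmatrix},$$
which is lower block triangular with invertible diagonal blocks and therefore lies in $\Sigma_{\phi}$; a direct computation from $BB^{-1}=\1$ shows that the bottom-left entry of $\phi^{(k+2)}(B)^{-1}$ equals $u^{T}\phi^{(k)}(A)^{-1} v$.

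For the inclusion $\phi(\C\langle x_1,\dots,x_n\rangle)\subseteq\mathcal{R}_{\phi}$: given any polynomial $p$, the $2\times2$ matrix $\begin{pmatrix} 1 & -p \\ 0 & 1 \end{pmatrix}$ maps under $\phi^{(2)}$ to a unipotent upper triangular matrix whose inverse has $(1,2)$-entry equal to $\phi(p)$. For closure under linear combinations, given entries $a=(\phi^{(k)}(A)^{-1})_{ij}$ and $b=(\phi^{(l)}(B)^{-1})_{pq}$ together with scalars $\lambda,\mu\in\C$, the block-diagonal matrix $D=\mathrm{diag}(A,B)$ is in $\Sigma_{\phi}$ and one has
$$\lambda a+\mu b \;=\; \widetilde{u}^{T}\,\phi^{(k+l)}(D)^{-1}\,\widetilde{v}$$
for $\widetilde{u}=(\lambda e_i,\,\mu e_p)^{T}$ and $\widetilde{v}=(e_j,\,e_q)^{T}$; by the preliminary observation this expression belongs to $\mathcal{R}_{\phi}$.

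For closure under multiplication, with $a$ and $b$ as above, I would consider
$$C \;=\; \begin{pmatrix} A & -e_j e_p^{T} \\ 0 & B \end{pmatrix} \;\in\; M_{k+l}(\C\langle x_1,\dots,x_n\rangle).$$
Its image $\phi^{(k+l)}(C)$ is block upper triangular with invertible diagonal blocks $\phi^{(k)}(A)$ and $\phi^{(l)}(B)$, so $C\in\Sigma_{\phi}$, and the standard block-triangular inversion formula yields
$$\phi^{(k+l)}(C)^{-1} \;=\; \begin{pmatrix} \phi^{(k)}(A)^{-1} & \phi^{(k)}(A)^{-1} e_j e_p^{T} \phi^{(l)}(B)^{-1} \\ 0 & \phi^{(l)}(B)^{-1} \end{pmatrix},$$
whose $(i,k+q)$-entry is precisely $(\phi^{(k)}(A)^{-1})_{ij}\cdot(\phi^{(l)}(B)^{-1})_{pq}=ab$, so $ab\in\mathcal{R}_{\phi}$. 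I do not foresee any substantial obstacle: the proof is a collection of elementary block-matrix manipulations, and the only point requiring attention is the routine check that every block matrix I write down actually has entries in $\C\langle x_1,\dots,x_n\rangle$ and that block triangularity with invertible diagonal blocks guarantees membership in $\Sigma_{\phi}$.
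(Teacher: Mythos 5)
Your proof is correct and follows essentially the same route as the paper, which observes that every element of $\mathcal{R}_{\phi}$ can be written as $u\,\phi^{(k)}(A)^{-1}v$ and then invokes the block-matrix constructions of Algorithm \ref{linearization} (deferring the details to \cite{HMS18}); your bordered and block-triangular matrices are just explicit instances of those constructions, written out and verified. The only cosmetic difference is your upper-triangular product gadget in place of the anti-diagonal one used in the algorithm, which changes nothing of substance.
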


By the definition of rational closures, we can see that for each element $r\in\mathcal{R}_{\phi}$, there is a $k\times k$ matrix $A$ whose image $\phi^{(k)}(A)$ is invertible such that $r$ is a entry of $\phi^{(k)}(A)^{-1}$; hence we can choose some scalar-valued row and column vectors $u$ and $v$ such that
\[r=u\phi^{(k)}(A)^{-1}v.\]
Therefore, the proof of this lemma can go the same way as Algorithm \ref{linearization}, though here no linearity is involved. So we refer to \cite[Section 5]{HMS18} for a detailed proof which can easily be adapted to our setting.

Unlike the rational functions, the rational closure is not a division ring in general. But it has a nice property about inverses: if an element $r\in\mathcal{R}_{\phi}$ is invertible in $\A$, then $r^{-1}\in\mathcal{R}_{\phi}$. Actually, this can be seen from the last rule in Algorithm \ref{linearization}: if $r=u\phi^{(k)}(A)^{-1}v$ for some matrix $A$ over $\C\left\langle x_{1},\dots,x_{n}\right\rangle $ and scalar-valued row and column vectors $u$ and $v$, then
\[r^{-1}=\begin{pmatrix}1 & \0\end{pmatrix}\begin{pmatrix}0 & u\\v & \phi^{(k)}(A)\end{pmatrix}^{-1}\begin{pmatrix}1\\\0\end{pmatrix},\]
where the invertiblity of matrix
\[\begin{pmatrix}0 & u\\v & \phi^{(k)}(A)\end{pmatrix}\]
follows from the invertiblity of $r=u\phi^{(k)}(A)^{-1}v$ in $\A$ by the following well-known lemma about Schur complements.

\begin{lemma}
\label{lem:Schur complement}Suppose that $\A$ is a unital algebra. Let $k,l\in\N$, $A\in M_{k}(\A)$, $B\in M_{k\times l}(\A)$, $C\in M_{l\times k}(\A)$ and $D\in M_{l}(\A)$ such that $D$ is invertible. Then the matrix
\[\begin{pmatrix}A & B\\C & D\end{pmatrix}\]
is invertible in $M_{k+l}(\A)$ if and only if the Schur complement $A-BD^{-1}C$ is invertible in $M_{k}(\A)$.
\end{lemma}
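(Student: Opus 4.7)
The plan is to prove the equivalence via a block LDU factorization, which only uses invertibility of $D$ and requires no commutativity. Specifically, one verifies by direct matrix multiplication the identity
\[
\begin{pmatrix} A & B \\ C & D \end{pmatrix}
= \begin{pmatrix} \1_k & BD^{-1} \\ \0 & \1_l \end{pmatrix}
  \begin{pmatrix} A - BD^{-1}C & \0 \\ \0 & D \end{pmatrix}
  \begin{pmatrix} \1_k & \0 \\ D^{-1}C & \1_l \end{pmatrix}.
\]
Each pairwise product of blocks is unambiguously ordered, so this identity holds over any unital algebra $\A$. This is the core of the argument; everything else is extraction of invertibility information from it.

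Next, I would note that the two outer triangular factors are invertible in $M_{k+l}(\A)$, with inverses obtained by negating the off-diagonal blocks, namely
\[
\begin{pmatrix} \1_k & -BD^{-1} \\ \0 & \1_l \end{pmatrix}
\qquad\text{and}\qquad
\begin{pmatrix} \1_k & \0 \\ -D^{-1}C & \1_l \end{pmatrix},
\]
as one checks by a one-line multiplication. Consequently, the block matrix on the left-hand side is invertible in $M_{k+l}(\A)$ if and only if the central block-diagonal matrix is. A routine argument then shows that a block-diagonal matrix with diagonal blocks $X \in M_k(\A)$ and $Y \in M_l(\A)$ is two-sidedly invertible in $M_{k+l}(\A)$ if and only if both $X$ and $Y$ are invertible: from any two-sided inverse written in block form $(W_{ij})_{i,j=1,2}$ one immediately reads off $XW_{11}=W_{11}X=\1_k$ and $YW_{22}=W_{22}Y=\1_l$, while conversely the block-diagonal matrix with diagonal entries $X^{-1}$ and $Y^{-1}$ inverts it. Since $D$ is invertible by hypothesis, the invertibility of the central factor reduces precisely to the invertibility of the Schur complement $A - BD^{-1}C$, which is the desired equivalence.

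I do not expect a real obstacle here: the entire content is contained in the factorization displayed above, and the rest is elementary bookkeeping about invertibility over a possibly noncommutative algebra. The only point meriting a moment of care is that in the noncommutative setting one must interpret ``invertible'' as two-sidedly invertible, but both the triangular factorization step and the block-diagonal reduction step respect this automatically, so no extra argument is needed.
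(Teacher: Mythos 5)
Your proof is correct; the paper itself states this lemma without proof (as ``well-known''), and your block LDU factorization with unitriangular outer factors is precisely the standard argument, indeed the same factorization the authors use in analogous situations elsewhere (e.g.\ in the proof of Proposition \ref{prop:invertible minor} and in the rank computations of Theorems \ref{thm:Atiyah-1} and \ref{thm:Atiyah-2}). The one point needing care in the noncommutative setting --- that invertibility must be two-sided and that the block-diagonal middle factor is two-sidedly invertible iff both diagonal blocks are --- is handled correctly by your reading off $XW_{11}=W_{11}X=\1_k$ and $YW_{22}=W_{22}Y=\1_l$ from a two-sided block inverse.
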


Then a closely related notion is the smallest subalgebra that has this property, namely, is closed under taking inverses, as following.

\begin{definition}
\label{def:division closure}Let $\mathcal{R}$ be a subalgebgra of $\A$. The \emph{division closure of $\mathcal{R}$} in $\A$ is the smallest subalgebra $\D$ of $\A$ containing $\mathcal{R}$ which is closed under taking inverses in $\A$, i.e., if $d\in\D$ is invertible in $\A$, then $d^{-1}\in\D$.
\end{definition}

From the definition it follows that the rational closure $\mathcal{R}_{\phi}$ for some homomorphism $\phi:\mathbb{C}\left\langle x_{1},\dots,x_{n}\right\rangle \rightarrow\A$ always contains the division closure of the image of $\mathbb{C}\left\langle x_{1},\dots,x_{n}\right\rangle $. In order to study when the rational closure actually is equal to the division closure, we consider a recursive structure for the division closure. Namely, now we begin with $\mathcal{R}_{0}:=\phi(\mathbb{C}\left\langle x_{1},\dots,x_{n}\right\rangle )$, then we set
\[\mathcal{R}_{0}^{-1}:=\{p^{-1}\bigm|p\in\mathcal{R}_{0}\text{ is invertible in }\A\}\subseteq\A\]
and define $\mathcal{R}_{1}$ as the subalgebra of $\A$ generated by the set $\mathcal{R}_{0}\cup\mathcal{R}_{0}^{-1}$. Let $\D$ be the division closure of $\mathcal{R}_{0}$, then clearly we have $\mathcal{R}_{0}^{-1}\subseteq\D$ by the definition of the division closure and thus $\mathcal{R}_{1}\subseteq\D$. It is not difficult to see that we can repeat this procedure to obtain $\mathcal{R}_{k}\subseteq\D$ for $k=1,2,\dots$ and thus
\[\mathcal{R}_{\infty}:=\bigcup_{k=1}^{\infty}\mathcal{R}_{k}\subseteq\D.\]
On the other hand, if an element $r\in\mathcal{R}_{k}$ for some $k\in\N$ is invertible in $\A$, then $r^{-1}\in\mathcal{R}_{k+1}$ and thus $\mathcal{R}_{\infty}$ is closed under taking inverses in $\A$; hence we have $\mathcal{R}_{\infty}=\D$.

Therefore, if we want to prove $\mathcal{R}_{\phi}=\D$, we should try to prove $\mathcal{R}_{\phi}=\mathcal{R}_{\infty}$. A nice criterium for this is given by the following lemma.

\begin{lemma}
\label{lem:division closure}Let $\phi$ be a homomorphism from $\mathbb{C}\left\langle x_{1},\dots,x_{n}\right\rangle $ to a unital algebra $\A$. If the rational closure $\mathcal{R}_{\phi}$ is a division ring, then $\mathcal{R}_{\phi}=\mathcal{R}_{\infty}=\D$.
\end{lemma}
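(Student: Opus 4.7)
The plan is to separate the two inclusions in $\mathcal{R}_\phi = \mathcal{R}_\infty$. The inclusion $\mathcal{R}_\infty \subseteq \mathcal{R}_\phi$ has been established in the preceding discussion: $\mathcal{R}_\phi$ contains $\mathcal{R}_0 = \phi(\C\langle x_1,\dots,x_n\rangle)$ and is closed under taking inverses of elements invertible in $\A$, so it contains the division closure $\D = \mathcal{R}_\infty$. Thus the substance is the reverse inclusion $\mathcal{R}_\phi \subseteq \mathcal{R}_\infty$. I would obtain it from the following stronger statement, proved by induction on $k$: \emph{whenever $M \in M_k(\mathcal{R}_\infty)$ is invertible in $M_k(\A)$, every entry of $M^{-1}$ already lies in $\mathcal{R}_\infty$}. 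Granting this, any $r = u\, \phi^{(k)}(A)^{-1} v \in \mathcal{R}_\phi$ is obtained from a matrix $\phi^{(k)}(A) \in M_k(\mathcal{R}_0) \subseteq M_k(\mathcal{R}_\infty)$ that is invertible in $M_k(\A)$, with scalar $u,v$, so that $r \in \mathcal{R}_\infty$.

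The base case $k=1$ is immediate: any element of $\mathcal{R}_\infty$ that is invertible in $\A$ lies in some $\mathcal{R}_N$ and its inverse belongs to $\mathcal{R}_N^{-1} \subseteq \mathcal{R}_{N+1} \subseteq \mathcal{R}_\infty$. For the inductive step I would exploit the division-ring hypothesis via Gaussian elimination. Since $\mathcal{R}_\infty \subseteq \mathcal{R}_\phi$ and $M$ acts injectively on $\A^k$, the first column of $M$ cannot vanish identically; after permuting rows (which only permutes entries of $M^{-1}$) I may assume $M_{11} \neq 0$. Because $\mathcal{R}_\phi$ is a division ring, $M_{11}$ is two-sided invertible in $\mathcal{R}_\phi$ and hence in $\A$, and the base case yields $M_{11}^{-1} \in \mathcal{R}_\infty$. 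Let $E \in M_k(\mathcal{R}_\infty)$ be the lower-triangular matrix with unit diagonal and entry $-M_{i1}M_{11}^{-1}$ in position $(i,1)$ for $i \geq 2$; a direct computation gives the factorization
\[EM = \begin{pmatrix} M_{11} & w \\ 0 & M'' \end{pmatrix} = \begin{pmatrix} M_{11} & 0 \\ 0 & M'' \end{pmatrix} \begin{pmatrix} 1 & M_{11}^{-1} w \\ 0 & 1 \end{pmatrix},\]
with $w \in M_{1,k-1}(\mathcal{R}_\infty)$ and $M'' \in M_{k-1}(\mathcal{R}_\infty)$. Since $EM$ and $M_{11}$ are both invertible in $\A$, the factorization forces $M''$ to be invertible in $M_{k-1}(\A)$ as well (alternatively, invoke Lemma \ref{lem:Schur complement} after swapping blocks so that $M_{11}$ sits in the lower-right corner). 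The inductive hypothesis places $(M'')^{-1}$ in $M_{k-1}(\mathcal{R}_\infty)$, the block-inversion formula
\[(EM)^{-1} = \begin{pmatrix} M_{11}^{-1} & -M_{11}^{-1} w (M'')^{-1} \\ 0 & (M'')^{-1} \end{pmatrix}\]
then has all entries in $\mathcal{R}_\infty$, and $M^{-1} = (EM)^{-1} E \in M_k(\mathcal{R}_\infty)$ follows at once.

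The principal obstacle in this inductive step is securing an invertible pivot, and this is exactly what the division-ring hypothesis on $\mathcal{R}_\phi$ guarantees: without it, the nonzero entry $M_{11}$ might fail to be invertible in $\A$, so the elimination could not even start. Once a pivot is available, the two closure properties hard-wired into $\mathcal{R}_\infty$---closure under ring operations (by construction at each level $\mathcal{R}_k$) and closure under inversion of elements that happen to be invertible in $\A$ (the defining feature of the division closure)---keep every quantity manufactured during the elimination inside $\mathcal{R}_\infty$, which is precisely what the argument needs.
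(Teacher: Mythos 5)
Your argument is correct and follows essentially the route the paper intends: the paper itself only defers to Theorem 2.4 of \cite{Yin18}, describing the method as representing elements of $\mathcal{R}_\phi$ as entries of inverses of matrices over $\phi(\C\langle x_1,\dots,x_n\rangle)$ together with the Schur complement lemma, which is exactly your induction on matrix size with the pivot supplied by the division-ring hypothesis and the block factorization keeping everything inside $\mathcal{R}_\infty$. The inclusion $\D=\mathcal{R}_\infty\subseteq\mathcal{R}_\phi$ that you take as given is indeed established in the discussion preceding the lemma, so nothing is missing.
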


The proof of this lemma can be done step by step as the proof of Theorem 2.4 (which shows that the free field has a similar recursive structure as above) in \cite{Yin18}, with the role of the free field therein replaced by the rational closure $\mathcal{R}_{\phi}$. It relies on representing the elements of rational closure as entries of inverses of matrices over polynomials and also Lemma \ref{lem:Schur complement} on the Schur complement.

Of course, the question when the rational closure $\mathcal{R}_{\phi}$ becomes a division ring is not an easy one. We will discuss some situation in the second half of Section \ref{sec:Atiyah}. For that purpose, we need the following
technical lemma.

\begin{lemma}
\label{lem:rational closure}Let $\phi$ be a homomorphism from $\C\left\langle x_{1},\dots,x_{n}\right\rangle $ to a unital algebra $\A$. If the rational closure $\mathcal{R}_{\phi}$ is a division ring, then the set of all square full matrices over $\mathcal{R}_{\phi}$ is closed under products and diagonal sums.
\end{lemma}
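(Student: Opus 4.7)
The entire lemma reduces to the following key observation: over a division ring $\mathcal{K}$, a square matrix $A \in M_n(\mathcal{K})$ is full in the sense of Definition~\ref{def:inner-rank_full} if and only if $A$ is invertible in $M_n(\mathcal{K})$. Once this equivalence is established for $\mathcal{K} = \mathcal{R}_\phi$, the assertion is immediate: the product of two invertible same-size square matrices is invertible, and the diagonal sum of two invertible square matrices is invertible with inverse the diagonal sum of the inverses; so both operations preserve invertibility and therefore fullness.

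To prove the non-trivial direction \emph{full $\Rightarrow$ invertible}, I would argue by contrapositive. Suppose $A \in M_n(\mathcal{R}_\phi)$ is not invertible. Viewing column vectors $\mathcal{R}_\phi^n$ as a right $\mathcal{R}_\phi$-module and $A$ as the associated left-multiplication map, $\im(A)$ is a proper submodule of $\mathcal{R}_\phi^n$. Since $\mathcal{R}_\phi$ is a division ring, every submodule of $\mathcal{R}_\phi^n$ is free with a well-defined dimension, so $\dim \im(A) = r \leq n-1$. Choosing a basis of $\im(A)$ and extending appropriately, one factors $A$ as the composition $\mathcal{R}_\phi^n \twoheadrightarrow \im(A) \cong \mathcal{R}_\phi^r \hookrightarrow \mathcal{R}_\phi^n$, which in matrix form reads $A = PQ$ with $P \in M_{n,r}(\mathcal{R}_\phi)$ and $Q \in M_{r,n}(\mathcal{R}_\phi)$ for $r < n$; hence $\rho(A) \leq r < n$, contradicting fullness.

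The reverse direction \emph{invertible $\Rightarrow$ full} is equally short: if $A = PQ$ with $P \in M_{n,r}(\mathcal{R}_\phi)$, $Q \in M_{r,n}(\mathcal{R}_\phi)$, $r < n$, then the left-multiplication map induced by $A$ factors through $\mathcal{R}_\phi^r$, so its image (contained in the image of $P$) has dimension at most $r < n$ and cannot equal $\mathcal{R}_\phi^n$; so $A$ is not surjective, contradicting invertibility.

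The only conceptual care needed is to rely on the standard module theory over a possibly non-commutative division ring --- existence of bases, invariant basis number, and well-definedness of dimension for submodules of finitely generated free modules --- all of which hold without modification. No serious obstacle arises; the lemma is essentially a packaging of the familiar fact that $GL_n(\mathcal{K}) \subseteq M_n(\mathcal{K})$ is closed under multiplication and under the block-diagonal embedding into $GL_{n+m}(\mathcal{K})$.
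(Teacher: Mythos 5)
Your reduction is exactly the one the paper uses: Lemma \ref{lem:rational closure} is deduced there from Lemma \ref{lem:fullness and invertibility}, whose content is precisely your key observation that over a division ring a square matrix is full if and only if it is invertible, after which closure under products and diagonal sums is the trivial statement about $GL_n$. Where you differ is in how the equivalence itself is proved. The paper's argument for ``full $\Rightarrow$ invertible'' is an elementary induction on the matrix size: one uses fullness to find a nonzero entry in the first column (otherwise the matrix would be hollow), clears the column by an explicit invertible elementary matrix, and invokes Lemma \ref{lem:minor} to see that the remaining $(k-1)\times(k-1)$ block is full; the converse direction is routed through stable finiteness of division rings (itself proved by a separate induction on rank factorizations) and the fullness of the identity matrix. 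Your argument instead invokes the standard module theory of a (possibly noncommutative) division ring --- every submodule of $\mathcal{R}_\phi^n$ is free of well-defined dimension, so a non-invertible $A$ factors through its image of dimension $r<n$, and conversely a factorization through $\mathcal{R}_\phi^r$ bounds the image dimension by $r$. Both are correct; yours is shorter and more conceptual, at the price of importing invariant basis number and the existence of bases for submodules over a skew field, while the paper's version stays entirely within explicit matrix manipulations and has the side benefit of establishing stable finiteness of division rings, which the paper needs again elsewhere (e.g.\ in the proofs of Theorems \ref{thm:Atiyah-1} and \ref{thm:Atiyah-2}).
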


Actually, this lemma can be easily deduced from the following lemma.

\begin{lemma}
\label{lem:fullness and invertibility}Let $\A$ be a unital algebra.
\begin{enumerate}
\item If $\A$ is a division ring, then $\A$ is stably finite and any square full matrix is invertible.
\item If $\A$ is stably finite, then any invertible matrix is full.
\end{enumerate}
\end{lemma}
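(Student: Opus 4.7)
The plan is to treat the two implications separately, both reducing to basic dimension theory for modules over a division ring and to the definition of inner rank.

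For part (i), assume $\A$ is a division ring. The key general fact is that the right $\A$-module $\A^n$ has a well-defined dimension $n$. Given $A,B\in M_n(\A)$ with $AB=\1_n$, left multiplication by $B$ defines a right-$\A$-linear endomorphism of $\A^n$ that is surjective (composing with left multiplication by $A$ yields the identity); by dimension count over a division ring this endomorphism is bijective and hence possesses a two-sided inverse $C$ in $M_n(\A)$. A short computation $C=(AB)C=A(BC)=A$ identifies $C=A$, so $BA=BC=\1_n$, proving that $\A$ is stably finite. For the assertion that any square full matrix is invertible, I would first prove the intermediate fact that for every $A\in M_{m,n}(\A)$ the inner rank $\rho(A)$ coincides with the column rank, i.e., with the right-$\A$-dimension of $\im(A)\subseteq\A^m$: any factorization $A=PQ$ with $P\in M_{m,r}(\A)$ forces $\im(A)\subseteq \im(P)$, so the column rank is at most $r$; conversely, choosing a basis of $\im(A)$ (which is possible over a division ring) yields a factorization realizing the column rank as the middle dimension. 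If $A\in M_n(\A)$ is full, then its column rank equals $n$, i.e., left multiplication by $A$ is surjective, hence by the dimension count an isomorphism, so $A$ is invertible.

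For part (ii), assume $\A$ is stably finite and $A\in M_n(\A)$ is invertible but not full. Then there exists a factorization $A=PQ$ with $P\in M_{n,r}(\A)$ and $Q\in M_{r,n}(\A)$ for some $r<n$. The decisive trick is to pad with zeros to obtain square matrices
$$\tilde P := \begin{pmatrix} P & \0 \end{pmatrix} \in M_n(\A) \qquad\text{and}\qquad \tilde Q := \begin{pmatrix} Q \\ \0 \end{pmatrix} \in M_n(\A),$$
which still satisfy $\tilde P\tilde Q=PQ=A$. Consequently $(A^{-1}\tilde P)\tilde Q=\1_n$, and stable finiteness applied at matrix size $n$ gives $\tilde Q(A^{-1}\tilde P)=\1_n$. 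But the last $n-r$ rows of $\tilde Q$ are zero, so the last $n-r$ rows of any product $\tilde Q M$ vanish, contradicting the fact that $\1_n$ has no zero rows.

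I expect the only genuine obstacle to lie in part (i), namely ensuring that the dimension theory is invoked consistently with the left/right module conventions over a noncommutative division ring when identifying the inner rank with the column rank. Part (ii) is entirely formal once the zero-padding embedding is spotted and relies only on applying the stable-finiteness hypothesis at size $n$ rather than at any smaller size.
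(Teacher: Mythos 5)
Your argument is correct, but it takes a genuinely different route from the paper's, most notably in part (i). The paper proves ``full $\Rightarrow$ invertible'' by induction on the matrix size, using a row/column reduction to a block-triangular form together with Lemma \ref{lem:minor} to see that the lower-right $(k-1)\times(k-1)$ block stays full, and only afterwards deduces stable finiteness from it; you instead invoke the dimension theory of free modules over a division ring, identifying the inner rank of any matrix with the column rank (the right-$\A$-dimension of its column space) and concluding invertibility of full square matrices from surjectivity plus rank--nullity. Your route is more conceptual and yields the extra fact $\rho(A)=\dim_\A\im(A)$ over a division ring, which the paper never states; the paper's induction is more in the spirit of Cohn's purely ring-theoretic development and reuses lemmas already established in the appendix. (One small imprecision: from $AB=\1_n$ the composition $L_A\circ L_B=\id$ makes $L_B$ injective and $L_A$ surjective, not $L_B$ surjective as you write; since injectivity and surjectivity are equivalent for endomorphisms of $\A^n$ over a division ring, your conclusion that $L_B$ is bijective stands.) For part (ii) the two arguments are close in substance: the paper routes through Lemma \ref{lem:full identity} (fullness of $\1_k$ over a stably finite ring) and Lemma \ref{lem:inner rank}(iii), whereas your zero-padding argument applies stable finiteness at size $n$ directly and reaches the contradiction from the vanishing rows of $\tilde Q$; both are valid, and yours has the virtue of being self-contained.
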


\begin{proof}
Firstly, we want to prove (i). So we suppose that $\A$ is a division ring and we want to prove that any square full matrix is invertible by induction on the dimension $k$. For $k=1$, $a\in\A$ full just means $a\neq0$, hence $a$ is invertible since $\A$ is a division ring. Now suppose that any full $\left(k-1\right)\times\left(k-1\right)$ matrix is invertible, we want to show it's true for any $k\times k$ full matrix. Let $A\in M_{k}\left(\A\right)$ be full, then left multiplied by a permutation matrix, $A$ can be written as
\[A=\begin{pmatrix}a & c\\d & B\end{pmatrix}\]
such that $a\neq0$, $B\in M_{k-1}\left(\A\right)$, $c\in M_{1,k}\left(\A\right)$ and $d\in M_{k,1}\left(\A\right)$; otherwise, $A$ would be hollow as the first column would be zero. Now, since $a$ is invertible, left multiplied by the invertible matrix
\[\begin{pmatrix}1 & \0 \\-da^{-1} & \1_{k-1}\end{pmatrix},\]
$A$ can be assumed to be of the form
\[A=\begin{pmatrix}a & c\\\0 & B\end{pmatrix}.\]
By Lemma \ref{lem:minor}, $B$ must be full as we drop the first row; then by induction, $B$ is invertible and so does $A$ because of
\[\begin{pmatrix}a & c\\\0 & B\end{pmatrix}\begin{pmatrix}a^{-1} & -a^{-1}cB^{-1}\\\0 & B^{-1}\end{pmatrix}=\begin{pmatrix}a^{-1} & -a^{-1}cB^{-1}\\\0 & B^{-1}\end{pmatrix}\begin{pmatrix}a & c\\\0 & B\end{pmatrix}=\1_{k}.\]

Next, we want to prove that $\A$ is stably finite. For that purpose, we want to show by induction on $k$ that any right inverse is also a left inverse in $M_{k}\left(\A\right)$. If $k=1$, it is true as $\A$ is a division ring. Now let $A$ and $B$ be matrices in $M_{k}\left(\A\right)$ such that $AB=\1_{k}$, then it suffices to prove $A$ is full; because, by what we just proved, this implies that $A$ is invertible, and so the right inverse $B$ is also a left inverse. Now, assume that $A$ is not full, then there is a rank factorization
\[A=\begin{pmatrix}C\\C'\end{pmatrix}\begin{pmatrix}D & D'\end{pmatrix},\]
where $r<k$, and $C,D\in M_{r}\left(\A\right)$, $C'\in M_{k-r,r}\left(\A\right)$, and $D'\in M_{r,k-r}\left(\A\right)$. Write
\[B=\begin{pmatrix}B_{1} & B_{2}\\B_{3} & B_{4}\end{pmatrix},\]
where $B_{1}\in M_{r}\left(\A\right)$, $B_{4}\in M_{k-r}\left(\A\right)$ and $B_{2}$, $B_{3}$ are matrices over $\A$ of appropriate sizes, then we have
\[\1_{k}=AB=\begin{pmatrix}C\\C'\end{pmatrix}\begin{pmatrix}D & D'\end{pmatrix}\begin{pmatrix}B_{1} & B_{2}\\B_{3} & B_{4}\end{pmatrix}=\begin{pmatrix}C\\C'\end{pmatrix}\begin{pmatrix}DB_{1}+D'B_{3} & DB_{2}+D'B_{4}\end{pmatrix}.\]
So we have $C(DB_{1}+D'B_{3})=\1_{r}$, namely, $C$ has $DB_{1}+D'B_{3}$ as its right inverse. This yields that $C$ is invertible by the induction, and so does $DB_{1}+D'B_{3}$. Then we obtain $DB_{2}+D'B_{4}=\0$ and $C'=\0$, hence
\[\1_{k}=AB=\begin{pmatrix}C\\\0\end{pmatrix}\begin{pmatrix}DB_{1}+D'B_{3} & \0\end{pmatrix}=\begin{pmatrix}\1_{r} & \0\\\0 & \0\end{pmatrix}.\]
This gives a contradiction and hence $A$ is full.

Finally, we want to show the part (ii), that is, if $\A$ is stably finite, then invertible matrices are full. This follows from Lemma \ref{lem:full identity}, which says that any identity matrix is full when $\A$ is stably finite: Let $A\in M_{k}(\A)$ be invertible, i.e., there exists $B\in M_{k}(\A)$ such that $AB=\1_{k}$, so this factorization of $\1_{k}$ is a rank factorization since $\1_{k}$ is full; then by the part (iii) of Lemma \ref{lem:inner rank}, $A$ and $B$ are full.
\end{proof}

\section{Applications to Atiyah properties}

\subsection{Preliminaries for affiliated operators}

Let $\left(\M,\tau\right)$ be a tracial $W^{\ast}$-probability space as before. In this section, we denote by $\A$ the set of all closed and densely defined linear operators affiliated with $\M$, which is known to be a $\ast$-algebra containing $\M$. An important and well-known fact is that the polar decomposition also holds in this case.

\begin{lemma}
Let $X$ be a closed densely defined operator on some Hilbert space $H$, then we have $X=U\left|X\right|$, where $\left|X\right|=(X^{\ast}X)^{\frac{1}{2}}$ is a positive selfadjoint (so necessarily closed densely defined) operator and $U$ is a partial isometry such that $U^{\ast}U=p_{(\ker(X))^{\bot}}$ and $UU^{\ast}=p_{\overline{\im(X)}}$. Moreover, $X$ is affiliated with $\M$ if and only if $U\in\M$ and $\left|X\right|$ is affiliated with $\M$.
\end{lemma}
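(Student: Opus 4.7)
The plan is to recover this as a standard consequence of von Neumann's theorem together with the uniqueness of the polar decomposition, tracked carefully through the affiliation relation.

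\textbf{Existence of the polar decomposition.} First I would invoke von Neumann's theorem: for any closed densely defined $X$, the operator $X^{\ast}X$ is positive selfadjoint and its domain is a core for $X$. Setting $|X|:=(X^{\ast}X)^{1/2}$ via the Borel functional calculus of this selfadjoint operator, one verifies on the core $\dom(X^{\ast}X)$ the identity $\||X|\xi\|^{2}=\langle X^{\ast}X\xi,\xi\rangle=\|X\xi\|^{2}$, which extends by continuity to $\dom(|X|)=\dom(X)$. This isometric identity allows me to define $U$ unambiguously on $\ran(|X|)$ by $U(|X|\xi):=X\xi$ and to extend it by continuity to an isometry on $\overline{\ran(|X|)}$, while setting $U:=0$ on $\ker(|X|)=(\overline{\ran(|X|)})^{\perp}$. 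Since $\ker(|X|)=\ker(X)$, this yields a partial isometry satisfying $U^{\ast}U=p_{\overline{\ran(|X|)}}=p_{(\ker X)^{\perp}}$ and $UU^{\ast}=p_{\overline{\ran X}}$, and $X=U|X|$ by construction.

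\textbf{Affiliation: the forward direction.} Next I would use the characterization that a closed densely defined $Y$ is affiliated with $\M$ if and only if $u'Y(u')^{\ast}=Y$ for every unitary $u'\in\M'$. Assume that $X$ is affiliated with $\M$. Then $X^{\ast}$ is also affiliated (since $u'X^{\ast}(u')^{\ast}=(u'X(u')^{\ast})^{\ast}=X^{\ast}$), hence so is $X^{\ast}X$, and thus so is $|X|=(X^{\ast}X)^{1/2}$ by Borel functional calculus applied to the selfadjoint operator $X^{\ast}X$. From $u'|X|(u')^{\ast}=|X|$ and $u'X(u')^{\ast}=X$ one obtains
\[
(u'U(u')^{\ast})|X|=u'U|X|(u')^{\ast}=u'X(u')^{\ast}=X=U|X|.
\]
Since $u'U(u')^{\ast}$ is a partial isometry whose initial space is the image under $u'$ of $\overline{\ran(|X|)}$, which equals $\overline{\ran(|X|)}$ by the invariance of $|X|$, the uniqueness of the polar decomposition forces $u'U(u')^{\ast}=U$. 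Hence $U\in(\M')'=\M$ by the double commutant theorem.

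\textbf{Affiliation: the converse.} Conversely, if $U\in\M$ and $|X|$ is affiliated with $\M$, then for every unitary $u'\in\M'$,
\[
u'X(u')^{\ast}=u'U(u')^{\ast}\cdot u'|X|(u')^{\ast}=U|X|=X,
\]
so that $X$ is affiliated with $\M$. The main conceptual obstacle lies in the forward direction: one needs to argue that the invariance $u'X(u')^{\ast}=X$ transports through the (nonlinear) operation of taking $|X|$, which I handle via the affiliation of $X^{\ast}X$ and Borel functional calculus, and then deduce the invariance of $U$ from uniqueness of the polar decomposition; everything else is a standard domain-chasing argument.
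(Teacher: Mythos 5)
Your proof is correct and complete; the paper itself states this lemma without proof, citing it as a well-known fact, and your argument (von Neumann's theorem for $X^{\ast}X$, the isometric identity $\||X|\xi\|=\|X\xi\|$ to build the partial isometry, and the characterization of affiliation via invariance under conjugation by unitaries of $\M'$ combined with uniqueness of the polar decomposition) is exactly the standard route the authors are implicitly invoking. No gaps: in particular you correctly handle the one genuinely non-trivial point, namely transporting the invariance $u'X(u')^{\ast}=X$ through to $|X|$ via the affiliation of $X^{\ast}X$ and the functional calculus, and then to $U$ via uniqueness.
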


Therefore, we have the analogue of Lemma \ref{lem:kernels}:

\begin{lemma}
\label{lem:kernels unbdd}Given $X\in\A$, let $p_{\ker(X)}$ and $p_{\overline{\im(X)}}$ denote the orthogonal projections onto $\ker(X)$ and the closure of $\im(X)$, respectively. Then they belong both to $\M$ and satisfy
\[\tau(p_{\ker(X)})+\tau(p_{\overline{\im(X)}})=1.\]
\end{lemma}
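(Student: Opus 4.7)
The plan is to reduce the assertion to a direct application of the polar decomposition stated in the preceding lemma, combined with the traciality of $\tau$. Concretely, I would first invoke the polar decomposition $X = U|X|$, where $U\in \M$ is a partial isometry and $|X|$ is positive selfadjoint and affiliated with $\M$. The key structural input is the identification of the initial and final projections of $U$ in terms of the kernel and range of $X$: we have $U^{\ast}U = p_{(\ker X)^{\perp}}$ and $UU^{\ast} = p_{\overline{\im X}}$.

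From $U\in\M$ it follows immediately that $U^{\ast}U\in\M$ and $UU^{\ast}\in\M$, hence $p_{\overline{\im X}} = UU^{\ast}\in\M$ and $p_{\ker X} = \1 - U^{\ast}U \in\M$. This takes care of the membership assertion, and it is worth recording before proceeding that in the finite setting this is a genuine statement (on an infinite-dimensional Hilbert space without a finite trace, spectral projections of affiliated operators need not lie in the algebra).

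The trace identity is then obtained from the general fact that for any $V\in\M$ one has $\tau(V^{\ast}V) = \tau(VV^{\ast})$ by traciality. Applied to our partial isometry $U$, this yields
\[
\tau\bigl(p_{(\ker X)^{\perp}}\bigr) = \tau\bigl(U^{\ast}U\bigr) = \tau\bigl(UU^{\ast}\bigr) = \tau\bigl(p_{\overline{\im X}}\bigr).
\]
Rewriting the left-hand side as $1 - \tau(p_{\ker X})$ using $\tau(\1)=1$ gives the claimed equality $\tau(p_{\ker X}) + \tau(p_{\overline{\im X}}) = 1$.

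I do not anticipate any real obstacle here; the entire content of the statement is already packaged into the polar decomposition and into traciality, so the proof is essentially a two-line consequence of the previous lemma. The only point that one has to be a bit careful about is to cite the correct identification of $U^{\ast}U$ with $p_{(\ker X)^{\perp}}$ (and not with $p_{\ker X}$), which is the standard convention used in the polar decomposition stated just above.
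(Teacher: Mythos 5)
Your proof is correct and follows exactly the route the paper intends: the lemma is presented as an immediate consequence of the polar decomposition stated just before it, with $U^{\ast}U=p_{(\ker X)^{\perp}}$, $UU^{\ast}=p_{\overline{\im X}}$, and traciality giving $\tau(U^{\ast}U)=\tau(UU^{\ast})$. No gaps.
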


Moreover, for each integer $N$, we can also consider the matricial extension $M_{N}(\A)$, which is the $\ast$-algebra of closed and densely defined linear operators affiliated to the $W^{\ast}$-probability space $(M_{N}(\M),\tr_{N}\circ\tau^{(N)})$. Therefore, with the help of the polar decomposition and also its matricial extended version, we can show that $\A$ is also stably finite; thus the evaluation of rational functions is well-defined on $\A$ by Theorem \ref{thm:evaluation}.

\begin{lemma}
$\A$ is stably finite.
\end{lemma}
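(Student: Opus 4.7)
The plan is to reduce the matricial assertion to the base case $N=1$ and then exploit the polar decomposition together with the faithful trace on $\M$.

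First I would observe that the matricial setting reduces to the scalar one: since $M_N(\M)$ is itself a finite von Neumann algebra equipped with the faithful normal tracial state $\tr_N \circ \tau^{(N)}$, and since $M_N(\A)$ coincides with the $\ast$-algebra of closed, densely defined operators affiliated with $M_N(\M)$, stable finiteness of $\A$ is equivalent to the assertion that, for every tracial $W^\ast$-probability space $(\N,\sigma)$, the affiliated algebra satisfies $ab = 1 \Rightarrow ba = 1$. Thus I would fix $(\M,\tau)$ once and for all and work only in $\A$; applying the conclusion to $(M_N(\M),\tr_N\circ\tau^{(N)})$ yields the statement for $M_N(\A)$.

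Next, given $A,B\in\A$ with $AB=\1$, I would show that $A$ is invertible in $\A$. Polar-decompose $A=U|A|$, where $U\in\M$ is a partial isometry with $U^\ast U = p_{(\ker A)^\perp}$ and $UU^\ast = p_{\overline{\im A}}$, and $|A|$ is a positive selfadjoint operator affiliated with $\M$. Since $AB=\1$, the range of $A$ is all of the underlying Hilbert space, hence $p_{\overline{\im A}} = \1$. By Lemma \ref{lem:kernels unbdd}, $\tau(p_{\ker A}) = 1 - \tau(p_{\overline{\im A}}) = 0$, and faithfulness of $\tau$ then forces $p_{\ker A}=0$. Consequently $U^\ast U = \1 = UU^\ast$, so $U$ is unitary in $\M$; moreover $|A|$ has trivial kernel and, being positive selfadjoint and affiliated with $\M$, its inverse $|A|^{-1}$ (defined through the functional calculus on the complement of $\{0\}$) is again affiliated with $\M$. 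Therefore $A^{-1} := |A|^{-1} U^\ast \in \A$ is a two-sided inverse of $A$ in the $\ast$-algebra $\A$.

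Finally, from $AB=\1$ and the existence of $A^{-1}\in\A$, I would conclude $B = A^{-1}(AB) = A^{-1}$, whence $BA = A^{-1}A = \1$. This gives stable finiteness for $N=1$, and by the first paragraph also for every $N$. The only delicate point is the manipulation of unbounded products in $\A$ (for example, verifying that $|A|^{-1}U^\ast \cdot U|A| = \1$ on the natural core), but this is the standard affiliated-operator calculus in finite von Neumann algebras and presents no real obstacle; the essential input is the trace equality of kernel and cokernel projections from Lemma \ref{lem:kernels unbdd}.
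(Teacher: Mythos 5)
Your argument is correct, and it takes a genuinely different route from the paper's, although both begin with the same reduction to $N=1$ via the observation that $M_N(\A)$ is again the affiliated algebra of the finite von Neumann algebra $(M_N(\M),\tr_N\circ\tau^{(N)})$. The paper first disposes of the case where the left factor is selfadjoint by a purely algebraic manipulation of adjoints ($XY=1$ with $X=X^\ast$ gives $Y^\ast X=1$, hence $Y^\ast=Y^\ast XY=Y$ and $YX=1$), and then reduces the general case to this one by polar-decomposing the \emph{right} inverse $Y=U|Y|$; finiteness enters only through the fact that an injective partial isometry in $\M$ must be unitary. You instead polar-decompose the \emph{left} factor $A$ and build its inverse $|A|^{-1}U^\ast$ explicitly, with finiteness entering through the trace equality $\tau(p_{\ker A})+\tau(p_{\overline{\im A}})=1$ of Lemma \ref{lem:kernels unbdd}. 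Your version is more constructive and makes the role of the trace transparent, at the price of invoking the functional calculus for the unbounded inverse of an injective positive affiliated operator and the associativity of products in $\A$ (which you rightly flag as standard). One small imprecision: from $AB=\1$ in $\A$ you may only conclude a priori that $\im A$ contains a dense subspace (the product is a closure of a densely defined composition), not that $A$ is surjective; but since all you use is $p_{\overline{\im A}}=\1$, the argument goes through unchanged.
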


\begin{proof}
It suffices to prove that any $X,Y\in\A$ with $XY=1$ implies $YX=1$, since $M_{N}(\A)$ is also a $\ast$-algebra of affiliated operators for each $N$. First, let $X,Y\in\A$ with $XY=1$ and $X=X^{\ast}$. Then $Y^{\ast}X=1$, hence $Y^{\ast}=Y^{\ast}XY=Y$, and thus $YX=1$. Next, we consider now arbitrary $X,Y\in\A$ with $XY=1$. By the polar decomposition, we can write $Y=U\left|Y\right|$ with a partial isometry $U\in\M$ and $\left|Y\right|\in\A$. Note that $\left|Y\right|$ is selfadjoint and satisfies $XU\left|Y\right|=XY=1$ in $\A$, so by the previous argument we have $\left|Y\right|XU=1$. Then $U$ is injective and so it must be unitary by the previous lemma. Hence $Y$ is also injective and thus invertible with inverse $X$.
\end{proof}

Now, consider an element $P\in M_{N}(\A)$, then it is invertible in $M_{N}(\A)$ if and only if $\tr_{N}\circ\tau^{(N)}(p_{\ker(P)})=0$. So with this setting, Theorem \ref{thm:main-1} asserts that if a tuple $X=(X_{1},\dots,X_{n})$ of selfadjoint random variables satisfies $\delta^{\star}(X_{1},\dots,X_{n})=n$, then a linear full matrix $P$ over $\C\left\langle x_{1},\dots,x_{n}\right\rangle $ gives an invertible evaluation $P(X)$ in $M_{N}(\A)$. In other words, a linear matrix $P$ of inner rank $N$ has $\tr_{N}\circ\tau^{(N)}(p_{\overline{\im(P(X))}})=1$. Therefore, if we use the unnormalized trace $\Tr_{N}$ instead of $\tr_{N}$, then these two quantities coincide. So we define the rank of a matrix over $\A$ by this unnormalized trace as following.

\begin{definition}
For any $P\in M_{N}(\A)$, we define its \emph{rank} as
\[\rank(P)=\Tr_{N}\circ\tau^{(N)}(p_{\overline{\im(P)}}).\]
\end{definition}

One of the main goals of this section is to show that this equality of these two ranks for full matrices is not a coincidence: as these two quantities both describe the invertibility of the matrices in some sense, they are naturally equal to each other once we choose some nice operators like in Theorem \ref{thm:main-1}. Furthermore, we will show that this equality holds not only for full matrices but actually for all matrices with arbitrary inner rank. Moreover, we also want to prove that the equality is equivalent to some kind of Atiyah property. For that purpose, we need the following two lemmas.

\begin{lemma}
\label{invertible and rank}$P\in M_{N}(\A)$ is invertible if and only if $\rank(P)=N$.
\end{lemma}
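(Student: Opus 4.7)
The plan is to reduce the statement to a computation about kernels, using the matricial version of Lemma \ref{lem:kernels unbdd} together with the polar decomposition.

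First I would rewrite the rank condition in terms of the kernel. Applying Lemma \ref{lem:kernels unbdd} in the tracial $W^\ast$-probability space $(M_N(\M), \tr_N\circ\tau^{(N)})$ (equivalently, multiplying by $N$ to pass to $\Tr_N$), we have
\[
\Tr_N\circ\tau^{(N)}(p_{\ker(P)}) + \Tr_N\circ\tau^{(N)}(p_{\overline{\im(P)}}) = N.
\]
Hence $\rank(P) = N$ if and only if $\Tr_N\circ\tau^{(N)}(p_{\ker(P)}) = 0$, which by the faithfulness of $\tau^{(N)}$ is equivalent to $p_{\ker(P)} = 0$, i.e.\ $\ker(P) = \{0\}$.

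The remaining task is therefore to show that $P$ is invertible in $M_N(\A)$ if and only if $\ker(P) = \{0\}$. The forward direction is immediate: if $Q\in M_N(\A)$ satisfies $QP = \1_N$, then any vector $v\in\dom(P)$ with $Pv=0$ must equal $QPv = 0$. For the converse, I would invoke the polar decomposition in the matricial setting, writing $P = U|P|$ with a partial isometry $U\in M_N(\M)$ and a positive selfadjoint $|P|$ affiliated with $M_N(\M)$. The hypothesis $\ker(P)=\{0\}$ gives $U^\ast U = p_{(\ker P)^\perp} = \1_N$, and the same relation from Lemma \ref{lem:kernels unbdd} used above forces $p_{\overline{\im(P)}} = \1_N$, hence $UU^\ast = \1_N$ as well, so $U$ is unitary. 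Moreover $|P|$ has trivial kernel, so by the Borel functional calculus (applied to the spectral resolution of $|P|$, which lies in $M_N(\M)$) the inverse $|P|^{-1}$ is a positive selfadjoint operator affiliated with $M_N(\M)$, hence $|P|^{-1}\in M_N(\A)$. Then $P^{-1} = |P|^{-1} U^\ast$ belongs to $M_N(\A)$ and inverts $P$ on both sides.

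There is essentially no obstacle here: all non-trivial ingredients, namely the trace identity relating $p_{\ker(P)}$ and $p_{\overline{\im(P)}}$ and the polar decomposition for affiliated operators, are provided by the preceding two lemmas of the section. The only point that deserves care is ensuring that the partial isometry in the polar decomposition is genuinely unitary, which is exactly where the matricial Lemma \ref{lem:kernels unbdd} is used a second time.
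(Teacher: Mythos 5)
Your proof is correct and follows the route the paper intends: the paper dismisses this lemma in one line as ``a rephrased statement of Lemma \ref{lem:kernels unbdd} with this notion of rank,'' and your argument simply supplies the details (the trace identity reducing the rank condition to triviality of the kernel, and the polar decomposition producing the two-sided inverse $|P|^{-1}U^\ast$) that the paper leaves implicit. No gaps.
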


It is just a rephrased statement of Lemma \ref{lem:kernels unbdd} with this notion of rank. And this rank doesn't change when multiplied by invertible matrices over $\A$.

\begin{lemma}
\label{inverstible matrix preserve rank}(See \cite[Lemma 2.3]{Lin93}) If $Q$ is invertible in $M_{N}(\A)$, then $\rank(P)=\rank(PQ)=\rank(QP)$ for any $P\in M_{N}(\A)$.
\end{lemma}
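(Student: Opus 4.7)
The plan is to reduce the multiplicative invariance of $\rank$ to three ingredients: the matricial analogue of Lemma \ref{lem:kernels unbdd}, the identity $(PQ)^\ast = Q^\ast P^\ast$ which holds within the $\ast$-algebra $M_N(\A)$, and the observation that if $Q\in M_N(\A)$ is invertible then so is $Q^\ast$, with $(Q^\ast)^{-1}=(Q^{-1})^\ast$.

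First I would show the auxiliary identity $\rank(P)=\rank(P^\ast)$. Applying the matricial version of Lemma \ref{lem:kernels unbdd} in the tracial $W^\ast$-probability space $(M_N(\M),\tr_N\circ\tau^{(N)})$ and rescaling to the unnormalized trace, we obtain
$$\Tr_N\!\circ\tau^{(N)}(p_{\ker(P)}) + \Tr_N\!\circ\tau^{(N)}(p_{\overline{\im(P)}}) = N.$$
Since $\overline{\im(P^\ast)} = \ker(P)^{\perp}$, i.e.\ $p_{\overline{\im(P^\ast)}} = \1_N - p_{\ker(P)}$, this yields $\rank(P^\ast) = N - \Tr_N\!\circ\tau^{(N)}(p_{\ker(P)}) = \rank(P)$.

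Next I would handle right multiplication, proving $\rank(PQ)=\rank(P)$. Since $M_N(\A)$ is a $\ast$-algebra, we have $(PQ)^\ast = Q^\ast P^\ast$. Because $Q^\ast$ is invertible in $M_N(\A)$, for any $x$ in the underlying Hilbert space the condition $Q^\ast P^\ast x = 0$ is equivalent to $P^\ast x = 0$; hence $\ker((PQ)^\ast) = \ker(P^\ast)$, so $p_{\ker((PQ)^\ast)} = p_{\ker(P^\ast)}$. Invoking once more the matricial version of Lemma \ref{lem:kernels unbdd} for both $PQ$ and $P$, we conclude
$$\rank(PQ) = N - \Tr_N\!\circ\tau^{(N)}(p_{\ker((PQ)^\ast)}) = N - \Tr_N\!\circ\tau^{(N)}(p_{\ker(P^\ast)}) = \rank(P).$$

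Finally, to obtain $\rank(QP)=\rank(P)$ I would simply combine the two previous steps: by the first identity, $\rank(QP) = \rank((QP)^\ast) = \rank(P^\ast Q^\ast)$, and since $Q^\ast$ is invertible, the right-multiplication case gives $\rank(P^\ast Q^\ast) = \rank(P^\ast) = \rank(P)$. There is no real obstacle beyond bookkeeping: the delicate unbounded-operator issues (domain of composition, closure of ranges, the adjoint formula for products) are already absorbed into the fact that $M_N(\A)$ is a $\ast$-algebra with the usual polar decomposition, and the equality $\ker(Q^\ast P^\ast)=\ker(P^\ast)$ together with the kernel-image trace identity does all the work.
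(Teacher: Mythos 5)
Your argument is correct. The paper itself gives no proof of this lemma (it is delegated to \cite[Lemma 2.3]{Lin93}), so there is nothing to diverge from; what you supply is the standard argument, and it is organized efficiently: the identity $\rank(X)=\rank(X^\ast)$ via $\overline{\im(X^\ast)}=\ker(X)^\perp$ and the trace identity of Lemma \ref{lem:kernels unbdd}, then the right-multiplication case via $\ker(Q^\ast P^\ast)=\ker(P^\ast)$, then left multiplication by combining the two. The only point worth being explicit about is the one you already flag: in $M_N(\A)$ the product $Q^\ast P^\ast$ is the \emph{closure} of the composition, so the equivalence ``$Q^\ast P^\ast x=0 \iff P^\ast x=0$'' needs the standard fact that for operators affiliated with a finite von Neumann algebra one may cancel the invertible factor $(Q^\ast)^{-1}$ inside the $\ast$-algebra (i.e.\ $P^\ast=(Q^\ast)^{-1}(Q^\ast P^\ast)$ holds in $M_N(\A)$, whence $\ker(Q^\ast P^\ast)\subseteq\ker(P^\ast)$); with that remark the proof is complete.
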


\subsection{\label{sec:Atiyah}Atiyah properties}

Following the notion in \cite{SS15} with some adaptation, we have the following definition:

\begin{definition}
Let $X=(X_{1},\dots,X_{n})$ be a tuple with elements from a tracial $W^{\ast}$-probability space $(\M,\tau)$, and consider the evaluation map $\ev_{X}:\C\left\langle x_{1},\dots,x_{n}\right\rangle \rightarrow\M$. If for any matrix $P\in M_{N}(\C\left\langle x_{1},\dots,x_{n}\right\rangle )$, we have
\[\rank(P(X))\in\N,\]
then we say $X$ has the \emph{strong Atiyah property}.
\end{definition}

The presence of this property is one of the various formulations of the Atiyah conjecture, which arose in the work \cite{Ati74} and asks whether some analytic $L^{2}$-Betti numbers are always rational numbers for certain Riemannian manifolds. A priori this rank can be any real number in $[0,N]$ if defined as above, so it's not a trivial property if all these numbers are integer for some given operators. In this terminology, free groups (or precisely, the free group generators in their group von Neumann algebras) have been proved to have the strong Atiyah property in Linnell's paper \cite{Lin93}; see \cite[Chapter 10]{Luc02} for more detailed discussion, including some counterexamples, and references therein on Atiyah conjecture for group algebras. In the context of free probability, a tuple of non-atomic, freely independent random variables is also proven to have the strong Atiyah property in \cite{SS15} by Shlyakhtenko and Skoufranis.

In fact, with the help of the strong Atiyah property as well as the construction of rational closure and some techniques from Cohn's theory, in \cite{Lin93} Linnell also shows that for the free group, there exists some division ring, as a subring of the $\ast$-algebra of affiliated operators, containing the corresponding group algebra. Inspired by this result, in this section, we want to show that, if $X=(X_{1},\dots,X_{n})$ satisfies that $\delta^{\star}(X_{1},\dots,X_{n})=n$, then $X$ has the strong Atiyah property. Once the strong Atiyah property holds, as proved in \cite{Lin93}, we know that then the rational closure is a division ring. (The validity of Linnell's argument in this much more general context was pointed out to us by Ken Dykema and James Pascoe.) Actually, we want to establish the equivalence of these two properties; in addition, we also want to connect this with the question whether the inner rank of a matrix of polynomials is equal to the rank of its evaluation of the corresponding operators in some finite von Neumann algebra.

These equivalences are established in two settings: in the first one, we consider the evaluation of all rational functions; in the second one, we consider the rational closure of the $\ast$-algebra generated by a given tuple of operators. More precisely, in the first list of equivalent properties, one of them is that all non-zero rational functions are invertible, or alternatively, have no zero divisors; such a result is a natural generalization of the result that all polynomials have non zero divisors for operators with full entropy dimension, see \cite{MSW17} and \cite{CS16}. We achieve this result by linear representations for rational functions, introduced in Section 5, combining with Theorem \ref{thm:main-1} that a linear full matrix cannot have zero divisors when evaluated at operators with full entropy dimension. In this case, a matrix of polynomials has its rank of the evaluation equal to its inner rank.

Moreover, we find that the equality of these two notions of ranks, as a property for the operators in consideration, is stronger than the strong Atiyah property: the strong Atiyah property only asks the rank of a matrix over operators to be integers, but in Theorem \ref{thm:Atiyah-1}, we ask the rank to be exactly the corresponding inner rank (which is an integer by definition). So, for the strong Atiyah property, it is possible that the rank is an integer but doesn't equal the inner rank.

Therefore, in our second list (Theorem \ref{thm:Atiyah-2}), we want to establish equivalent characterizations for the strong Atiyah property. Instead of evaluating rational functions, we consider the rational closure: in this case, we can show that the ranks of the evaluations are equal to inner ranks over the rational closure; and the latter can be a division algebra which is not isomorphic to the free field. In general, there is a gap between these two cases, which will be shown by an example.

Now, let $(\M,\tau)$ be a tracial $W^{\ast}$-probability space and we consider the evaluation map $\ev_{X}:\C\left\langle x_{1},\dots,x_{n}\right\rangle \rightarrow\A$ given by a tuple $X=(X_{1},\dots,X_{n})$ in $\M^n$, where $\A$ is the $\ast$-algebra of affiliated operators introduced in the previous section. Our first main theorem in this section is the following.

\begin{theorem}
\label{thm:Atiyah-1}The following statements are equivalent:
\begin{enumerate}
\item For any $N\in\N$ and $P\in M_{N}(\C\left\langle x_{1},\dots,x_{n}\right\rangle )$ we have: if $P$ is linear and full, then $P(X)\in M_{N}(\A)$ is invertible.
\item For any $N\in\N$ and $P\in M_{N}(\C\left\langle x_{1},\dots,x_{n}\right\rangle )$ we have: if $P$ is full, then $P(X)\in M_{N}(\A)$ is invertible.
\item For any $N\in\N$ and $P\in M_{N}(\C\left\langle x_{1},\dots,x_{n}\right\rangle )$ we have: $\rank(P(X))=\rho(P)$.
\item We have $X\in\dom_\A(r)$ for each $r\in \C\plangle x_{1},\dots,x_{n}\prangle$ and $\Ev_{X}$ as introduced in Definition \ref{def:evaluation} induces an injective homomorphism $\Ev_{X}: \C\plangle x_{1},\dots,x_{n}\prangle\rightarrow\A$ that extends the evaluation map $\ev_{X}:\C\left\langle x_{1},\dots,x_{n}\right\rangle \rightarrow\A$.
\end{enumerate}
Moreover, if the equivalent conditions above are satisfied, then
\begin{equation}\label{eq:rank_equality}
\rank(P(X)) = \rho(P) = \rho_\A(P(X)) \qquad\text{for all $P\in M_{N}(\C\left\langle x_{1},\dots,x_{n}\right\rangle )$},
\end{equation}
where $\rho_{\A}(P(X))$ denotes the inner rank of $P(X)$ over the algebra $\A$.
\end{theorem}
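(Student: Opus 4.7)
The plan is to establish the cyclic chain of implications
(iii) $\Rightarrow$ (i) $\Rightarrow$ (iv) $\Rightarrow$ (ii) $\Rightarrow$ (iii),
and then deduce \eqref{eq:rank_equality}. The cheap step is (iii) $\Rightarrow$ (i): if $P$ is linear and full then $\rho(P)=N$ by definition, so (iii) gives $\rank(P(X))=N$, which by Lemma \ref{invertible and rank} is equivalent to invertibility of $P(X)$ in $M_N(\A)$.

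For (i) $\Rightarrow$ (iv), I would invoke Theorem \ref{thm:linear representation} to attach to each $r\in\C\plangle x_1,\dots,x_n\prangle$ a linear representation $\rho=(u,A,v)$ whose matrix $A$ is linear and full. By (i), $A(X)$ is invertible in $M_k(\A)$, so $X$ lies in $\dom_\A(\rho)$ and hence in $\dom_\A(r)$ by Definition \ref{def:evaluation}. Since $\A$ is stably finite, the independence-of-representation argument underlying Theorem \ref{thm:evaluation} applies, so $\Ev_X(r):=u A(X)^{-1} v$ is well-defined; the homomorphism property follows from the closure remarks at the end of Section \ref{sec:evaluation}, where sum, product, and inverse of linear representations match the corresponding operations on evaluations. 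Injectivity is then automatic: given $0\neq r$, the skew field contains $r^{-1}$, and applying the homomorphism to $r r^{-1}=1$ yields $\Ev_X(r)\Ev_X(r^{-1})=\1$, so $\Ev_X(r)\neq 0$. The step (iv) $\Rightarrow$ (ii) is then almost a formality: any full $P\in M_N(\C\langle x_1,\dots,x_n\rangle)$ is invertible over $M_N(\C\plangle x_1,\dots,x_n\prangle)$ by the universal-localization characterization of the free field, so applying $\Ev_X$ entry-wise to $P P^{-1}=\1_N=P^{-1} P$ produces an inverse for $P(X)$ in $M_N(\A)$.

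The key combinatorial step is (ii) $\Rightarrow$ (iii), where I would argue two inequalities with $r:=\rho(P)$. A rank factorization $P=QR$ over polynomials with intermediate size $r$ evaluates to $P(X)=Q(X)R(X)$; since $Q(X)\in M_{N,r}(\A)$, the image of $P(X)$ sits inside the image of $Q(X)$, and the range projection of the latter has trace at most $r$, so $\rank(P(X))\leq r$. For the matching lower bound, Theorem \ref{thm:full minor} supplies a full $r\times r$ submatrix $B=\pi_I P\iota_J$; applying (ii), $B(X)$ is invertible, hence the map $P(X)\iota_J\colon\A^r\to\A^N$ is left-invertible via $B(X)^{-1}\pi_I$. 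A polar-decomposition argument then shows that the range projection of this injective map in $M_N(\M)$ has trace exactly $r$, giving $\rank(P(X))\geq r$.

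Finally, for the rank identity \eqref{eq:rank_equality}, the inequality $\rank(P(X))\leq \rho_\A(P(X))$ follows exactly as above, now applied to $\A$-factorizations. The reverse inequality is the most delicate part of the argument. The key observation is that $\rank(P(X))=\rho(P)$ is an integer, so the range projection $p_{\overline{\im P(X)}}\in M_N(\M)$ has integer trace $r$; in the finite von Neumann algebra $(M_N(\M),\Tr_N\circ\tau^{(N)})$ it is therefore Murray--von Neumann equivalent to the standard coordinate projection $e^{1,1}+\dots+e^{r,r}$. A partial isometry realizing this equivalence, combined with the polar decomposition of $P(X)$, produces a factorization $P(X)=\hat u\hat R$ with $\hat u\in M_{N,r}(\A)$ and $\hat R\in M_{r,N}(\A)$, so $\rho_\A(P(X))\leq r$. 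I expect this last transfer to be the main technical obstacle: the algebraic notion of inner rank has to be reconciled with the analytic trace rank through the structural fact that integer-trace projections in a finite von Neumann algebra are equivalent to standard coordinate projections.
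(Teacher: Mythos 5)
Your cyclic chain (iii)$\Rightarrow$(i)$\Rightarrow$(iv)$\Rightarrow$(ii)$\Rightarrow$(iii) is sound and organized differently from the paper, which proves (iii)$\Rightarrow$(ii)$\Rightarrow$(i) trivially and then (iv)$\Rightarrow$(iii) and (i)$\Rightarrow$(iv). Your (iv)$\Rightarrow$(ii), applying $\Ev_X$ entrywise to $PP^{-1}=\1_N$ over the free field, and your (ii)$\Rightarrow$(iii), extracting a full $r\times r$ block $B$ via Theorem \ref{thm:full minor} and using left-invertibility of $P(X)\iota_J$ through $B(X)^{-1}\pi_I$ to get the lower bound on the rank, together replace the paper's Schur-complement computation in (iv)$\Rightarrow$(iii); both routes work, and yours avoids invoking the free field at that step.

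The genuine gap is in your argument for $\rho_\A(P(X))\le\rank(P(X))$ in \eqref{eq:rank_equality}. You assert that a projection in $(M_N(\M),\Tr_N\circ\tau^{(N)})$ of integer trace $r$ is Murray--von Neumann equivalent to the coordinate projection $e^{1,1}+\dots+e^{r,r}$. This is true in a factor but false in general: equivalent projections must have the same central support, so for instance in $M_2(\C\oplus\C)$ the projection $\1_{M_2(\C)}\oplus 0$ and the coordinate projection $e^{1,1}$ both have unnormalized trace $1$ yet are not equivalent. Since the theorem is stated for an arbitrary tracial $W^\ast$-probability space, factoriality of $\vN(X_1,\dots,X_n)$ cannot be assumed, and your construction of the factorization $P(X)=\hat u\hat R$ does not go through. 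The repair is much simpler and is what the paper does: every rank factorization of $P$ over $\C\langle x_1,\dots,x_n\rangle$ evaluates at $X$ to a factorization of $P(X)$ over $\A$, so $\rho_\A(P(X))\le\rho(P)$ holds trivially; combined with your inequality $\rank(P(X))\le\rho_\A(P(X))$ and with $\rho(P)=\rank(P(X))$ from (iii), the chain $\rank(P(X))\le\rho_\A(P(X))\le\rho(P)=\rank(P(X))$ forces all three quantities to coincide, with no structure theory of projections needed.
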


\begin{proof}
It's easy to see that (ii)$\implies$(i) is trivial and (iii)$\implies$(ii) follows from Lemma \ref{invertible and rank}.

(iv)$\implies$(iii): Assume that $P\in M_{N}(\C\left\langle x_{1},\dots,x_{n}\right\rangle )$ has inner rank $\rho(P)=r$, then by Theorem \ref{thm:full minor} (with its requirement checked in Appendix \ref{sec:full minor}), there exist a full $r\times r$ block of $P$. With some permutations of rows and columns (which don't change either the inner rank of $P$ or the rank of its evaluation $P(X)$), we may assume that $P$ is of the form
\[P=\begin{pmatrix}A & B\\C & D\end{pmatrix},\]
where $A\in M_{r}(\C\left\langle x_{1},\dots,x_{n}\right\rangle )$ and other blocks $B$, $C$, $D$ are of appropriate sizes. It can be verified directly that the factorization
\begin{equation}\label{eq:factorization with full minor}
\begin{pmatrix}\1_{r} & \0\\\0 & D-CA^{-1}B\end{pmatrix}=\begin{pmatrix}A^{-1} & \0\\-CA^{-1} & \1_{N-r}\end{pmatrix}\begin{pmatrix}A & B\\C & D\end{pmatrix}\begin{pmatrix}\1_{r} & -A^{-1}B\\\0 & \1_{N-r}\end{pmatrix}
\end{equation}
holds in $M_{N}(\C\plangle x_{1},\dots,x_{n}\prangle)$, since the full matrix $A$ is invertible in $M_{r}(\C\plangle x_{1},\dots,x_{n}\prangle)$ (as discussed in the previous section on rational functions). Note that
\[\begin{pmatrix}A^{-1} & \0\\-CA^{-1} & \1_{N-r}\end{pmatrix},\ \begin{pmatrix}\1_{r} & -A^{-1}B\\\0 & \1_{N-r}\end{pmatrix}\]
are invertible in $M_{N}(\C\plangle x_{1},\dots,x_{n}\prangle)$, hence we have
\[r=\rho(P)=\rho\begin{pmatrix}A & B\\C & D\end{pmatrix}=\rho\begin{pmatrix}\1_{r} & \0\\\0 & D-CA^{-1}B\end{pmatrix}.\]
By Proposition \ref{prop:invertible minor} (the stable finiteness of polynomials follows from the stable finiteness of the free field, which can be seen from Lemma \ref{lem:fullness and invertibility}), we have $D=CA^{-1}B$. As we assume statement (iv), the extended evaluation $\Ev_{X}:\C\plangle x_{1},\dots,x_{n}\prangle\rightarrow \A$, as a homomorphism, yields that $A(X)$ is invertible with the inverse given by the evaluation $A^{-1}(X)$ since
\[\1_r=\Ev_{X}^{(r)}(AA^{-1})=\Ev_{X}^{(r)}(A)\Ev_{X}^{(r)}(A^{-1})=A(X)A^{-1}(X).\]
Therefore, (\ref{eq:factorization with full minor}) leads to the following factorization 
\[\begin{pmatrix}\1_{r} & \0\\\0 & \0\end{pmatrix}=\begin{pmatrix}A^{-1}(X) & \0\\-C(X)A^{-1}(X) & \1_{N-r}\end{pmatrix}\begin{pmatrix}A(X) & B(X)\\C(X) & D(X)\end{pmatrix}\begin{pmatrix}\1_{r} & -A^{-1}(X)B(X)\\\0 & \1_{N-r}\end{pmatrix}.\]
Applying Lemma \ref{inverstible matrix preserve rank}, we obtain
\[\rank(P(X))=\rank\begin{pmatrix}A(X) & B(X)\\C(X) & D(X)\end{pmatrix}=\rank\begin{pmatrix}\1_{r} & \0\\\0 & \0\end{pmatrix}=r.\]

(i)$\implies$(iv): First, recall from Definition \ref{def:evaluation} that a rational function $r$ in the free field $\C\plangle x_{1},\dots,x_{n}\prangle$ satisfies $X\in\dom_\A(r)$ if there is a linear representation $\rho=(u,A,v)$ of $r$ with the property that $X\in \dom_\A(\rho)$, i.e., for which $A(X)$ is invertible; but in fact, each linear representation of $r$ (whose existence is guaranteed by Theorem \ref{thm:linear representation}) has this property due to our assumption (i) as $A$ is full.
Thus, according to Definition \ref{def:evaluation} and Theorem \ref{thm:evaluation}, the evaluation $\Ev_X(r)$ is well-defined for each $r\in \C\plangle x_{1},\dots,x_{n}\prangle$ and thus induces a map $\Ev_X: \C\plangle x_1,\dots,x_n\prangle \to \A$. Now, we infer from the proof of Theorem \ref{thm:evaluation} that the evaluation of rational functions via such linear representations coincides with the usual evaluation of polynomials and respects the arithmetic operations between rational functions. Therefore, the evaluation map $\Ev_X: \C\plangle x_1,\dots,x_n\prangle \to \A$ forms a homomorphism which agrees with $\ev_{X}$ on $\C\left\langle x_{1},\dots,x_{n}\right\rangle$. Moreover, $\Ev_{X}$ has to be injective as a homomorphism from a skew field.

Suppose now that the equivalent conditions are satisfied. Then, for any matrix $P\in M_{N}(\C\left\langle x_{1},\dots,x_{n}\right\rangle )$, we can consider the rank factorization $P(X)=AB$ of $P(X)$ over $\A$, where $A\in M_{N,r}(\A)$ and $B\in M_{r,N}(\A)$ for $r := \rho_\A(P(X)) \leqslant N$. This can be rewritten as $P(X) = \hat{A} \hat{B}$ with the square matrices $\hat{A},\hat{B} \in M_N(\A)$ that are defined by
\[\hat{A} := \begin{pmatrix} A & \0_{N \times (N-r)} \end{pmatrix} \qquad\text{and}\qquad \hat{B} := \begin{pmatrix} B\\ \0_{(N-r) \times N} \end{pmatrix}.\]
From this, we see that $(\tr_N \circ \tau^{(N)})(p_{\ker(\hat{A})}) \geq \frac{N-r}{N}$, so that $\rank(\hat{A}) \leq r$ by Lemma \ref{lem:kernels unbdd}; thus, since $\im(P(X))\subseteq\im(\hat{A})$, it follows that
\[\rank(P(X)) \leqslant \rank(\hat{A})\leqslant r.\]
On the other hand, we may observe that in general
\[r = \rho_{\A}(P(X)) \leqslant \rho(P),\]
because each rank factorization of $P$ yields after evaluation at $X$ a factorization of $P(X)$ over $\A$. Finally, the third property in the theorem gives us
\[\rho(P) = \rank(P(X)).\]
Thus, in summary, the asserted equality \eqref{eq:rank_equality} follows.
\end{proof}

We want to remark that the fourth property in the theorem implies that any non-zero rational function $r$ has no zero divisors for its evaluation $r(X)$: for any right zero divisor $p\in\M$, $r(X)p=0$ yields that $\im(p)\subseteq\ker(r(X))$, but $\ker(r(X))$ is always trivial as $r(X)$ is invertible in $\A$ (where we use the property that the evaluation $\Ev_{X}$ is a homomorphism defined on the whole free field).

In other words, the fourth property also says that the image of the free field under the evaluation map forms a division subring of $\A$ that contains the algebra $\C\left\langle X_{1},\dots,X_{n}\right\rangle$ generated by $X_1,\dots X_n$; therefore, from the fourth property in our above theorem, we may also infer that the division closure $\D$ of $\C\left\langle X_{1},\dots,X_{n}\right\rangle$ is contained in the image of free field $\Ev_{X}(\C\plangle x_{1},\dots,x_{n}\prangle)$. Such a result was first established by Linnell in his paper \cite{Lin93} for free groups, by first proving the Atiyah conjecture for free groups. More precisely, he proved that the division closure (see Definition \ref{def:division closure}) of the group algebra is a division ring (Lemma 3.7 in \cite{Lin93}).

Moreover, we can also consider the rational closure $\mathcal{R}$ (see Definition \ref{def:rational closure}) of $\C\left\langle X_{1},\dots,X_{n}\right\rangle$ with respect to $\ev_{X}$, which contains the image $\Ev_{X}(\C\plangle x_{1},\dots,x_{n}\prangle)$ by the definition of rational closure and the way we define the evaluation map $Ev_{X}$ for rational functions. So we have
\[\D\subseteq\Ev_{X}(\C\plangle x_{1},\dots,x_{n}\prangle)\subseteq\mathcal{R};\]
and actually, these three algebras are equal to each other in our setting, since $\D=\mathcal{R}$ holds as we will see in the following; in fact, this follows from Theorem \ref{thm:Atiyah-2} (which applies if the equivalent conditions of Theorem \ref{thm:Atiyah-1} hold) in combination with Lemma \ref{lem:division closure}.

Therefore, with the equivalences of these properties, we get a complete understanding for the existence of the embedding of the free field into unbounded affiliated operators. Even though the free group case does not follow directly from our result, we can establish this embedding for the big class of operators $(X_1,\dots,X_n)$ that have maximal non-microstates free entropy dimension; in fact, we can prove this under the weaker condition $\delta^\star(X_1,\dots,X_n)=n$. We put it as the following corollary.

\begin{corollary}\label{cor:full_entropy_dimension_implies_Atiyah}
If $X=(X_{1},\dots,X_{n})$ is a tuple of selfadjoint random variables in some tracial $W^{\ast}$-probability space $(\M,\tau)$ with $\delta^{\star}(X_{1},\dots,X_{n})=n$, then
\begin{enumerate}
\item for any non-zero rational function $r$, $r(X)$ is well-defined and invertible as an affiliated operator, and thus $r(X)$ is not zero and has no zero divisors;
\item for any $N\in\N$ and any $N\times N$ matrix $P$ over $\C\left\langle x_{1},\dots,x_{n}\right\rangle $, we have
\[\rank(P(X))=\rho(P)\in\N\cap[0,N].\]
\end{enumerate}
\end{corollary}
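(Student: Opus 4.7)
The plan is to deduce this corollary directly by showing that the hypothesis $\delta^{\star}(X_1,\dots,X_n)=n$ activates the equivalence scheme of Theorem \ref{thm:Atiyah-1} via Theorem \ref{thm:main-1}. More precisely, Theorem \ref{thm:main-1} asserts that for any linear full $P \in M_N(\C\langle x_1,\dots,x_n\rangle)$, the only projection $p \in M_N(\M_0)$ satisfying $P(X)\, p = 0$ is $p=0$. Applying this to the kernel projection $p_{\ker(P(X))}$, which lies in $M_N(\M_0)$ and obviously satisfies $P(X)\, p_{\ker(P(X))} = 0$, forces $p_{\ker(P(X))} = 0$, hence $\rank(P(X)) = N$. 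By Lemma \ref{invertible and rank}, $P(X)$ is therefore invertible in $M_N(\A)$. This is exactly condition (i) of Theorem \ref{thm:Atiyah-1}, so by the equivalences established there, all of (i)--(iv) hold simultaneously.

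Part (ii) of the corollary then follows immediately from condition (iii) of Theorem \ref{thm:Atiyah-1}, noting that $\rho(P)$ is automatically an integer in $\{0, 1, \dots, N\}$ by the very definition of inner rank. For part (i), I would invoke condition (iv): one obtains an injective homomorphism $\Ev_X: \C\plangle x_1,\dots,x_n\prangle \to \A$ extending the ordinary polynomial evaluation. For a non-zero rational function $r$, injectivity gives $r(X) = \Ev_X(r) \neq 0$; moreover, since $\C\plangle x_1,\dots,x_n\prangle$ is a skew field, the element $r^{-1}$ exists therein, and applying the homomorphism $\Ev_X$ to $r \cdot r^{-1} = r^{-1} \cdot r = 1$ produces a two-sided inverse of $r(X)$ in $\A$. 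Invertibility in $\A$ automatically excludes both left and right zero divisors.

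The only genuinely substantial input is Theorem \ref{thm:main-1}, whose proof rests on the matricial extensions of the Voiculescu and Dabrowski $L^2$-estimates and the reduction argument of Corollary \ref{cor:reduction}. The passage from linear full matrices to arbitrary full matrices and to rational functions is effectively bookkeeping through the linearization machinery (Theorem \ref{thm:linear representation}) together with the inner-rank identities coming from Theorem \ref{thm:full minor} and Proposition \ref{prop:invertible minor}. The only genuine concern is making sure the invocation of Theorem \ref{thm:Atiyah-1} is logically legitimate, i.e., that none of its equivalences were proved using the present corollary; a quick inspection shows that the implications (i)$\Rightarrow$(iv)$\Rightarrow$(iii)$\Rightarrow$(ii)$\Rightarrow$(i) in Theorem \ref{thm:Atiyah-1} rely only on the linear representation theorem and on standard Schur-complement manipulations, so the logical order is clean.
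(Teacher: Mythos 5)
Your proposal is correct and follows essentially the same route as the paper: the paper likewise obtains condition (i) of Theorem \ref{thm:Atiyah-1} from Theorem \ref{thm:main-1} applied to the kernel projection (together with Lemmas \ref{lem:kernels unbdd} and \ref{invertible and rank}), and then reads off both parts of the corollary from the equivalences (i)$\Leftrightarrow$(iii)$\Leftrightarrow$(iv), with the zero-divisor statement following from invertibility of $r(X)$ in $\A$ exactly as you argue. Your circularity check is also accurate, so nothing further is needed.
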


In particular, the second statement implies that the strong Atiyah property holds for $X$. Moreover, we also know exactly the dimension of the kernel of non-full matrices evaluated at $X$.

\begin{remark}
For any matrix $P$ over $\C\left\langle x_{1},\dots,x_{n}\right\rangle $,
\[\Tr_N\circ\tau^{(N)}(p_{\ker(P(X))})=N-\rho(P).\]
Therefore, if $P$ is self-adjoint, then the analytic distribution of the self-adjoint operator $P(X)$, with respect to the normalized trace $\tr_{N}\circ\tau^{(N)}$, has an atom at $0$ of measure $1-\rho(P)/N$, whenever $\rho(P)<N$, i.e., $P$ is not full. Furthermore, it allows us to determine the value of the non-microstates free entropy dimension $\delta^\ast(P(X))$ of $P(X)$, as we already explained in Remark \ref{rem:value_entropy_dimension}.
\end{remark}

As mentioned before, a tuple of operators may have the strong Atiyah property but fail the equality in the third property in Theorem \ref{thm:Atiyah-1}. Here we present an example provided by Ken Dykema and James Pascoe.

\begin{example}\label{ex:Dykema_Pascoe}
Consider two freely independent semicircular elements, denoted by $X$ and $Y$, then they satisfy the strong Atiyah property. Let
\[A=Y^{2},\ B=YXY,\ C=YX^{2}Y,\]
then they also have the strong Atiyah property as any polynomial in them can be reduced back to a polynomial in $X$ and $Y$. However, though they don't satisfy any nontrivial polynomial relation, they have a rational relation:
\[BA^{-1}B-C=0\]
in the $\ast$-algebra of affiliated operators. Then definitely they don't satisfy the last property in Theorem \ref{thm:Atiyah-1}; moreover, we can also find some matrix like
\[\begin{pmatrix}A & B\\B & C\end{pmatrix}\]
that has inner rank $2$ if it is viewed as a matrix of formal variables, but has
\[\rank\begin{pmatrix}A & B\\B & C\end{pmatrix}=\rank\begin{pmatrix}Y^{2} & YXY\\YXY & YX^{2}Y\end{pmatrix}=\rank\begin{pmatrix}1 & X\\X & X^{2}\end{pmatrix}=\rho\begin{pmatrix}1 & x\\x & x^{2}\end{pmatrix}=1.\]
Therefore, $(A,B,C)$ violates all the properties in Theorem \ref{thm:Atiyah-1} though it has the strong Atiyah property. Nevertheless, by the following list of equivalent properties, we see that the rank is always equal to the inner rank over the rational closure when the strong Atiyah property holds.
\end{example}

Let $\mathcal{R}$ be the rational closure of $\C\left\langle x_{1},\dots,x_{n}\right\rangle $ with respect to $\ev_{X}:\C\left\langle x_{1},\dots,x_{n}\right\rangle \rightarrow\A$, which is a subalgebra of $\A$. In the following theorem, we consider the inner rank over $\mathcal{R}$ and denote it by $\rho_{\mathcal{R}}$. Similarly as for the inner rank $\rho$, if a matrix $A$ over $\mathcal{R}$ is multiplied by invertible matrices over $\mathcal{R}$, then its inner rank $\rho_{\mathcal{R}}$ stays invariant. We have the following equivalent properties.

\begin{theorem}
\label{thm:Atiyah-2}The following statements are equivalent:
\begin{enumerate}
\item For any $N\in\N$ and any $P\in M_{N}(\C\left\langle x_{1},\dots,x_{n}\right\rangle )$ we have: if $P(X)$ is full over $\mathcal{R}$, then $P(X)\in M_{N}(\A)$ is invertible.
\item For any $N\in\N$ and any $P\in M_{N}(\C\left\langle x_{1},\dots,x_{n}\right\rangle )$we have: $\rank(P(X))=\rho_{\mathcal{R}}(P(X))$.
\item The rational closure $\mathcal{R}$ is a division ring.
\item We have the strong Atiyah property for $X$, i.e., for any $N\in\N$ and any $P\in M_{N}(\C\left\langle x_{1},\dots,x_{n}\right\rangle )$ we have that $\rank(P(X))\in\N$.
\end{enumerate}
\end{theorem}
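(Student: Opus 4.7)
My plan is to close the circle by proving $(\text{ii})\Rightarrow(\text{i})$, $(\text{ii})\Rightarrow(\text{iv})$, $(\text{iii})\Rightarrow(\text{ii})$, $(\text{iii})\Rightarrow(\text{i})$, $(\text{iv})\Rightarrow(\text{iii})$, and $(\text{i})\Rightarrow(\text{iii})$. The first two are immediate: fullness of $P(X)$ over $\mathcal{R}$ means $\rho_\mathcal{R}(P(X))=N$, whence (ii) gives $\rank(P(X))=N$ and invertibility via Lemma \ref{invertible and rank}; and $\rho_\mathcal{R}(P(X))$ is automatically an integer, so (ii) trivially implies (iv).

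For $(\text{iii})\Rightarrow(\text{ii})$ and $(\text{iii})\Rightarrow(\text{i})$, I would mirror the full-minor strategy from the proof of Theorem \ref{thm:Atiyah-1}. Under the assumption that $\mathcal{R}$ is a division ring, Lemma \ref{lem:rational closure} allows Theorem \ref{thm:full minor} to be applied over $\mathcal{R}$, so $P(X)\in M_N(\mathcal{R})$ contains a full square block of size $r:=\rho_\mathcal{R}(P(X))$; after permuting rows and columns (which change neither $\rho_\mathcal{R}$ nor $\rank$) I may assume $P(X)=\begin{pmatrix}A & B\\ C & D\end{pmatrix}$ with the top-left $r\times r$ block $A$ full. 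By Lemma \ref{lem:fullness and invertibility}\,(i), $A$ is invertible in $M_r(\mathcal{R})$, and stable finiteness of $\mathcal{R}\subseteq\A$ combined with Proposition \ref{prop:invertible minor} forces $D=CA^{-1}B$. The factorization
\[ P(X) \;=\; \begin{pmatrix}\1_r & 0\\ CA^{-1} & \1_{N-r}\end{pmatrix}\begin{pmatrix}A & 0\\ 0 & 0\end{pmatrix}\begin{pmatrix}\1_r & A^{-1}B\\ 0 & \1_{N-r}\end{pmatrix} \]
with invertible outer factors then yields $\rank(P(X))=r$ via Lemma \ref{inverstible matrix preserve rank}, which is (ii); and $(\text{iii})\Rightarrow(\text{i})$ falls out directly from Lemma \ref{lem:fullness and invertibility}\,(i), since any full matrix over $\mathcal{R}$ is invertible in $M_N(\mathcal{R})\subseteq M_N(\A)$.

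The heart of the argument is $(\text{iv})\Rightarrow(\text{iii})$, for which I would use a Schur complement trick that embeds a single element of $\mathcal{R}$ into a matrix of polynomials. Given nonzero $r\in\mathcal{R}$, write $r=uA(X)^{-1}v$ with $A\in M_k(\C\langle x_1,\dots,x_n\rangle)$ and $A(X)$ invertible in $M_k(\A)$, and form
\[ P \;:=\; \begin{pmatrix}0 & u\\ v & A\end{pmatrix}\in M_{k+1}(\C\langle x_1,\dots,x_n\rangle). \]
Left and right multiplication of $P(X)$ by invertible unipotent block matrices involving $A(X)^{-1}$ reduces $P(X)$ to the block diagonal form $\begin{pmatrix}-r & 0\\ 0 & A(X)\end{pmatrix}$, so Lemma \ref{inverstible matrix preserve rank} yields $\rank(P(X))=\rank(r)+k$. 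The strong Atiyah property forces this quantity to be an integer, hence $\rank(r)\in\N\cap[0,1]=\{0,1\}$; since $r\neq 0$ and $\tau$ is faithful, Lemma \ref{lem:kernels unbdd} forces $\rank(r)=1$, so $r$ is invertible in $\A$ and $r^{-1}\in\mathcal{R}$ by the discussion after Definition \ref{def:rational closure}. The implication $(\text{i})\Rightarrow(\text{iii})$ proceeds by the same embedding: if a nonzero $r=uA(X)^{-1}v$ were not invertible, then $P(X)$ would fail to be invertible and hence, by (i), fail to be full over $\mathcal{R}$, and the equality case of Proposition \ref{prop:invertible minor} applied to the equivalent block diagonal form (with the invertible $k\times k$ block $A(X)$ placed in the top-left corner via a further permutation) would force $r=0$, a contradiction.

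The main obstacle I expect is handling the rank identities for block diagonal matrices over $\A$ carefully, most notably $\rank\begin{pmatrix}-r & 0\\ 0 & A(X)\end{pmatrix}=\rank(r)+k$ and $\rank\begin{pmatrix}A & 0\\ 0 & 0\end{pmatrix}=r$ when $A\in M_r(\A)$ is invertible. Both rest on the facts that the closure of the range of a block diagonal operator is the direct sum of the closures of the ranges of the diagonal blocks, and that $\Tr_N\circ\tau^{(N)}$ of a block diagonal projection is the sum of traces of the diagonal projections; these are standard but will need to be spelled out to keep the argument airtight.
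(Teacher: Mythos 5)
Your proposal is correct and follows essentially the same route as the paper: the key ingredients are identical, namely the full-block/Schur-complement reduction over $\mathcal{R}$ (via Lemma \ref{lem:rational closure}, Theorem \ref{thm:full minor} and Proposition \ref{prop:invertible minor}) for (iii)$\Rightarrow$(ii), and the bordered matrix $\begin{pmatrix}0 & u\\ v & A\end{pmatrix}$ together with the block-diagonal rank identity for the implications into (iii); your different way of closing the cycle ((iii)$\Rightarrow$(i) directly from Lemma \ref{lem:fullness and invertibility}, (i)$\Rightarrow$(iii) by contradiction) is only a cosmetic variation. The one point to make explicit in (i)$\Rightarrow$(iii) is that $r^{-1}\in\mathcal{R}$, which follows from the closure of the rational closure under inverses taken in $\A$ (the Schur-complement observation after Definition \ref{def:rational closure}), exactly as you already invoke it in (iv)$\Rightarrow$(iii).
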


\begin{proof}
It's easy to see that (ii)$\implies$(i) follows from Lemma \ref{invertible and rank}.

(iii)$\implies$(ii): Assume that the evaluation of $P\in M_{N}(\C\left\langle x_{1},\dots,x_{n}\right\rangle)$ has inner rank $\rho_{\mathcal{R}}(P(X))=r$. By Lemma \ref{lem:rational closure}, the requirements in Proposition \ref{prop:full minor} are satisfied and thus we can apply this proposition to $P(X)$; so there exist a full matrix $A\in M_{r}(\mathcal{R})$ and matrices $B$, $C$ and $D$ over $\mathcal{R}$ of appropriate sizes such that we can write $P(X)$ as
\[P(X)=\begin{pmatrix}A & B\\C & D\end{pmatrix}.\]
From Lemma \ref{lem:fullness and invertibility}, $A$ is invertible and $A^{-1}\in M_{r}(\mathcal{R})$ as $A$ is full; hence the factorization
\begin{equation}
\begin{pmatrix}\1_{r} & \0\\\0 & D-CA^{-1}B\end{pmatrix}=\begin{pmatrix}A^{-1} & \0\\-CA^{-1} & \1_{N-r}\end{pmatrix}\begin{pmatrix}A & B\\C & D\end{pmatrix}\begin{pmatrix}\1_{r} & -A^{-1}B\\\0 & \1_{N-r}\end{pmatrix}\label{eq:inner rank}
\end{equation}
holds in $M_{N}(\mathcal{R})$, and thus
\[r=\rho_{\mathcal{R}}(P(X))=\rho_{\mathcal{R}}\begin{pmatrix}A & B\\C & D\end{pmatrix}=\rho_{\mathcal{R}}\begin{pmatrix}\1_{r} & \0\\\0 & D-CA^{-1}B\end{pmatrix}.\]
As $\mathcal{R}$ is also stably finite (by Lemma \ref{lem:fullness and invertibility}), we can apply Proposition \ref{prop:invertible minor} to see that $D-CA^{-1}B=0$. Therefore, (\ref{eq:inner rank}) turns out to be
\[\begin{pmatrix}\1_{r} & \0\\\0 & \0\end{pmatrix}=\begin{pmatrix}A^{-1} & \0\\-CA^{-1} & \1_{N-r}\end{pmatrix}P(X)\begin{pmatrix}\1_{r} & -A^{-1}B\\\0 & \1_{N-r}\end{pmatrix};\]
then by applying Lemma \ref{inverstible matrix preserve rank} we have
\[\rank(P(X))=\rank\begin{pmatrix}\1_{r} & \0\\\0 & \0\end{pmatrix}=r.\]

(i)$\implies$(iii): For any nonzero $r\in\mathcal{R}$ , there exist a matrix $P\in M_{N}(\C\left\langle x_{1},\dots,x_{n}\right\rangle )$, $u\in M_{1,N}(\C)$, $v\in M_{N,1}(\C)$ such that $P(X)$ is invertible in $M_{N}(\A$) and $r=uP(X)^{-1}v$ (see Section 5.4). It can be verified that the factorization
\[\begin{pmatrix}-r & \0\\\0 & \1_{N}\end{pmatrix}=\begin{pmatrix}-uP(X)^{-1}v & \0\\\0 & \1_{N}\end{pmatrix}=\begin{pmatrix}1 & -uP(X)^{-1}\\\0 & P(X)^{-1}\end{pmatrix}\begin{pmatrix}0 & u\\v & P(X)\end{pmatrix}\begin{pmatrix}1 & \0\\-P(X)^{-1}v & \1_{N}\end{pmatrix}\]
holds in $M_{N+1}(\A)$. Since
\[\begin{pmatrix}1 & -uP(X)^{-1}\\\0 & P(X)^{-1}\end{pmatrix}\text{ and }\begin{pmatrix}1 & \0\\-P(X)^{-1}v & \1_{N}\end{pmatrix}\]
are invertible in $M_{N+1}(\A)$, we have
\[\rank\begin{pmatrix}-r & \0\\\0 & \1_{N}\end{pmatrix}=\rank\begin{pmatrix}0 & u\\v & P(X)\end{pmatrix}.\]
Then, as $\mathcal{R}$ is stably finite (because it is a subalgebra of $\A$ which is stably finite), by Proposition \ref{prop:invertible minor},
\[\begin{pmatrix}0 & u\\v & P(X)\end{pmatrix}\]
is full over $\mathcal{R}$ since $-uP(X)^{-1}v=-r\neq0$; hence this matrix is invertible by the assumption (i) and so is the matrix
\[\begin{pmatrix}-r & \0\\\0 & \1_{N}\end{pmatrix}.\]
So we see that $r$ is invertible. Moreover, note
\[-r^{-1}=-(uP(X)^{-1}v)^{-1}=\begin{pmatrix}1 & \0\end{pmatrix}\begin{pmatrix}0 & u\\v & P(X)\end{pmatrix}^{-1}\begin{pmatrix}1\\\0\end{pmatrix},\]
so $r^{-1}$ is also in the rational closure $\mathcal{R}$, by definition of the rational closure. This shows that $\mathcal{R}$ is a division ring.

Finally, it remains to prove that the assertion (iv) is equivalent to the first three assertions. It is clear that (ii) implies (iv) trivially, as the inner rank is always an integer by definition. Now we want to prove (iii) from assertion (iv) by more or less the same argument as in (i)$\implies$(iii). Given any nonzero $r\in\mathcal{R}$, there exists a matrix $P\in M_{N}(\C\left\langle x_{1},\dots,x_{n}\right\rangle )$, $u\in M_{1,N}(\C)$, $v\in M_{N,1}(\C)$ such that $P(X)$ is invertible in $M_{N}(\A$) and $r=uP(X)^{-1}v$. Consider again the factorization
\[\begin{pmatrix}-r & \0\\\0 & \1_{N}\end{pmatrix}=\begin{pmatrix}1 & -uP(X)^{-1}\\\0 & P(X)^{-1}\end{pmatrix}\begin{pmatrix}0 & u\\v & P(X)\end{pmatrix}\begin{pmatrix}1 &\0\\-P(X)^{-1}v & \1_{N}\end{pmatrix},\]
we have
\[\rank\begin{pmatrix}-r & \0\\\0 & \1_{N}\end{pmatrix}=\rank\begin{pmatrix}0 & u\\v & P(X)\end{pmatrix}.\]
Now, by assertion (iv), we have
\[\rank\begin{pmatrix}-r & \0\\\0 & \1_{N}\end{pmatrix}=\rank\begin{pmatrix}0 & u\\v & P(X)\end{pmatrix}\in\N;\]
combining this with the fact that
\[\rank\begin{pmatrix}-r & \0\\\0 & \1_{N}\end{pmatrix}=\rank r+N,\]
we obtain $\rank(r)\in\{0,1\}$. Then, as $r\neq0$, we have $\rank(r)=1$, and thus $r$ is invertible by Lemma \ref{invertible and rank}.

Moreover, now we have
\[\rank\begin{pmatrix}0 & u\\v & P(X)\end{pmatrix}=\rank\begin{pmatrix}-r & \0\\\0 & \1_{N}\end{pmatrix}=N+1,\]
so
\[\begin{pmatrix}0 & u\\v & P(X)\end{pmatrix}\]
is invertible by Lemma \ref{invertible and rank} and thus
\[-r^{-1}=-\left(uP(X)^{-1}v\right)^{-1}=\begin{pmatrix}1 & \0\end{pmatrix}\begin{pmatrix}0 & u\\v & P(X)\end{pmatrix}^{-1}\begin{pmatrix}1\\\0\end{pmatrix}\]
also lies in the rational closure $\mathcal{R}$.
\end{proof}

If any of the above properties holds for some given tuple of operators, then we also have that the division closure of the algebra generated by these operators forms a division ring; this follows directly from the fact that the rational closure is exactly the division closure in this setting (see Lemma \ref{lem:division closure}). 

Finally, we close this section by examining the group algebra case for our theorems.

\begin{example}
As mentioned before, other important and interesting examples are group algebras. Let $G$ be generated by $n$ elements $g_{1},\dots,g_{n}$, then their images $U_{1},\dots,U_{n}$ under the left regular representation of $G$, are unitary operators which generate the group von Neumann algebra $L(G)$; and there is a trace $\tau$ on $L(G)$ (the vector state deduced from the identity element of $G$) such that $(L(G),\tau)$ is a tracial $W^{\ast}$-probability space. So we can apply Theorem \ref{thm:Atiyah-2} to the generators $U_{1},\dots,U_{n}$, by the evaluation map $\ev:\C\left\langle x_{1},\dots,x_{n},y_{1},\dots,y_{n}\right\rangle \rightarrow\A(G)$ that is defined through
\[\ev(x_{i})=U_{i},\ \ev(y_{i})=U_{i}^{\ast},\ i=1,\dots,n,\]
where $\A(G)$ is the $\ast$-algebra of densely defined operators affiliated with $L(G)$ as usual. In this way, the evaluation of $\C\left\langle x_{1},\dots,x_{n},y_{1},\dots,y_{n}\right\rangle $ is nothing else but the image of $\C[G]$ under the left regular representation. Since we have the unitary relations $U_{i}U_{i}^{\ast}=U_{i}^{\ast}U_{i}=1$, the tuple $(U_{1},\dots,U_{n},U_{1}^{\ast},\dots,U_{n}^{\ast})$ never satisfies any of the properties in Theorem \ref{thm:Atiyah-1}. But, on the other hand, these unitary relations also tell us that we can forget about $U_{i}^{\ast}$ if we treat them as inverses of $U_{i}$. Hence, we can also consider the evaluation map $\ev:\C\left\langle x_{1},\dots,x_{n}\right\rangle \rightarrow\A(G)$ that is defined through
\[\ev(x_{i})=U_{i},\ i=1,\dots,n;\]
then, as $U_{i}^{-1}=U_{i}^{\ast}$, the rational closure $\mathcal{R}(G)$ of the image of $\C\left\langle x_{1},\dots,x_{n}\right\rangle $ have to contain all $U_{i}^{\ast}$. Therefore, as a subalgebra of $\A(G)$, $\mathcal{R}(G)$ contains also the image of the group algebra $\C[G]$. Then there is hope that some property in Theorem \ref{thm:Atiyah-1} can hold; in that case, the evaluation can be extended to the free field $\C\plangle x_{1},\dots,x_{n}\prangle$ with its image being the rational closure $\mathcal{R}(G)$. Actually, free group algebras are known to satisfy these properties in Theorem \ref{thm:Atiyah-1}: in \cite{Lin93} Linnell also proved that for a free group $\mathbb{F}_{n}$, the rational closure of $\mathcal{R}(\mathbb{F}_{n})$ is the universal field of fractions for $\C[\mathbb{F}_{n}]$, which turns out to be the free field $\C\plangle x_{1},\dots,x_{n}\prangle$; hence the fourth property in Theorem \ref{thm:Atiyah-1} is valid for generators $U_{1},\dots,U_{n}$.
\end{example}

\section{Absolute continuity}

In this section, we continue our investigations in the spirit of \cite{EKYY13,AjEK18,AEK18} that we began in Section \ref{sec:regularity_linear_matrices}. We have already seen in Theorem \ref{thm:main-3} that the condition $\delta^\star(X_1,\dots,X_n) = n$, and in particular the stronger version $\delta^\ast(X_1,\dots,X_n) = n$ thereof, allow us to conclude that the analytic distribution $\mu_\X$ of any operator of the form
$$\X = b_0 + b_1 X_1 + \dots + b_n X_n,$$
with selfadjoint matrices $b_1,\dots,b_n$ coming from $M_N(\C)$, cannot have atoms if the associated matrix $b_1 x_1 + \dots + b_n x_n$ in $M_N(\C\langle x_1,\dots,x_n\rangle)$ is full over $\C\langle x_1,\dots,x_n\rangle$. This is in accordance with the common philosophy that both $n-\delta^\star(X_1,\dots,X_n)$ and $n-\delta^\ast(X_1,\dots,X_n)$ measure the ``atomic part'' in the noncommutative distribution of $(X_1,\dots,X_n)$ and are accordingly somehow the weakest regularity conditions that we may impose on the noncommutative distribution of $(X_1,\dots,X_n)$.

The opposite end of the scale of regularity conditions is approached when assuming the existence of a dual system. Indeed, it was shown in \cite{CS16} that this condition allows positive statements about the absolute continuity of analytic distributions with respect to the Lebesgue measure. In this section, we give more evidence to this conceptual point of view by showing that in the case $b_0=0$ the fullness of $b_1 x_1 + \dots + b_n x_n$ guarantees even the absolute continuity of the analytic distribution $\mu_\X$ with respect to the Lebesgue measure.

\subsection{Some notational preliminaries}

Let $\A$ be a unital complex algebra.
The algebraic tensor product $\A \otimes \A$ over $\C$ carries a natural linear involution $\sim: \A \otimes \A \to \A \otimes \A$ that is determined by linear extension of $(a_1 \otimes a_2)^\sim = a_2 \otimes a_1$ for all $a_1,a_2\in\A$. We will refer to $\sim$ as the \emph{flip} on $\A \otimes \A$. Note that $\sim$ naturally extends to a linear involution on $M_N(\M \otimes \M)$, which will be denoted again by the same symbol $\sim$ and is defined by $u^\sim := (u^\sim_{kl})_{k,l=1}^N$ for each $u=(u_{kl})_{k,l=1}^N \in M_N(\A \otimes \A)$.

Now, let $\M$ be an $\A$-bimodule. We have used before that $\sharp: (\A \otimes \A) \times \M \to \M$ extends to an operation
$$\sharp:\ M_N(\A \otimes \A) \times \M \to M_N(\M),\qquad (u_{kl})_{k,l=1}^N \sharp m = (u_{kl} \sharp m)_{k,l=1}^N.$$
In the following, we will use that $\sharp$ extends further to an operation
$$\sharp:\ M_N(\A \otimes \A) \times M_N(\M) \to M_N(\M),\qquad (u_{kl})_{k,l=1}^N \sharp (m_{kl})_{k,l=1}^N = \bigg( \sum^N_{p=1} u_{kp} \sharp m_{pl} \bigg)_{k,l=1}^N,$$
which is obviously compatible with the latter under the canonical embedding $\M \subseteq M_N(\M)$ and thus justifies the usage of the same symbol.

\subsection{Schatten-class operators}

Let $(\H,\langle \cdot,\cdot\rangle)$ be a separable complex Hilbert space. An operator $T: \H \to \H$ is said to be of \emph{trace class}, if for some (and hence for each) orthonormal basis $(e_i)_{i\in I}$ of $\H$ the condition $\sum_{i\in I} \langle |T| e_i, e_i\rangle < \infty$ for $|T| := (T^\ast T)^{1/2}$ is satisfied. It can be shown that in such cases $\sum_{i\in I} \langle T e_i, e_i\rangle$ is an absolutely convergent series, whose value, denoted by $\Tr(T)$, is independent of the concrete choice of $(e_i)_{i\in I}$; we will refer to $\Tr(T)$ as the \emph{trace of $T$}.
Note that in particular each finite rank operator on $\H$ is of trace class.

Clearly, an operator $T$ is of trace class if and only if $|T|$ is of trace class; thus, we may define by $\|T\|_1 := \Tr(|T|)$ a norm $\|\cdot\|_1$ on the linear space $S_1(\H)$ of all trace class operators on $\H$, with respect to which it becomes a Banach space. Note that $|\Tr(T)| \leq \|T\|_1$ for each operator $T\in S_1(\H)$.

More generally, for any $1\leq p < \infty$, we may define $S_p(\H)$ to be the linear space of all bounded operators $T$ on $\H$ for which $|T|^p$ is of trace class; this space also carries a norm, denoted accordingly by $\|\cdot\|_p$, which is defined by $\|T\|_p := \Tr(|T|^p)^\frac{1}{p}$ and with respect to which $S_p(\H)$ becomes a Banach space. We call $S_p(\H)$ the \emph{$p$-th Schatten-class on $\H$}.

Note that each Schatten-class $S_p(\H)$ consists only of compact operators on $\H$. Moreover, each $S_p(\H)$ forms even a two-sided ideal in $B(\H)$ as $\|A T B\|_p \leq \|A\| \|B\| \|T\|_p$ for all $T\in S_p(\H)$ and $A,B\in B(\H)$ holds. For trace class operators $T$, we have that $\Tr(A T) = \Tr(T A)$ for all $A\in B(\H)$, which justifies calling $\Tr$ a trace.

Furthermore, if $p,q \in (1,\infty)$ are given such that $\frac{1}{p} + \frac{1}{q} = 1$ holds, then $ST$ is of trace class whenever $S\in S_p(\H)$ and $T\in S_q(H)$, and in those cases $\|ST\|_1 \leq \|S\|_p \|T\|_q$.

Of particular interest is the class $S_2(\H)$, whose elements are also called \emph{Hilbert-Schmidt operators}. If endowed with the inner product $\langle \cdot,\cdot \rangle$ that is given by $\langle S,T \rangle := \Tr(ST^\ast)$ for all $S,T\in S_2(\H)$, $S_2(\H)$ forms a Hilbert space.

\subsection{Absolute continuity of the spectral measure}

Trace class operators provide some suitable framework to deal with questions concerning absolute continuity of spectral measures. With the following lemma we recall some criterion that was used crucially in \cite{CS16}; see also \cite{Voi79}.

\begin{lemma}\label{lem:absolutely_continuous}
Let $\H$ be a separable Hilbert space. Consider a selfadjoint operator $X\in B(\H)$ and assume that its spectral measure is not Lebesgue absolutely continuous. Then there exists a sequence $(T_n)_{n=1}^\infty$ of finite rank operators on $\H$ having the following properties:
\begin{enumerate}[(i)]
 \item $0 \leq T_n \leq 1$ for all $n\in\N$;
 \item $(T_n)_{n=1}^\infty$ converges weakly to a non-zero spectral projection $p$ of $X$;
 \item $\| [T_n,X] \|_1 \to 0$ as $n\to\infty$.
\end{enumerate}
\end{lemma}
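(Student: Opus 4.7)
The plan is to reduce the problem to the setting where $X$ has purely singular spectrum on a distinguished invariant subspace, and then to appeal to the sharp Weyl--von Neumann--Kuroda theorem for trace-class perturbations in order to produce the sequence $(T_n)$. First I would use the Lebesgue decomposition of the spectral measure of $X$: since this measure is not absolutely continuous by hypothesis, there exists a Borel set $E\subseteq\R$ of Lebesgue measure zero such that the spectral projection $p:=1_E(X)\in B(\H)$ is non-zero. The reduced operator $X_0:=X|_{p\H}$ is then a bounded selfadjoint operator on the separable Hilbert space $p\H$ whose spectral measure is purely Lebesgue-singular, and $p$ is by construction a spectral projection of $X$ (so $X$ commutes with $p$ and both $p\H$ and $p^\perp\H$ are $X$-invariant).

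Next I would invoke the classical fact that a bounded selfadjoint operator on a separable Hilbert space with purely singular spectrum admits arbitrarily small trace-class perturbations with pure point spectrum; this is the strengthening of the Weyl--von Neumann--Kuroda theorem to trace class, which is compatible with Kato--Rosenblum precisely because the absolutely continuous part is absent (essentially the same result is used in Voiculescu's work on norm-ideal perturbations \cite{Voi79}). For each $n\in\N$ this lets me choose a trace-class operator $K_n$ on $p\H$ with $\|K_n\|_1<\frac{1}{n}$ such that $A_n:=X_0+K_n$ has pure point spectrum, and I fix an orthonormal basis $\{\xi_k^{(n)}\}_{k=1}^\infty$ of $p\H$ consisting of eigenvectors of $A_n$.

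Now I would carry out the actual construction. Fix once and for all a countable dense subset $\{e_i\}_{i\in\N}$ of $p\H$. For each fixed $n$, the finite-rank orthogonal projections $P_n^{(m)}$ of $p\H$ onto $\operatorname{span}\{\xi_1^{(n)},\dots,\xi_m^{(n)}\}$ converge strongly to $1_{p\H}$ as $m\to\infty$, so I can pick $m_n$ large enough that $\|P_n^{(m_n)} e_i - e_i\|<\frac{1}{n}$ for $i=1,\dots,n$. Define $T_n\in B(\H)$ as the orthogonal projection onto $\operatorname{span}\{\xi_1^{(n)},\dots,\xi_{m_n}^{(n)}\}$, viewed as a finite-dimensional subspace of $\H$. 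Then $T_n$ is a finite-rank orthogonal projection, so (i) holds. Since $T_n$ has range in $p\H$ and since by the choice of $m_n$ together with uniform boundedness $\|T_n\|\leq1$ we get $T_n\to 1_{p\H}$ strongly on $p\H$, it follows that $T_n\to p$ strongly on $\H$ and in particular weakly, which is (ii). Finally, $T_n$ commutes with $A_n$ as a spectral projection of $A_n$, and $T_n$ vanishes on $p^\perp\H$ while $X$ preserves $p^\perp\H$; hence $[T_n,X]$ vanishes on $p^\perp\H$ and on $p\H$ equals $[T_n,X_0]=[T_n,A_n-K_n]=-[T_n,K_n]$, so
\[
\|[T_n,X]\|_1 \;=\; \|[T_n,K_n]\|_1 \;\leq\; 2\,\|T_n\|\,\|K_n\|_1 \;\leq\; \frac{2}{n} \;\longrightarrow\; 0,
\]
establishing (iii).

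The essentially only non-trivial input is the trace-class Weyl--von Neumann--Kuroda statement used in the second paragraph; the rest is the standard interplay between the Lebesgue decomposition, the strong and weak operator topologies on uniformly bounded sequences, and the routine estimate $\|[A,B]\|_1\leq 2\|A\|\,\|B\|_1$ for trace-class $B$. The main obstacle is therefore to invoke that spectral-theoretic fact in the right form (trace class rather than just Hilbert--Schmidt, which requires the absence of absolutely continuous spectrum) and to check that the diagonal choice of $m_n$ indeed preserves weak convergence $T_n\to p$; both points are handled by the reduction to $p\H$ in the first step.
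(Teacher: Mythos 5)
Your proof is correct. Note first that the paper does not prove this lemma at all: it is recalled from \cite{CS16} (see also \cite{Voi79}), so you have supplied an argument where the authors only cite. The route implicit in those references is to observe that $X_0=X|_{p\H}$ has vanishing absolutely continuous part and then to invoke Voiculescu's theorem that the obstruction to a \emph{quasicentral approximate unit relative to the trace class} is exactly the absolutely continuous spectrum (the invariant $k_{\mathfrak{C}_1}$ vanishes iff the a.c.\ part does), which hands you the sequence $(T_n)$ directly. You instead pass through the trace-class Weyl--von Neumann--Kuroda/Carey--Pincus diagonalization theorem ($X_0=A_n-K_n$ with $A_n$ diagonalizable and $\|K_n\|_1<1/n$) and then build the quasicentral approximate unit by hand from the eigenbasis of $A_n$; this is the elementary direction of the equivalence between the two formulations, and your $\varepsilon/3$-argument for strong (hence weak) convergence $T_n\to p$ and the estimate $\|[T_n,X]\|_1=\|[T_n,K_n]\|_1\leq 2\|K_n\|_1$ are both correct, as is the block decomposition showing $[T_n,X]$ vanishes on $p^\perp\H$. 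Both routes rest on the same non-elementary input (Kato--Rosenblum is the only obstruction), so the choice between them is a matter of which form of the theorem one prefers to quote. One small imprecision: $T_n$ need not be a \emph{spectral} projection of $A_n$ (eigenvalues may repeat or accumulate), but it does commute with $A_n$ because $A_n$ is diagonal in the chosen eigenbasis, which is all you use.
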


If $X=X^\ast\in B(\H)$ is given, then its spectral measure (i.e., the associated resolution of the identity) is a projection valued measure $E_X$ on the Borel subsets of $\R$ that satisfies
$$X = \int_\R t\, dE_X(t).$$
More precisely, the spectral measure $E_X$ takes values in the von Neumann algebra that is generated by $X$ in $B(\H)$. Clearly, $E_X$ being Lebesgue absolutely continuous means that $E_X(A) = 0$ holds for each Borel subset $A\subset\R$ of Lebesgue measure zero. Thus, if $(\M,\tau)$ is any tracial $W^\ast$-probability space, then the spectral measure $E_X$ of an element $X=X^\ast \in \M$ is Lebesgue absolutely continuous if and only if its analytic distribution $\mu_X = \tau \circ E_X$ is absolutely continuous with respect to the Lebesgue measure on $\R$.

\subsection{Trace formulas}

Throughout the rest of this section, we let $(\M,\tau)$ be a (separable) tracial $W^\ast$-probability space. Via the GNS construction, we obtain $L^2(\M,\tau)$ as the canonical (separable) complex Hilbert space on which $\M$ acts. Let us denote by $J$ Tomita's conjugation operator, i.e., the antilinear operator $J: L^2(\M,\tau) \to L^2(\M,\tau)$ that extends the involution $\ast$ isometrically from $\M$ to $L^2(\M,\tau)$. One easily sees that $J$ satisfies $J=J^\ast=J^{-1}$. Furthermore, we introduce $\Pi_1$ as the orthogonal projection onto the (closed) linear subspace $\C1$ of $L^2(\M,\tau)$; note that $\Pi_1$ is of trace class.

In the following, we denote by $\Tr$ the trace on the trace class operators $S_1(L^2(\M,\tau))$ on the Hilbert space $L^2(\M,\tau)$. Let us recall some formulas that were used in \cite{CS16}.

\begin{lemma}\label{lem:trace_formula}
In the situation described previously, we have
\begin{equation}\label{eq:trace_identity-1} 
\Tr(J X^\ast J \Pi_1 Y) = \tau(XY)
\end{equation}
for all $X,Y\in\M$ and more generally
\begin{equation}\label{eq:trace_identity-2} 
\Tr(J X^\ast J (U \sharp \Pi_1) Y) = \tau(X (U^\sim \sharp Y))
\end{equation}
for all $X\in \M$ and $U\in \M \otimes \M$
\end{lemma}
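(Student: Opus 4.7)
The plan is to reduce both identities to explicit rank-one computations on $L^2(\M,\tau)$, using the standard GNS machinery. The first step is to recall the well-known fact that Tomita's conjugation $J$ implements right multiplication in the following sense: for any $X\in \M$, the bounded operator $JX^\ast J$ on $L^2(\M,\tau)$ coincides with right multiplication by $X$, i.e., $(JX^\ast J)\xi = \xi X$ for $\xi \in L^2(\M,\tau)$. This follows directly from the definition $J(a) = a^\ast$ on $\M \subseteq L^2(\M,\tau)$: one computes $(JX^\ast J)(a) = J(X^\ast a^\ast) = aX$ for $a\in\M$, and extends by continuity.

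Next, I would make explicit the rank-one structure of $\Pi_1$. Since $\Pi_1$ is the projection onto $\C 1$, we have $\Pi_1 \xi = \langle \xi, 1\rangle \cdot 1 = \tau(\xi)\cdot 1$ for $\xi\in L^2(\M,\tau)$, where we use $\langle \xi,1\rangle = \tau(\xi \cdot 1^\ast) = \tau(\xi)$. Combined with the identity $\tau(Y\xi) = \tau(\xi Y) = \langle \xi, Y^\ast\rangle$, which is just traciality plus the definition of the $L^2$-inner product, one sees directly that the operator $JX^\ast J \Pi_1 Y$ acts on $\xi \in L^2(\M,\tau)$ by
$$\big(JX^\ast J \Pi_1 Y\big) \xi = \tau(Y\xi) X = \langle \xi, Y^\ast\rangle X.$$
This is a rank-one operator of the standard form $\xi \mapsto \langle \xi, \eta\rangle \zeta$ with $\eta = Y^\ast$ and $\zeta = X$, whose trace is $\langle \zeta, \eta\rangle = \langle X, Y^\ast\rangle = \tau(XY)$. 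This establishes \eqref{eq:trace_identity-1}.

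For \eqref{eq:trace_identity-2}, I would first reduce to elementary tensors $U = a_1 \otimes a_2$ by linearity, so that $U \sharp \Pi_1 = a_1 \Pi_1 a_2$, interpreting $a_1,a_2\in\M$ as multiplication operators on $L^2(\M,\tau)$. Repeating the computation above yields
$$\big(JX^\ast J (U \sharp \Pi_1) Y\big)\xi = \tau(a_2 Y \xi)\, a_1 X = \langle \xi, Y^\ast a_2^\ast \rangle\, a_1 X,$$
so that its trace equals $\langle a_1 X, Y^\ast a_2^\ast\rangle = \tau(a_1 X a_2 Y) = \tau(X (a_2 Y a_1))$ by traciality. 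Recognising $a_2 Y a_1 = U^\sim \sharp Y$ for $U^\sim = a_2 \otimes a_1$ gives the desired equality, and linear extension in $U$ completes the argument. The whole proof is essentially bookkeeping; the only subtle point is the systematic use of traciality to move from $\tau(a_1 X a_2 Y)$ to the $U^\sim \sharp Y$ form, which is exactly why the flip $\sim$ appears on the right-hand side.
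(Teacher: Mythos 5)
Your proof is correct and follows essentially the same route as the paper: both arguments exploit that $JX^\ast J$ is right multiplication by $X$ and that $\Pi_1$ is the rank-one projection $\xi\mapsto\tau(\xi)1$, so that $JX^\ast J(U\sharp\Pi_1)Y$ is an explicit rank-one operator whose trace is read off directly (the paper sums $\langle \cdot\, e_i,e_i\rangle$ over a basis and, for the second identity, additionally invokes the trace property of $\Tr$ to reduce to the first, but this is only a cosmetic difference from your direct computation).
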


\begin{proof}
If $(e_i)_{i\in I}$ is any orthonormal basis of the separable Hilbert space $L^2(\M,\tau)$, then
$$\langle J X^\ast J \Pi_1 Y e_i, e_i\rangle = \langle Y e_i, 1\rangle \langle J X^\ast J 1, e_i\rangle = \langle e_i, Y^\ast 1\rangle \langle J X^\ast J 1, e_i\rangle = \langle J X^\ast J 1, \langle Y^\ast 1, e_i\rangle e_i\rangle$$
for all $i\in I$, so that in summary, since $Y^\ast 1 = \sum_{i\in I} \langle Y^\ast 1, e_i\rangle e_i$,
$$\Tr(J X^\ast J \Pi_1 Y) = \sum_{i\in I} \langle J X^\ast J \Pi_1 Y e_i, e_i\rangle = \langle J X^\ast J 1, Y^\ast 1\rangle = \langle X, Y^\ast\rangle = \tau(XY),$$
which is \eqref{eq:trace_identity-1}. For proving \eqref{eq:trace_identity-2}, it clearly suffices to consider an element $U\in \M\otimes\M$ that is of the special form $U=U_1 \otimes U_2$; for such $U$, we may check that
\begin{align*}
\Tr(J X^\ast J (U \sharp \Pi_1) Y) &= \Tr(J X^\ast J (U_1 \Pi_1 U_2) Y)\\
                                   &= \Tr(U_1 J X^\ast J \Pi_1 U_2 Y)\\
                                   &= \Tr(J X^\ast J \Pi_1 (U_2 Y U_1))\\
                                   &= \tau(X (U_2 Y U_1))\\
                                   &= \tau(X (U^\sim \sharp Y)),
\end{align*}
where we used in turn the fact that $J X^\ast J$ commutes with $\M$, the trace property of $\Tr$, and finally the previous formula \eqref{eq:trace_identity-1}. This concludes the proof.
\end{proof}

Now, let any $N\in\N$ be given. We are aiming at an analogue of the previous lemma for $M_N(\M)$.

For that purpose, we represent the von Neumann algebra $M_N(\M)$ on the associated complex Hilbert space $L^2(M_N(\M),\tr_N\circ\tau^{(N)})$. This sounds very natural but is of course not the only option: alternatively, we could represent $M_N(\M)$ on $L^2(\M,\tau)^N$, which would however not have all the needed properties.

Let us denote by $\Tr_N$ the trace on the trace class operators on the separable Hilbert space $L^2(M_N(\M),\tr_N\circ\tau^{(N)})$ and denote by $J_N$ the Tomita operator that extends the involution $\ast$ isometrically from $M_N(\M)$ to $L^2(M_N(\M),\tr_N\circ\tau^{(N)})$. Of course, we could apply Lemma \ref{lem:trace_formula} directly, but the resulting formula would involve $\Pi_{\1_N}$, i.e., the orthogonal projection from $L^2(M_N(\M),\tr_N\circ\tau^{(N)})$ onto its closed linear subspace $\C \1_N$. In contrast, we need a formula involving $\Pi_1 \1_N$ instead.

Note that $(\M,\tau)$ is canonically embedded in $(M_N(\M),\tr_N\circ\tau^{(N)})$. We thus may consider the unique trace preserving conditional expectation $\E_N$ from $M_N(\M)$ to $\M$. Being trace preserving means then explicitly
$$\tr_N \circ \tau^{(N)} = \tau \circ \E_N.$$
Clearly, $\E_N[X] = \frac{1}{N} \sum^N_{k=1} X_{kk}$ for each $X=(X_{kl})_{k,l=1}^N \in M_N(\M)$.

\begin{lemma}\label{lem:trace_formula-matricial}
In the situation described before, we have
$$\Tr_N(J_N X^\ast J_N (U \sharp \Pi_1) Y) = N^2 \tau\big( \E_N[X] \E_N[U^\sim \sharp Y] \big)$$
for $X,Y\in M_N(\M)$ and $U \in M_N(\M \otimes \M)$.
\end{lemma}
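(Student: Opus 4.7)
The natural approach is to use the Hilbert-space identification
\[L^2(M_N(\M), \tr_N \circ \tau^{(N)}) \cong L^2(M_N(\C), \tr_N) \otimes L^2(\M, \tau),\]
under which $M_N(\M) \cong M_N(\C) \otimes \M$ and $J_N$ factors as $J_{M_N(\C)} \otimes J$. By trilinearity of both sides in $X$, $Y$ and $U$, it suffices to prove the formula for elementary tensors $X = B \otimes y$, $Y = B' \otimes y'$ and $U = A \otimes (u_1 \otimes u_2)$ with $A,B,B' \in M_N(\C)$ and $y,y',u_1,u_2 \in \M$.

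For such elementary tensors, $J_N X^\ast J_N$ is right multiplication by $X$ and factors as $R_B \otimes R_y$; the operator $Y$ acts as $L_{B'} \otimes L_{y'}$; and, as one verifies directly on basis vectors $e^{kl}\otimes b$, the operator $U \sharp \Pi_1$ factors as $L_A \otimes (u_1 \Pi_1 u_2)$. Hence $J_N X^\ast J_N (U \sharp \Pi_1) Y$ is a Hilbert-space tensor product $T_1 \otimes T_2$ with $T_1 = L_A L_{B'} R_B$ on $L^2(M_N(\C), \tr_N)$ and $T_2 = u_1 \Pi_1 u_2 L_{y'} R_y$ on $L^2(\M,\tau)$, so by multiplicativity of the trace on tensor products $\Tr_N(T_1\otimes T_2)$ equals the product of the traces of $T_1$ and $T_2$ on the respective Hilbert spaces. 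An elementary computation (using that the inner product on $L^2(M_N(\C),\tr_N)$ carries the factor $\tfrac{1}{N}$) yields $\Tr(AB')\Tr(B)$ for the matrix factor, while Lemma \ref{lem:trace_formula} applied to $y,y'$ and $u_1 \otimes u_2$, together with cyclicity of the trace on $S_1(L^2(\M,\tau))$, yields $\tau(u_2 y' u_1 y)$ for the operator factor.

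For the right-hand side, $\E_N[X] = \tfrac{1}{N}\Tr(B)\, y$ and $U^\sim \sharp Y = (AB') \otimes (u_2 y' u_1)$ give $\E_N[U^\sim \sharp Y] = \tfrac{1}{N}\Tr(AB')\, u_2 y' u_1$; multiplying by $N^2$ produces $\Tr(B)\Tr(AB') \tau(y u_2 y' u_1)$, which coincides with the left-hand side by traciality of $\tau$. The main bookkeeping hurdle is tracking the factors of $\tfrac{1}{N}$ that enter both through the inner product on $L^2(M_N(\C),\tr_N)$ and through the definition of $\E_N$, and verifying that the explicit $N^2$ on the right-hand side of the formula is precisely what is needed to reconcile these against the unnormalized matrix trace $\Tr$ that naturally appears from the calculation on the matrix factor.
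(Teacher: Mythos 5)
Your proof is correct. The underlying idea is the same as in the paper --- identify $L^2(M_N(\M),\tr_N\circ\tau^{(N)})$ with $L^2(M_N(\C),\tr_N)\otimes L^2(\M,\tau)$ and reduce to the scalar formula of Lemma \ref{lem:trace_formula} --- but the execution differs. The paper keeps $X$, $Y$, $U$ general and computes $J_N X^\ast J_N (U\sharp\Pi_1)Y$ entrywise on the lifted orthonormal basis $(e^{k,l}_i)$, collapsing the diagonal sum to $\sum_{k,l,r}\Tr(JX_{ll}^\ast J(U_{kr}\sharp\Pi_1)Y_{rk})$ before invoking the scalar lemma. You instead reduce by trilinearity to elementary tensors, observe that the whole operator factorizes as $T_1\otimes T_2$, and use multiplicativity of the trace on tensor products; this trades the paper's index bookkeeping for the (routine) verifications that $U\sharp\Pi_1=L_A\otimes(u_1\Pi_1 u_2)$ and that all factors are trace class (clear, since $T_1$ lives on a finite-dimensional space and $T_2$ is finite rank because of $\Pi_1$). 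Your accounting of the normalizations checks out: the matrix factor gives $\Tr(B)\Tr(AB')$ with unnormalized traces, the operator factor gives $\tau(yu_2y'u_1)$ via \eqref{eq:trace_identity-2} and cyclicity, and the two factors of $\tfrac1N$ from $\E_N[X]$ and $\E_N[U^\sim\sharp Y]$ are exactly cancelled by the $N^2$, so both sides equal $\Tr(B)\Tr(AB')\,\tau(yu_2y'u_1)$.
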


\begin{proof}
We choose an orthonormal basis $(e_i)_{i\in I}$ of $L^2(\M,\tau)$. Since the normalized matrix units $(\sqrt{N} e^{k,l})_{(k,l)\in\{1,\dots,N\}^2}$ form an orthonormal basis of $L^2(M_N(\C),\tr_N)$, we may lift the latter to an orthonormal basis $(e^{k,l}_i)_{(k,l,i) \in \{1,\dots,N\}^2 \times I}$ of $L^2(M_M(\M),\tr_N\circ\tau^{(N)})$, where $e^{k,l}_i$ corresponds to $\sqrt{N} e^{kl} \otimes e_i$ under the natural identification of $L^2(M_M(\M),\tr_N\circ\tau^{(N)})$ with the Hilbert space tensor product of $L^2(M_N(\C),\tr_N)$ and $L^2(\M,\tau)$. Then we may compute that
\begin{align*}
J_N X^\ast J_N (U \sharp \Pi_1) Y (e^{kl} \otimes e_i)
&= \sum^N_{r=1} J_N X^\ast J_N (U \sharp \Pi_1) (e^{rl} \otimes (Y_{rk} e_i))\\
&= \sum^N_{q,r=1} J_N X^\ast J_N (e^{ql} \otimes ( (U_{qr} \sharp \Pi_1) Y_{rk} e_i))\\
&= \sum^N_{q,r=1} J_N X^\ast (e^{lq} \otimes ( J (U_{qr} \sharp \Pi_1) Y_{rk} e_i))\\
&= \sum^N_{p,q,r=1} J_N (e^{pq} \otimes (X^\ast_{lp} J (U_{qr} \sharp \Pi_1) Y_{rk} e_i))\\
&= \sum^N_{p,q,r=1} e^{qp} \otimes (J X^\ast_{lp} J (U_{qr} \sharp \Pi_1) Y_{rk} e_i),
\end{align*}
so that
\begin{align*}
\Tr_N(J_N X^\ast J_N (U \sharp \Pi_1) Y)
&= \sum_{k,l=1}^N \sum_{i\in I} \langle J_N X^\ast J_N (U \sharp \Pi_1) Y e^{kl}_i, e^{kl}_i\rangle\\
&= \sum_{k,l=1}^N \sum_{i\in I} \sum^N_{p,q,r=1} N \langle e^{qp}, e^{kl}\rangle  \langle (J X^\ast_{lp} J (U_{qr} \sharp \Pi_1) Y_{rk} e_i, e_i\rangle\\
&= \sum_{k,l=1}^N \sum_{i\in I} \sum^N_{r=1} \langle J X^\ast_{ll} J (U_{kr} \sharp \Pi_1) Y_{rk} e_i, e_i\rangle\\
&= \sum_{k,l=1}^N \sum^N_{r=1} \Tr(J X^\ast_{ll} J (U_{kr} \sharp \Pi_1) Y_{rk}),
\end{align*}
and finally, with the help of Lemma \ref{lem:trace_formula},
\begin{align*}
\Tr_N(J_N X^\ast J_N (U \sharp \Pi_1) Y)
&= \sum_{k,l=1}^N \sum^N_{r=1} \Tr(J X^\ast_{ll} J (U_{kr} \sharp \Pi_1) Y_{rk})\\
&= \sum_{k,l=1}^N \sum^N_{r=1} \tau(X_{ll} U_{kr}^\sim \sharp Y_{rk})\\
&= \sum_{k,l=1}^N \tau(X_{ll} (U^\sim \sharp Y)_{kk})\\
&= N^2 \tau\big( \E_N[X] \E_N[U^\sim \sharp Y] \big),
\end{align*}
as we wished to show.
\end{proof}

\subsection{Dual Systems}

Consider selfadjoint elements $X_1,\dots,X_n\in \M$. We suppose that a \emph{dual system to $(X_1,\dots,X_n)$ in $L^2(\M,\tau)$} exists, i.e., an $n$-tuple $(R_1,\dots,R_n)$ of operators $R_1,\dots,R_n\in B(L^2(\M,\tau))$ such that
$$[R_i,X_j] = \delta_{i,j} \Pi_1 \qquad\text{for all $i,j = 1,\dots,n$}.$$
Note that our definition is taken from \cite{CS16} and thus differs slightly from \cite{Voi98}. More precisely, we have removed the imaginary unit on the right hand side and have flipped the entries of the commutator on the left hand side; accordingly, the operators $R_1,\dots,R_n$ are not selfadjoint like in \cite{Voi98} but satisfy $R_i^\ast = - R_i$ for $i=1,\dots,n$.

It follows from Proposition 5.10 in \cite{Voi98} that the existence of a dual system to $(X_1,\dots,X_n)$ guarantees that $\Phi^\ast(X_1,\dots,X_n) < \infty$. More concretely, the conjugate system $(\xi_1,\dots,\xi_n)$ of $(X_1,\dots,X_n)$ is given by $\xi_j = (R_j - J R_j J)1$ for $j=1,\dots,n$.

If now $P\in \C\langle x_1,\dots,x_n\rangle$ is any noncommutative polynomial, then
$$[R_j,P(X)] = (\partial_j P)(X) \sharp \Pi_1 \qquad\text{for all $j=1,\dots,n$}.$$
Indeed, since $[R_j,\cdot]$ is a derivation on $B(L^2(\M,\tau))$, we get that
$$[R_j,P(X)] = \sum^n_{i=1} (\partial_i P)(X) \sharp [R_j,X_i] = (\partial_j P)(X) \sharp \Pi_1.$$
More generally, if $P\in M_N(\C\langle x_1,\dots,x_n\rangle)$ is given, then
\begin{equation}\label{eq:dual_system-matricial}
[R_j\1_N,P(X)] = (\partial_j^{(N)} P)(X) \sharp \Pi_1 \qquad\text{for all $j=1,\dots,n$}.
\end{equation}
Indeed, if we write $P=(P_{kl})_{k,l=1}^N$, then we see that
$$[R_j\1_N,P(X)] = \big([R_j,P_{kl}(X)]\big)_{k,l=1}^N = \big((\partial_j P_{kl})(X) \sharp \Pi_1\big)_{k,l=1}^N = (\partial^{(N)}_j P)(X) \sharp \Pi_1.$$
A comment on the notation is in order: for any given $T\in B(L^2(\M,\tau))$, we denote by $T \1_N$ the associated ``diagonal operator'' in $M_N(B(L^2(\M,\tau)))$; note that $M_N(B(L^2(\M,\tau)))$ sits like $M_N(\M)$ inside $B(L^2(M_N(\M),\tr_N\circ\tau^{(N)}))$.

\begin{proposition}\label{prop:absolutely_continuous_reduction}
Let $P=P^\ast \in M_N(\C\langle x_1,\dots,x_n\rangle)$ be given and assume that the analytic distribution of the selfadjoint operator $P(X)\in M_N(\M)$ is not Lebesgue absolutely continuous. Then there exists a non-zero projection $p\in \vN(P(X)) \subseteq M_N(\M)$, such that
$$\E_N\big[(\partial_j^{(N)} P)(X)^\sim \sharp (P(X)p)\big] = 0 \qquad\text{for $j=1,\dots,n$}.$$
\end{proposition}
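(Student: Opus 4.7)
The plan is to combine the absolute-continuity criterion from Lemma \ref{lem:absolutely_continuous}, the dual-system commutator identity \eqref{eq:dual_system-matricial}, and the matricial trace formula of Lemma \ref{lem:trace_formula-matricial} in a single commutator estimate, thereby transferring the singular part of $\mu_{P(X)}$ into a kernel equation for $(\partial_j^{(N)}P)(X)$.

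First I would apply Lemma \ref{lem:absolutely_continuous} to $P(X)$, viewed as a selfadjoint operator on the separable Hilbert space $L^2(M_N(\M),\tr_N\circ\tau^{(N)})$. This produces a sequence $(T_m)_{m\in\N}$ of finite-rank operators with $0\le T_m\le 1$, converging weakly to a non-zero spectral projection $p$ of $P(X)$---which automatically lies in $\vN(P(X))\subseteq M_N(\M)$---and satisfying $\|[T_m,P(X)]\|_1\to 0$. I then fix $j\in\{1,\dots,n\}$ and an arbitrary $Z\in M_N(\M)$, abbreviate $c:=J_NZ^\ast J_N$, and note that by Tomita theory $c$ lies in the commutant of $M_N(\M)$, so that $[c,P(X)]=[c,p]=0$. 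The starting quantity will be
\[
\alpha_m\ :=\ \Tr_N\bigl(P(X)\,c\,(R_j\1_N)\,[T_m,P(X)]\bigr),
\]
which tends to $0$ as $m\to\infty$ because the operator multiplying $[T_m,P(X)]$ on the left is bounded uniformly in $m$.

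Next I would expand $[T_m,P(X)]=T_mP(X)-P(X)T_m$, push $T_m$ cyclically through $\Tr_N$ (legitimate since $T_m$ is trace class), use $[c,P(X)]=0$, and invoke \eqref{eq:dual_system-matricial} in the form $R_j\1_N\,P(X)=P(X)\,R_j\1_N+(\partial_j^{(N)}P)(X)\sharp\Pi_1$ to rewrite the second summand. Writing $K:=(\partial_j^{(N)}P)(X)\sharp\Pi_1$, the two resulting copies of $\Tr_N\bigl(T_m\,c\,P(X)^2\,R_j\1_N\bigr)$ cancel, leaving
\[
\alpha_m\ =\ -\,\Tr_N\bigl(T_m\,c\,P(X)\,K\bigr).
\]
Since $\Pi_1$ is rank one and each entry of $(\partial_j^{(N)}P)(X)$ is a finite sum of elementary tensors, $K$ is of finite rank, hence $c\,P(X)\,K$ is trace class; weak convergence $T_m\to p$ together with $\alpha_m\to 0$ then gives $\Tr_N\bigl(c\,P(X)\,K\,p\bigr)=0$.

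Finally I would rearrange the factors to match the shape demanded by Lemma \ref{lem:trace_formula-matricial}. Using once more $[c,P(X)]=[c,p]=0$, the spectral commutation $P(X)p=pP(X)$, and cyclicity of $\Tr_N$ with $K$ as the trace-class factor, the preceding equality is equivalent to $\Tr_N\bigl(c\,K\,P(X)p\bigr)=0$. Applying Lemma \ref{lem:trace_formula-matricial} with $U=(\partial_j^{(N)}P)(X)$ and $Y=P(X)p$ translates this into
\[
N^2\,\tau\!\Bigl(\E_N[Z]\,\E_N\!\bigl[(\partial_j^{(N)}P)(X)^\sim\sharp(P(X)p)\bigr]\Bigr)\ =\ 0\qquad\text{for every $Z\in M_N(\M)$,}
\]
and the surjectivity of $\E_N\colon M_N(\M)\to\M$ combined with the faithfulness of $\tau$ then forces $\E_N\bigl[(\partial_j^{(N)}P)(X)^\sim\sharp(P(X)p)\bigr]=0$. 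I expect the main obstacle to be the cancellation step: one has to keep careful track of which factor in each trace carries trace-class status (so that cyclicity is valid) and of which commutations are supplied by Tomita theory versus the spectral commutation $[P(X),p]=0$, so that both the cancellation of the $P(X)^2R_j\1_N$ terms and the final rearrangement into the form of Lemma \ref{lem:trace_formula-matricial} go through cleanly.
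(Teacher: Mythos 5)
Your proposal is correct and uses exactly the same ingredients as the paper's proof (Lemma \ref{lem:absolutely_continuous}, the commutator identity \eqref{eq:dual_system-matricial}, and Lemma \ref{lem:trace_formula-matricial}), with the same integration-by-parts manipulation of the commutator inside $\Tr_N$; the only difference is that you run the computation from $\alpha_m\to 0$ toward the target expression, whereas the paper starts from the target and shows it equals a limit that vanishes. This is essentially the same argument.
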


\begin{proof}
The proof proceeds along the same lines as that of Theorem 13 in \cite{CS16}. If we assume that the analytic distribution $\mu_{P(X)}$ is not absolutely continuous with respect to the Lebesgue measure, then Lemma \ref{lem:absolutely_continuous} guarantees, as $M_N(\M)$ is represented in standard form, the existence of a sequence $(T_n)_{n=1}^\infty$ of finite rank operators on $L^2(M_N(\M),\tr_N\circ\tau^{(N)})$, such that $0 \leq T_n \leq 1$ for all $n\in\N$, $T_n \to p$ weakly for some non-zero spectral projection $p$ of $P(X)$, which thus belongs to $\vN(P(X)) \subseteq M_N(\M)$, and $\| [T_n, P(X)] \|_1 \to 0$ as $n\to \infty$.

Let us fix $Z\in M_N(\M)$. Then, for each $j=1,\dots,n$, we may compute that
\begin{align*}
\lefteqn{N^2 \tau\big( \E_N[Z] \E_N\big[ (\partial_j^{(N)} P)(X)^\sim \sharp (P(X)p) \big] \big)}\\
&\qquad = \Tr_N\big( J_N Z^\ast J_N \big( (\partial_j^{(N)} P)(X) \sharp \Pi_1 \big) P(X) p \big) \qquad\text{by Lemma \ref{lem:trace_formula-matricial}}\\
&\qquad = \lim_{n\to \infty} \Tr_N\big( J_N Z^\ast J_N \big( (\partial_j^{(N)} P)(X) \sharp \Pi_1 \big) P(X) T_n \big)\\
&\qquad = \lim_{n\to \infty} \Tr_N\big( J_N Z^\ast J_N [R_j\1_N, P(X)] P(X) T_n \big) \qquad\text{by \eqref{eq:dual_system-matricial}}.
\end{align*}
By the trace property of $\Tr_N$, the simple observation that the commutator $[\cdot,P(X)]$ forms a derivation on $B(L^2(M_N(\M),\tr_N\circ\tau^{(N)}))$, and the fact that both $[J_N Z^\ast J_N, P(X)] = 0$ and $[P(X),P(X)]=0$, we get that
$$\Tr_N\big( J_N Z^\ast J_N [R_j\1_N, P(X)] P(X) T_n \big) = - \Tr_N\big( J_N Z^\ast J_N (R_j\1_N) P(X) [T_n,P(X)] \big)$$
and finally
\begin{align*}
\big|\Tr_N\big( J_N Z^\ast J_N [R_j\1_N, P(X)] P(X) T_n \big)\big|
&\leq \big\| J_N Z^\ast J_N (R_j\1_N) P(X) [T_n,P(X)] \big\|_1\\
&\leq \| J_N Z^\ast J_N (R_j\1_N) P(X) \| \| [T_n,P(X)] \|_1,
\end{align*}
from which it follows that
$$\lim_{n\to \infty} \Tr_N\big( J_N Z^\ast J_N [R_j\1_N, P(X)] P(X) T_n \big) = 0.$$
Thus, in summary, we obtain
$$N^2 \tau\big( \E_N[Z] \E_N\big[ (\partial_j^{(N)} P)(X)^\sim \sharp (P(X)p) \big] \big) = 0.$$
Since $\tau$ is faithful and $Z$ was arbitrary, we conclude that
$$\E_N\big[(\partial_j^{(N)} P)(X)^\sim \sharp (P(X)p)\big] = 0$$
holds for each $j=1,\dots,n$, as desired.
\end{proof}

Now, we are able to provide the announced regularity result.

\begin{theorem}\label{thm:main-4}
Let us suppose the following situation:
\begin{enumerate}[(i)]
 \item $b_1,\dots,b_n$ are selfadjoint matrices in $M_N(\C)$ for which
 $$b_1 x_1 + \dots + b_n x_n \in M_N(\C\langle x_1,\dots,x_n\rangle)$$
 is full over $\C\langle x_1,\dots,x_n\rangle$.
 \item $X_1,\dots,X_n$ are selfadjoint elements in a tracial $W^\ast$-probability space $(\M,\tau)$ to which a dual system exists.
\end{enumerate}
Then the analytic distribution $\mu_\X$ of
$$\X := b_1 X_1 + \dots + b_n X_n,$$
seen as an element in the tracial $W^\ast$-probability space $(M_N(\M),\tr_N \circ \tau^{(N)})$, is absolutely continuous with respect to the Lebesgue measure.
\end{theorem}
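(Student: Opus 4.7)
The plan is to argue by contradiction using Proposition \ref{prop:absolutely_continuous_reduction} as the engine, in the spirit of \cite{CS16}. Suppose $\mu_\X$ is not absolutely continuous with respect to the Lebesgue measure. The dual system $(R_1,\dots,R_n)$ produces a conjugate system via $\xi_j = (R_j - J R_j J) 1$, so $\Phi^\ast(X_1,\dots,X_n) < \infty$ and in particular $\delta^\star(X_1,\dots,X_n) = n$; hence the hypotheses of our earlier results are met. Applying Proposition \ref{prop:absolutely_continuous_reduction} to the selfadjoint matrix $P := b_1 x_1 + \dots + b_n x_n$ extracts a non-zero spectral projection $p \in \vN(\X)$ of $\X = P(X)$ satisfying
\[\E_N\big[(\partial_j^{(N)} P)(X)^\sim \sharp (\X p)\big] = 0 \qquad (j=1,\dots,n).\]

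Since $P$ is linear, $\partial_j^{(N)} P = b_j \odot \1_N$ is constant and its flip equals $\1_N \odot b_j$; unwinding the formulas of Section 8.1, the action on $\X p$ collapses to the ordinary matrix product, yielding the central identities $\E_N[b_j \X p] = 0$ for every $j$. Using that $p$ commutes with $\X$ and expanding $\X = \sum_i b_i X_i$, these rewrite as the $\M$-valued linear system $\sum_{i=1}^n \E_N[b_j p b_i]\, X_i = 0$ for each $j$. Moreover, the approximating sequence $(T_n)$ furnished by Lemma \ref{lem:absolutely_continuous} can be replaced by $(f(\X) T_n)$ for an arbitrary bounded Borel function $f$, since $f(\X)$ commutes with $\X$ and therefore the trace-class estimate persists via $\|[f(\X) T_n, \X]\|_1 \le \|f(\X)\|\,\|[T_n,\X]\|_1 \to 0$; rerunning the proof of Proposition \ref{prop:absolutely_continuous_reduction} with this enriched family upgrades the conclusion to
\[\E_N[b_j\, g(\X)\, p] = 0 \qquad \text{for every bounded Borel } g \text{ and every } j,\]
where we use the absence of an atom of $\mu_\X$ at $0$ (Theorem \ref{thm:main-1} applied to the full matrix $P$) to pass from $g$ vanishing at $0$ to the general case.

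The main obstacle is then to bootstrap this rich family of conditional-expectation identities to the matrix equalities $b_j \X p = 0$ for all $j$. Granting this step, the fullness of $P$ excludes via Proposition \ref{prop:shrunk_subspace} the shrunk subspace $V = \bigcap_j \ker(b_j)$ against $W = \{0\}$, forcing $\bigcap_j \ker(b_j) = \{0\}$; hence the columns of $\X p$ lie in the trivial intersection and $\X p = 0$. Theorem \ref{thm:main-1} applied to the full matrix $P$ then yields $p = 0$, contradicting our choice. The bootstrapping step is where the dual system enters most substantially: the Dabrowski-type $L^2$-estimates of Corollary \ref{cor:key-estimates} (applied to the operators $\partial_j^{(N)}$ and their adjoints in the matricial $L^2$-setup) together with the non-degeneracy of the conjugate relations should allow one to test the enriched family $\E_N[b_j g(\X) p] = 0$ against sufficiently many elements of $\vN(\X)$ and its derivatives to separate the individual matrix entries of $b_j \X p$, completing the argument.
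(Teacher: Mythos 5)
You correctly reach the identities $\E_N[b_j\X p]=0$ for $j=1,\dots,n$ via Proposition \ref{prop:absolutely_continuous_reduction}, which is exactly where the paper's proof stands at the same point. From there, however, your argument has a genuine gap: the passage from these $n$ conditional-expectation identities to the operator identities $b_j\X p=0$ is precisely the step you yourself label ``the main obstacle'' and then only gesture at. Neither the proposed enrichment $\E_N[b_j\,g(\X)\,p]=0$ nor the appeal to Corollary \ref{cor:key-estimates} is carried out, and it is far from clear that it can be: since $\E_N[Y]=\frac1N\sum_k Y_{kk}$, each such identity only constrains the normalized diagonal sum over the matrix index, and testing against functions of $\X$ alone does not obviously recover the individual matrix entries of $b_j\X p$. (Two further slips: replacing $T_n$ by $f(\X)T_n$ destroys the property $0\leq T_n\leq 1$ and the positivity/weak-limit structure used in Lemma \ref{lem:absolutely_continuous}, so the proof of Proposition \ref{prop:absolutely_continuous_reduction} cannot simply be ``rerun''; and the rewriting $\sum_i\E_N[b_j p b_i]X_i=0$ is incorrect, because $p$ does not commute with the individual $X_i\1_N$ and $X_i$ cannot be pulled out of $\E_N$.)

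The missing idea is much simpler and requires no bootstrapping. Multiply $\E_N[b_j\X p]=0$ on the left by $X_j$, apply $\tau$, and sum over $j$. Using $\tau\circ\E_N=\tr_N\circ\tau^{(N)}$ and the $\M$-bimodule property of $\E_N$, this gives
$$0=\sum_{j=1}^n\tau\big(X_j\,\E_N[b_j\X p]\big)=\sum_{j=1}^n(\tr_N\circ\tau^{(N)})\big(b_jX_j\X p\big)=(\tr_N\circ\tau^{(N)})\big(\X^2p\big).$$
Since $p$ is a spectral projection of $\X$, one has $\X^2p=(\X p)^\ast(\X p)\geq0$, so faithfulness of $\tr_N\circ\tau^{(N)}$ yields $\X p=0$ directly; fullness of $P$ then forces $p=0$ by Theorem \ref{thm:main-1} (equivalently Theorem \ref{thm:main-3}), the desired contradiction. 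Your endgame (deducing $\X p=0$ from $b_j\X p=0$ for all $j$ via the absence of shrunk subspaces) would be valid if the bootstrap were available, but as written the proof is incomplete at its decisive step.
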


\begin{proof}
Consider $P := b_1 x_1 + \dots + b_n x_n \in M_N(\C\langle x_1,\dots,x_n\rangle)$, which by assumption satisfies $P^\ast = P$. Assume to the contrary that the analytic distribution of the selfadjoint operator $\X = P(X) \in M_N(\M)$ would not be absolutely continuous with respect to the Lebesgue measure. Then the previous Proposition \ref{prop:absolutely_continuous_reduction} guarantees that a non-zero projection $p\in \vN(\X) \subseteq M_N(\M)$ exists with the property that
$$\E_N\big[(\partial_j^{(N)} P)(X)^\sim \sharp (P(X)p)\big] = 0 \qquad\text{for $j=1,\dots,n$}.$$
Clearly, $\partial^{(N)}_j P = b_j \odot 1_N$ and thus $(\partial^{(N)}_j P)(X)^\sim = b_j \odot 1_N$ for $j=1,\dots,n$, so we may conclude from the latter that
$$\E_N\big[b_j \X p\big] = 0 \qquad\text{for $j=1,\dots,n$}.$$
Multiplying first with $X_j$ from the left, applying $\tau$ to both side, and finally summing over $j=1,\dots,n$ yields that
$$0 = \sum^n_{j=1} \tau\big(X_j \E_N\big[b_j \X p\big]\big) = \sum^n_{j=1} \tau\big(\E_N[X_j b_j \X p]\big) = \sum^n_{j=1} (\tr_N\circ\tau^{(N)})\big(b_j X_j \X p\big) = (\tr_N\circ\tau^{(N)})\big(\X^2 p\big)$$
and thus, by the faithfulness of $\tr_N\circ\tau^{(N)}$ since $p$ is a projection, finally that $\X p=0$. Because $P$ is assumed to be full over $\C\langle x_1,\dots,x_n\rangle$, the latter contradicts Theorem \ref{thm:main-3} as $p\neq 0$. Thus, the analytic distribution $\mu_\X$ must be absolutely continuous with respect to the Lebesgue measure.
\end{proof}

\section{Hoelder continuity of cumulative distribution functions}
\label{sec:Hoelder_continuity}

In this section, we want to address some further regularity properties of noncommutative random variables of the form
$$\X = b_0 + b_1 X_1 + \dots + b_n X_n,$$
namely the Hoelder continuity of its cumulative distribution function $\F_\X$. Recall that the \emph{cumulative distribution function $\F_\mu$} of a probability measure $\mu$ on $\R$ is the function $\F_\mu: \R \to [0,1]$ that is defined by $\F_\mu(t) := \mu((-\infty,t])$; note that in the case of the analytic distribution $\mu_\X$ of $\X$, we will abbreviate $\F_{\mu_\X}$ by $\F_\X$.

Our considerations are inspired by \cite{CS16}. While their approach works for noncommutative random variables $X_1,\dots,X_n$ that admit a dual system, we can weaken that condition to finite Fisher information. Furthermore, since we deal with operators $\X$ that are matrix-valued but linear in $X_1,\dots,X_n$, we are able to provide some explicit value for the exponent of Hoelder continuity.

Of course, Hoelder continuity implies continuity of $\F_\X$ and thus excludes atoms in the analytic distribution $\mu_\X$. Therefore, in view of our previous result Theorem \ref{thm:main-3}, we can only hope for Hoelder continuity in situations where we impose the additional condition that already the purely linear part $b_1 x_1 + \dots + b_n x_n$ of $b_0 + b_1 x_1 + \dots + b_n x_n$, is full over $\C\langle x_1,\dots,x_n\rangle$. However, it turns out that a stronger version of fullness must be required for that purpose.

The final statement thus reads as follows.

\begin{theorem}\label{thm:main-5}
Let us suppose the following situation:
\begin{enumerate}[(i)]
 \item $b_1,\dots,b_n$ are selfadjoint matrices in $M_N(\C)$ for which the associated quantum operator
 \begin{equation}\label{eq:quantum-operator}
 \cL:\ M_N(\C) \to M_N(\C),\qquad b \mapsto \sum^n_{j=1} b_j b b_j
 \end{equation}
 is \emph{semi-flat} in the sense that there exists some $c>0$ such that the condition $$\cL(b) \geq c \tr_N(b) \1_N$$ is satisfied for all positive semidefinite matrices $b\in M_N(\C)$. Let $b_0$ be any other selfadjoint matrix in $M_N(\C)$.
 \item $X_1,\dots,X_n$ are selfadjoint elements in a tracial $W^\ast$-probability space $(\M,\tau)$ that satisfy $$\Phi^\ast(X_1,\dots,X_n) < \infty.$$
\end{enumerate}
Then the cumulative distribution function $\F_\X$ of
$$\X := b_0 + b_1 X_1 + \dots + b_n X_n,$$
seen as an element in the tracial $W^\ast$-probability space $(M_N(\M),\tr_N \circ \tau^{(N)})$, is Hoelder continuous with exponent $\frac{2}{3}$, i.e., there is some $C>0$ such that
$$|\F_\X(t) - \F_\X(s)| \leq C |t-s|^{\frac{2}{3}} \qquad \text{for all $s,t\in\R$}.$$
More precisely, the above constant $C$ is given by
$$C := 4 c^{-\frac{2}{3}} \bigg(\sum^n_{j=1} \|b_j\|^2\bigg)^{\frac{1}{3}} \Phi^\ast(X_1,\dots,X_n)^{\frac{1}{3}}.$$
\end{theorem}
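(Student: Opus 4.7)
The plan is to combine the matricial Fisher estimate of Theorem~\ref{thm:Fisher-bound} with the semi-flatness of $\cL$, choosing the test matrices in \eqref{eq:Fisher-bound} so that, after summing over $j$, the left-hand side becomes precisely a quantity to which semi-flatness can be applied.

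Setup: fix $s < t$, put $\ell := t-s$ and $\lambda := (s+t)/2$, let $p := \1_{(s,t]}(\X) \in M_N(\M_0)$ be the spectral projection of $\X$ onto $(s,t]$, and set $\pi := \tau^{(N)}(p) \in M_N(\C)$, which is positive semidefinite. Writing $\tau_N := \tr_N \circ \tau^{(N)}$ for short, we have $\tau_N(p) = \tr_N(\pi) = \F_\X(t) - \F_\X(s) =: \pi_{\text{tot}}$; the goal is $\pi_{\text{tot}} \leq C \ell^{2/3}$ with the stated constant. I would work with the selfadjoint polynomial matrix
$$P := (b_0 - \lambda\1_N) + b_1 x_1 + \cdots + b_n x_n \in M_N(\C\langle x_1,\dots,x_n\rangle),$$
for which $P(X) = \X - \lambda\1_N$ and $\partial_j^{(N)} P = b_j \odot \1_N$; by the spectral theorem, $\|P(X) p\|_2 \leq (\ell/2) \sqrt{\pi_{\text{tot}}}$.

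The main step is then to apply \eqref{eq:Fisher-bound} of Theorem~\ref{thm:Fisher-bound} for each $j = 1, \dots, n$ with $u = v = p$, $Q^1 = \1_N$, $Q^2 = b_j$; using $\|p\| = 1$ and the bound on $\|P(X)p\|_2$, the right-hand side becomes $4\ell \|\xi_j\|_2 \|b_j\| \sqrt{\pi_{\text{tot}}}$. For the left-hand side, I would first unfold $p(b_j \odot \1_N) p = (pb_j) \odot p$, and then compute the inner product directly in $L^2(M_N(\M\ootimes\M))$; the key simplification is that $b_j$ has scalar entries, so $\tau^{(N)}(pb_j) = \pi b_j$, and a short matrix-unit calculation then yields
$$\langle (pb_j) \odot p,\ \1_N \odot b_j\rangle = \tr_N(\pi b_j \pi b_j) = \tr_N\bigl((\pi^{1/2} b_j \pi^{1/2})^2\bigr) \geq 0.$$
Summing over $j$ produces $\sum_j \tr_N(\pi b_j \pi b_j) = \tr_N(\pi\cL(\pi))$, and the semi-flatness $\cL(\pi) \geq c\tr_N(\pi)\1_N$ (applied to the positive semidefinite scalar matrix $\pi$) delivers the lower bound $\sum_j \tr_N(\pi b_j \pi b_j) \geq c\,\pi_{\text{tot}}^2$.

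Combining the lower bound with the Fisher upper bound, after a Cauchy--Schwarz step $\sum_j \|\xi_j\|_2 \|b_j\| \leq \sqrt{\Phi^\ast(X)}\,(\sum_j \|b_j\|^2)^{1/2}$, gives
$$c\,\pi_{\text{tot}}^2 \leq 4\ell \sqrt{\pi_{\text{tot}}}\,\sqrt{\Phi^\ast(X_1,\dots,X_n)}\,\Bigl(\sum_{j=1}^n \|b_j\|^2\Bigr)^{1/2},$$
which rearranges to $\pi_{\text{tot}} \leq C\ell^{2/3}$ with $C$ of the asserted form (up to a harmless numerical constant that one may absorb into the stated value). Since $\pi_{\text{tot}} = \F_\X(t) - \F_\X(s)$ for $s \leq t$, the case $s > t$ follows by symmetry. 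The most delicate step will be the inner-product identity $\langle (pb_j) \odot p,\ \1_N \odot b_j\rangle = \tr_N(\pi b_j \pi b_j)$: carefully tracking matrix-unit indices through the matricial bimodule operation $\odot$ is where the technical work lies, and it is the scalar-entry hypothesis on $b_j$ that makes everything collapse into a single trace involving $\cL(\pi)$, at which point the semi-flatness hypothesis takes over.
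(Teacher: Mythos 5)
Your proposal is correct and follows essentially the same route as the paper: the paper likewise applies the bound \eqref{eq:Fisher-bound} with $u=v=p$ a spectral projection and test matrix $b_j\odot\1_N$, identifies the resulting pairing with $\tr_N\bigl(\tau^{(N)}(p)\,b_j\,\tau^{(N)}(p)\,b_j\bigr)$, sums over $j$ to bring in $\cL$, and invokes semi-flatness (this is packaged there as Proposition \ref{prop:Hoelder-projection} together with the generic conversion Lemma \ref{lem:Hoelder-criterion} with $\alpha=4$, $\beta=\tfrac{2}{3}$). Your only deviations are cosmetic --- centering at the midpoint of $(s,t]$ and noting positivity of $\tr_N(\pi b_j\pi b_j)$ --- which in fact yield a slightly smaller constant $4^{2/3}c^{-2/3}(\sum_j\|b_j\|^2)^{1/3}\Phi^\ast(X_1,\dots,X_n)^{1/3}$, consistent with the stated $C$.
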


The proof requires some preparations. First of all, we apply Theorem \ref{thm:Fisher-bound} in order to derive the following proposition.

\begin{proposition}\label{prop:Hoelder-projection}
Let $(\M,\tau)$ be a tracial $W^\ast$-probability space and take any selfadjoint noncommutative random variables $X_1,\dots,X_n$ in $\M$ for which $\Phi^\ast(X_1,\dots,X_n) < \infty$. Denote be $\M_0$ the von Neumann subalgebra of $\M$ that is generated by $X_1,\dots,X_n$ and let $(\xi_1,\dots,\xi_n)$ be the conjugate system for $(X_1,\dots,X_n)$.
Furthermore, let $b_0,b_1,\dots,b_n\in M_N(\C)$ be selfadjoint matrices and consider the associated operator
$$\X = b_0 + b_1 X_1 + \dots + b_n X_n.$$
If $p$ is any projection in $M_N(\M_0)$, then the inequality
\begin{equation}\label{eq:Hoelder-projection-1}
\big|(\tr_N\circ\tau^{(N)})\big(b_j \tau^{(N)}(p) b_j p\big)\big| \leq 8 \|\xi_j\|_2 \|b_j\| \|\X p\|_2
\end{equation}
holds for each $j=1,\dots,n$ and thus, if we denote by $\cL$ the quantum operator \eqref{eq:quantum-operator} associated to $b_1,\dots,b_n$ (note that $b_0$ is not involved intentionally), in particular
\begin{equation}\label{eq:Hoelder-projection-2}
\big|(\tr_N\circ\tau^{(N)})\big(\cL(\tau^{(N)}(p)) p\big)\big| \leq 8 \bigg(\sum^n_{j=1} \|b_j\|^2\bigg)^{\frac{1}{2}} \Phi^\ast(X_1,\dots,X_n)^{\frac{1}{2}} \|\X p\|_2.
\end{equation}
\end{proposition}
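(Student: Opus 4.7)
The plan is to apply Theorem \ref{thm:Fisher-bound} to the noncommutative polynomial $P = b_0 + b_1 x_1 + \cdots + b_n x_n$ in $M_N(\C\langle x_1,\dots,x_n\rangle)$ with the selfadjoint projection $p$ playing the role of both $u$ and $v$, and to test the resulting inner product against a cleverly chosen element of the form $Q^1 \odot Q^2$ that produces exactly the quantity $(\tr_N\circ\tau^{(N)})(b_j \tilde{p} b_j p)$. Since $P$ is linear with scalar coefficients, one has $\partial_j^{(N)} P = b_j \odot \1_N$, and since $P = P^\ast$ and $v = p$ is a projection, both $Pu = \X p$ and $P^\ast v = \X p$.

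First I would set $Q^1 := b_j \in M_N(\C) \subseteq M_N(\C\langle X_1,\dots,X_n\rangle)$ and $Q^2 := \1_N$, noting that $\|Q^1\| = \|b_j\|$ and $\|Q^2\| = 1$ as elements of $M_N(\M_0)$. The main computation is then to expand
\[
\big\langle p \cdot (b_j \odot \1_N) \cdot p,\ b_j \odot \1_N \big\rangle
\]
using the explicit formulas for $\odot$ and the bimodule action of $M_N(\A)$ on $M_N(\A \otimes \A)$ given in Section~3.1, together with the inner product
$\langle \cdot,\cdot\rangle = \tr_N \circ (\tau \ootimes \tau)^{(N)}(\,\cdot\, (\cdot)^\dagger)$
on $L^2(M_N(\M_0 \ootimes \M_0))$. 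A short bookkeeping with indices yields
\[
\big\langle p \cdot (b_j \odot \1_N) \cdot p,\ b_j \odot \1_N \big\rangle
= \tr_N\big(b_j \tilde{p}\, b_j \tilde{p}\big)
= (\tr_N \circ \tau^{(N)})\big(b_j \tilde{p}\, b_j\, p\big),
\]
where the second identity comes from collapsing one copy of $\tilde{p} = \tau^{(N)}(p)$ back via $\tau^{(N)}(b_j \tilde{p} b_j p) = b_j \tilde{p} b_j \tilde{p}$ (using that $b_j \tilde{p} b_j$ is scalar).

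Now Theorem \ref{thm:Fisher-bound} immediately gives
\[
\big|(\tr_N\circ\tau^{(N)})(b_j \tilde{p} b_j p)\big|
\leq 4 \|\xi_j\|_2 \bigl(\|\X p\|_2 \|p\| + \|p\| \|\X p\|_2\bigr) \|b_j\| \cdot 1,
\]
and since $\|p\| \leq 1$, the right-hand side is bounded by $8\|\xi_j\|_2 \|b_j\| \|\X p\|_2$, establishing \eqref{eq:Hoelder-projection-1}. For the consequence \eqref{eq:Hoelder-projection-2}, I would write $\cL(\tilde{p}) = \sum_{j=1}^n b_j \tilde{p} b_j$ so that
\[
(\tr_N\circ\tau^{(N)})(\cL(\tilde{p}) p) = \sum_{j=1}^n (\tr_N\circ\tau^{(N)})(b_j \tilde{p} b_j p),
\]
apply the triangle inequality and then the Cauchy--Schwarz inequality in $\R^n$ to the sum $\sum_j \|\xi_j\|_2 \|b_j\|$, using $\Phi^\ast(X_1,\dots,X_n) = \sum_j \|\xi_j\|_2^2$.

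The only genuinely non-routine step is the identification of the inner product with $(\tr_N\circ\tau^{(N)})(b_j \tilde{p} b_j p)$; this is just an index computation, but one must be careful about the two-fold role of $\tau^{(N)}$ (once in the inner product of the tensor product space and once in producing $\tilde{p}$) and about the compatibility of the $\odot$-operation with scalar coefficients. Everything else is a direct substitution into Theorem \ref{thm:Fisher-bound} followed by Cauchy--Schwarz, so I do not expect any conceptual obstacle beyond this bookkeeping.
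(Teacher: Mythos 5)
Your proposal is correct and follows essentially the same route as the paper: apply Theorem \ref{thm:Fisher-bound} with $u=v=p$, $Q^1=b_j$, $Q^2=\1_N$, identify $\langle p\cdot(b_j\odot\1_N)\cdot p,\,b_j\odot\1_N\rangle$ with $(\tr_N\circ\tau^{(N)})(b_j\tau^{(N)}(p)b_jp)$, bound $\|p\|\leq 1$, and then sum over $j$ with Cauchy--Schwarz. The index bookkeeping you flag as the only delicate point checks out, so nothing is missing.
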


\begin{proof}
Fix any $1\leq j \leq n$ and let $p$ be any projection in $M_N(\M_0)$. A direct application of the inequality \eqref{eq:Fisher-bound} that was stated in Theorem \ref{thm:Fisher-bound} yields that
$$\big|\langle p \cdot (\partial_j^{(N)} \X) \cdot p, b_j \odot \1_N \rangle\big| \leq 4 \|\xi_j\|_2 \big(\|\X p\|_2 \|p\| + \|p\| \|\X p\|_2\big) \|b_j\| \|\1_N\|,$$
which simplifies to
$$\big|\langle (pb_j) \odot p, b_j \odot \1_N \rangle\big| \leq 8 \|\xi_j\|_2 \|b_j\| \|\X p\|_2 \|p\|,$$
as $\partial^{(N)} \X = b_j \odot \1_N$, and finally implies
$$\big|\langle (pb_j) \odot p, b_j \odot \1_N \rangle\big| \leq 8 \|\xi_j\|_2 \|b_j\| \|\X p\|_2,$$
as $\|p\| \leq 1$. Now, we observe that
$$\langle (pb_j) \odot p, b_j \odot \1_N \rangle = \langle p b_j \tau^{(N)}(p), b_j \rangle = (\tr_N\circ\tau^{(N)})\big( p b_j \tau^{(N)}(p) b_j \big) = (\tr_N\circ\tau^{(N)})\big( b_j \tau^{(N)}(p) b_j p\big),$$
so that \eqref{eq:Hoelder-projection-1} is derived from the latter. Finally, we may derive \eqref{eq:Hoelder-projection-2} from by summing \eqref{eq:Hoelder-projection-1} over $j=1,\dots,n$ and applying the classical Cauchy-Schwarz inequality. Indeed, we see that
\begin{align*}
\big|(\tr_N\circ\tau^{(N)})\big(\cL(\tau^{(N)}(p)) p\big)\big|
&= \bigg| \sum^n_{j=1}(\tr_N\circ\tau^{(N)})\big( b_j \tau^{(N)}(p) b_j p\big)  \bigg|\\
&\leq \sum^n_{j=1} \big|(\tr_N\circ\tau^{(N)})\big(\cL(\tau^{(N)}(p)) p\big)\big|\\
&\leq 8 \bigg(\sum^n_{j=1} \|\xi_j\|_2 \|b_j\|\bigg) \|\X p\|_2\\
&\leq 8 \bigg(\sum^n_{j=1} \|b_j\|^2 \bigg)^{\frac{1}{2}} \bigg(\sum^n_{j=1} \|\xi_j\|_2^2 \bigg)^{\frac{1}{2}} \|\X p\|_2\\
&= 8 \bigg(\sum^n_{j=1} \|b_j\|^2\bigg)^{\frac{1}{2}} \Phi^\ast(X_1,\dots,X_n)^{\frac{1}{2}} \|\X p\|_2,
\end{align*}
which is \eqref{eq:Hoelder-projection-2}.
\end{proof}

Next, we need the following lemma, which summarizes and slightly extends some of the techniques that were used in \cite{CS16} to control cumulative distribution functions.

\begin{lemma}\label{lem:Hoelder-criterion}
Let $(\M,\tau)$ be a tracial $W^\ast$-probability space and consider any selfadjoint noncommutative random variable $X$ in $\M$. If there exist $c>0$ and $\alpha>1$ such that
\begin{equation}\label{eq:Hoelder-criterion}
c \|(X - s) p\|_2 \geq \|p\|_2^\alpha
\end{equation}
holds for all $s\in\R$ and all spectral projections $p$ of $X$, then, with $\beta := \frac{2}{\alpha-1}$, it holds true that
$$\mu_X\big((s,t]\big) \leq c^\beta (t-s)^\beta \qquad\text{for all $s,t\in\R$ with $s<t$},$$
i.e., the cumulative distribution function of $\mu_X$ is Hoelder continuous with exponent $\beta$.
\end{lemma}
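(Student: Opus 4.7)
The plan is straightforward: given $s<t$ in $\R$, let $p$ be the spectral projection of $X$ onto the interval $(s,t]$, so that by the definition of the analytic distribution $\mu_X((s,t]) = \tau(p) = \|p\|_2^2$. The goal is then to bound $\|p\|_2$ in terms of $t-s$, and the natural way to exploit the hypothesis $c \|(X-r)p\|_2 \geq \|p\|_2^\alpha$ is to choose the parameter $r$ so as to make the left-hand side small.

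The key observation is that on the range of $p$, the operator $X$ has spectrum contained in $(s,t]$, so centering at the midpoint $m := (s+t)/2$ gives the operator-norm estimate $\|(X-m)p\| \leq (t-s)/2$. Combining this with the elementary bound $\|Ap\|_2 \leq \|Ap\| \cdot \|p\|_2$ (obtained by writing $\|Ap\|_2^2 = \tau(p A^\ast A p) \leq \|Ap\|^2 \tau(p)$) yields $\|(X-m)p\|_2 \leq \frac{t-s}{2} \|p\|_2$. Substituting this into the hypothesis with $s$ replaced by $m$ and dividing by $\|p\|_2$ (which we may assume non-zero) produces $\|p\|_2^{\alpha-1} \leq c(t-s)/2$, i.e., $\|p\|_2 \leq (c(t-s)/2)^{1/(\alpha-1)}$. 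Squaring and invoking $\beta = 2/(\alpha-1)$ gives
\[
\mu_X\bigl((s,t]\bigr) = \|p\|_2^2 \leq \bigl(c(t-s)/2\bigr)^\beta \leq c^\beta (t-s)^\beta,
\]
where in the last step we used $2^\beta \geq 1$ since $\beta > 0$.

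There is really no hard step here; the whole argument collapses into a single substitution once one realizes that the interval should be centered at its midpoint before applying the hypothesis. The only conceptual point worth flagging is that the hypothesis \eqref{eq:Hoelder-criterion} has to be uniform in $s$ (it is) so that the midpoint substitution is legitimate, and that $\mu_X((s,t])$ is controlled by a spectral projection rather than, say, an open interval, which is why the $L^2$-norm identity $\tau(p) = \|p\|_2^2$ is exactly what is needed.
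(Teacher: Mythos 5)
Your proof is correct and follows essentially the same route as the paper: take $p$ to be the spectral projection of $(s,t]$, bound $\|(X-r)p\|_2$ by a multiple of $(t-s)\|p\|_2$, and feed this into the hypothesis to isolate $\|p\|_2^{\alpha-1}$. The only cosmetic differences are that you center at the midpoint (gaining a factor $1/2$ that you then discard) and bound the $L^2$-norm via the operator norm rather than via the spectral integral $\int_{(s,t]}|x-s|^2\,d\mu_X(x)$ as the paper does.
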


\begin{proof}
Let $E_X$ be the spectral measure of $X$. Fix any $s,t\in\R$ with $s<t$. We consider then the spectral projection $p$ that is given by $p := E_X((s,t])$ and observe that $\|p\|_2 = \mu_X((s,t])^{1/2}$ and
$$\|(X - s) p\|_2^2 = \int_{(s,t]} |x-s|^2\, d\mu_X(x) \leq (t-s)^2 \mu_X((s,t]),$$
so that our assumption \eqref{eq:Hoelder-criterion} enforces that
$$c (t-s) \mu_X\big((s,t]\big)^{\frac{1}{2}} \geq c \|(X - s) p\|_2 \geq \|p\|_2^\alpha = \mu_X\big((s,t]\big)^{\frac{1}{2} \alpha}.$$
Rearranging the latter inequality gives us that
$$\mu_X\big((s,t]\big)^{\frac{1}{2} (\alpha-1)} \leq c (t-s)$$
and finally, after raising both sides to the power $\beta$, which preserves the inequality as $\beta>0$,
$$\mu_X\big((s,t]\big) \leq c^\beta (t-s)^\beta,$$
which is the desired estimate.
\end{proof}

Now, we are prepared to focus on our actual goal.

\begin{proof}[Proof of Theorem \ref{thm:main-5}]
Fix any $s,t\in\R$ with the property that $s<t$. We may apply Proposition \ref{prop:Hoelder-projection} to $\X-s\1_N$, the inequality \eqref{eq:Hoelder-projection-2} gives us that
$$\big|(\tr_N\circ\tau^{(N)})\big(\cL(\tau^{(N)}(p)) p\big)\big| \leq 8 \bigg(\sum^n_{j=1} \|b_j\|^2\bigg)^{\frac{1}{2}} \Phi^\ast(X_1,\dots,X_n)^{\frac{1}{2}} \|(\X -s\1_N) p\|_2$$
for each projection $p\in M_N(\M_0)$, and thus in particular for all spectral projections of $\X$. Now, we use the assumption that $\cL$ is semi-flat; by positivity of $\tr_N\circ\tau^{(N)}$, we may infer that
$$(\tr_N\circ\tau^{(N)})\big(\cL(\tau^{(N)}(p)) p\big) = (\tr_N\circ\tau^{(N)})\big(p \cL(\tau^{(N)}(p)) p\big) \geq c (\tr_N\circ\tau^{(N)})(p)^2 = c \|p\|_2^4.$$
Thus, in summary, we see that
$$\|p\|_2^4 \leq \tilde{c} \|(\X -s\1_N) p\|_2 \qquad\text{with}\qquad \tilde{c} := \frac{8}{c} \bigg(\sum^n_{j=1} \|b_j\|^2\bigg)^{\frac{1}{2}} \Phi^\ast(X_1,\dots,X_n)^{\frac{1}{2}}.$$
With the help of Lemma \ref{lem:Hoelder-criterion}, applied in the case $\alpha=4$ which corresponds to $\beta=\frac{2}{3}$, we conclude now that
$$\mu_\X\big((s,t]\big) \leq C (t-s)^{\frac{2}{3}} \qquad\text{with}\qquad C := \tilde{c}^{\frac{2}{3}} = 4 c^{-\frac{2}{3}} \bigg(\sum^n_{j=1} \|b_j\|^2\bigg)^{\frac{1}{3}} \Phi^\ast(X_1,\dots,X_n)^{\frac{1}{3}}.$$
This proves the assertion.
\end{proof}

An interesting consequence of Theorem \ref{thm:main-5} is given in the following corollary. It relies on bounds derived in \cite{Jam15} for the logarithmic energy for measures having Hoelder continuous cumulative distribution functions; in fact, it was shown in \cite{Jam15} that if a Borel probability measure $\mu$ on $\R$ has a cumulative distribution function $\F_\mu$ that is Hoelder continuous with exponent $\gamma>0$, i.e., it satisfies
$$|\F_\mu(t) - \F_\mu(s)| \leq K |t-s|^\gamma \qquad \text{for all $s,t\in\R$}$$
for some constant $K>0$, then $H^+(\mu) \leq 2 \frac{K}{\gamma}$, where
$$H^+(\mu) := \int_\R \int_\R \log^+\Big(\frac{1}{|s-t|}\Big)\, d\mu(s)\, d\mu(t)$$
with $\log^+(x) := \max\{\log(x),0\}$; consequently, the \emph{logarithmic energy}
$$I(\mu) := \int_\R \int_\R \log |s-t| \, d\mu(s)\, d\mu(t)$$
is then bounded from below by $I(\mu) \geq -2 \frac{K}{\gamma}$.

\begin{corollary}
In the situation of Theorem \ref{thm:main-5}, the noncommutative random variable
$$\X := b_0 + b_1 X_1 + \dots + b_n X_n$$
in the tracial $W^\ast$-probability space $(M_N(\M),\tr_N \circ \tau^{(N)})$ has finite logarithmic energy that can be bounded from below by
$$I(\mu_\X) \geq -12 c^{-\frac{2}{3}} \bigg(\sum^n_{j=1} \|b_j\|^2\bigg)^{\frac{1}{3}} \Phi^\ast(X_1,\dots,X_n)^{\frac{1}{3}}.$$
\end{corollary}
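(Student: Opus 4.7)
The plan is to combine Theorem \ref{thm:main-5} directly with the bound from \cite{Jam15} that is quoted in the paragraph preceding the corollary. Theorem \ref{thm:main-5} gives us that the cumulative distribution function $\F_\X$ is Hoelder continuous with exponent $\gamma=\frac{2}{3}$ and constant
$$K := 4 c^{-\frac{2}{3}} \bigg(\sum^n_{j=1} \|b_j\|^2\bigg)^{\frac{1}{3}} \Phi^\ast(X_1,\dots,X_n)^{\frac{1}{3}}.$$
Plugging into the cited inequality $H^+(\mu) \leq 2K/\gamma$ from \cite{Jam15}, applied to $\mu = \mu_\X$, I immediately obtain
$$H^+(\mu_\X) \leq \frac{2 K}{2/3} = 3K = 12 c^{-\frac{2}{3}} \bigg(\sum^n_{j=1} \|b_j\|^2\bigg)^{\frac{1}{3}} \Phi^\ast(X_1,\dots,X_n)^{\frac{1}{3}}.$$

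To convert this bound on $H^+(\mu_\X)$ into the asserted lower bound on $I(\mu_\X)$, I would decompose the logarithm by $\log|s-t| = \log^+|s-t| - \log^+(1/|s-t|)$, which gives
$$I(\mu_\X) = \iint_{\R\times\R} \log^+|s-t|\, d\mu_\X(s)\, d\mu_\X(t) - H^+(\mu_\X).$$
Since the first integrand is nonnegative, this yields the trivial but sufficient inequality $I(\mu_\X) \geq - H^+(\mu_\X)$, so that inserting the bound on $H^+(\mu_\X)$ obtained in the previous step produces exactly the constant $-12 c^{-2/3} (\sum_j \|b_j\|^2)^{1/3} \Phi^\ast(X_1,\dots,X_n)^{1/3}$ claimed in the corollary. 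Finiteness of $I(\mu_\X)$ follows because $\X$ is a bounded selfadjoint operator, so $\mu_\X$ is compactly supported and the first integral on the right-hand side is trivially finite, while the second is finite by the bound just derived.

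There is essentially no obstacle here beyond checking the chain of inequalities; the real work has already been done in proving Theorem \ref{thm:main-5}. The only mildly delicate point is the decomposition of $\log|s-t|$ into positive and negative parts, which is needed because the Jamrom-type bound from \cite{Jam15} controls only the negative part of the logarithmic energy, whereas the statement is about $I(\mu_\X)$ itself; compactness of the support of $\mu_\X$ then takes care of the positive part automatically.
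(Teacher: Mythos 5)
Your proof is correct and follows exactly the route the paper intends: the corollary is stated as an immediate consequence of Theorem \ref{thm:main-5} combined with the bound $H^+(\mu)\leq 2K/\gamma$ from \cite{Jam15} quoted in the preceding paragraph, and your arithmetic $2K/\gamma = 3K = 12\,c^{-2/3}(\sum_j\|b_j\|^2)^{1/3}\Phi^\ast(X_1,\dots,X_n)^{1/3}$ matches the asserted constant. Your explicit decomposition $\log|s-t| = \log^+|s-t| - \log^+(1/|s-t|)$ and the remark on compact support merely spell out details the paper leaves implicit.
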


\appendix
\section{Inner rank}
\subsection{\label{sec:inner rank}Inner rank of matrices over noncommutative algebras}

In this subsection, we collect some properties of the inner rank $\rho$ of matrices over a general unital algebra $\A$.

\begin{lemma}
\label{lem:inner rank}Let $A\in M_{m.n}\left(\A\right)$ be given.
\begin{enumerate}
\item $\rho\left(A\right)\leqslant\min\{m,n\}$.
\item $\rho\left(A\right)$ is invariant under multiplication with invertible matrices over $\A$. In particular, the inner rank $\rho\left(A\right)$ doesn't change when multiplied by invertible scalar-valued matrices.
\item Writing $A=BC$ with $B\in M_{m,\rho\left(A\right)}(\A)$ and $C\in M_{\rho\left(A\right),n}\left(\A\right)$, then $B$ and $C$ both are full matrices.
\item If we write $A=(B\ C)$, where $B\in M_{m,r}\left(\A\right)$ and $C\in M_{m,n-r}\left(\A\right)$ for some integer $r<n$, then
\[\rho\left(A\right)\geqslant\max\left(\rho\left(B\right),\rho\left(C\right)\right).\]
\end{enumerate}
\end{lemma}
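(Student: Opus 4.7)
The plan is to verify the four parts directly from the definition of inner rank, using only elementary manipulations of matrix factorizations. None of these should require any deep input; the lemma is essentially a collection of sanity checks on the notion of $\rho$.

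For part (i), I would simply exhibit trivial factorizations. Writing $A = \1_m \cdot A$ with $\1_m \in M_m(\A)$ and $A \in M_{m,n}(\A)$ shows $\rho(A) \leq m$, and $A = A \cdot \1_n$ similarly gives $\rho(A) \leq n$; combining these yields $\rho(A) \leq \min\{m,n\}$.

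For part (ii), the key observation is that if $U \in M_m(\A)$ is invertible and $A = PQ$ is any factorization through size $r$, then $UA = (UP) Q$ is a factorization of the same inner size, so $\rho(UA) \leq \rho(A)$. Applying the same to $U^{-1}$ in place of $U$ and to $UA$ in place of $A$ gives the reverse inequality; the case of right multiplication by an invertible matrix works verbatim. For part (iii), I would argue by contradiction: if $A = BC$ with $B \in M_{m,\rho(A)}(\A)$ and $C \in M_{\rho(A),n}(\A)$, and if $B$ admitted a factorization $B = B_1 B_2$ through a strictly smaller size $s < \rho(A)$, then $A = B_1 (B_2 C)$ would be a factorization of $A$ through size $s$, contradicting minimality of $\rho(A)$. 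The argument for $C$ is symmetric.

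For part (iv), I would take a rank factorization $A = PQ$ with $P \in M_{m,\rho(A)}(\A)$ and $Q \in M_{\rho(A),n}(\A)$, and split $Q$ column-wise as $Q = (Q_1 \ Q_2)$ with $Q_1$ of width $r$ and $Q_2$ of width $n-r$. Since $A = (B \ C) = (PQ_1 \ PQ_2)$, we obtain $B = PQ_1$ and $C = PQ_2$, each a factorization through size $\rho(A)$, so $\rho(B), \rho(C) \leq \rho(A)$, giving the asserted inequality.

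Since each of these arguments is a one-line manipulation of factorizations, there is no real obstacle: the only thing to watch is keeping track of the sizes of the blocks in part (iv) and being careful in part (iii) that the contradiction is genuinely strict (which it is, since $s < \rho(A)$ produces a factorization of $A$ through size $s$, violating the minimality in the definition of $\rho(A)$).
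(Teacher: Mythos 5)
Your proof is correct and is exactly the kind of direct verification the paper has in mind: the paper gives no argument for this lemma beyond the remark that ``these facts can be verified directly from the definition,'' and your factorization manipulations (trivial factorizations for (i), transporting a factorization through an invertible factor for (ii), the minimality contradiction for (iii), and splitting $Q$ column-wise for (iv)) are the standard way to do so. No gaps.
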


These facts can be verified directly from the definition.

\begin{lemma}
\label{lem:minor}(See \cite[Lemma 5.4.8(i)]{Coh06}) Let $A\in M_{m,n}\left(\A\right)$ be full with $2\leqslant m\leqslant n$.
\begin{enumerate}
\item $A$ remains full when the first row is omitted.
\item If $A$ does not remain full when the first column is omitted, then there is a factorization
\[A=B\begin{pmatrix}\1 & \0\\\0 & C\end{pmatrix},\]
where $B\in M_{m}\left(\A\right)$ and $C\in M_{m-1,n-1}\left(\A\right)$ both are full.
\end{enumerate}
\end{lemma}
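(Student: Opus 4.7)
The plan for part~(i) is a direct contrapositive argument. Suppose the $(m-1)\times n$ matrix $A'$ obtained from $A$ by omitting its first row is not full, so that $\rho(A')\leq m-2$; choose a rank factorization $A'=PQ$ with $P\in M_{m-1,r}(\A)$, $Q\in M_{r,n}(\A)$, $r\leq m-2$. Writing $A=\begin{pmatrix} a\\ A'\end{pmatrix}$ with $a$ the discarded first row, I would exhibit the identity
\[
A \;=\; \begin{pmatrix} \1 & \0\\ \0 & P\end{pmatrix}\begin{pmatrix} a\\ Q\end{pmatrix},
\]
whose middle index is $r+1\leq m-1<m$. This contradicts the fullness of $A$ and establishes~(i).

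For part~(ii), write $A=(a_1,A_1)$ with $a_1$ the first column, and set $r:=\rho(A_1)$; since $A_1$ is not full and $A_1$ is of size $m\times(n-1)$ with $m\leq n$, one has $r\leq m-1$. Fix a rank factorization $A_1=PQ$ with $P\in M_{m,r}(\A)$ and $Q\in M_{r,n-1}(\A)$, both full by Lemma~\ref{lem:inner rank}(iii). The decisive step will be the identity
\[
A \;=\; (a_1,\,PQ) \;=\; (a_1,P)\begin{pmatrix} \1 & \0\\ \0 & Q\end{pmatrix},
\]
which presents $A$ as a product of an $m\times(r+1)$ matrix and an $(r+1)\times n$ matrix. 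Fullness of $A$ therefore forces $r+1\geq m$, and combined with $r\leq m-1$ this pins down $r=m-1$ exactly. One may then set $B:=(a_1,P)\in M_{m}(\A)$ and $C:=Q\in M_{m-1,n-1}(\A)$, giving the asserted factorization.

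It remains to verify that both $B$ and $C$ are full. Fullness of $C=Q$ is immediate from the fact that $Q$ appears in a rank factorization of $A_1$ (Lemma~\ref{lem:inner rank}(iii)). For $B$, I would argue once more by contraposition: if $B$ admitted a factorization $B=B_1B_2$ through $M_{m-1}(\A)$, then substituting into the formula for $A$ would realize $A$ as a product passing through $M_{m-1}(\A)$, contradicting $\rho(A)=m$. No structural hypothesis on $\A$ beyond being unital is needed, and no auxiliary results beyond Lemma~\ref{lem:inner rank}. The only mildly delicate point in the whole argument is the dimension-counting that forces $r=m-1$ in part~(ii); this should not present a genuine obstacle.
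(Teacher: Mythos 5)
Your proposal is correct and follows essentially the same route as the paper: the same block factorizations $\begin{pmatrix}\1 & \0\\ \0 & P\end{pmatrix}\begin{pmatrix}a\\ Q\end{pmatrix}$ for part (i) and $(a_1,P)\begin{pmatrix}\1 & \0\\ \0 & Q\end{pmatrix}$ for part (ii), with the same dimension count forcing $r=m-1$. The only (harmless) difference is that you verify fullness of $B$ by an explicit contraposition, whereas the paper reads it off from the factorization of $A$ being a rank factorization.
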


\begin{proof}
To prove the statement $(i)$, we write
\[A=\begin{pmatrix}a\\A'\end{pmatrix},\]
where $a\in M_{1,n}\left(\A\right)$ and $A'\in M_{m-1,n}\left(\A\right)$. Suppose that $A'$ is not full, then there exist matrices $B\in M_{m-1,r}\left(\A\right)$ and $C\in M_{r,n}\left(\A\right)$ with an integer $r<m-1$ such that $A'=BC$. Then we can see that the factorization
\[A=\begin{pmatrix}a\\A'\end{pmatrix}=\begin{pmatrix}a\\BC\end{pmatrix}=\begin{pmatrix}\1 & \0\\\0 & B\end{pmatrix}\begin{pmatrix}a\\C\end{pmatrix}\]
holds for $A$. This implies that $\rho\left(A\right)\leqslant r+1<m$, which is a contradiction to the fullness of $A$.

For the statement $(ii)$, we write $A=(a \  A')$, where $a\in M_{m,1}\left(\A\right)$ and $A'\in M_{m,n-1}\left(\A\right)$. By assumption, $A'$ is not full, so we have a rank factorization $A'=DC$ where $D\in M_{m,r}\left(\A\right)$ and $C\in M_{r,n-1}\left(\A\right)$ are full matrices (by the part (iii) of Lemma \ref{lem:inner rank}). Then we have the factorization
\[A=\begin{pmatrix}a & A'\end{pmatrix}=\begin{pmatrix}a & DC\end{pmatrix}=\begin{pmatrix}a & D\end{pmatrix}\begin{pmatrix}\1 & \0\\\0 & C\end{pmatrix}.\]
Since $A$ is full, we obtain $r+1=\rho\left(A\right)=m$ and thus $B:=(a \  D)$ is a full square matrix of dimension $m$. Hence the above factorization is the desired factorization.
\end{proof}

\begin{definition}
\label{def:diagonal sum}Let $A\in M_m(\A)$ and $B\in M_n(\A)$ be two matrices, we define their \emph{diagonal sum}, denoted by $A\oplus B$, as
\[A\oplus B=\begin{pmatrix}A & \0\\\0 & B\end{pmatrix}\in M_{m+n}(\A).\]
\end{definition}

\begin{theorem}
\label{thm:full minor-appendix}(See \cite[Theorem 5.4.9]{Coh06}) Suppose that the set of all square full matrices over $\A$ is closed under products and diagonal sums. Then for any $A\in M_{m,n}\left(\A\right)$, there exists a square block of $A$ which is a full matrix over $\A$ of dimension $\rho\left(A\right)$. Moreover, $\rho\left(A\right)$ is the maximal dimension for such blocks.
\end{theorem}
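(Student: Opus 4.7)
The plan is to prove the theorem in two parts: first the easy upper bound on the dimension of full square blocks, then the existence of a full block attaining that bound.

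For the upper bound, I would show that whenever $A$ contains a $k \times k$ full submatrix $B$, necessarily $\rho(A) \geq k$. Indeed, a rank factorization $A = PQ$ with $P \in M_{m,r}(\A)$, $Q \in M_{r,n}(\A)$, and $r = \rho(A)$ restricts, upon selecting the rows of $P$ and the columns of $Q$ indexed in the same way as the rows and columns of $B$, to a factorization of $B$ through intermediate dimension $r$; hence $k = \rho(B) \leq r$, confirming that $\rho(A)$ is an upper bound for the dimensions of square full blocks of $A$.

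For the existence, the central reduction is again to a rank factorization. Choose $A = PQ$ with $P \in M_{m,r}(\A)$ and $Q \in M_{r,n}(\A)$, where $r = \rho(A)$; by Lemma~\ref{lem:inner rank}(iii) both $P$ and $Q$ are full. If I can locate an $r \times r$ full submatrix $P'$ of $P$ via some row selection $I \subseteq \{1, \dots, m\}$ and an $r \times r$ full submatrix $Q'$ of $Q$ via some column selection $J \subseteq \{1, \dots, n\}$, then the $r \times r$ submatrix of $A$ with rows from $I$ and columns from $J$ equals $P'Q'$, which is full by the assumed closure of full square matrices under products.

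The heart of the proof is therefore the following key lemma: every full $k \times l$ matrix with $k \leq l$ admits a $k \times k$ full submatrix (and, by a symmetric argument, every full $k \times l$ matrix with $k \geq l$ admits an $l \times l$ full submatrix). I would establish this by a double induction, with outer induction on $k$ and inner induction on $l$. The base case $k = 1$ is immediate because a non-zero row must have a non-zero entry, and the base of the inner induction ($l = k$) is trivial. For $l > k$, either some column of $A$ can be deleted without destroying fullness, in which case the inner inductive hypothesis yields the desired $k \times k$ block inside the remaining $k \times (l-1)$ submatrix, or every column is essential. In the latter case, Lemma~\ref{lem:minor}(ii) (applied after permuting the chosen essential column into first position) gives a factorization $A = B \begin{pmatrix} \1 & \0 \\ \0 & C \end{pmatrix}$ with $B \in M_k(\A)$ full and $C \in M_{k-1,l-1}(\A)$ full. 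The outer inductive hypothesis applied to $C$ produces a $(k-1) \times (k-1)$ full submatrix $C'$ obtained by selecting $k-1$ columns of $C$. Keeping the first column of $A$ together with the $k-1$ columns of $A$ that correspond to the column indices of $C'$, one obtains a $k \times k$ submatrix of $A$ equal to $B \begin{pmatrix} 1 & \0 \\ \0 & C' \end{pmatrix}$; this is full because the diagonal sum $(1) \oplus C'$ is full by the closure under diagonal sums, and the product of two full square matrices is full by the closure under products.

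The main obstacle will be the second case of the key lemma, where Lemma~\ref{lem:minor}(ii) must be combined with both closure assumptions while correctly orchestrating the nested induction. The symmetric variant for $k \geq l$, needed to locate the full block inside $P$, uses the row-analog of Lemma~\ref{lem:minor}(ii), which is obtained by a transparent symmetrization of the argument (swapping the roles of rows and columns), so no genuinely new idea is required there.
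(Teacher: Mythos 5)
Your proposal is correct and follows essentially the same route as the paper: both arguments pass to a rank factorization $A=PQ$, reduce the problem to locating full square blocks inside the full factors via Lemma~\ref{lem:minor}(ii), and invoke closure under diagonal sums and products in exactly the same places, while the maximality claim is the same restriction-of-the-factorization argument. The only difference is organizational — you isolate the statement about full $k\times l$ matrices as an explicit key lemma proved by a double induction on $(k,l)$, whereas the paper runs a single induction on $m+n$ for the whole theorem — and this does not change the substance.
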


\begin{proof}
We prove the existence of such blocks by induction on $m+n$. For $m+n\leqslant3$, it's easy to check directly from the definition. Now suppose that the statement holds for $m+n-1$, we want to prove it for $m+n$. First, we can write $A=BC$, where $B\in M_{m,\rho\left(A\right)}\left(\A\right)$ and $C\in M_{\rho\left(A\right),n}\left(\A\right)$ are two full matrices over $\A$. Since the product of two square full matrices is again full by assumption, it is enough to show that $B$ and $C$ both have a full $\rho\left(A\right)\times\rho\left(A\right)$ block. In the following we prove that $C$ has a full $\rho\left(A\right)\times\rho\left(A\right)$ block; such a block for $B$ can be obtained by a similar argument.

Put $C=(c\ C')$, where $c\in M_{\rho\left(A\right),1}\left(\A\right)$ and $C'\in M_{\rho\left(A\right),n-1}\left(\A\right)$. If $C'$ is full then, by the induction hypothesis, we can conclude that $C'$ has a full $\rho\left(A\right)\times\rho\left(A\right)$ block and so does $C$. Now if $C'$ is not full, then, by part (ii) of Lemma \ref{lem:minor}, there is a factorization
\[C=D\begin{pmatrix}1 & \0\\\0 & E\end{pmatrix},\]
where $D$ is a full $\rho\left(A\right)\times\rho\left(A\right)$ matrix and $E$ is full matrix with one less row and column than $C$. Thus by the induction we can find a full $(\rho\left(A\right)-1)\times(\rho\left(A\right)-1)$ block of $E$. Then by assumption, the diagonal sum of $1$ and this block is also full. And its product with $D$ is full by assumption again, which becomes a full $\rho\left(A\right)\times\rho\left(A\right)$ block of $C$, as desired.

Now we want to show that such a block is maximal. For that purpose, we suppose that $A\in M_{m,n}\left(\A\right)$ has a full block with dimension $r>\rho\left(A\right)$. By applying row and column permutations, we can assume $A$ is of form
\[A=\begin{pmatrix}P & Q\\R & S\end{pmatrix},\]
where $P\in M_{r}\left(\A\right)$ is full and $Q\in M_{r,n-r}\left(\A\right)$, $R\in M_{m-r,r}\left(\A\right)$, $S\in M_{m-r,n-r}\left(\A\right)$. Let
\[A=\begin{pmatrix}B'\\B''\end{pmatrix}\begin{pmatrix}C' & C''\end{pmatrix}\]
be a rank factorization where $B'\in M_{r,\rho\left(A\right)}\left(\A\right)$, $B''\in M_{m-r,\rho\left(A\right)}\left(\A\right)$, $C'\in M_{\rho\left(A\right),r}\left(\A\right)$ and $C''\in M_{\rho\left(A\right),n-r}\left(\A\right)$, then
\[A=\begin{pmatrix}P & Q\\R & S\end{pmatrix}=\begin{pmatrix}B'C' & B'C''\\B''C' & B''C''\end{pmatrix},\]
from which we can see $P=B'C'$. This gives a factorization of $P$ that yields $\rho\left(P\right)\leqslant\rho\left(A\right)<r$, which is a contradiction to the fullness of $P$. Hence we can conclude that $r\leqslant\rho\left(A\right)$ for any full $r\times r$ block of $A$.
\end{proof}

Now we want to present a proof for Proposition \ref{prop:invertible minor}. For that purpose, we first prove the following useful lemma.

\begin{lemma}
\label{lem:full identity}If $\A$ is stably finite, then the identity matrix of each dimension is full.
\end{lemma}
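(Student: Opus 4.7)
The plan is to argue by contradiction. Suppose that for some $n\geq 1$ the identity matrix $\1_n\in M_n(\A)$ is not full. Then, by definition of the inner rank, there would be an integer $r<n$ together with matrices $B\in M_{n,r}(\A)$ and $C\in M_{r,n}(\A)$ such that $\1_n = BC$. The idea is to promote this rectangular factorization to a square one of size $n$, then apply stable finiteness, and finally derive a contradiction from a block structure that is forced by the zero padding.

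Concretely, I would enlarge $B$ to a square matrix $\tilde B\in M_n(\A)$ by appending $n-r$ zero columns on the right, and enlarge $C$ to a square matrix $\tilde C\in M_n(\A)$ by appending $n-r$ zero rows at the bottom, i.e.
\[
\tilde B := \begin{pmatrix} B & \0_{n\times(n-r)}\end{pmatrix}, \qquad \tilde C := \begin{pmatrix} C\\ \0_{(n-r)\times n}\end{pmatrix}.
\]
The padding with zeros does not affect the product, so $\tilde B\tilde C = BC = \1_n$. Since $\A$ is stably finite by assumption, this one-sided identity forces also $\tilde C\tilde B = \1_n$.

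The contradiction now comes from directly computing $\tilde C\tilde B$: a block multiplication yields
\[
\tilde C\tilde B = \begin{pmatrix} C\\ \0\end{pmatrix}\begin{pmatrix} B & \0\end{pmatrix} = \begin{pmatrix} CB & \0_{r\times(n-r)}\\ \0_{(n-r)\times r} & \0_{(n-r)\times(n-r)}\end{pmatrix},
\]
so the lower-right $(n-r)\times(n-r)$ block of $\tilde C\tilde B$ is zero. But $\tilde C\tilde B = \1_n$ and $n-r\geq 1$, so this lower-right block should equal $\1_{n-r}\neq 0$, a contradiction. Therefore no such factorization exists and $\1_n$ is full. There is no real obstacle here; the only conceptual point is the standard trick of padding with zeros to convert a rectangular factorization of the identity into a square one, so that the stable finiteness hypothesis becomes applicable.
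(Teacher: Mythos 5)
Your proof is correct. It takes a mildly different route from the paper's: you pad the rectangular factors $B\in M_{n,r}(\A)$ and $C\in M_{r,n}(\A)$ with zeros to square matrices $\tilde B,\tilde C\in M_n(\A)$, invoke stable finiteness at size $n$ to flip $\tilde B\tilde C=\1_n$ into $\tilde C\tilde B=\1_n$, and then read off the contradiction from the forced zero $(n-r)\times(n-r)$ corner of $\tilde C\tilde B$. The paper instead block-decomposes the rank factorization as $\1_n=\bigl(\begin{smallmatrix}A\\A'\end{smallmatrix}\bigr)\bigl(\begin{smallmatrix}B & B'\end{smallmatrix}\bigr)$, applies stable finiteness at size $r$ to the corner equation $AB=\1_r$ to conclude that $A$ and $B$ are invertible, deduces $A'=\0$ and $B'=\0$ from the off-diagonal equations, and then contradicts $A'B'=\1_{n-r}$. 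Both arguments use the hypothesis exactly once and reach the same kind of contradiction (a zero block where $\1_{n-r}$ must sit); yours is slightly more economical because the zero-padding makes the offending block visible immediately, without the intermediate step of inverting the $r\times r$ corner. Your block computation of $\tilde C\tilde B$ is correct, so there is no gap.
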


\begin{proof}
Assume that $\rho(\1_{n})=r$ for some $r<n$, then there is a rank factorization
\[\1_{n}=\begin{pmatrix}A\\A'\end{pmatrix}\begin{pmatrix}B & B'\end{pmatrix},\]
where $A,B\in M_{r}\left(\A\right)$, $A'\in M_{n-r,r}\left(\A\right)$, and $B'\in M_{r,n-r}\left(\A\right)$. This yields the equations
\[AB=\1_{r},\ AB'=\0,\ A'B=\0,\ A'B'=\1_{n-r}.\]
Since $\A$ is stably finite, we have $AB=\1_{r}$ implies $BA=\1_{r}$, which means $A$, $B$ are invertible and $A^{-1}=B$. Then the above equations reduce to $A'=\0$, $B'=\0$, and thus
\[\1_{n}=\begin{pmatrix}A\\\0\end{pmatrix}\begin{pmatrix}B & \0\end{pmatrix}=\begin{pmatrix}\1_{r} & \0\\\0 & \0\end{pmatrix}.\]
This is impossible and so we can conclude that $\rho(\1_{n})=n$, i.e., $\1_{n}$ is full.
\end{proof}

\begin{proposition}
\label{prop:invertible minor-appendix}(See \cite[Proposition 5.4.6]{Coh06}) Suppose that $\A$ is stably finite. Let $A\in M_{m+n}\left(\A\right)$ be of the form
\[A=\begin{pmatrix}B & C\\D & E\end{pmatrix},\]
where $B\in M_{m}\left(\A\right)$, $C\in M_{m,n}\left(\A\right)$, $D\in M_{n,m}\left(\A\right)$ and $E\in M_{n}\left(\A\right)$. If $B$ is invertible, then $\rho\left(A\right)\geqslant m$, with equality if and only if $E=DB^{-1}C$.
\end{proposition}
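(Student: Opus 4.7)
The plan is to use the invertibility of $B$ to perform block row and column operations (using invertible matrices over $\A$) in order to decouple the blocks, and then reduce the problem to a simple statement about matrices of the form $\1_m \oplus S$.

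First I would write down the standard Schur-type identity
\[\begin{pmatrix}\1_m & \0\\-DB^{-1} & \1_n\end{pmatrix}\begin{pmatrix}B & C\\D & E\end{pmatrix}\begin{pmatrix}\1_m & -B^{-1}C\\\0 & \1_n\end{pmatrix}=\begin{pmatrix}B & \0\\\0 & E-DB^{-1}C\end{pmatrix},\]
where the two outer factors are clearly invertible in $M_{m+n}(\A)$. By part (ii) of Lemma \ref{lem:inner rank}, multiplication by invertible matrices does not change the inner rank, so $\rho(A)=\rho(B\oplus S)$, where $S:=E-DB^{-1}C$. Multiplying further by the invertible matrix $B^{-1}\oplus\1_n$ on the left gives $\rho(A)=\rho(\1_m\oplus S)$.

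Next I would establish that $\rho(\1_m\oplus S)\geq m$. By Lemma \ref{lem:full identity}, stable finiteness of $\A$ implies that $\1_m$ is full, i.e.\ $\rho(\1_m)=m$. Since $\1_m$ appears as a block of $\1_m\oplus S$, the inner rank of a block is bounded by the inner rank of the whole matrix (this is Lemma \ref{lem:inner rank}(iv) combined with its row analogue), and so $\rho(\1_m\oplus S)\geq\rho(\1_m)=m$. This gives $\rho(A)\geq m$.

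The main step will be the equality characterization. The ``if'' direction is immediate: if $S=0$, then $\1_m\oplus\0_n=\begin{pmatrix}\1_m\\\0\end{pmatrix}\begin{pmatrix}\1_m&\0\end{pmatrix}$ is a rank-$m$ factorization, so $\rho(A)=m$. For the ``only if'' direction, assuming $\rho(\1_m\oplus S)=m$, I would take a rank factorization $\1_m\oplus S=PQ$ with $P\in M_{m+n,m}(\A)$, $Q\in M_{m,m+n}(\A)$, and decompose them in blocks $P=\binom{P_1}{P_2}$, $Q=(Q_1\ Q_2)$ with $P_1,Q_1\in M_m(\A)$. Expanding yields the four equations
\[P_1Q_1=\1_m,\qquad P_1Q_2=\0,\qquad P_2Q_1=\0,\qquad P_2Q_2=S.\]
Stable finiteness of $\A$ turns $P_1Q_1=\1_m$ into $Q_1P_1=\1_m$, so $P_1$ and $Q_1$ are both invertible in $M_m(\A)$ with $Q_1=P_1^{-1}$. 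Multiplying $P_1Q_2=\0$ on the left by $P_1^{-1}$ gives $Q_2=\0$, and multiplying $P_2Q_1=\0$ on the right by $Q_1^{-1}$ gives $P_2=\0$, so finally $S=P_2Q_2=\0$. This step is where stable finiteness really enters in a non-trivial way, and will be the main (though still short) obstacle — everything else is bookkeeping with the Schur complement.
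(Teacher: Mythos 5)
Your proof is correct and follows essentially the same route as the paper: reduce via the Schur complement and invertible block-triangular factors to $\rho(A)=\rho(\1_m\oplus S)$ with $S=E-DB^{-1}C$, use stable finiteness (via the fullness of $\1_m$) for the lower bound, and analyze a rank-$m$ factorization of $\1_m\oplus S$ with stable finiteness to force $S=0$ in the equality case. The only difference is cosmetic — you conjugate $A$ to $B\oplus S$ and then strip off $B^{-1}\oplus\1_n$, whereas the paper absorbs $B$ into the left factor directly — so no further comment is needed.
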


\begin{proof}
It's easy to check that the factorization
\begin{equation}
A=\begin{pmatrix}B & C\\D & E\end{pmatrix}=\begin{pmatrix}B & \0\\D & \1_{n}\end{pmatrix}\begin{pmatrix}1_{m} & \0\\\0 & E-DB^{-1}C\end{pmatrix}\begin{pmatrix}\1_{m} &B^{-1}C\\\0 & \1_{n}\end{pmatrix}\label{factorization-1}
\end{equation}
holds as $B$ is invertible over $\A$. Moreover, from
\[\begin{pmatrix}B & \0\\D & \1_{n}\end{pmatrix}\begin{pmatrix}B^{-1} & \0\\-DB^{-1} & \1_{n}\end{pmatrix}=\1_{m+n}=\begin{pmatrix}\1_{m} & B^{-1}C\\\0 & \1_{n}\end{pmatrix}\begin{pmatrix}\1_{m} & -B^{-1}C\\\0 & \1_{n}\end{pmatrix},\]
we can see that
\[\begin{pmatrix}B & \0\\D & \1_{n}\end{pmatrix}\text{ and }\begin{pmatrix}\1_{m} & B^{-1}C\\\0 & \1_{n}\end{pmatrix}\]
are invertible over $\A$. Since the inner rank doesn't change when multiplied by invertible matrices, we obtain
\[\rho\left(A\right)=\rho\begin{pmatrix}\1_{m} & \0\\\0 & E-DB^{-1}C\end{pmatrix}.\]
From the last item of Lemma \ref{lem:inner rank}, we have
\[\rho\left(A\right)\geqslant\max\left\{ \rho\left(\1_{m}\right),\rho\left(E-DB^{-1}C\right)\right\} \geqslant\rho\left(\1_{m}\right).\]
Since $\A$ is stably finite, by Lemma \ref{lem:full identity}, we conclude that $\rho\left(A\right)\geqslant m$.

Now if we assume that $E=DB^{-1}C$, then from the factorization (\ref{factorization-1}), it is clear that $\rho\left(A\right)=\rho\left(\1_{m}\right)=m$. For the converse, we assume that $\rho\left(A\right)=m$ and we want to prove $E-DB^{-1}C=0$. From the factorization (\ref{factorization-1}), we have
\[\rho\begin{pmatrix}\1_{m} & \0\\\0 & E-DB^{-1}C\end{pmatrix}=\rho\left(A\right)=m\]
and then there exists a rank factorization
\[\begin{pmatrix}\1_{m} & \0\\\0 & E-DB^{-1}C\end{pmatrix}=\begin{pmatrix}R\\R'\end{pmatrix}\begin{pmatrix}S & S'\end{pmatrix},\]
where $R\in M_{m}\left(\A\right)$, $R'\in M_{n,m}\left(\A\right)$, $S\in M_{m}\left(\A\right)$ and $S'\in M_{m,n}\left(\A\right)$. So we obtain $RS=\1_{m}$ and thus $R=S^{-1}$ as $\A$ is stably finite. Then from $RS'=0$ and $R'S=0$ we get $S'=0$ and $R'=0$. Hence the above factorization reduces to
\[\begin{pmatrix}\1_{m} & \0\\\0 & E-DB^{-1}C\end{pmatrix}=\begin{pmatrix}R\\\0\end{pmatrix}\begin{pmatrix}S & \0\end{pmatrix}=\begin{pmatrix}\1_{m} & \0\\\0 & \0\end{pmatrix},\]
and so $E-DB^{-1}C=0$.
\end{proof}

\subsection{\label{sec:full minor}Inner rank of matrices over noncommutative polynomials}

In this subsection, we want to verify that the set of all square full matrices over $\C\left\langle x_{1},\dots,x_{d}\right\rangle $ is closed under products and diagonal sums, so Proposition \ref{prop:full minor} follows from Theorem \ref{thm:full minor}. For that purpose, we need some lemmas first. Given a polynomial $a\in\C\left\langle x_{1},\dots,x_{d}\right\rangle $, we denote its degree by $d(a)$. Similar to the commutative case, we have the following properties:
\begin{itemize}
\item $d(ab)=d(a)+d(b)$,
\item $d(a+b)=\max\{d(a),d(b)\}$,
\end{itemize}
with, in particular, $d(0):=-\infty$. Moreover, we define the degree of a matrix $A=(a_{ij})\in M_{m,n}(\C\left\langle x_{1},\dots,x_{d}\right\rangle )$ as \[d(A)=\max_{1\leqslant i\leqslant m,1\leqslant j\leqslant n}d(a_{ij}).\]

\begin{definition}
Let $a_{1},\dots,a_{n}$ be $n$ polynomials. They are called \emph{right $d$-dependent} if $a_{i}=0$ for some index $i$, or there exists a non-zero vector $(b_{1},\dots,b_{n})$ of polynomials such that
\[d(\sum_{i=1}^{n}a_{i}b_{i})<\max_{1\leqslant i\leqslant n}(d(a_{i})+d(b_{i})).\]
Otherwise, if $a_{1},\dots,a_{n}$ are all non-zero and for any polynomials $b_{1},\cdots,b_{n}$, we have
\[d(\sum_{i=1}^{n}a_{i}b_{i})\geqslant\max_{1\leqslant i\leqslant n}(d(a_{i})+d(b_{i})),\]
then $a_{1},\dots,a_{n}$ are called \emph{right $d$-independent}. Similarly, we can define \emph{left $d$-dependent} and \emph{left $d$-independent} for $a_{1},\dots,a_{n}$.
\end{definition}

Let us remark that if $a_{1},\dots,a_{n}$ are right linear dependent over $\C\left\langle x_{1},\dots,x_{d}\right\rangle $, i.e., if there exists a non-zero vector $(b_{1},\dots,b_{n})$ of polynomials such that $\sum_{i=1}^{n}a_{i}b_{i}=0$, then $a_{1},\dots,a_{n}$ are also right $d$-dependent. This follows from
\[-\infty=d(0)=d(\sum_{i=1}^{n}a_{i}b_{i})<0\leqslant\max_{1\leqslant i\leqslant n}d(b_{i})\leqslant\max_{1\leqslant i\leqslant n}(d(a_{i})+d(b_{i})).\]
This shows that right $d$-independence implies right linear independence over $\C\left\langle x_{1},\dots,x_{d}\right\rangle $. Moreover, similar to the usual notion of linear dependence, the following lemma shows that any tuple of polynomials always can be reduced to a $d$-independent one.

\begin{lemma}
\label{lem:d-independence}(See \cite[Theorem 2.5.1]{Coh06}) Let $a_{1},\dots,a_{n}$ be $n$ polynomials, then there exists an invertible matrix over $\C\left\langle x_{1},\dots,x_{d}\right\rangle $, which reduces $(a_{1},\dots,a_{n})$,
by acting on the right, to a tuple whose non-zero entries are right $d$-independent. Similarly, we can reduce $(a_{1},\dots,a_{n})$ to a tuple whose non-zero entries are left $d$-independent by an invertible matrix over $\C\left\langle x_{1},\dots,x_{d}\right\rangle $ acting on the left.
\end{lemma}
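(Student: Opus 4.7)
The plan is to prove the right-sided statement by induction on the lexicographically ordered complexity $c(a_1,\dots,a_n) := (n_0, \sum_{a_i \neq 0} d(a_i))$, where $n_0$ is the number of non-zero entries; the left-sided case is symmetric. If the non-zero entries of $(a_1,\dots,a_n)$ are already right $d$-independent, take $U = I_n$. Otherwise, after an invertible permutation I may assume the non-zero entries are $a_1,\dots,a_k$ with $d(a_1) \leq d(a_2) \leq \dots \leq d(a_k)$, and the $d$-dependence supplies a non-zero vector $(b_1,\dots,b_k)$ of polynomials with $d(\sum_i a_i b_i) < M := \max_i(d(a_i)+d(b_i))$.

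Let $I := \{i : d(a_i) + d(b_i) = M\}$ and put $m := \max I$. Taking leading homogeneous components $\tilde{a}_i, \tilde{b}_i$ for $i \in I$ yields the homogeneous relation $\sum_{i \in I} \tilde{a}_i \tilde{b}_i = 0$ of degree $M$ in $F := \C\langle x_1,\dots,x_d\rangle$. The crux is a weak-algorithm lemma for the free algebra: under these hypotheses, there exist homogeneous polynomials $c_i \in F$ for $i \in I \setminus \{m\}$ with $d(a_i) + d(c_i) \leq d(a_m)$ such that $\tilde{a}_m = \sum_{i \in I,\, i < m} \tilde{a}_i c_i$ in the degree-$d(a_m)$ homogeneous component of $F$. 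This relies on the unique-factorization property of monomials in $F$: since $m = \max I$, the ordering $d(a_i) \leq d(a_m)$ for $i \in I$ forces $d(\tilde{b}_m) \leq d(\tilde{b}_i)$, and comparing coefficients of degree-$M$ monomials $u$ under the unique decomposition $u = wv$ into prefix/suffix of prescribed lengths $(d(a_i), d(b_i))$ extracts the desired combination.

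Granted this, form the invertible matrix $U_0 := I_n - \sum_{i \in I,\, i < m} c_i E_{i,m}$, whose inverse is $I_n + \sum_{i \in I,\, i < m} c_i E_{i,m}$. Right-multiplication by $U_0$ leaves every entry except the $m$-th unchanged and replaces $a_m$ by
\[
a_m' := a_m - \sum_{i \in I,\, i < m} a_i c_i.
\]
By construction, the leading homogeneous component of $a_m'$ equals $\tilde{a}_m - \sum_{i \in I,\, i < m} \tilde{a}_i c_i = 0$, so $d(a_m') < d(a_m)$ (with $a_m' = 0$ allowed). In either case the complexity measure strictly decreases, so by induction there is an invertible matrix $U_1$ reducing the new tuple to one whose non-zero entries are right $d$-independent. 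The product of the initial permutation, $U_0$, and $U_1$ is the sought-after invertible matrix $U$.

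The main obstacle is the weak-algorithm lemma producing the $c_i$; this is the genuinely non-trivial input and is a property specific to the free algebra, ultimately resting on the fact that $F$ is a domain with the monomial basis on homogeneous components, so that a homogeneous relation $\sum_{i \in I} \tilde{a}_i \tilde{b}_i = 0$ has enough combinatorial rigidity to permit extraction of one leading term as a combination of the others. Everything else is bookkeeping via elementary column operations and the induction.
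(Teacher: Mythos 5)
Your overall strategy---reduce the degree of the highest-degree entry by an elementary (hence invertible) column operation and iterate---is exactly the strategy of the paper's proof, and your lexicographic complexity gives a cleaner termination argument than the paper's informal ``after finitely many steps''. The gap is that the entire mathematical content of the lemma is concentrated in the ``weak-algorithm lemma'' that you state but do not prove: that from the homogeneous relation $\sum_{i\in I}\tilde a_i\tilde b_i=0$ one can extract $\tilde a_m=\sum_{i\in I,\,i<m}\tilde a_i c_i$ with $d(a_i)+d(c_i)\le d(a_m)$. Everything else in your argument is, as you say, bookkeeping; a referee would not accept ``this rests on unique factorization of monomials'' as a proof of the one step that distinguishes the free algebra from a general graded algebra (the statement is false, for instance, in a commutative polynomial ring in two variables: $x\cdot y-y\cdot x=0$, yet $y$ is not a right multiple of $x$).

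The claim is true, and the device that proves it is precisely the (left) transduction operator that the paper introduces. In your homogenized setting the argument runs as follows: since $m=\max I$ and $d(a_i)+d(b_i)=M$ on $I$, the degree $d(\tilde b_m)$ is minimal among $\{d(\tilde b_i): i\in I\}$. Pick a monomial $u$ occurring in $\tilde b_m$ and let $L_u$ be the linear map sending a monomial of the form $wu$ to $w$ and every other monomial to $0$. Because every $\tilde b_i$ with $i\in I$ is homogeneous of degree $\ge d(u)$, a monomial $wv$ of $\tilde a_i\tilde b_i$ ends in $u$ if and only if $v$ ends in $u$ (this is where unique prefix/suffix factorization of words enters), so $L_u(\tilde a_i\tilde b_i)=\tilde a_i L_u(\tilde b_i)$ for all $i\in I$. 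Applying $L_u$ to the relation gives $\sum_{i\in I}\tilde a_i L_u(\tilde b_i)=0$ with $L_u(\tilde b_m)$ a non-zero scalar and $d(L_u(\tilde b_i))=d(a_m)-d(a_i)$, which is your extraction. The paper avoids the homogenization altogether by applying $L_a$, with $a$ a top-degree monomial of the lowest-degree coefficient $b_1$, directly to the inhomogeneous $b_i$'s; the resulting column operation is then guaranteed to be invertible because the pivot $p_1=L_a(b_1)$ is a non-zero scalar, which is the inhomogeneous counterpart of your $U_0$ being unitriangular. So your proof becomes complete once this transduction computation is inserted; without it, the key step is asserted rather than proved.
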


\begin{proof}
In order to prove this lemma, we introduce the notion of left transduction: the \emph{left transduction} for $x_{i}$ is defined as a linear map $L_{i}$ of $\C\left\langle x_{1},\dots,x_{d}\right\rangle $ into itself, which sends any monomial of the form $x_{j_{1}}\cdots x_{j_{s}}x_{i}$ to $x_{j_{1}}\cdots x_{j_{s}}$ and all other monomials to $0$. Furthermore, for a given monomial $a=x_{j_{1}}\cdots x_{j_{s}}$, we define the \emph{left transduction} for this monomial as $L_{a}=L_{j_{1}}\cdots L_{j_{s}}$. For convenience, we also define the \emph{left transduction} for any non-zero number $a$ as the identity map on $\C\left\langle x_{1},\dots,x_{d}\right\rangle $. Then it's not difficult to check that $d(L_{a}(b))\leqslant d(b)-s$ for any $b\in\C\left\langle x_{1},\dots,x_{d}\right\rangle $.

Now, if $a_{1},\dots,a_{n}$ are right $d$-independent, then nothing needs to be proved. So we suppose that $a_{1},\dots,a_{n}$ are right $d$-dependent; then there exists $b_{1},\dots,b_{n}$ such that
\begin{equation}
d(\sum_{i=1}^{n}a_{i}b_{i})<\max_{1\leqslant i\leqslant n}(d(a_{i})+d(b_{i})).\label{eq:d-dependence}
\end{equation}
Denote $k=\max_{1\leqslant i\leqslant n}(d(a_{i})+d(b_{i}))$, then from (\ref{eq:d-dependence}), we see that the monomials of degree $k$ degree in each product $a_{i}b_{i}$ have to cancel with each other in the sum $\sum_{i=1}^{n}a_{i}b_{i}$; therefore, omitting pairs $(a_{i},b_{i})$ with $d(a_{i})+d(b_{i})<k$ does not change the relation (\ref{eq:d-dependence}), so we may assume $d(a_{i})+d(b_{i})=k$ for all $i=1,\dots,n$. Without loss of generality, we may additionally arrange $d(a_{1})\geqslant\cdots\geqslant d(a_{n})$, then we have $0\leqslant d(b_{1})\leqslant\cdots\leqslant d(b_{n})$. In order to find the desired invertible matrix, we set $a$ as the monomial of highest degree $s:=d(b_1)$ in $b_{1}$ and define
\[p_{i}=L_{a}(b_{i}),i=1,\dots,n;\]
then we can write $b_{i}=p_{i}a+b_{i}'$, where $b_{i}'$ consists of the monomials in $b_{i}$ which are cancelled by $L_{a}$. Hence, we have
\[L_{a}(\sum_{i=1}^{n}a_{i}b_{i})=\sum_{i=1}^{n}a_{i}p_{i}+\sum_{i=1}^{n}L_{a}(a_{i}b_{i}'),\]
where each product $a_{i}p_{i}=a_{i}L_{a}(b_{i})$ has degree $k-s$, while each polynomial $L_{a}(a_{i}b_{i}')$ has degree $d(L_{a}(a_{i}b_{i}'))<d(a_{i})$ (because every monomial in $b_{i}'$ does not have $a$ as a factor from the right side). As $d(a_{i})\leqslant d(a_{1})=k-s$ and
\[d(L_{a}(\sum_{i=1}^{n}a_{i}b_{i}))\leqslant d(\sum_{i=1}^{n}a_{i}b_{i})-s<k-s,\]
where the second inequality comes from (\ref{eq:d-dependence}), it follows that
\[d(\sum_{i=1}^{n}a_{i}p_{i})<k-s=d(a_{1}),\]
since $\sum_{i=1}^{n}a_{i}p_{i}$ is the difference of two polynomials whose degree $<k-s.$ Setting
\[P=\begin{pmatrix}p_{1} & 0 & \cdots & 0\\p_{2} & 1 & \cdots & 0\\\vdots & \vdots & \ddots & \vdots\\p_{n} & 0 & \cdots & 1\end{pmatrix}\in M_{n}(\C\left\langle x_{1},\dots,x_{d}\right\rangle ),\]
then it is invertible as $p_{1}=L_{a}(b_{1})\in\C$ is non-zero because $a$ is the monomial of highest degree in $b_{1}$; moreover, the acting of $P$ on $(a_{1},\dots,a_{n})$ reduces the degree of $a_{1}$ but doesn't change the other entries.

We continue this procedure as long as $a_{1},\dots,a_{n}$ are right $d$-dependent; we arrive either at $a_{1},\dots,a_{n}$ which are  are $d$-independent or at $a_{1}=0$. In the case where $a_{1}$ is reduced to $0$, we just repeat the procedure for the remaining terms. Therefore, the assertion can be achieved after finitely many steps.
\end{proof}

\begin{definition}
A matrix $A$ over $\C\left\langle x_{1},\dots,x_{d}\right\rangle $ is called \emph{right regular} if it has no right zero divisor, i.e., if there is a matrix $B$ over $\C\left\langle x_{1},\dots,x_{d}\right\rangle $ such that $AB=\0$, then $B=\0$. Similarly, if a matrix has no left zero divisor, then we call it \emph{left regular}.
\end{definition}

From the definition, if $A$ is not right regular, then we have a matrix $B$ such that $AB=\0$; in particular, we can say that each column of $B$ is also a right zero divisor of $A$. Therefore, we see that $A$ is right regular if and only if there is no non-zero column $b$ over $\C\left\langle x_{1},\dots,x_{d}\right\rangle $ satisfying $Ab=\0$.

\begin{lemma}
\label{lem:regular}(See \cite[Lemma 3.1.1]{Coh06}) If $A\in M_{m,n}(\C\left\langle x_{1},\dots,x_{d}\right\rangle )$ is full, then $A$ is right regular whenever $m\geqslant n$ and left regular whenever $m\leqslant n$.
\end{lemma}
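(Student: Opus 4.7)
The plan is to prove the lemma by contradiction, making essential use of Lemma \ref{lem:d-independence} (both versions) together with the characterization of non-fullness via hollow sub-blocks that was developed just before Definition \ref{def:hollow}. I will only write the details for the case $m \geq n$ (right regularity), since the other case is entirely symmetric, with the roles of ``left'' and ``right'' $d$-independence swapped and the argument applied to a row vector annihilating $A$ from the left.

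Suppose $A \in M_{m,n}(\C\langle x_1,\dots,x_d\rangle)$ is full with $m \geq n$, and assume for contradiction that $A b = 0$ for some non-zero column $b = (b_1,\dots,b_n)^T$. By the ``similarly'' version of Lemma \ref{lem:d-independence}, there exists an invertible matrix $P \in M_n(\C\langle x_1,\dots,x_d\rangle)$ such that the non-zero entries of $P b$ are left $d$-independent. Now $A b = 0$ rewrites as $(A P^{-1})(P b) = 0$, and by Lemma \ref{lem:inner rank}(ii) the matrix $A P^{-1}$ is again full; replacing $A$ by $A P^{-1}$ and $b$ by $P b$, we may therefore assume at the outset that the non-zero entries of $b$ itself are left $d$-independent.

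Let $J = \{j : b_j \neq 0\}$, which is non-empty by assumption. For each row index $i$, the identity $\sum_{j \in J} a_{ij} b_j = 0$ holds, so $d\bigl(\sum_{j \in J} a_{ij} b_j\bigr) = -\infty$. On the other hand, the defining inequality for left $d$-independence of the family $\{b_j : j \in J\}$ gives
\[
d\bigl(\textstyle\sum_{j \in J} a_{ij} b_j\bigr) \geq \max_{j \in J} \bigl(d(a_{ij}) + d(b_j)\bigr).
\]
Since every $b_j$ with $j \in J$ is non-zero, we have $d(b_j) \geq 0$; hence if any $a_{ij}$ with $j \in J$ were non-zero, the right-hand side would be $\geq 0$, contradicting the value $-\infty$ on the left. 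We conclude that $a_{ij} = 0$ for all $i$ and all $j \in J$, i.e.\ every column of $A$ indexed by $J$ is zero.

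Letting $k := |J| \geq 1$, this exhibits an $m \times k$ zero block in $A$. Since $m \geq n$, we have $m + k \geq m + 1 > m = \max(m,n)$, so $A$ is hollow in the sense of Definition \ref{def:hollow}; by the factorization argument recorded just before that definition, $\rho(A) \leq m + n - m - k = n - k < n = \min(m,n)$, contradicting the fullness of $A$. This forces $b = 0$ and completes the proof of right regularity. The main technical point is choosing the correct flavor of $d$-independence: because in $A b = 0$ the entries of $b$ sit to the \emph{right} of those of $A$, left $d$-independence of the $b_j$'s is the property that transfers the vanishing sum into componentwise vanishing of the coefficients, and I expect this bookkeeping (rather than any deeper obstacle) to be the only real place where care is required.
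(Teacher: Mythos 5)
Your proof is correct, and it takes a genuinely different route from the paper's. The paper proves right regularity by induction on the number of columns $n$: the base case uses that $\C\langle x_1,\dots,x_d\rangle$ is an integral domain, and the inductive step splits according to whether $b$ has a zero entry (in which case that column is dropped, with Lemma \ref{lem:minor}(i) preserving fullness) or not (in which case Lemma \ref{lem:d-independence} either produces a zero entry, returning to the first case, or shows $b$ is left $d$-independent, forcing $A=0$). You bypass the induction entirely: after normalizing $b$ so that its non-zero entries are left $d$-independent, the degree inequality kills \emph{all} columns of $A$ indexed by the support $J$ of $b$, and even a single zero column already makes an $m\times n$ matrix with $m\geq n$ hollow ($r+s=m+|J|>\max\{m,n\}$), hence non-full. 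The core mechanism — left $d$-independence turning $\sum_j a_{ij}b_j=0$ into $a_{ij}=0$ — is identical in both arguments, but your organization replaces the induction and the appeal to Lemma \ref{lem:minor} with the hollowness criterion, which is shorter and makes the contradiction with fullness more transparent. The only cosmetic point worth adding is that the zero columns indexed by $J$ need a column permutation (an invertible scalar matrix, harmless for the inner rank by Lemma \ref{lem:inner rank}(ii)) to form a contiguous block, and that your identification of which flavor of $d$-independence is needed on each side (left for the column $b$, right for a row annihilating $A$ from the left) is exactly right.
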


\begin{proof}
Suppose that $m\geqslant n$; we want to prove that any full $m\times n$ matrix $A$ is right regular by induction on $n$. When $n=1$, $A$ is a full column means that there is at least one entry of $A$ which is non-zero; then, as $\C\left\langle x_{1},\dots,x_{d}\right\rangle $ is an integral domain, there is no zero divisor for this non-zero entry and so for $A$.

Now suppose the assertion is proven for $n=k-1$ ($k\leqslant m$), and we consider a full $m\times k$ matrix $A$ such that $Ab=\0$ for some column $b$. If there is one entry of $b$ being zero, then we can obtain $b=\0$ by the assertion for $k-1$: Actually, we may assume that $b=(b' \ 0)^{T}$, namely, has its last entry as zero, then by writing $A=(A'\  a)$, where $A'$ is the block of the first $n-1$ columns of $A$, we have $Ab=A'b'=\0$ and thus $b'=\0$ follows from the assertion for $k-1$. In order to use the assertion for $k-1$, we only need to show that $A'$ is full, which is guaranteed by part (i) (with  exchanged rows and columns) of Lemma \ref{lem:minor}.

Finally, we deal with the case where every entry of $b$ is non-zero. We apply Lemma \ref{lem:d-independence} to $b$, that is, $b$ is either left $d$-independent or can be reduced to a new column who has zero entries. In the case that $b$ is reduced by an invertible matrix $P$ to a column $Pb$ has zeros, then from $(AP^{-1})(Pb)=\0$ we see that $Pb=\0$ by the previous paragraph. So we have $b=\0$ as desired, and then it remains to conside the case that $b$ is left $d$-independent and has no zero entry. In this case, $b$ is also left linear independent over polynomials, which enforces that $A=\0$, which is a contradiction with fullness of $A$.

Therefore, we can conclude that for any matrix $A\in M_{m,n}(\C\left\langle x_{1},\dots,x_{d}\right\rangle)$ with $n\leqslant m$ is right regular. By symmetry the second half of our assertion follows.

\end{proof}

\begin{lemma}
(See \cite[Proposition 3.1.3]{Coh06}) If two full matrices $A\in M_{m,r}(\C\left\langle x_{1},\dots,x_{d}\right\rangle )$ and $B\in M_{r,n}(\C\left\langle x_{1},\dots,x_{d}\right\rangle )$ satisfy $AB=\0$, then $m+n\leqslant r$.
\end{lemma}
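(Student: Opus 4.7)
The plan is to argue by contradiction, assuming $m + n > r$. The first step is to rule out the easy cases using Lemma~\ref{lem:regular}: if $m \geq r$, then the full matrix $A$ is right regular, so $AB = \0$ forces $B = \0$, contradicting the fullness of $B$ (which in particular makes $B$ nonzero); symmetrically, $n \geq r$ is impossible. Hence we may assume $m < r$ and $n < r$, which in particular yields $\rho(A) = m$ and $\rho(B) = n$.

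Next I would form the $(m+r)\times(r+n)$ block matrix
$$M = \begin{pmatrix} A & \0 \\ -\1_r & B \end{pmatrix}$$
and estimate its inner rank in two ways. The upper bound $\rho(M) \leq r$ follows by left multiplication with the invertible matrix $\begin{pmatrix} \1_m & A \\ \0 & \1_r\end{pmatrix}$, which, using $AB = \0$, transforms $M$ into $\begin{pmatrix} \0 & \0 \\ -\1_r & B \end{pmatrix}$; the latter factors as $\begin{pmatrix} \0 \\ -\1_r\end{pmatrix}\begin{pmatrix} \1_r & -B\end{pmatrix}$ and so has inner rank at most $r$. The goal is then to force a lower bound $\rho(M) \geq m + n$, which combined with $\rho(M) \leq r$ gives the desired contradiction. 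Given any rank factorization $M = \begin{pmatrix} P \\ Q \end{pmatrix}\begin{pmatrix} R & S \end{pmatrix}$ of inner dimension $k$, the block identities $PR = A$, $PS = \0$, $QR = -\1_r$, $QS = B$ together with the fullness of $\1_r$ (Lemma~\ref{lem:full identity}, since $\C\langle x_1,\dots,x_d\rangle$ is stably finite) force $\rho(Q) = \rho(R) = r$, and then standard rank-multiplicativity arguments applied to $PR = A$ and $QS = B$ show that $P$ and $S$ are themselves full, of inner ranks $m$ and $n$ respectively.

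The crux, and the main obstacle, is to pass from $PS = \0$ with $P$, $S$ full to the desired inequality $m + n \leq k$: this is essentially the same statement one is trying to prove, so the argument must be set up as an induction on $r$. The base case $r = 1$ is immediate because $\C\langle x_1,\dots,x_d\rangle$ is an integral domain, so a nonzero column times a nonzero row cannot vanish. For the inductive step the delicate issue is the borderline $k = r$: here $Q$ is a square $r \times r$ matrix with $QR = -\1_r$, which by stable finiteness means $Q$ is invertible over $\C\langle x_1,\dots,x_d\rangle$, so the rank factorization simply reproduces the original setup with no reduction in $r$. To break the circularity one must use the degree-reduction Lemma~\ref{lem:d-independence} to replace $A$ and $B$ by equivalent matrices (under invertible left and right multiplications, which preserve both fullness and the relation $AB = \0$) whose nonzero columns, respectively rows, are right/left $d$-independent; the identity $AB = \0$ then combines with the degree inequalities from $d$-independence to force entire rows of $B$ (or columns of $A$) to vanish, producing a strictly smaller instance of the same problem and completing the induction.
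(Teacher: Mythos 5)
Your opening step (ruling out $m\geq r$ and $n\geq r$ via Lemma \ref{lem:regular}) is correct and is also how the paper begins. But the rest of the proposal has a genuine gap, and the block-matrix detour cannot be repaired as described. First, the construction of $M$ is provably vacuous: in any factorization $M=\left(\begin{smallmatrix}P\\Q\end{smallmatrix}\right)\left(\begin{smallmatrix}R & S\end{smallmatrix}\right)$ of inner dimension $k$, the identity $QR=-\1_r$ exhibits $\1_r$ as a product through $k$ columns, and since $\1_r$ is full (Lemma \ref{lem:full identity}) this forces $k\geq r$; combined with your upper bound $\rho(M)\leq r$ this gives $\rho(M)=r$ \emph{always}. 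So the ``non-borderline'' case $k<r$ never occurs, the rank factorization of $M$ only reproduces the original data ($P$, $S$ full of sizes $m\times r$, $r\times n$ with $PS=\0$), and the proposed induction on $r$ never advances through this construction. Everything therefore rests on your final paragraph, which is exactly where the substance of the lemma lies and which you do not carry out. Moreover, that paragraph misdescribes the mechanism: Lemma \ref{lem:d-independence} applies to a \emph{tuple of polynomials}, not to ``columns of $A$'' or ``rows of $B$'' (no notion of $d$-independence for vectors of polynomials is defined or proved), and it is not $AB=\0$ together with degree inequalities that kills rows of $B$.

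For comparison, the paper's argument runs as follows. Since $m<r$, $A$ is not right regular, so there is a nonzero column $b$ with $Ab=\0$; Lemma \ref{lem:d-independence} applied to the \emph{entries of $b$} produces an invertible $P$ with $Pb=(\0\ b')^T$ and $b'$ left $d$-independent, hence left linearly independent, and then $(AP^{-1})(Pb)=\0$ forces the corresponding columns of $AP^{-1}$ to vanish. Iterating, one may assume $A=(A'\ \0)$ with $A'\in M_{m,s}$ right regular, whence $AB=\0$ gives $B=(\0\ B'')^T$. The conclusion is then a \emph{hollowness count}, not a smaller instance of the lemma: $(A'\ \0)$ is full, so its $m\times(r-s)$ zero block satisfies $m+(r-s)\leq r$, i.e.\ $s\geq m$, and likewise $(\0\ B'')^T$ full gives $s+n\leq r$; adding yields $m+n\leq r$. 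To fix your write-up, drop the matrix $M$ entirely and execute this kernel-vector reduction and the final non-hollowness estimate explicitly.
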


\begin{proof}
By the previous lemma, it is clear that $m<r$ and $n<r$. Moreover, as $A$ is not right regular, there exists some non-zero column $b$ such that $Ab=\0$; we can use Lemma \ref{lem:d-independence} to reduce $b$ to a column $(\0\ b')^T$ whose non-zero part $b'$ is $d$-independent; then by the same invertible matrix acting on $A$, we reduce $A$ to the form 
$(A'\ \0)$ as $b'$ is independent over $\C\left\langle x_{1},\dots,x_{d}\right\rangle$. So we may assume that $A$ is of the form $(A'\ \0)$, where $A'\in M_{m,s}(\A)$ is right regular for some $s<r$. Hence the relation $AB=\0$ turns to
\[\begin{pmatrix}A' & \0\end{pmatrix}\begin{pmatrix}B'\\B''\end{pmatrix}=\0;\]
then it follows $A'B'=\0$ and thus $B'=\0$ as $A'$ is right regular. Note that $(A'\ \0)$ and $(\0\ B'')^T$ are full, this implies that $s\geqslant m$ and $r-s\geqslant n$, otherwise they would be hollow. Therefore, we obtain $m+n\leqslant r-s+s=r$.
\end{proof}

\begin{lemma}
\label{lem:product and sum} The product and the diagonal sum of full square matrices over $\C\left\langle x_{1},\dots,x_{d}\right\rangle$ are full.
\end{lemma}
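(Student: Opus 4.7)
My plan is to prove both claims by contradiction, reducing each case to the immediately preceding lemma, which asserts that if $\tilde A\in M_{m,r}(\C\langle x_1,\dots,x_d\rangle)$ and $\tilde B\in M_{r,n}(\C\langle x_1,\dots,x_d\rangle)$ are both full and satisfy $\tilde A\tilde B=0$, then $m+n\leqslant r$. In both situations I will take the presumed non-trivial factorization of $AB$ (respectively $A\oplus B$) and repackage it as the vanishing product of two auxiliary matrices that I can show to be full, whereupon the preceding lemma yields a numerical contradiction.

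For the product, suppose for contradiction that $A,B\in M_n(\C\langle x_1,\dots,x_d\rangle)$ are both full but $AB$ is not full, so that $AB=PQ$ with $P\in M_{n,r}(\C\langle x_1,\dots,x_d\rangle)$, $Q\in M_{r,n}(\C\langle x_1,\dots,x_d\rangle)$, and $r<n$. I will rewrite this as
\[\begin{pmatrix} A & -P\end{pmatrix}\begin{pmatrix} B\\ Q\end{pmatrix}=\0,\]
and then observe, using parts (i) and (iv) of Lemma \ref{lem:inner rank}, that the bordered matrix $\begin{pmatrix}A & -P\end{pmatrix}\in M_{n,n+r}(\C\langle x_1,\dots,x_d\rangle)$ has inner rank squeezed between $\rho(A)=n$ and $n$, and is therefore full; the same argument makes $\begin{pmatrix} B\\ Q\end{pmatrix}\in M_{n+r,n}(\C\langle x_1,\dots,x_d\rangle)$ full. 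The preceding lemma then forces $n+n\leqslant n+r$, i.e.\ $n\leqslant r$, contradicting $r<n$.

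For the diagonal sum, I will start from $A\in M_m(\C\langle x_1,\dots,x_d\rangle)$ and $B\in M_n(\C\langle x_1,\dots,x_d\rangle)$ full and suppose $A\oplus B=PQ$ with intermediate size $s<m+n$. Block-decomposing $P=\begin{pmatrix} P_1\\ P_2\end{pmatrix}$ and $Q=\begin{pmatrix} Q_1 & Q_2\end{pmatrix}$ according to the $(m,n)$-partition, the equation $A\oplus B=PQ$ unpacks into $A=P_1Q_1$, $B=P_2Q_2$, and the crucial off-diagonal relations $P_1Q_2=\0$ and $P_2Q_1=\0$. I will then argue that each of $P_1,Q_1,P_2,Q_2$ is full: combining the monotonicity $\rho(XY)\leqslant\min(\rho(X),\rho(Y))$ (a direct consequence of the definition of inner rank) with part (i) of Lemma \ref{lem:inner rank} forces, for instance, $m=\rho(A)\leqslant\rho(P_1)\leqslant m$, so $\rho(P_1)=m$, and analogously for the other three blocks. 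The preceding lemma applied to $P_1Q_2=\0$ then yields $m+n\leqslant s$, contradicting $s<m+n$.

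The whole argument is essentially mechanical once the right matrix identities are written down; I do not anticipate any substantive obstacle. The only point requiring some care is the fullness of the two bordered matrices in the product case and of the four blocks $P_1,Q_1,P_2,Q_2$ in the diagonal-sum case, and in every instance this fullness follows by sandwiching the inner rank between matching upper and lower bounds extracted from Lemma \ref{lem:inner rank}.
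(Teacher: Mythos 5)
Your proof is correct and follows essentially the same route as the paper: for the product, the identity $\begin{pmatrix} A & -P\end{pmatrix}\begin{pmatrix} B\\ Q\end{pmatrix}=\0$ together with Lemma \ref{lem:inner rank} and the preceding lemma is exactly the paper's argument, and for the diagonal sum the paper likewise extracts the off-diagonal relation $P_1Q_2=\0$ and deduces fullness of the relevant blocks from the factorizations $A=P_1Q_1$, $B=P_2Q_2$ (phrased there as a contradiction with a rank factorization rather than via the inequality $\rho(XY)\leqslant\min(\rho(X),\rho(Y))$, which is the same observation). No gaps.
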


\begin{proof}
We want to prove the assertion by contradiction. Suppose that $A$ and $B$ are two full $n\times n$ matrices such that $AB$ is not full. Then there exists a rank factorization $AB=CD$, where $C\in M_{n,r}(\A)$, $D\in M_{r,n}(\A)$ with $r<n$. Alternatively, we can write
\[\begin{pmatrix}A & C\end{pmatrix}\begin{pmatrix}B\\-D\end{pmatrix}=\0;\]
therefore if $(A\ C)$ and $(B \  (-D))^{T}$ are full, then by the previous lemma, we have $2n\leqslant n+r$, as desired. To see that these two matrices are full, recall part (iv) of Lemma \ref{lem:inner rank}; this gives
\[\rho\begin{pmatrix}A & C\end{pmatrix}\geqslant\max\{\rho(A),\rho(C)\}=n,\]
so $(A\ C)$ is full; by symmetry we can also see that $(B \ D)^{T}$ is full.

Suppose that $A\in M_{m}(\C\left\langle x_{1},\dots,x_{d}\right\rangle )$ and $B\in M_{n}(\C\left\langle x_{1},\dots,x_{d}\right\rangle )$ are full but $A\oplus B$ is not full. Then there exists a rank factorization
\[\begin{pmatrix}A & \0\\\0 & B\end{pmatrix}=\begin{pmatrix}C'\\C''\end{pmatrix}\begin{pmatrix}D' & D''\end{pmatrix},\]
where $C'$, $C''$, $D'$ and $D''$ are of sizes $m\times r$, $n\times r$, $r\times m$ and $r\times n$, respectively, for some $r<m+n$. By comparing both sides of the equation, we obtain $C'D''=\0$; therefore, if $C'$ and $D''$ are full, then we obtain $m+n\leqslant r$ by the previous lemma, as desired. To see that $C'$ is full, we consider the relation $A=C'D'$: if $C'$ is not full, then there exists a rank factorization $C'=FG$ with $F\in M_{m,s}(\A)$, $G\in M_{s,r}(\A)$ and $s<m$; thus, $A=F(GD')$ yields that $\rho(A)<s<m$, contradicting the fullness of $A$. Similarly, we can also prove that $D''$ is full.
\end{proof}

Finally we have the following proposition as a corollary of Theorem \ref{thm:full minor} (or Theorem \ref{thm:full minor-appendix}).

\begin{proposition}
\label{prop:full minor-appendix}Let $A\in M_{n}(\C\left\langle x_{1},\dots,x_{d}\right\rangle )$ be given in the form $A=(a \ A')$, where $a$ is the first column of $A$ and $A'$ is the remaining block. Assume that $A$ is full, then there is a full $(n-1)\times(n-1)$ block in $A'$.
\end{proposition}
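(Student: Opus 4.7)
The plan is to deduce this proposition as a two-step corollary of Theorem \ref{thm:full minor} (equivalently Theorem \ref{thm:full minor-appendix}): first show that the rectangular block $A'$ is itself full as an $n\times(n-1)$ matrix, and then extract a full $(n-1)\times(n-1)$ square sub-block using the full-minor theorem.

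For the first step I would argue by contradiction. Suppose $A'$ is not full, so that there exists a factorization $A' = BC$ with $B\in M_{n,r}(\C\langle x_1,\dots,x_d\rangle)$ and $C\in M_{r,n-1}(\C\langle x_1,\dots,x_d\rangle)$ for some $r<n-1$. Then one obtains an honest factorization of the square matrix $A$ by absorbing the first column into an enlarged left factor, namely
\[A \,=\, (a \ \ A') \,=\, (a \ \ BC) \,=\, (a \ \ B) \begin{pmatrix} 1 & \0 \\ \0 & C\end{pmatrix},\]
where the left factor has size $n\times(r+1)$ and the right factor has size $(r+1)\times n$. Since $r+1 < n$, this shows $\rho(A)\leqslant r+1 < n$, contradicting the hypothesis that $A$ is full. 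Hence $\rho(A') = n-1$.

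For the second step I would invoke Theorem \ref{thm:full minor} applied to $A'$; the hypothesis of that theorem (closure of square full matrices under products and diagonal sums) holds over $\C\langle x_1,\dots,x_d\rangle$ by Lemma \ref{lem:product and sum}. This theorem produces a square block of $A'$ of dimension $\rho(A') = n-1$ which is full. Since $A'$ has exactly $n$ rows and $n-1$ columns, such a block is obtained simply by deleting one row of $A'$; this is the desired full $(n-1)\times(n-1)$ minor, completing the argument.

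I do not anticipate any serious obstacle: the only slightly subtle point is the first step, where one must recognize that the failure of $A'$ to be full as a non-square matrix already forces $A$ to fail to be full, via the trivial ``block-extension'' factorization above. Everything else is a direct citation of Theorem \ref{thm:full minor} and Lemma \ref{lem:product and sum}, both of which are already established in the appendix.
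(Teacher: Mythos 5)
Your proposal is correct and follows essentially the same route as the paper: the paper also first observes that $A'$ is full (citing Lemma \ref{lem:minor}(i) with rows and columns exchanged, whose proof is exactly your block-extension factorization, which you simply re-derive inline) and then applies Theorem \ref{thm:full minor} to $A'$, with its hypotheses verified by Lemma \ref{lem:product and sum}.
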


\begin{proof}
First, by part (i) (with exchanged rows and columns) of Lemma \ref{lem:minor}, we can see that $A'$ is full. Then the assertion can be deduced from Theorem \ref{thm:full minor} if we can verify its requirements; this can be done by the previous lemma.
\end{proof}

\subsection{\label{sec:linear matrices}Characterizations of full matrices with linear entries}

Theorem \ref{thm:hollow and full} establishes the equivalence between fullness and hollowness for linear matrices over $\C\left\langle x_{1},\dots,x_{d}\right\rangle $ up to invertible matrices over $\C$; for reader's convenience, we are presenting a proof based on Section 5.8 of \cite{Coh06}.

\begin{lemma}
\label{degree control}(See \cite[Lemma 5.8.7]{Coh06}) Let $A\in M_{m,n}(\C\left\langle x_{1},\dots,x_{d}\right\rangle )$ be a matrix with rank factorization $A=BC$, then there exists an invertible matrix $P$ over $\C\left\langle x_{1},\dots,x_{d}\right\rangle $ such that $d(BP)\leqslant d(A)$ and $d(P^{-1}C)\leqslant d(A)$.
\end{lemma}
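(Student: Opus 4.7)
The plan is to reduce to the situation where the columns of $B$ are ``right $d$-independent'' after a change of basis on $\C\langle x_1,\dots,x_d\rangle^r$, and then read off the degree estimates for both factors simultaneously from this $d$-independence. The key point is that Lemma \ref{lem:d-independence} generalizes in a routine way from scalar polynomials to column vectors over the polynomial ring: for a tuple of column vectors $b_1,\dots,b_r\in M_{m,1}(\C\langle x_1,\dots,x_d\rangle)$, one defines $d(b_i):=\max_k d((b_i)_k)$ and calls the $b_i$ right $d$-dependent if either some $b_i=0$ or there exist $p_1,\dots,p_r$, not all zero, with $d(\sum_i b_i p_i)<\max_i(d(b_i)+d(p_i))$. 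The proof of Lemma \ref{lem:d-independence}, based on iterated left transduction $L_a$ applied entrywise, carries over verbatim.

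First, I would apply this vector version of the reduction to the columns of $B$: this produces an invertible $P\in M_r(\C\langle x_1,\dots,x_d\rangle)$ such that the non-zero columns of $B':=BP$ are right $d$-independent. Next, I would rule out the appearance of zero columns in $B'$. Since $A=BC$ is a rank factorization, $B$ is full by Lemma \ref{lem:inner rank}(iii); because $B\in M_{m,r}(\C\langle x_1,\dots,x_d\rangle)$ with $r=\rho(A)\leqslant m$, Lemma \ref{lem:regular} guarantees that $B$ is right regular. Hence if $(BP)_{*,j}=B\cdot P_{*,j}=0$ for some $j$, right regularity forces $P_{*,j}=0$, contradicting the invertibility of $P$. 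Consequently all columns of $B'$ are non-zero, and therefore all of them are right $d$-independent.

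Setting $C':=P^{-1}C$, we have $A=B'C'$ with $A_{*,k}=\sum_{i=1}^r b'_i c'_{ik}$. Applying the right $d$-independence of $b'_1,\dots,b'_r$ to the polynomials $c'_{1k},\dots,c'_{rk}$ (viewed column-wise) yields
\[
d(A_{*,k}) \;=\; d\Big(\sum_i b'_i c'_{ik}\Big) \;\geqslant\; \max_i \big(d(b'_i)+d(c'_{ik})\big)
\]
for every $k$. Taking the maximum over $k$ gives
\[
d(A)\;\geqslant\;\max_i\big(d(b'_i)+d(c'_{i,*})\big)\;\geqslant\;\max\{d(B'),d(C')\},
\]
where $d(c'_{i,*})$ denotes the degree of the $i$-th row of $C'$. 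This is exactly $d(BP)\leqslant d(A)$ and $d(P^{-1}C)\leqslant d(A)$.

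The main obstacle is purely expository: one has to formulate and check the column-vector version of Lemma \ref{lem:d-independence} with some care, because the statement there is for scalar polynomials. However, the mechanism (reduce by a one-column-at-a-time left-transduction trick using the leading monomial of one column, assemble invertible matrices at each step) is insensitive to whether the ``entries'' are polynomials or column vectors of polynomials, and the termination argument based on strict degree decrease is unchanged. Once this vectorized reduction is available, the rest of the argument is a one-line application of right $d$-independence.
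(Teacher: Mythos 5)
Your argument is correct, and it is a legitimate variant of the paper's proof rather than a reproduction of it. The paper deliberately avoids having to vectorize Lemma \ref{lem:d-independence}: it adjoins auxiliary indeterminates $y_1,\dots,y_m$ and $z_1,\dots,z_n$, encodes the $k$-th column of $B$ as the scalar polynomial $b_k'=\sum_i y_i b_{ik}$ and the $k$-th row of $C$ as $c_k'=\sum_j c_{kj}z_j$, and applies the scalar lemma in the enlarged free algebra to the identity $\sum_k b_k'c_k'=\sum_{i,j}y_i a_{ij}z_j$, whose degree is $d(A)+2$; a small extra argument is then needed to see that the reducing matrix $P$ involves only the $x$-variables. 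You instead re-prove the reduction lemma directly for columns of polynomials. As you say, the transduction mechanism, the degree bookkeeping ($d(a_ip_i)=d(a_i)+d(p_i)$ and $d(L_a(a_ib_i'))<d(a_i)$, now read entrywise), and the termination argument are unaffected, so this is sound, and it buys a cleaner endgame: no auxiliary variables and no need to check where the entries of $P$ live. Both proofs rest on the same two pillars, namely the transduction-based degree reduction and the right regularity of the full matrix $B$ (Lemma \ref{lem:inner rank}(iii) together with Lemma \ref{lem:regular}); the paper uses the latter to show the $b_k'$ stay nonzero throughout the reduction, while you use it a posteriori to show that $BP$ has no zero column.

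One small point to repair in a final write-up: the inequality $\max_i\big(d(b_i')+d(c'_{i,*})\big)\geq d(B')$ silently assumes $d(c'_{i,*})\geq 0$ for every $i$, i.e.\ that $C'=P^{-1}C$ has no zero row; if the row of $C'$ sitting opposite the highest-degree column of $B'$ were zero, that column would drop out of the maximum. This is true but needs a word: $C$ is full by Lemma \ref{lem:inner rank}(iii), hence so is $C'$, and a full $r\times n$ matrix with $r\leq n$ cannot have a zero row, since it would then be hollow. You establish the symmetric fact for the columns of $B'$ via right regularity but omit this one; the paper's proof makes the corresponding observation about the $c_k'$ explicitly.
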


\begin{proof}
Consider the free algebra of polynomials $\C\left\langle x_{1},\dots,x_{d},y_{1},\dots,y_{m},z_{1},\dots,z_{n}\right\rangle $ with more variables $y_{1},\dots,y_{m}$ and $z_{1},\dots,z_{n}$, which contains $\C\left\langle x_{1},\dots,x_{d}\right\rangle $ as a subalgebra. Writing $A=(a_{ij})$, $B=(b_{ik})\in M_{m,r}(\C\left\langle x_{1},\dots,x_{d}\right\rangle) $, $C=(c_{kj})\in M_{r,n}(\C\left\langle x_{1},\dots,x_{d}\right\rangle) $, we can define $b_{k}'=\sum_{i=1}^{m}y_{i}b_{ik}$ and $c_{k}'=\sum_{j=1}^{n}c_{kj}z_{j}$ for $k=1,\dots,r$, then
\[\sum_{k=1}^{r}b_{k}'c_{k}'=\sum_{k=1}^{r}\sum_{i=1}^{m}\sum_{j=1}^{n}y_{i}b_{ik}c_{kj}z_{j}=\sum_{i=1}^{m}\sum_{j=1}^{n}y_{i}(\sum_{k=1}^{r}b_{ik}c_{kj})z_{j}=\sum_{i=1}^{m}\sum_{j=1}^{n}y_{i}a_{ij}z_{j}.\]
If $b_{1}',\dots,b_{r}'$ are right $d$-independent, then we have
\[d(A)+2=d(\sum_{i=1}^{m}\sum_{j=1}^{n}y_{i}a_{ij}z_{j})=d(\sum_{k=1}^{r}b_{k}'c_{k}')\geqslant\max_{1\leqslant k\leqslant r}(d(b_{k}')+d(c_{k}'));\]
combining this with
\[d(b_{k}')=\max_{1\leqslant i\leqslant m}b_{ik}+1,\ d(c_{k}')=\max_{1\leqslant j\leqslant n}c_{kj}+1\]
for $k=1,\dots,r$, implies that
\[d(A)\geqslant\max_{1\leqslant k\leqslant r}(\max_{1\leqslant i\leqslant m}b_{ik}+\max_{1\leqslant j\leqslant n}c_{kj}).\]
So we see that
\[d(A)\geqslant\max_{1\leqslant k\leqslant r}\max_{1\leqslant i\leqslant m}b_{ik}=d(B)\]
and $d(A)\geqslant d(C)$ similarly.

Therefore, it remains to consider the case when $b_{1}',\dots,b_{r}'$ are not right $d$-independent. In this situation, we apply Lemma \ref{lem:d-independence} to $b_{1}',\dots,b_{r}'$, then there exists an invertible matrix $P$ reducing $(b_{1}',\dots,b_{r}')$ to a tuple whose non-zero entries are $d$-independent. First, from the proof of Lemma \ref{lem:d-independence}, we can see in the construction of this invertible matrix $P$ that $P$ only contains polynomials in $x_{1},\dots,x_{d}$, since each $c_{k}'$ has at least a monomial containing some variable $z_{j}$ (otherwise, $c_{k}'=0$ and thus $C$ has a zero column, which is impossible by the fullness of $C$). Secondly, we can show that $b_{1}',\dots,b_{r}'$ are right linear independent over $\C\left\langle x_{1},\dots,x_{d}\right\rangle $: if this is not the case, then there exist polynomials $f_{1},\dots,f_{r}\in \C\left\langle x_{1},\dots,x_{d}\right\rangle $ such that
\[0=\sum_{k=1}^{r}b_{k}'f_{k}=\sum_{i=1}^{m}y_{i}(\sum_{k=1}^{r}b_{ik}f_{k}),\]
which enforces that $\sum_{k=1}^{r}b_{ik}f_{k}=0$ for all $i=1,\dots,m$, i.e., $B$ is not right regular over $\C\left\langle x_{1},\dots,x_{d}\right\rangle$; but $B$ is right regular by Lemma \ref{lem:regular} since $B$ is full. As $b_{1}',\dots,b_{r}'$ are right linear independent, the right acting of $P$ on them can not reduce any of them to zero. Hence we are the back to the situation that $b_{1}',\dots,b_{r}'$ are right $d$-independent as previous.
\end{proof}

\begin{definition}
Let $A=A_{0}+A_{1}x_{1}+\cdots+A_{d}x_{d}\in M_{m,n}(\C\left\langle x_{1},\dots,x_{d}\right\rangle )$ with $m\geqslant n$. $A$ is called \emph{left monic} if there are matrices $B_{1},\dots,B_{d}\in M_{n,m}\left(\C\right)$ such that
\[\sum_{i=1}^{d}B_{i}A_{i}=\1_{n}.\]
That is, the coefficients of homogeneous terms in $A$ form a left invertible matrix $(A_{1}\ \dots\  A_{d})^{T}\in M_{md,n}\left(\C\right)$. Similarly, we say a linear matrix $A\in M_{m,n}(\C\left\langle x_{1},\dots,x_{d}\right\rangle )$ with $m\leqslant n$ is \emph{right monic} if $(A_{1}\ \dots\  A_{d})\in M_{m,nd}\left(\C\right)$ has a right inverse.
\end{definition}

\begin{lemma}
\label{lem:monic}(See \cite[Corollary 5.8.4]{Coh06}) If $A\in M_{m,n}(\C\left\langle x_{1},\dots,x_{d}\right\rangle )$ is a linear full matrix with $m\geqslant n$, then there exist an invertible matrix $U\in M_{m}(\C)$ and an invertible linear matrix $P\in M_{n}(\C\left\langle x_{1},\dots,x_{d}\right\rangle )$ such that
\[UAP=\begin{pmatrix}B & \0\\\0 & \1_{s}\end{pmatrix},\]
where $B\in M_{m-s,n-s}(\C\left\langle x_{1},\dots,x_{d}\right\rangle )$ is left monic for some $s=0,1,\dots,n$.
\end{lemma}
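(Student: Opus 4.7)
I will argue by induction on the number of columns $n$. The base case $n=0$ is trivial (the empty matrix is vacuously left monic, $s=0$). For the inductive step, the strategy is to either observe that $A$ is already left monic (and take $U=I_m$, $P=I_n$, $s=0$, $B=A$), or to peel off a block of identity columns while preserving linearity.

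Suppose $A$ is not left monic. Then the stacked homogeneous coefficient matrix
\[
M=\begin{pmatrix}A_{1}\\ \vdots \\ A_{d}\end{pmatrix}\in M_{md,n}(\C)
\]
has rank $r<n$, so $\dim \ker(M)=n-r\geq 1$. First, choose $V\in GL_{n}(\C)$ carrying $\ker(M)$ onto $\mathrm{span}(e_{r+1},\dots,e_{n})$; then the last $n-r$ columns of $AV$ have zero homogeneous part, i.e.\ are constant vectors in $\C^{m}$. These constant columns are $\C$-linearly independent: otherwise some non-zero $w\in\C^{n-r}$ would yield $AV\cdot\binom{0}{w}=0$, contradicting right regularity of the full matrix $A$ (Lemma \ref{lem:regular}). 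Hence there is $U'\in GL_{m}(\C)$ bringing this block of constants to $\binom{0}{I_{n-r}}$, and we obtain
\[
U'AV=\begin{pmatrix}A' & 0\\ A'' & I_{n-r}\end{pmatrix},\qquad A'\in M_{m-n+r,r}(\C\langle x\rangle),\ A''\in M_{n-r,r}(\C\langle x\rangle),
\]
with both $A'$ and $A''$ linear.

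The next step is the linear clearing $P_{1}=\begin{pmatrix}I_{r} & 0\\ -A'' & I_{n-r}\end{pmatrix}$. Writing $P_{1}=I_{n}+N$ with $N=\begin{pmatrix}0 & 0\\ -A'' & 0\end{pmatrix}$, one checks $N^{2}=0$, so $P_{1}^{-1}=I_{n}-N$ is also linear and $P_{1}$ is invertible over $\C\langle x\rangle$. The crucial point is that $U'AV\cdot P_{1}$ remains linear: the only non-zero rows of $N$ are the last $n-r$ rows, which act by multiplying the last $n-r$ \emph{columns} of $U'AV$; but those columns are constant, so no quadratic terms appear. A direct computation gives
\[
U'AV\cdot P_{1}=\begin{pmatrix}A' & 0\\ 0 & I_{n-r}\end{pmatrix}.
\]

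Now $A'$ is linear, has strictly fewer columns $(r<n)$, and is full: by Lemma \ref{lem:minor}(i) applied to the matrix $U'AV P_{1}$ (viewed after dropping the last $n-r$ identity-block rows), any proper factorization of $A'$ would descend to a proper factorization of $U'AV P_{1}$, contradicting fullness of $A$. Invoke the induction hypothesis on $A'$: there exist $U''\in GL_{m-n+r}(\C)$ and an invertible linear $P'\in M_{r}(\C\langle x\rangle)$ with
\[
U''A'P'=\begin{pmatrix}B & 0\\ 0 & I_{s'}\end{pmatrix},\qquad B\text{ left monic}.
\]
Assemble $U:=\mathrm{diag}(U'',I_{n-r})\,U'\in GL_{m}(\C)$ and $P:=V\,P_{1}\,\mathrm{diag}(P',I_{n-r})$, and set $s:=s'+(n-r)$; then a block computation gives $UAP=\begin{pmatrix}B & 0\\ 0 & I_{s}\end{pmatrix}$ as desired.

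The main obstacle is the final step: verifying that the assembled $P$ is truly \emph{linear}, not merely of polynomial degree $\leq 2$. Each factor $V$, $P_{1}$, $\mathrm{diag}(P',I_{n-r})$ is individually linear, but an arbitrary product of linear matrices is a priori quadratic. The resolution uses the specific block structure: the only non-constant entries of $P_{1}$ sit in the $(2,1)$-block, and $\mathrm{diag}(P',I_{n-r})$ acts only on the first $r$ coordinates, so their product $P_{1}\cdot\mathrm{diag}(P',I_{n-r})$ has cross-terms of the shape $A''\cdot P'$ which one must either show cancel, or avoid by reorganizing the recursion (for example, by absorbing the constant part of $P'$ into $V$ before applying $P_{1}$, leaving only a strictly nilpotent piece in the last step whose interaction with $P_{1}$ is linear in the relevant block). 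This rearrangement—keeping constants to the outside and postponing the single ``linear elementary'' factor to the end—is the delicate technical point and is where the induction must be set up carefully.
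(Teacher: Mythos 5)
Your argument is essentially the paper's (and Cohn's): use the rank deficiency of the stacked coefficient matrix $(A_1\ \dots\ A_d)^T$ to produce constant columns, normalize them by scalar row operations, clear the residual linear entries by an elementary right factor $\1+N$ with $N$ linear and $N^2=0$, and recurse on a smaller full linear matrix. The only structural difference is that you remove all $n-r$ deficient columns in one pass, which forces you to check that the resulting constant columns are linearly independent; your appeal to right regularity of a full matrix with $m\geq n$ (Lemma \ref{lem:regular}) does this correctly, whereas the paper removes one column per step and argues that a vanishing constant column would make $A$ hollow. Everything you actually carry out is correct. Two small points: the fullness of $A'$ is better justified by the diagonal-sum argument you also hint at (a factorization of $A'$ of inner dimension $<r$ yields one of $A'\oplus\1_{n-r}$ of inner dimension $<n$, while $\rho(U'AVP_1)=\rho(A)=n$ by Lemma \ref{lem:inner rank}(ii), since $U'$, $V$, $P_1$ are invertible over $\C\langle x_1,\dots,x_d\rangle$) than by Lemma \ref{lem:minor}(i); and you should note that $A'$ has $m-n+r\geq r$ rows, so the induction hypothesis applies.

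The issue you flag at the end is genuine and is not merely a matter of reorganizing the recursion; but you should also know that the paper's own proof (``we can continue this procedure'') composes exactly the same elementary factors and therefore has the same feature. Concretely, for $d=2$ and
\[A=\begin{pmatrix}x_1&0&0\\x_2&1&0\\0&x_1&1\end{pmatrix},\]
the procedure yields the two clearing factors whose assembled product is
\[\begin{pmatrix}1&0&0\\0&1&0\\0&-x_1&1\end{pmatrix}\begin{pmatrix}1&0&0\\-x_2&1&0\\0&0&1\end{pmatrix}=\begin{pmatrix}1&0&0\\-x_2&1&0\\x_1x_2&-x_1&1\end{pmatrix},\]
which is quadratic; and one can check directly that no scalar $U$ and linear invertible $P$ bring this $A$ to the form $\operatorname{diag}(b,\1_2)$ with $b$ linear, so the quadratic entry cannot be removed by rearranging constants as you suggest. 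The good news is that the linearity of $P$ is never used: in the only application of the lemma (the proof of Theorem \ref{thm:hollow and full}) one needs only that $U$ is scalar, that $P$ is invertible over $\C\langle x_1,\dots,x_d\rangle$ so that $P^{-1}C$ is a polynomial matrix, and that $UAP$ has the stated block form with $B$ linear and left monic. Your proof is complete for that weakened conclusion, which is the one you should state; establishing linearity of $P$ itself would require a different idea and is not available from this construction.
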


\begin{proof}
Let $A=A_{0}+A_{1}x_{1}+\cdots+A_{d}x_{d}$ be a full matrix. Suppose that $A$ is not left monic, then the scalar-valued matrix $(A_{1}\ \dots\  A_{d})^{T}$ has no left inverse. This means that its rank is less than $n$, and thus we can find a column operation to eliminate its last column. Now if it happens that the last column of $A_{0}$ is also zero after the same column operation, then $A$ becomes hollow after this operation. This implies that $A$ is not full, which is a contradiction to our assumption. Hence the last column of $A_{0}$ cannot be zero and then by some row operations over $\C$ we can turn the last column $A_{0}$ into the form $(\0,1)^{T}$, namely, the last column has its last entry as $1$ and all other entries as $0$. Then by further column operations over linear polynomials, we reduce $A$ into the form
\[\begin{pmatrix}B & \0\\\0 & 1\end{pmatrix},\]
where $B$ is a linear matrix in $M_{m-1,n-1}(\C\left\langle x_{1},\dots,x_{d}\right\rangle )$. Therefore, we can continue this procedure if $B$ is still not left monic and we will reach either a left monic $B$ or a vanishing $B$.
\end{proof}

\begin{theorem}
\label{thm:hollow and full-appendix}(See \cite[Corollary 6.3.6]{Coh95} or \cite[Theorem 5.8.8]{Coh06}) Let $A$ be a linear matrix over $\C\left\langle x_{1},\dots,x_{d}\right\rangle$ that is not full. Then there exist invertible matrices $U,V\in M_{n}\left(\C\right)$ such that $UAV$ is hollow.
\end{theorem}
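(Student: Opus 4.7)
The plan is to produce a rank factorization $A = BC$ with both factors linear, extract an algebraic relation on the scalar coefficients, and then use only constant invertible matrices to expose a large zero block. Since $A$ is not full, such a factorization exists with $B \in M_{n,r}(\C\langle x_1,\dots,x_d\rangle)$ and $C \in M_{r,n}(\C\langle x_1,\dots,x_d\rangle)$ for some $r < n$. By Lemma \ref{degree control}, there is an invertible $Q$ over $\C\langle x_1,\dots,x_d\rangle$ with $d(BQ) \leq d(A) \leq 1$ and $d(Q^{-1}C) \leq 1$, so after replacing $B$ by $BQ$ and $C$ by $Q^{-1}C$ (which preserves $A = BC$) both factors are linear. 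Writing $B = B_0 + \sum_{i=1}^d B_i x_i$ and $C = C_0 + \sum_{j=1}^d C_j x_j$, the vanishing of the degree-two part of $BC$ combined with the linear independence of the monomials $x_i x_j$ in $\C\langle x_1,\dots,x_d\rangle$ yields
$$B_i C_j = 0 \qquad \text{for all } i,j \in \{1,\dots,d\}.$$

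Next, set $W := \bigcap_{i=1}^d \ker(B_i) \subseteq \C^r$ and $w := \dim W$; the above relation gives $\im(C_j) \subseteq W$ for every $j \geq 1$. Pick a scalar invertible $P \in M_r(\C)$ mapping $W$ onto $\C^w \oplus \{0\} \subseteq \C^r$, and replace $B$ by $BP^{-1}$ and $C$ by $PC$. Then for each $i \geq 1$ the matrix $B_i P^{-1}$ has zero first $w$ columns, and for each $j \geq 1$ the matrix $PC_j$ has zero last $r - w$ rows, so partitioning accordingly,
$$B = \begin{pmatrix} E & F \end{pmatrix}, \qquad C = \begin{pmatrix} G \\ H \end{pmatrix},$$
with $E \in M_{n,w}(\C)$ and $H \in M_{r-w,n}(\C)$ constant scalar matrices and $F, G$ linear; in particular $A = EG + FH$.

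To finish, let $a := \rank(E) \leq w$ and $b := \rank(H) \leq r - w$, so that $a + b \leq r < n$. Since $E$ and $H$ are scalar, there exist invertible $U, V \in M_n(\C)$ such that the last $n - a$ rows of $UE$ and the last $n - b$ columns of $HV$ all vanish. Then
$$UAV = (UE)(GV) + (UF)(HV)$$
has its bottom-right $(n-a) \times (n-b)$ block equal to zero (the first summand vanishes there because $UE$ does, the second because $HV$ does). Since $(n-a) + (n-b) = 2n - (a+b) > n$, the matrix $UAV$ is hollow. The main technical subtlety is the application of Lemma \ref{degree control}: a generic rank decomposition of a linear matrix need not have linear factors, and linearity of both factors is precisely what forces the clean relation $B_i C_j = 0$; the subsequent extraction of the shrunk-subspace structure and reduction to hollowness is then standard linear algebra.
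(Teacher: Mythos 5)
Your proof is correct, and it takes a genuinely different route from the paper's. Both arguments begin identically: take a rank factorization $A=BC$ with $r=\rho(A)<n$ and invoke Lemma \ref{degree control} to make both factors linear --- this is the one non-elementary ingredient in either version. From there the paper normalizes $B$ further via Lemma \ref{lem:monic}, producing an invertible \emph{linear} matrix $P$ over $\C\left\langle x_{1},\dots,x_{d}\right\rangle$ that brings $B$ to the block form $\operatorname{diag}(B',\1_{s})$ with $B'$ left monic; the left monicity is then what forces the linear part of the corresponding block of $P^{-1}C$ to vanish, and the hollow block is read off from the resulting factorization. You instead extract the relations $B_{i}C_{j}=0$ directly from the vanishing of the degree-two part of $BC$ (legitimate, since the distinct words $x_{i}x_{j}$ are linearly independent in $\C\left\langle x_{1},\dots,x_{d}\right\rangle$), and then work purely with scalar linear algebra: the common kernel $W$ of the $B_{i}$ contains every $\im(C_{j})$, a constant change of basis on the inner index $\C^{r}$ makes the first $w$ columns of $B$ and the last $r-w$ rows of $C$ scalar, and the rank bound $a+b\leqslant r<n$ produces a zero block of size $(n-a)\times(n-b)$ with $(n-a)+(n-b)>n$. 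Your route avoids Lemma \ref{lem:monic} entirely, uses only a scalar (rather than polynomial) change of basis on the inner index, and is closer in spirit to the shrunk-subspace picture of Proposition \ref{prop:shrunk_subspace}; it costs nothing extra, so it is arguably the cleaner argument. The only case it formally skips is $A=\0$, which has no rank factorization but is hollow outright.
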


\begin{proof}
Let $A=BC$ be a rank factorization, where $B$ and $C$ are two full matrices. According to Lemma \ref{degree control}, we may assume that $B$ and $C$ are also linear. Denote $r=\rho(A)$. By applying Lemma \ref{lem:monic} to $B$, there exist invertible matrices $U\in M_{n}\left(\C\right)$ and $P\in M_{r}(\C\left\langle x_{1},\dots,x_{d}\right\rangle )$ such that
\[UBP=\begin{pmatrix}B' & \0\\\0 & \1_{s}\end{pmatrix},\]
where $B'\in M_{n-s,r-s}(\C\left\langle x_{1},\dots,x_{d}\right\rangle )$ is left monic for some $s=0,\cdots,r$. Writing
\[P^{-1}C=\begin{pmatrix}C'\\C''\end{pmatrix}\]
and
\begin{align*}
B' & =B_{0}'+B_{1}'x_{1}+\cdots+B_{d}'x_{d},\\
C' & =C_{0}'+C_{1}'x_{1}+\dots+C_{d}'x_{d}+C^{\lambda}\in M_{r-s,n}(\C\left\langle x_{1},\dots,x_{d}\right\rangle ),
\end{align*}
where $C^{\lambda}$ contains the terms in $C'$ of degree higher than $1$, then by comparing the two sides of the equation
\[UA=(UBP)(P^{-1}C)=\begin{pmatrix}B' & \0\\\0 & \1_{s}\end{pmatrix}\begin{pmatrix}C'\\C''\end{pmatrix}=\begin{pmatrix}B'C'\\C''\end{pmatrix},\]
we get $d(B'C')\leqslant1$ and $d(C'')\leqslant1$. Hence the coefficients of those terms in $B'C'$ whose degree is larger than $1$ are all zero, that is,
\[B_{i}'C_{j}'=0,\ i,j=1,\dots,d\text{ and }C^{\lambda}=0.\]
Since $B'$ is left monic, it follows that $C_{j}'=0$ for $j=1,\dots,d$ and thus $C'=C_{0}'\in M_{r-s,n}(\C)$. By Lemma \ref{lem:minor}, $C'$ is full and so by column operations over $\C$ we can reduce $C'$ to the form $(\1_{r-s}\ \0)$. Therefore, finally we can find an invertible matrix $V\in M_{n}(\C)$ such that $P^{-1}CV$ is of the form
\[\begin{pmatrix}\1_{r-s} & \0\\C_{1}'' & C_{2}''\end{pmatrix}\]
and we have
\[UAV=\begin{pmatrix}B' & \0\\\0 & \1_{s}\end{pmatrix}\begin{pmatrix}\1_{r-s} & \0\\C_{1}'' & C_{2}''\end{pmatrix}=\begin{pmatrix}B' & \0\\C_{1}'' & C_{2}''\end{pmatrix},\]
where the zero block has size $(n-s)\times(n-r+s)$, as desired.
\end{proof}

\bibliographystyle{amsalpha}
\bibliography{rational_regularity}

\end{document}